\newtheorem{assumption}{Assumption}
\newtheorem{theorem}{Theorem}[section]
\newtheorem{corollary}{Corollary}[section]
\newtheorem{lemma}[theorem]{Lemma}
\newtheorem{remark}{Remark}[section]
\theoremstyle{definition}
\newtheorem{prop}{Proposition}[section]
\newcommand{\cha}{\mathbb{1}_{\Omega_0}}
\newcommand{\R}{\mathbb{R}}
\newcommand{\om}{{\Omega_{0}}}
\newcommand{\bydef}{\stackrel{\mbox{\tiny\textnormal{\raisebox{0ex}[0ex][0ex]{def}}}}{=}}
\newcommand{\bU}{\mathbf{U}}
\newcommand{\bgam}{\text{\boldmath{$\gamma$}}}
\newcommand{\bGam}{\text{\boldmath{$\Gamma$}}}
\newcommand{\bpi}{\text{\boldmath{$\pi$}}}
\newcommand\subsubsubsection{\@startsection{paragraph}{4}{\z@}{-2.5ex\@plus -1ex \@minus -.25ex}{1.25ex \@plus .25ex}{\normalfont\normalsize\bfseries}}
\newcommand\subsubsubsubsection{\@startsection{subparagraph}{5}{\z@}{-2.5ex\@plus -1ex \@minus -.25ex}{1.25ex \@plus .25ex}{\normalfont\normalsize\bfseries}}
\title{The 2D Gray-Scott system of equations: constructive proofs of existence of localized stationary patterns
}
\author{
Matthieu Cadiot
\footnote{McGill University, Department of Mathematics and Statistics, 805 Sherbrooke Street West, Montreal, QC, H3A 0B9, Canada. {\tt matthieu.cadiot@mail.mcgill.ca}}
\and
Dominic Blanco \footnote{McGill University, Department of Mathematics and Statistics, 805 Sherbrooke Street West, Montreal, QC, H3A 0B9, Canada. {\tt dominic.blanco@mail.mcgill.ca}}}
\begin{document}

\maketitle
\begin{abstract}
In this article, we present a comprehensive framework for constructing smooth, localized solutions in systems of semi-linear partial differential equations, with a particular emphasis to the Gray-Scott model.
 Specifically, we construct a natural Hilbert space $\mathcal{H}$ for the study of systems of autonomous semi-linear PDEs, on which products and differential operators are well-defined.  Then, given an approximate solution $\mathbf{u}_0$, we derive a Newton-Kantorovich approach based on the construction of an approximate inverse of the linearization around $\mathbf{u}_0$. In particular, we derive a condition under which we prove the existence of a unique solution in a neighborhood of $\mathbf{u}_0$. Such a condition can be verified thanks to the explicit computation of different upper bounds, for which analytical details are presented. Furthermore, we provide an extra condition under which localized patterns are proven to be the limit of an unbounded branch of (spatially) periodic solutions as the period tends to infinity. We then demonstrate our approach by proving (constructively) the existence of four different localized patterns in the 2D Gray-Scott model. In addition, these solutions are proven to satisfy the $D_4$-symmetry. That is, the symmetry of the square. The algorithmic details to perform the computer-assisted proofs are available on GitHub at \cite{julia_cadiot_blanco_GS}.
\end{abstract}

\begin{center}
{\bf \small Key words.} 
{ \small Localized stationary planar patterns, Gray-Scott Model,  Systems of Partial Differential Equations, Dihedral symmetry,  Branch of periodic orbits, Computer-Assisted Proofs}
\end{center}

\section{Introduction}
In this paper, we investigate the existence of stationary localized solutions to autonomous systems of semi-linear partial differential equations (PDEs). This includes reaction-diffusion models of the form
\begin{equation}\label{eq : stationary reaction diffusion}
   \partial_t \mathbf{u} + \mathbf{D}_1 \Delta \mathbf{u} + \mathbf{D}_2 \mathbf{u}  + \mathbb{G}(\mathbf{u}) =0
\end{equation} where  $\mathbf{D}_1$ is a diagonal matrix of diffusion coefficients, $\mathbf{D}_2$ is a matrix and $\mathbb{G}$ is nonlinear and accounts for local reactions. We examine \eqref{eq : stationary reaction diffusion} on the whole space $\mathbb{R}^m$ and look for stationary  solutions $\mathbf{u} = \mathbf{u}(x)$. The precise class of equations under investigation will be specified  by Assumptions \ref{ass:A(1)} and \ref{ass : LinvG in L1} in Section \ref{sec : general set up system of PDEs}. In particular, this class of PDEs includes
the planar Gray-Scott system of equations :
\begin{equation}\label{eq : gray_scott}
\begin{split}
     u_t &= \delta_u \Delta u - uv^2 + \theta_{F}(1-u) \\
     v_t &= \delta_v \Delta v + uv^2 - (\theta_{F}+\theta_{k})v
     \end{split} ~~ \text{ with } ~~ u = u(x,t),~~ v = v(x,t),~~ x \in \mathbb{R}^2,~ \theta_F, \theta_k > 0
\end{equation}
where $\theta_F$ is the ``feed rate" of $u$ and $\theta_k$ is the ``kill rate" of $v$ (see \cite{sims_gs}). More specifically, $\theta_F$ is the rate at which $u$ is replenished in the chemical reaction. On the other hand, $\theta_k$ is the rate at which $v$ is removed from the reaction. First introduced by Peter Gray and S.K. Scott in \cite{gs_original} as a chemical equation, the Gray-Scott model is a prototypical reaction-diffusion equation of the activator-inhibitor type. It models a pair of reactions featuring a cubic autocatalysis. The equations presented in \eqref{eq : gray_scott} are the resulting reaction-diffusion equations corresponding to the chemical reaction proposed in \cite{gs_original}.  More generally, reaction-diffusion systems are known to exhibit complex dynamics and are of particular interest in the field of pattern formation since they provide a rich variety of patterns (see for instance the following review papers \cite{jason_review_paper},\cite{knobloch_patterns}, and \cite{ward_review}). Specifically to the Gray-Scott model, the pioneered work of  John Pearson in \cite{pearson_gs} allowed to discover and classify a wide variety of patterns, including Turing patterns.  Many works have followed his classification on both Turing and localized patterns (moving and stationary). These include \cite{mcgough_riley_gs}, \cite{munafo_gs}, \cite{autosolitons_2D}, \cite{pearson_spots}, \cite{turing_3d}, and \cite{ueyama_numerics} all of which found and classified additional numerical solutions to \eqref{eq : gray_scott}.

In this paper, we focus on stationary localized patterns. By localized pattern, we mean spatially confined structures that approach a constant solution at infinity. Such solutions are ubiquitous in reaction-diffusion systems and represent a fundamental subject of study in such equations (cf. \cite{gly_1d,jason_review_paper,  knobloch_patterns, sandstede_paper_three}). Note that in general, the change of variable $u \to u - \alpha$ allows to trivially switch from a solution approaching $\alpha$ at infinity to a solution vanishing at infinity. Without loss of generality, we consider solutions going to zero at infinity. In particular, we introduce the following change of variables.
Define $\lambda_2 = \frac{\theta_F}{(\theta_F+\theta_k)^2}, \lambda_1 = (\theta_F+\theta_k)\frac{\delta_v}{\delta_u}$, $v = (\theta_F+\theta_k) u_1\left(\frac{\theta_F+\theta_k}{\sqrt{\delta_u}}x\right)$, and $u = u_2\left(\frac{\theta_F+\theta_k}{\sqrt{\delta_u}}x\right) - \lambda_1 u_1\left(\frac{\theta_F+\theta_k}{\sqrt{\delta_u}}x\right) + 1$ and obtain
\begin{equation}\label{eq : gray_scott cov}
\begin{split}
     \lambda_1 \Delta u_1 + (u_2 + 1 - \lambda_1 u_1)u_1^2 -u_1 &= 0 \\
     \Delta u_2 - \lambda_2 u_2 + (\lambda_2 \lambda_1 - 1)u_1 &= 0,
\end{split}
\end{equation}
where $\lambda_1, \lambda_2 >0$. We look for solutions $(u_1,u_2)$ of \eqref{eq : gray_scott cov} where $u_1(x), u_2(x) \to 0$ as $|x| \to \infty$.
Equivalently, denoting $\mathbf{u} \bydef (u_1,u_2)$ and defining $\mathbb{F}(\mathbf{u}) \bydef \mathbb{L}\mathbf{u} + \mathbb{G}(\mathbf{u})$, 
where
\begin{equation}\label{definition_of_L}
    \mathbb{L} \bydef \begin{bmatrix}
        \mathbb{L}_{11} & 0 \\
        \mathbb{L}_{21} & \mathbb{L}_{22}
    \end{bmatrix} \bydef \begin{bmatrix}
        \lambda_1 \Delta - I & 0 \\
        (\lambda_2 \lambda_1 - 1)I & \Delta - \lambda_2 I
    \end{bmatrix}, 
\end{equation}
\begin{equation}\label{definition_of_G}
    \mathbb{G}(\mathbf{u}) \bydef \begin{bmatrix}
         (u_2 + 1 - \lambda_1 u_1)u_1^2 \\ 0
    \end{bmatrix},
\end{equation} 
we look for zeros of $\mathbb{F}$ vanishing at infinity.
Note that \eqref{eq : gray_scott cov} has the particular advantage of possessing a linear equation. This point will simplify parts of the analysis, both theoretically and numerically. Before presenting our methodology, we provide a brief summary of the existing literature review on localized patterns in the Gray-Scott model.

\par In the one dimensional equivalent of \eqref{eq : gray_scott}, diverse results have been obtained in terms of stationary localized solutions. In \cite{gs_alsaadi}, the authors introduce a homotopy parameter to a more general class of reaction diffusion models. This homotopy parameter allows them to transform the Glycolysis model studied in \cite{gly_1d} into Gray-Scott. By performing numerical continuation in this parameter, the authors observe the more simple and understood dynamics of the Glycolysis model transition to the more complex ones of Gray-Scott. In \cite{doleman_pulse}, in the region where a parameter $0 < \delta^2 \ll 1$, the authors used asymptotic scalings on the other parameters to prove the existence of single pulse solutions. More specifically, they consider a scaled system in which the variables and parameters are $O(1)$. Then, by using geometric arguments, their scalings enable them to deduce correct asymptotic scalings for the variables and parameters in order for the patterns to exist. In particular, each of these scalings depend on the small parameter $\delta$. Their main results are then proofs of existence of single-pulse and multi-pulse stationary states on the infinite line. They also construct these solutions for the rescaled parameter values.  Without relying on asymptotic scalings, the authors of \cite{exact_sol1D_Hale} found an exact solution to \eqref{eq : gray_scott cov} in the parameter regime $\lambda_2 \lambda_1 =1 $. Here, \eqref{eq : gray_scott cov} reduces to a scalar ODE which can be solved analytically, providing an exact solution on the infinite line. Additionally, the authors provided numerical evidence for the existence of pulse patterns away from $\lambda_2\lambda_1 = 1$ by performing a continuation method. Using a computer-assisted approach, the authors in \cite{jay_jp_gs} verified such an observation. Indeed, they used a rigorous continuation method, starting at $\lambda_1\lambda_2=1$, and proved the existence of a branch of spike solutions to \eqref{eq : gray_scott}.

In the planar case, the known results on localized patterns are restricted to radially symmetric solutions.  For instance, existence of such solutions has been shown analytically through the use of asymptotics by Wei in \cite{wei_spike_2d_unbounded}. Here, the author considers a specific parameter regime. More specifically, their regime provides that one of the component of the solution is almost constant, simplifying the analysis. In particular, Wei was able to reduce \eqref{eq : gray_scott} to a single equation and proved the existence of a radially symmetric solution. In the same regime, an alternative methodology was derived in \cite{wei_ring_2d_small_epsilon_unbounded} for proving the existence of a ring-like solution. Their approach is based on a Liapunov-Schmidt reduction method combined with asymptotic analysis. The authors begin by solving \eqref{eq : gray_scott} on a disk of radius $R$ with Neumann boundary conditions and show that ring solutions always exist with radius $r_{R} < R$, under some conditions on $R$. They are then able to extend these results to $\mathbb{R}^2$ by deriving a further condition on the parameters under which a ring solution exists. 

 As far as 2D non-radial solutions are concerned, there exists a general numerical method for constructing localized patterns in two component reaction-diffusion systems near a Turing instability. By considering a Galerkin truncation of the system in polar coordinates, the authors of \cite{jason_ring_paper} and \cite{jason_spot_paper} observe the emergence of "spot" and "ring" localized dihedral patterns. By then performing numerical continuation away from the Turing instability, they obtain more localized patterns in the desired system. The method is demonstrated on the Swift Hohenberg scalar PDE, but is applicable to a general class of two component reaction diffusion systems. A similar method can be applied to three component reaction diffusion systems as demonstrated by the authors of \cite{sandstede_paper_three}. As far as Gray-Scott, a rich variety of patterns have been studied on bounded domains. In particular, in \cite{wei_multi_spike_2d} and \cite{wei_asym_spike_2d}, Wei and Winter proved the existence of multi-spot patterns on a smooth bounded domain $\Omega$ in $\mathbb{R}^2$ with Neumann boundary conditions. In both of these studies, the same asymptotic parameter regime described in the previous paragraph is considered. This allows one to reduce the problem to a single equation. Then, using a Green's function, the authors were able to  construct multi-spot solutions to \eqref{eq : gray_scott}. In \cite{wei_multi_spike_2d}, symmetric spot solutions are constructed whereas \cite{wei_asym_spike_2d} focuses on asymmetric spot solutions. In \cite{thesis_localized_but_moving}, the authors proved the existence of various patterns by considering a perturbation argument along with asymptotic approximations, including some multi-spot patterns in 2D. In particular, quasi-equilibrium solutions, which are slowly moving in $O(\epsilon^2)$, are proven to exist. Their solutions evolve slowly and drift with asymptotically slow speed, providing a proof of a so-called moving pattern. Finally, by using a computer-assisted approach, Castelli in \cite{castelli_gs} proved various solutions to \eqref{eq : gray_scott cov} to exist. The method of \cite{castelli_gs} is applicable on a rectangular domain with Neumann or periodic boundary conditions. Furthermore, the author proved that if $\lambda_2 \leq 4$, then there are no non-constant solutions to \eqref{eq : gray_scott cov} for $\lambda_1 > \frac{1}{4}$.

In this manuscript, we propose a general methodology for constructively proving the existence of planar localized patterns in \eqref{eq : gray_scott cov}, without assuming radial symmetry, and away from any specific asymptotic regime. In particular, we prove the existence of a non-radial localized pattern (cf. Theorem \ref{th : proof_of_leaf}). These results are, to the best of our knowledge, new and novel in the pattern formation field.
\par  Recently, \cite{unbounded_domain_cadiot} provided a method for constructively proving the existence of strong solutions to scalar autonomous semi-linear PDEs on $\mathbb{R}^m$. An application to the 1D Kawahara equation on $\mathbb{R}$ was provided in the same paper. Then, the authors applied the theory of \cite{unbounded_domain_cadiot} to prove non-radial localized patterns in the 2D Swift-Hohenberg equation in  \cite{sh_cadiot}  demonstrating the method on a two-dimensional example. One of the objectives of this paper is to extend the analysis done in \cite{unbounded_domain_cadiot} to systems of autonomous semi-linear PDEs.  As we consider systems rather than scalar PDEs, this naturally introduces additional complications, such as the well-definedness of a product, which have to be taken into account.  More specifically, we  derive appropriate assumptions (cf. Assumptions \ref{ass:A(1)} and \ref{ass : LinvG in L1}) for a specific class of system of autonomous semi-linear PDEs, encompassing a large set of reaction-diffusion equations, including the Gray-Scott system. Under such assumptions, we demonstrate how the ideas of \cite{unbounded_domain_cadiot} can be generalized to cover systems of PDEs. Moreover, we had to address some subsidiary problems arising specifically from the Gray-Scott model, leading to the development of technical tools. We now provide a summary of our approach in the following paragraph.

 To begin with, our analysis relies on the ability to (accurately) compute an approximate solution, denoted $\mathbf{u}_0$, with support on a square $\om = (-d,d)^2$. Specifically, we construct a sequence of Fourier coefficients ${\mathbf{U}}_0$, providing a representation of $\mathbf{u}_0$ on $\om$. In order to obtain approximate solutions with different shapes and symmetries, we numerically follow branches using continuation. Indeed, in this paper we focus our attention on solutions possessing the symmetries of a square, which we denote as $D_4$-symmetric. In order to do so, we consider a specific Hilbert space of interest which has these properties. We choose to perform our analysis in the space denoted $\mathcal{H}_{D_4}$, which possesses functions in $H^2(\mathbb{R}^2) \times H^2(\mathbb{R}^2)$ with $D_4$-symmetry. A formal definition is provided in \eqref{H_l_D_4_definition}. In particular, we ensure that $\mathbf{u}_0 \in \mathcal{H}_{D_4}$ by construction.
 Enforcing such a symmetry allows us to remove the natural translation and rotation invariances in the set of solutions as well as proving the $D_4$-symmetry by construction. Moreover, restricting to such a symmetry implies that we can consider a reduced set of Fourier indices, allowing us to save memory when performing the computer-assisted parts of the proof. In order to enforce $D_4$-symmetry, we had to implement a custom sequence structure compatible with the Julia package RadiiPolynomial.jl \cite{julia_olivier}. The algorithmic details to use the $D_4$-sequence structure are available at \cite{dominic_D_4_julia}. In particular, this sequence structure allows us to construct an approximate solution $\mathbf{u}_0$ in $\mathcal{H}_{D_4}.$
 At this step, we focus on identifying a solution $\tilde{\mathbf{u}} \in \mathcal{H}_{D_4}$ to \eqref{eq : gray_scott cov} in a vicinity of $\mathbf{u}_0$ using a Newton-Kantorovich approach. This requires the construction of an approximate inverse $\mathbb{A}$ to $D\mathbb{F}(\mathbf{u}_0)$, for which we follow the approach of \cite{unbounded_domain_cadiot}. The details on this construction can be found in Section \ref{sec : the operator A}. With $\mathbb{A}$ built, we define a fixed point operator $\mathbb{T}(\mathbf{u}) \bydef \mathbf{u} - \mathbb{A}\mathbb{F}(\mathbf{u})$. Then, we employ a Newton-Kantorovich approach in Theorem \ref{th: radii polynomial} to prove that $\mathbb{T}: \overline{B_r(\mathbf{u}_0)} \to \overline{B_r(\mathbf{u}_0)}$ is a contraction mapping on $\overline{B_r(\mathbf{u}_0)}$, the closed ball of radius $r$ around $\mathbf{u}_0$. The Banach fixed point theorem  then provides the existence of a unique solution in $\overline{B_r(\mathbf{u}_0)}$. Illustrations of approximate solutions $\mathbf{u}_0$, for which a proof was successful (cf. Theorems \ref{th : proof_of_leaf}, \ref{th : proof_of_ring} and \ref{th : proof_of_spike_away}), are displayed in the next figure.   
\begin{figure}[H]
\centering
 \begin{minipage}{.33\linewidth}
  \centering\epsfig{figure=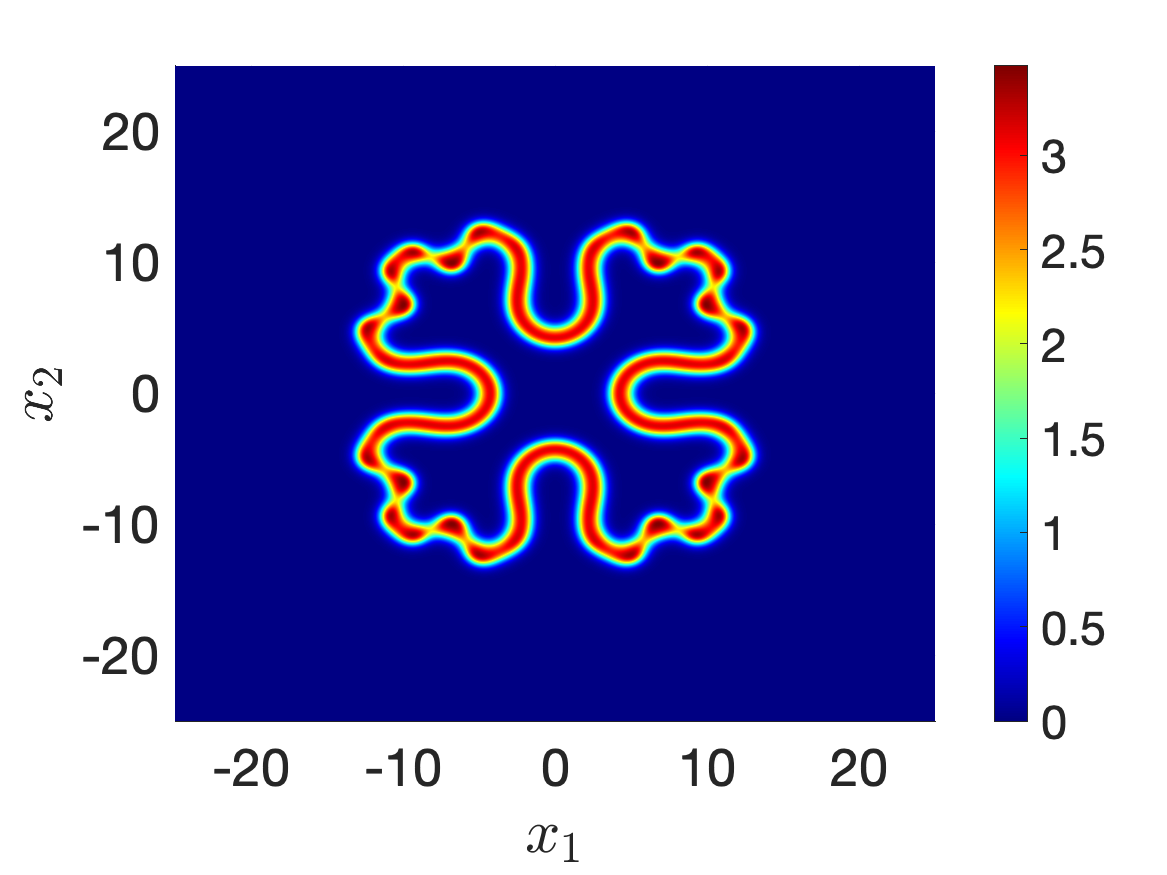,width=\linewidth}
  \end{minipage}%
 \begin{minipage}{.33\linewidth}
  \centering\epsfig{figure=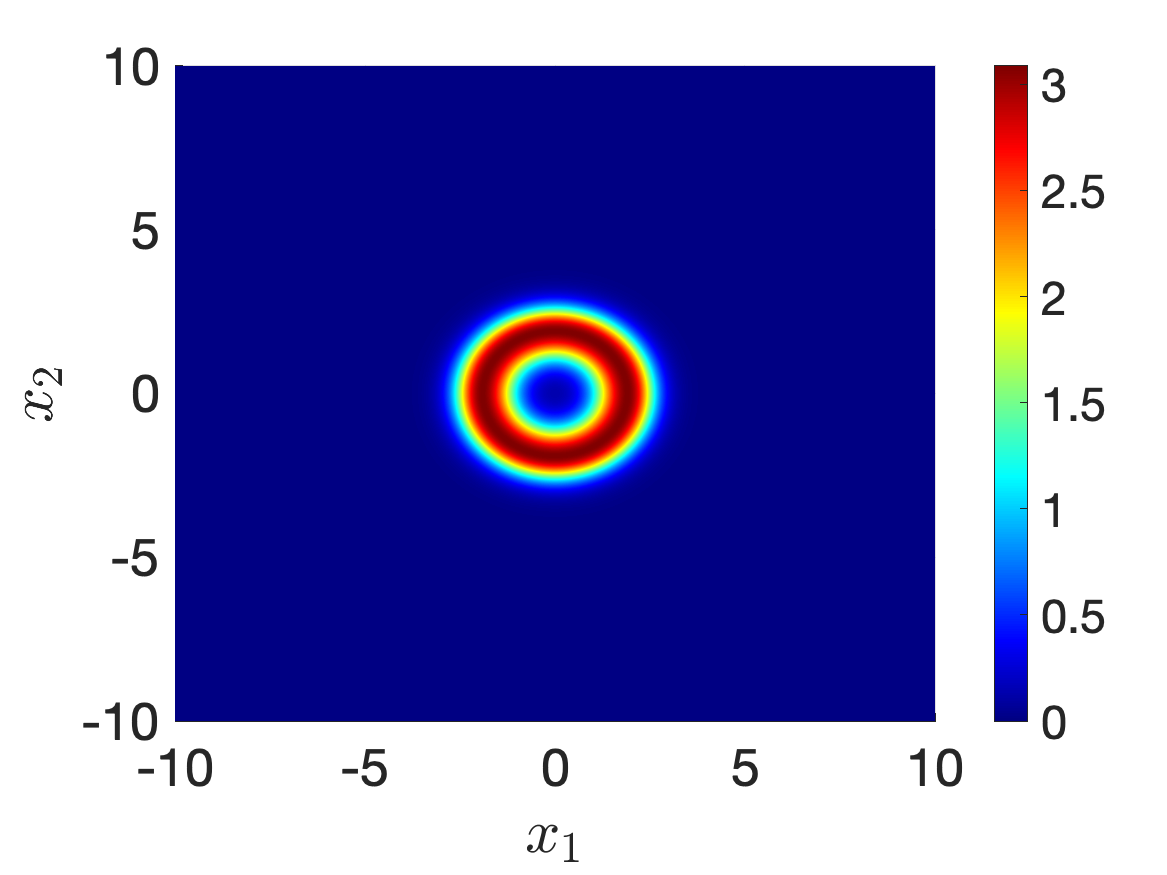,width=\linewidth}
 \end{minipage} %
 \begin{minipage}{.33\linewidth}
  \centering\epsfig{figure=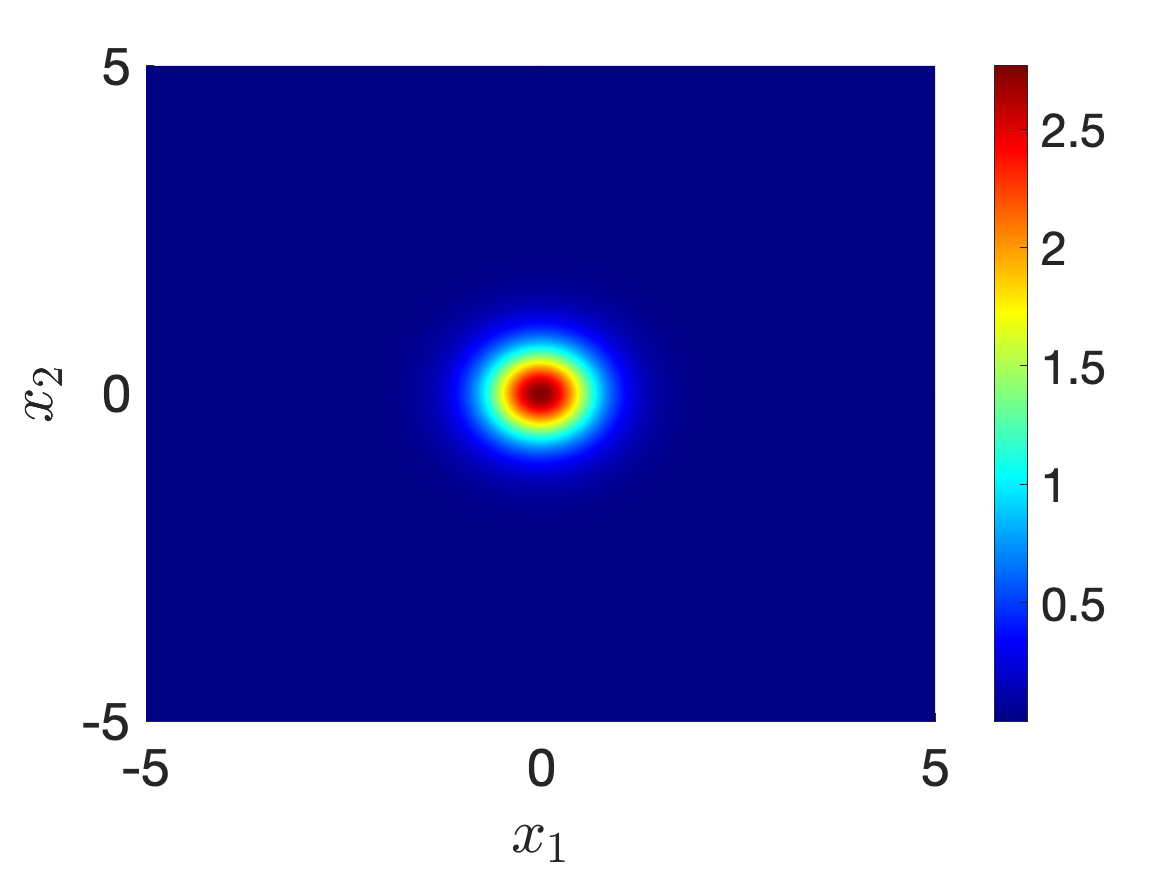,width=\linewidth}
 \end{minipage} 
 \caption{Plot of $u_1$ of an approximation of a ``leaf pattern" (L), a ``ring pattern" (C) and a ``spike pattern" (R) in the Gray-Scott System of Equations.}
 \end{figure}\label{eq : fig_intro}%
As mentioned earlier, the development of the Newton-Kantorovich approach in Theorem \ref{th: radii polynomial} involved subsidiary problems of interest we had to address. To begin with, we  establish a general set-up under which nonlinear operators are well-defined as bounded operators on $\mathcal{H}$. Using the pointwise product (cf. \eqref{def : product for systems}), we provide, under Assumption \eqref{ass : LinvG in L1}, a class of PDEs for which polynomial, autonomous, nonlinear operators satisfy the above requirement. In particular, we provide in Lemma \ref{Banach algebra} an explicit upper bound for the operator norm of differential multilinear operators. Furthermore, Theorem \ref{th: radii polynomial} depends on the explicit computation of multiple bounds, which are presented in Section \ref{sec : bounds}. One of the bounds specific to the unboundedness of the domain is the bound $\mathcal{Z}_u$. Defined in Lemma \ref{lem : Z_full_1}, the bound $\mathcal{Z}_u$ can be viewed as an estimate of how close our Fourier series approximations, computed on a bounded domain $\Omega_0 = (-d,d)^2$, are to the true PDE quantities on $\R^2$. This bound is unique to the case when we are attempting to prove a solution on an unbounded domain, and hence the tools needed for its computation need to be developed. Specifically, we provide an improvement for the computation of such a bound, enabling the construction of tighter estimates. Finally,  Theorem 3.7 in \cite{sh_cadiot} allows one to prove the existence of a branch of periodic solutions limiting a planar localized pattern as the period approaches infinity. In this paper, we derive a similar result in the case of the Gray-Scott model. Assuming that the existence of the localized pattern is obtained thanks to Theorem \ref{th: radii polynomial},  Theorem \ref{th : radii periodic} provides a condition under which an unbounded branch of (spatially) periodic solutions, converging to the localized pattern, can be obtained. We present applications of such a result in Theorems \ref{th : proof_of_spike_R1}, \ref{th : proof_of_spike_away}, \ref{th : proof_of_ring}, and \ref{th : proof_of_leaf} along with the existence proofs of the respective localized patterns. This is, to the best of our knowledge, a new result for pattern formation in systems of PDEs.

 Before delving into the specifics of our approach, it is worth mentioning that computer-assisted techniques have provided innovative results for PDEs on unbounded domains. For ODEs, the parameterization method allows one to formulate a projected boundary value problem which one can treat using Chebyshev series or splines. Details on the method can be found in \cite{MR1976079}, \cite{MR1976080}, and \cite{MR2177465}. In \cite{plum_numerical_verif}, Plum et al. presented a method for proving weak solutions to second and fourth-order PDEs. Their approach relies on the rigorous control of the spectrum of the linearization around an approximate solution, using a homotopy argument and the Temple-Lehmann-Goerisch method (see Section 10.2 in \cite{plum_numerical_verif}). In particular, using this method, Wunderlich in \cite{plum_thesis_navierstokes} proved the existence of a weak solution to the Navier-Stokes equations defined on an infinite strip with an obstacle. As far as strong solutions are concerned, the method presented in \cite{olivier_radial} allows to study the existence of radially symmetric solutions on $\R^d$. By studying the PDE in polar coordinates, the problem in question transforms into an ordinary differential equation. Then, a homoclinic connection at zero is computed via a rigorous enclosure of the center stable manifold, thanks the use of a Lyapunov-Perron operator. In this paper, we opt for the framework of \cite{unbounded_domain_cadiot} under which we are able to prove constructively the existence of strong, non-radial, localized patterns in the Gray-Scott model.

This paper is organized as follows: in Section \ref{sec : formulation of the problem}, we introduce the necessary theory and notation to extend the method of \cite{unbounded_domain_cadiot} to systems of PDEs. In particular, we derive assumptions under which our analysis will be applicable. In Section \ref{sec : settingup}, we present the Newton-Kantorovich approach we apply in this paper as well as the construction of both the approximate solution $\mathbf{u}_0$ and the approximate inverse $\mathbb{A}$. In Section \ref{sec : bounds}, we provide the required analysis of the bounds invoked in Theorem \ref{th: radii polynomial}. In Section \ref{sec : bounds_reduced}, we address the specific case $\lambda_2 \lambda_1 = 1$, under which \eqref{eq : gray_scott} reduces to a scalar PDE. In Section \ref{sec : CAPs}, we provide four computer-assisted proofs of existence of localized patterns in the Gray-Scott model, as well as their numerical description.  All computer-assisted proofs, including the requisite codes, are accessible on GitHub at \cite{julia_cadiot_blanco_GS}.
\section{Set-up of the problem}\label{sec : formulation of the problem}
Before treating the 2D Gray-Scott model, we develop a general framework to study the existence (and local uniqueness) of localized stationary solutions of systems of autonomous semi-linear PDEs. In particular, we wish to extend the method presented in \cite{unbounded_domain_cadiot} to fit this setup. To begin, we need to introduce assumptions  characterizing the class of system of PDEs which will fall under the analysis presented in this manuscript.
\subsection{Localized solutions in a system of PDEs}\label{sec : general set up system of PDEs}

\par We first introduce some notations. Let $m\in \mathbb{N}$ and define the Lebesgue spaces $L^2(\mathbb{R}^m)$ and $L^2(\om)$ on $\R^m$ and on a bounded domain $\om \subset \mathbb{R}^m$ respectively. Then, given $k \in \mathbb{N}$, we define the Lebesgue product space $L^2 \bydef \left(L^2(\mathbb{R}^m)\right)^k$. $L^2$ is associated to its natural inner product $(\cdot, \cdot)_2$. More generally, $L^p \bydef \left(L^p(\R^m)\right)^k$ denotes the usual $p$ Lebesgue product space on $\mathbb{R}^m$ associated to its norm $\| \cdot \|_{p}$. Moreover, given $s \geq 0$, denote by $H^s \bydef (H^s(\mathbb{R}^m))^k$ the usual Sobolev product space on $\mathbb{R}^m$.  For a bounded linear operator $\mathbb{K} : L^2 \to L^2$, denote by $\mathbb{K}^*$ the adjoint of $\mathbb{K}$ in $L^2$. For $\alpha \in \mathbb{N}^m$, let $\partial^{\alpha} \bydef \partial^{\alpha_1}_{x_1} \dots \partial^{\alpha_m}_{x_m}$ be the partial derivative operator  and denote by $\mathcal{S} \bydef \left\{ u \in C^{\infty}(\mathbb{R}^m) : \sup_{x\in \mathbb{R}^m}|x^\beta \partial^\alpha u(x)| < \infty \text{ for all } \beta, \alpha \in \mathbb{N}^m \right\}$ the {\em Schwartz space} on $\mathbb{R}^m$.
\par Now, let $\mathbf{u} = (u_1,\dots,u_k) \in L^2$ and denote $\hat{\mathbf{u}} \bydef \mathcal{F}(\mathbf{u}) \bydef (\hat{u}_1,\dots,\hat{u}_k) \bydef (\mathcal{F}(u_1),\dots,\mathcal{F}(u_k))$ the Fourier transform of $\mathbf{u}$. More specifically,  $\displaystyle \hat{u}_j(\xi)  \bydef  \int_{\mathbb{R}^m}u_j(x)e^{-2\pi i x \cdot \xi}dx$ for all $\xi \in \mathbb{R}^m.$ Next, given $u_1,u_2 \in L^2(\mathbb{R}^m)$, we denote  $u_1*u_2$ the convolution of $u_1$ and $u_2$.
Finally, for given $\mathbf{u},\mathbf{v} \in L^2$, we define the pointwise multiplication $\mathbf{u}\mathbf{v}$ as 
\begin{align}\label{def : product for systems}
    \mathbf{u}\mathbf{v} \bydef \begin{bmatrix}
        u_1v_1\\
        u_2 v_2 \\
        u_3 v_3 \\
        \vdots \\
        u_k v_k
    \end{bmatrix}.
\end{align}

Now, given $\psi \in L^2$, we consider the following class of system of autonomous semi-linear PDEs
\begin{equation}\label{eq : f(u)=0 on S}
    \mathbb{L}\mathbf{u} + \mathbb{G}(\mathbf{u}) = \psi
    \vspace{-.2cm}
\end{equation}
 where $\mathbf{u} : \R^m \to \R^k$, $\mathbb{L}$ is a linear differential operator with constant coefficients and $\mathbb{G}$ is a purely nonlinear ($\mathbb{G}$ does not contain linear operators) polynomial differential operator of order $N_{\mathbb{G}} \in \mathbb{N}$  where $N_{\mathbb{G}} \geq 2$, that is we can decompose it as a finite sum 
 \vspace{-.3cm}
\begin{equation}\label{def: G and j}
     \mathbb{G}(\mathbf{u}) \bydef \displaystyle\sum_{i = 2}^{N_{\mathbb{G}}}\mathbb{G}_i(\mathbf{u}) 
     \vspace{-.2cm}
\end{equation}
where $\mathbb{G}_i$ is a sum of monomials (where the product is defined thanks to \eqref{def : product for systems}) of degree $i$ in the components of $\mathbf{u}$ and some of its partial derivatives.
In particular, for $i \in \{2,\dots,N_{\mathbb{G}}\}$, $\mathbb{G}_i$ can be decomposed as follows
\vspace{-.2cm}
\[
\mathbb{G}_i(\mathbf{u}) \bydef  \mathbb{G}_{i,1}\mathbf{u} \dots \mathbb{G}_{i,i}\mathbf{u}
\vspace{-.2cm}
\]
 where $\mathbb{G}_{i,j}$ is a linear differential operator with constant coefficients for all  $j \in \{1, \dots, i\}$. $\mathbb{G}(\mathbf{u})$ depends on the derivatives of $\mathbf{u}$ but we only write the dependency in $\mathbf{u}$ for simplification.   {This setup includes a wide variety of systems of PDEs, including polynomial stationary reaction-diffusion equations of the form \eqref{eq : stationary reaction diffusion}.

 Our goal is now to derive assumptions on $\mathbb{L}$ and $\mathbb{G}$, under which we can recover similarities with the analysis derived in \cite{unbounded_domain_cadiot}. In particular, let us first consider the following assumption.
\begin{assumption}\label{ass:A(1)}
Assume that the Fourier transform of the linear operator $\mathbb{L}$ is given by 
\begin{equation} \label{eq:assumption1_on_L}
\mathcal{F}\big(\mathbb{L}\mathbf{u}\big)(\xi) = l(\xi) \hat{\mathbf{u}}(\xi), \quad \text{for all } \mathbf{u}\in \mathcal{S}^k,
\end{equation}
where $l$ is a $k$ by $k$ matrix of polynomials in $\xi$. Moreover, assume that there exists $\sigma_0 >0$ such that
\begin{equation} \label{eq:l>0}
\left|\mathrm{det}\left(l(\xi)\right)\right|  \geq \sigma_0, \qquad \text{for all } \xi \in \mathbb{R}^m.
\end{equation}
That is, the determinant of $l(\xi)$ is uniformly bounded away from $0$.
\end{assumption}

\begin{remark}
    
Observe that Assumption \ref{ass:A(1)} is the equivalent of Assumption 2.1 from \cite{unbounded_domain_cadiot} for systems of PDEs. In fact, as illustrated in Remark 2.7 in \cite{unbounded_domain_cadiot}, such an assumption is essential for controlling the essential spectrum of $\mathbb{L}$ away from zero. As such, it is necessary for constructing a contracting Newton-like fixed point operator, which is our main objective in Section \ref{sec : settingup}. Note, however, that  a large class of system of PDEs falls into the scope of Assumption \ref{ass:A(1)}. Specifically for reaction-diffusion equations of the form \eqref{eq : stationary reaction diffusion}, Assumption \ref{ass:A(1)} imposes that $ \mathbf{D}_1 \Delta  + \mathbf{D}_2 I_d : (H^2(\R^m))^d \to (L^2(\R^m))^d$ is invertible and has a bounded inverse. This is the case for instance for well-chosen parameter regimes in the FitzHugh–Nagumo equation or in the Gray-Scott model, as presented Section \ref{ssec : illustration gray scott}.
\end{remark}
   More specifically, $l$ is the symbol (Fourier transform) of the differential operator $\mathbb{L}$. Essentially, Assumption \ref{ass:A(1)} is equivalent to having $l(\xi)$ invertible for all $\xi \in \mathbb{R}^m$. We now define the following norm and inner product \begin{align}\label{def : definition of the norm and inner product Hl}
    \|\mathbf{u}\|_{\mathcal{H}} \bydef \|\mathbb{L}\mathbf{u}\|_{2} ~~ \text{ and } ~~(\mathbf{u},\mathbf{v})_{\mathcal{H}} \bydef (\mathbb{L}\mathbf{u},\mathbb{L}\mathbf{v})_2 
\end{align}
for all $\mathbf{u}, \mathbf{v} \in \mathcal{H}$ where $\mathcal{H}$ is the Hilbert space
\begin{align}\label{def : Hilbert space Hl}
    \mathcal{H} \bydef \{ \mathbf{u} \in L^2, \|\mathbf{u}\|_{\mathcal{H}} < \infty \}.
\end{align} 
 Using Plancherel's identity, we have 
\begin{align*}
(\mathbf{u}, \mathbf{v})_{\mathcal{H}} = (l\hat{\mathbf{u}},l\hat{\mathbf{v}})_2 
\end{align*}
for all $\mathbf{u}, \mathbf{v} \in \mathcal{H}.$ In particular, note that $\mathbb{L} : \mathcal{H} \to L^2$ is a well-defined bounded linear operator, which is actually an isometric isomorphism. Now, it remains to add an assumption to ensure that the non-linear part $\mathbb{G} : \mathcal{H} \to L^2$ is well-defined as well (under the pointwise product \eqref{def : product for systems}). Before stating the necessary assumption for this, we first introduce some notations. We begin with two norms
\\ \noindent\begin{minipage}{.4\linewidth}
\begin{equation}\label{M_2_norm_def}
 \|M\|_{\mathcal{M}_1} \bydef \sup_{\xi \in \R^m} \sup\limits_{\substack{x \in \R^k\\ |x|_2=1}}  |M(\xi)x|_2\end{equation}\end{minipage}%
\begin{minipage}{.6\linewidth}
\begin{equation}\label{M_21_norm_def}
\vspace{0.15cm}\|M\|_{\mathcal{M}_2} \bydef \max_{i \in \{1, \dots, k\}} \left\{\left(\sum_{j = 1}^k \|M_{i,j}\|_{L^2(\R^m)}^2\right)^{\frac{1}{2}} \right\}\end{equation}
\end{minipage}
\\
where $|x|_2 \bydef \left(\sum_{i = 1}^k |x_i|^2 \right)^\frac{1}{2}$ is the usual Euclidean 2 norm in $\mathbb{C}^k$. These norms then allow us to define the spaces $\mathcal{M}_1$ and $\mathcal{M}_2$ as
\begin{align}
 &\mathcal{M}_1 \bydef \left\{ M = \left(M_{i,j}\right)_{i,j \in \{ 1,\dots,k \} }, \text{ where } M_{i,j} \in L^{\infty}(\mathbb{R}^m) \text{ and } ~ \|M\|_{\mathcal{M}_1} < \infty \right\}  \\
&\mathcal{M}_2 \bydef \left\{ M = \left(M_{i,j}\right)_{i,j \in \{ 1,\dots,k \} }, \text{ where } M_{i,j} \in L^{2}(\mathbb{R}^m) \text{ and } ~ \|M\|_{\mathcal{M}_2} < \infty \right\}.
\end{align}
The spaces $\mathcal{M}_1$ and $\mathcal{M}_2$, we determine the class of system of PDEs under which our analysis is applicable.
\begin{assumption}\label{ass : LinvG in L1}
For all $2 \leq i \leq N_{\mathbb{G}}$ and $p \in \{1, \dots, i\}$ assume that there exists a $k$ by $k$ matrix of polynomials $g_{i,p}$ such that
\[
\mathcal{F}\left(\mathbb{G}_{i,p} \mathbf{u}\right)(\xi) = g_{i,p}(\xi)\hat{\mathbf{u}}(\xi),
\quad \text{for all } \xi \in \mathbb{R}^m. 
\]  Moreover,  assume that  $$g_{i,p}l^{-1} \in \mathcal{M}_1 \cap \mathcal{M}_2.$$
\end{assumption}
Assumption \ref{ass : LinvG in L1} is the equivalent of Assumption 2.3 for scalar PDEs in \cite{unbounded_domain_cadiot}.  Notice that under Assumptions \ref{ass:A(1)} and \ref{ass : LinvG in L1}, \eqref{eq : f(u)=0 on S} is a semi-linear autonoumous system of PDEs. Moreover, using Assumption \ref{ass : LinvG in L1}, one can prove that each $\mathbb{G}_i: \mathcal{H} \to L^2$ is a well-defined bounded multi-linear operator. Specifically, an explicit upper bound for the operator norm of each $\mathbb{G}_i$ can be computed.  Furthermore, we prove that $\mathbb{G}$ actually maps $\mathcal{H}$ to $\left(H^{1}(\mathbb{R}^m)\right)^k$. This point is fundamental as it allows to gain regularity for the solution a posteriori (cf. Proposition \ref{prop : regularity of the solution general}). We present the proof of these results in the following lemma.
\begin{lemma}\label{Banach algebra}
Suppose that Assumptions \ref{ass:A(1)}  and \ref{ass : LinvG in L1} are satisfied. Then,
\begin{align}\label{eq : semi linear regularity}
    \mathbb{G}(\mathbf{u}) \in \left(H^{1}(\mathbb{R}^m)\right)^k, \quad \text{for all } \mathbf{u} \in \mathcal{H}.
\end{align}
For each $i \in \{2,\dots,N_{\mathbb{G}}\}$, let $\kappa_i$ be a non-negative constant satisfying
\begin{equation}\label{def : definition of kappai banach algebra}
    \kappa_i \geq  \min_{p \in \{1, \dots, i\}}\left\|{g_{i,1}}l^{-1}\right\|_{\mathcal{M}_2} \cdots \left\|{g_{i,p-1}}l^{-1}  \right\|_{\mathcal{M}_2} \left\|{g_{i,p}}l^{-1}\right\|_{\mathcal{M}_1}   \left\|{g_{i,p+1}}l^{-1}\right\|_{\mathcal{M}_2}  \cdots \left\|{g_{i,i}}l^{-1}\right\|_{\mathcal{M}_2}.
\end{equation}
Then, for all $\mathbf{u} \in \mathcal{H}$,
\vspace{-.1cm}
\begin{equation}\label{eq : multilinearity of G}
    \|\mathbb{G}_i(\mathbf{u})\|_2 \leq  \kappa_i \|\mathbf{u}\|_{\mathcal{H}}^i.
    \vspace{-.1cm}
\end{equation}
\end{lemma}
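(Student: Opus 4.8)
The plan is to establish both claims by working on the Fourier side, where each differential operator $\mathbb{G}_{i,p}$ acts by multiplication by the matrix symbol $g_{i,p}(\xi)$, and where the key observation is that $\mathbb{L}$ being an isometric isomorphism from $\mathcal{H}$ onto $L^2$ lets us rewrite $\mathbb{G}_{i,p}\mathbf{u}$ in terms of $(g_{i,p}l^{-1})\widehat{\mathbb{L}\mathbf{u}}$. Concretely, for $\mathbf{w} = \mathbb{L}\mathbf{u} \in L^2$ we have $\hat{\mathbf{u}} = l^{-1}\hat{\mathbf{w}}$ (which makes sense pointwise by Assumption \ref{ass:A(1)}), so $\mathcal{F}(\mathbb{G}_{i,p}\mathbf{u}) = (g_{i,p}l^{-1})\hat{\mathbf{w}}$, and $\|\mathbf{u}\|_{\mathcal{H}} = \|\mathbf{w}\|_2$. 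Thus estimating $\mathbb{G}_i$ reduces to understanding how the matrix-symbol multipliers $g_{i,p}l^{-1} \in \mathcal{M}_1 \cap \mathcal{M}_2$ interact with the pointwise product \eqref{def : product for systems} under convolution.

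First I would prove the two basic mapping lemmas that \eqref{def : definition of kappai banach algebra} is built from. For a symbol $M \in \mathcal{M}_1$, multiplication in Fourier space by $M$ maps $L^2$ to $L^2$ with operator norm at most $\|M\|_{\mathcal{M}_1}$: this is immediate from Plancherel and the pointwise bound $|M(\xi)\hat{\mathbf{v}}(\xi)|_2 \le \|M\|_{\mathcal{M}_1}|\hat{\mathbf{v}}(\xi)|_2$. For a symbol $M \in \mathcal{M}_2$, the claim is that the operator $\mathbf{v} \mapsto \mathcal{F}^{-1}(M\hat{\mathbf{v}})$ maps $L^2$ into $L^\infty$ componentwise, with the $i$-th component bounded by $\big(\sum_j \|M_{i,j}\|_{L^2}^2\big)^{1/2}\|\mathbf{v}\|_2 \le \|M\|_{\mathcal{M}_2}\|\mathbf{v}\|_2$; this follows from Cauchy–Schwarz in $\xi$ applied to $\big(\mathcal{F}^{-1}(M\hat{\mathbf{v}})\big)_i(x) = \sum_j \int M_{i,j}(\xi)\hat{v}_j(\xi)e^{2\pi i x\cdot\xi}\,d\xi$. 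These are exactly the systems analogues of the two estimates used in the scalar case of \cite{unbounded_domain_cadiot}.

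Next I would combine these to estimate a single monomial $\mathbb{G}_i(\mathbf{u}) = (\mathbb{G}_{i,1}\mathbf{u})\cdots(\mathbb{G}_{i,i}\mathbf{u})$. Writing $\mathbf{w} = \mathbb{L}\mathbf{u}$ and $\mathbf{y}^{(p)} \bydef \mathbb{G}_{i,p}\mathbf{u}$ so that $\hat{\mathbf{y}}^{(p)} = (g_{i,p}l^{-1})\hat{\mathbf{w}}$, I estimate $\|\mathbb{G}_i(\mathbf{u})\|_2 = \|\mathbf{y}^{(1)}\cdots\mathbf{y}^{(i)}\|_2$ where the product is the pointwise product \eqref{def : product for systems}. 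Choose the index $p$ realizing the minimum in \eqref{def : definition of kappai banach algebra}; pull out that factor in $L^2$ and every other factor in $L^\infty$, using $\|f g\|_{L^2(\R^m)} \le \|f\|_{L^\infty}\|g\|_{L^2}$ componentwise and iterating over the $k$ components. This gives $\|\mathbb{G}_i(\mathbf{u})\|_2 \le \|\mathbf{y}^{(p)}\|_2 \prod_{q\ne p}\|\mathbf{y}^{(q)}\|_\infty \le \|g_{i,p}l^{-1}\|_{\mathcal{M}_1}\|\mathbf{w}\|_2 \cdot \prod_{q\ne p}\|g_{i,q}l^{-1}\|_{\mathcal{M}_2}\|\mathbf{w}\|_2 \le \kappa_i\|\mathbf{u}\|_{\mathcal{H}}^i$, which is \eqref{eq : multilinearity of G}. (If $\mathbb{G}_i$ is a sum of several monomials one applies this to each and collects constants; taking the min over $p$ for each monomial and a supremum/sum over monomials is what the definition of $\kappa_i$ encodes.) Summing over $i$ and using $\mathbb{G} = \sum_i \mathbb{G}_i$ shows $\mathbb{G}:\mathcal{H}\to L^2$ is well-defined and bounded on bounded sets.

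Finally, for the regularity statement \eqref{eq : semi linear regularity}, I would show each $\mathbb{G}_i(\mathbf{u}) \in (H^1(\R^m))^k$ by estimating $\xi_\ell \widehat{\mathbb{G}_i(\mathbf{u})}(\xi)$. Since the Fourier transform of a pointwise product is a convolution, $\widehat{\mathbb{G}_i(\mathbf{u})}$ is a multiple convolution of the $\hat{\mathbf{y}}^{(p)}$, and $\xi_\ell$ distributes across the convolution by the Leibniz-type identity for convolutions of the shifted-variable form; bounding $|\xi|\,\widehat{\mathbb{G}_i(\mathbf{u})}$ in $L^2$ then reduces to the same type of $\mathcal{M}_1$–$\mathcal{M}_2$ estimate as above, where one of the $i$ factors carries an extra $|\xi|$. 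The point is that $\mathbf{y}^{(p)} \in L^2 \cap L^\infty$ decays at infinity in Fourier space enough that after convolution we still control one derivative; more carefully, one writes $\xi_\ell = (\xi_\ell - \eta_\ell) + \eta_\ell$ inside the convolution integral and estimates each piece, the derivative landing on a factor that is still handled by an $\mathcal{M}_1$ bound since $\mathbf{u}\in\mathcal{H}$ already encodes two derivatives' worth of decay. I expect the main obstacle to be precisely this last regularity argument: keeping track of the combinatorics of distributing $|\xi|$ across an $i$-fold convolution while ensuring every resulting term is estimated by a finite product of $\mathcal{M}_1$ and $\mathcal{M}_2$ norms, and verifying that the hypothesis $g_{i,p}l^{-1}\in\mathcal{M}_1\cap\mathcal{M}_2$ (rather than a stronger weighted condition) genuinely suffices. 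The $L^2$-boundedness estimate \eqref{eq : multilinearity of G}, by contrast, is routine once the two multiplier lemmas are in place.
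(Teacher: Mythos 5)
Your argument for the $L^2$ estimate \eqref{eq : multilinearity of G} is exactly the paper's: conjugate by $\mathbb{L}$, place one factor in $L^2$ via Plancherel with the operator-norm bound $\|M\|_{\mathcal{M}_1}$, place the remaining $i-1$ factors in $L^\infty$ via $\|u\|_\infty \le \|\hat u\|_1$ and Cauchy--Schwarz with the $\|M\|_{\mathcal{M}_2}$ bound, then minimize over the choice of distinguished factor. (In fact your assignment of $\mathcal{M}_1$ to the $L^2\to L^2$ bound and $\mathcal{M}_2$ to the $L^2\to L^\infty$ bound is the consistent one; the paper's written proof has these two labels accidentally swapped at the end of each display, although the constant $\kappa_i$ in \eqref{def : definition of kappai banach algebra} is stated correctly.)

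For the $H^1$ regularity \eqref{eq : semi linear regularity}, you correctly anticipate the shape of the argument (Leibniz on the product, equivalently $\xi_\ell = (\xi_\ell-\eta_\ell)+\eta_\ell$ inside the iterated convolution, so that exactly one factor carries the extra weight $|\xi|$) and you correctly flag the genuine concern: whether $g_{i,p}l^{-1}\in\mathcal{M}_1\cap\mathcal{M}_2$ alone is enough to absorb that extra $|\xi|$. But you do not supply the mechanism that makes it work, and the heuristic you offer in its place --- that ``$\mathbf{u}\in\mathcal{H}$ already encodes two derivatives' worth of decay'' --- is misdirected. The decay that matters is a property of the symbol $g_{i,p}l^{-1}$, not of $\mathbf{u}$, and it is obtained as follows: each entry of $g_{i,p}l^{-1}$ is a rational function (a polynomial divided by the polynomial $\det l$, by Cramer's rule under Assumption \ref{ass:A(1)} and the polynomiality in Assumption \ref{ass : LinvG in L1}); a rational function on $\R^m$ that lies in $L^2(\R^m)$ must have numerator degree at least one below denominator degree, hence is $\mathcal{O}\bigl(1/(1+|\xi|)\bigr)$. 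Consequently $|\xi|\,(g_{i,p}l^{-1})_{s,n}$ is bounded, so the weighted symbol still gives an $L^2\to L^2$ bound and the Leibniz estimate closes. That is the decisive observation in the paper's proof, and without it your regularity step is a plausible plan rather than an argument. Note also that the relevant gain is one power of $|\xi|$, not two, so ``two derivatives'' is not the right invariant even as a rough count.
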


\begin{proof}
    Let $\mathbf{u} \in \mathcal{H}$ and let $\mathbf{v} \bydef \mathbb{L}\mathbf{u} \in L^2$, then
    \begin{align*}
        \|\mathbb{G}_i(\mathbf{u})\|_2 &=  \|\mathbb{G}_{i,1}\mathbf{u} \cdots \mathbb{G}_{i,i}\mathbf{u}\|_2\\
        &= \|\mathbb{G}_{i,1}\mathbb{L}^{-1}\mathbf{v} \cdots \mathbb{G}_{i,i}\mathbb{L}^{-1}\mathbf{v} \|_2\\
        &\hspace{-0.5cm}\leq \min_{p \in \{1, \dots, i\}}\|\mathbb{G}_{i,1}\mathbb{L}^{-1}\mathbf{v}\|_\infty  \cdots \|\mathbb{G}_{i,p-1}\mathbb{L}^{-1}\mathbf{v}\|_\infty \|\mathbb{G}_{i,p}\mathbb{L}^{-1}\mathbf{v}\|_2 \|\mathbb{G}_{i,p+1}\mathbb{L}^{-1}\mathbf{v}\|_\infty \dots  \|\mathbb{G}_{i,i}\mathbb{L}^{-1}\mathbf{v} \|_\infty.
    \end{align*}
    Now, given $i, p \in \{1, \dots, k\}$ and using Parseval's identity, we have
    \begin{align}
        \|\mathbb{G}_{i,p}\mathbb{L}^{-1}\mathbf{v}\|_2  = \|g_{i,p}l^{-1}\hat{\mathbf{v}}\|_2 = \left(\int_{\R^m} |g_{i,p}l^{-1}\hat{\mathbf{v}}|_2^2\right)^{\frac{1}{2}}.
    \end{align}
     Then, observing that
    \begin{align*}
        |(g_{i,p}l^{-1}\hat{\mathbf{v}})(\xi)|_2^2 \leq |\hat{\mathbf{v}}(\xi)|_2^2\sup_{\xi \in \R^m} \sup\limits_{\substack{x \in \R^k\\ |x|_2=1}}|g_{i,p}(\xi)l^{-1}(\xi)x|_2^2
    \end{align*}
    for all $\xi \in \mathbb{R}^m$, we get
    \begin{align*}
        \|g_{i,p}l^{-1}\hat{\mathbf{v}}\|_2  \leq  \|\hat{\mathbf{v}}\|_2 \|g_{i,p}l^{-1}\|_{\mathcal{M}_2} = \|{\mathbf{v}}\|_2 \|g_{i,p}l^{-1}\|_{\mathcal{M}_2}.
    \end{align*}
    Moreover, given $i,p \in \{1, \dots,k\}$,  let us define the $k$ by $k$ matrix of polynomials $M$  given by $M = (M_{j,s})_{j,s \in \{1,2,\dots,k\}} \bydef g_{i,p}l^{-1}$. Then notice that 
    \begin{align*}
         \|\mathbb{G}_{i,1}\mathbb{L}^{-1}\mathbf{v}\|_\infty \leq  \max_{j \in \{1, \dots, k\}} \left\|\sum_{s=1}^k M_{j,s}\hat{v}_s\right\|_1 
    \end{align*}
    where we used that $\|u\|_ \infty \leq \|\hat{u}\|_1$ for all $u : \R^m \to \R$ such that $\hat{u} \in L^1(\R^m)$. Moreover, for fixed $j \in \{1, \dots,k\}$, we have 
    \begin{align*}
        \left\|\sum_{s=1}^k M_{j,s}\hat{v}_s \right\|_1 \leq \sum_{s =1}^k \left\| M_{j,s}\hat{v}_s\right\|_1 \leq \sum_{s = 1}^k \|M_{j,s}\|_2 \|\hat{v}_s\|_2 
        \leq \|\hat{\mathbf{v}}\|_2 \left(\sum_{s = 1}^k \|M_{j,s}\|_2^2\right)^{\frac{1}{2}}
    \end{align*}
    where we used Holder's inequality in $L^2(\R^m)$ and in $\R^m$ in the last two steps. Using that $\|\hat{\mathbf{v}}\|_2 = \|{\mathbf{v}}\|_2$ by Plancherel's theorem, we obtain
\begin{align*}
     \|\mathbb{G}_{i,1}\mathbb{L}^{-1}\mathbf{v}\|_\infty &\leq \|{\mathbf{v}}\|_2 \max_{j \in \{1, \dots,k\}}\left(\sum_{s = 1}^k \|M_{j,s}\|_2^2 \right)^{\frac{1}{2}}= \|g_{i,p}l^{-1}\|_{\mathcal{M}_1}\|\mathbf{v}\|_2.
\end{align*}
Replacing $\mathbf{v}$ by $\mathbb{L}\mathbf{u}$ and recalling that $\|\mathbf{u}\|_{\mathcal{H}} = \|\mathbb{L}\mathbf{u}\|_2$,
this concludes the proof of \eqref{eq : multilinearity of G}. In particular, it implies that $\mathbb{G}(\mathbf{u}) \in L^2$ for all $\mathbf{u} \in \mathcal{H}$. \par To prove that $\mathbb{G}(\mathbf{u}) \in \left(H^{1}(\mathbb{R}^m)\right)^k$, let $i \in \{2, \dots, N_{\mathbb{G}}\}$ and let $p_1, p_2 \in \{1, \dots, i\}$. Moreover, let $\mathbf{u} \in \mathcal{H}$ and let $j \in \{1, \dots, m\}$. Denote $\partial_j$ the derivative with respect to the $j$-th variable, then
\begin{align*}
    \left\|\left(\partial_j\mathbb{G}_{i,p_1} \mathbf{u}\right)\left( \mathbb{G}_{i,p_2} \mathbf{u}\right)\right\|_2 \leq \|\partial_j\mathbb{G}_{i,p_1} \mathbf{u}\|_2 \|\mathbb{G}_{i,p_2} \mathbf{u}\|_\infty.
\end{align*}
Now, denoting $\mathbf{v} \bydef \mathbb{L}\mathbf{u} \in L^2$, we get
\begin{align*}
    \left\|\left(\partial_j\mathbb{G}_{i,p_1} \mathbf{u} \right)\left(\mathbb{G}_{i,p_2} \mathbf{u}\right)\right\|_2 \leq \|\partial_j\mathbb{G}_{i,p_1} \mathbb{L}^{-1}\mathbf{v}\|_2 \|\mathbb{G}_{i,p_2}\mathbb{L}^{-1} \mathbf{v}\|_\infty.
\end{align*}
Using the above computations, we readily have 
\begin{align*}
    \|\mathbb{G}_{i,p_2}\mathbb{L}^{-1} \mathbf{v}\|_\infty \leq \|g_{i,p_2}l^{-1}\|_{\mathcal{M}_1} \|\mathbf{v}\|_2.
\end{align*}
Moreover, we get
\begin{align*}
    \|\partial_j\mathbb{G}_{i,p_1} \mathbb{L}^{-1}\mathbf{v}\|_2 \leq \|\mathbf{v}\|_2 \left\|h g_{i,p_1}  l^{-1}\right\|_{\mathcal{M}_2},
\end{align*}
where $h$ is a $k$ by $k$ matrix of functions given by $h = (h_{i,j})_{i,j \in \{1,\dots,k\}}$ and $h_{i,j}(\xi) = \delta_{i,j} |\xi|_2$ for all $i,j \in \{1,\dots,k\}$, where $\delta_{i,j}$ is the usual Kronecker symbol. Now,  since $g_{i,p_1}l^{-1} \in \mathcal{M}_2$ by assumption, it implies that 
\begin{align*}
    \left(g_{i,p_1}l^{-1}\right)_{s,n} \in L^2(\R^m)
\end{align*}
for all $s,n \in \{1, \dots, k\}$. Moreover, under Assumptions \ref{ass:A(1)} and \ref{ass : LinvG in L1}, $\left(g_{i,p_1}l^{-1}\right)_{s,n}$ is a rational function. This implies that  
\begin{align*}
    \left(g_{i,p_1}l^{-1}\right)_{s,n}(\xi) = \mathcal{O}\left(\frac{1}{1+|\xi|_2}\right).
\end{align*}
Consequently, this implies that 
\begin{align*}
    \sup_{\xi \in \R^m} |\xi|_2 \left|\left(g_{i,p_1}l^{-1}\right)_{s,n}(\xi)\right| < \infty
\end{align*}
for all $s,n \in \{1, \dots, k\}$ and therefore, $\left\|h g_{i,p_1}  l^{-1}\right\|_{\mathcal{M}_2} < \infty.$  By equivalence of norms this implies that $\mathbb{G}_{i,p_1}\mathbf{u}\mathbb{G}_{i,p_2}\mathbf{u} \in \left(H^{1}(\mathbb{R}^m)\right)^k $ for all $\mathbf{u} \in \mathcal{H}$,  all $i \in \{2, \dots, N_\mathbb{G}\}$ and  all $p_1, p_2 \in \{1, \dots, i\}$.
\end{proof}
Using Lemma \ref{Banach algebra}, we obtain that the operator $\mathbb{G}$ is smooth from $\mathcal{H}$ to $L^2$. This implies that the zero finding problem $\mathbb{F}(\mathbf{u}) =0$ is well defined on $\mathcal{H}.$ 
Moreover, similarly to \cite{unbounded_domain_cadiot}, one can prove, under Assumptions \ref{ass:A(1)} and \ref{ass : LinvG in L1}, that zeros to $\mathbb{F}$ in $\mathcal{H}$ are actually smooth solutions to \eqref{eq : f(u)=0 on S} (assuming that $\psi$ is also smooth). Consequently, we prove in the next proposition that one can study \eqref{eq : f(u)=0 on S} on $\mathcal{H}$ and equivalently obtain classical solutions a posteriori. 
\begin{prop}\label{prop : regularity of the solution general}
Let $\mathbf{u} \in \mathcal{H}$ such that $\mathbf{u}$ solves \eqref{eq : f(u)=0 on S} and suppose $\psi \in (H^\infty(\mathbb{R}^m))^k  $. Then $\mathbf{u} \in (H^\infty(\mathbb{R}^m))^k $, specifically $\mathbf{u}$ is infinitely differentiable.
\end{prop}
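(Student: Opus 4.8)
The plan is to run an elliptic bootstrap on the Fourier side. Rewriting \eqref{eq : f(u)=0 on S} as $\mathbb{L}\mathbf{u}=\psi-\mathbb{G}(\mathbf{u})$ and using $\psi\in(H^\infty(\mathbb{R}^m))^k$, the regularity of $\mathbf{u}$ is governed by that of $\mathbb{G}(\mathbf{u})$ together with the smoothing of $\mathbb{L}^{-1}$, and Lemma \ref{Banach algebra} already supplies the first rung of the ladder, $\mathbb{G}(\mathbf{u})\in(H^1(\mathbb{R}^m))^k$. To iterate this gain, I would introduce, for $n\in\mathbb{N}$, the spaces $\mathcal{H}^{(n)}\bydef\{\mathbf{w}\in L^2:\ \mathbb{L}\mathbf{w}\in(H^n(\mathbb{R}^m))^k\}$, so that $\mathcal{H}^{(0)}=\mathcal{H}$; since $\mathbb{L}:\mathcal{H}\to L^2$ is an isometric isomorphism, $\mathbb{L}^{-1}$ is bounded on $L^2$, hence its symbol $l^{-1}$ is bounded on $\mathbb{R}^m$, and therefore $\mathcal{H}^{(n)}\hookrightarrow(H^n(\mathbb{R}^m))^k$ continuously. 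It then suffices to prove, by induction on $n$, that $\mathbf{u}\in\mathcal{H}^{(n)}$: the case $n=0$ is the hypothesis $\mathbf{u}\in\mathcal{H}$, and for the inductive step I only need that $\mathbf{u}\in\mathcal{H}^{(n)}$ implies $\mathbb{G}(\mathbf{u})\in(H^{n+1}(\mathbb{R}^m))^k$, for then $\mathbb{L}\mathbf{u}=\psi-\mathbb{G}(\mathbf{u})\in(H^{n+1}(\mathbb{R}^m))^k$, i.e. $\mathbf{u}\in\mathcal{H}^{(n+1)}$. Carrying this out yields $\mathbf{u}\in\bigcap_{n\ge 0}(H^n(\mathbb{R}^m))^k=(H^\infty(\mathbb{R}^m))^k$, whence $\mathbf{u}\in C^\infty(\mathbb{R}^m)^k$ by the Sobolev embedding $H^s(\mathbb{R}^m)\hookrightarrow C^j(\mathbb{R}^m)$ for $s>j+\tfrac{m}{2}$.

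The core step is the implication ``$\mathbf{u}\in\mathcal{H}^{(n)}\Rightarrow\mathbb{G}(\mathbf{u})\in(H^{n+1}(\mathbb{R}^m))^k$'', which I would establish by repeating the proof of Lemma \ref{Banach algebra} with $n$ additional derivatives. Writing $\mathbf{v}\bydef\mathbb{L}\mathbf{u}\in(H^n(\mathbb{R}^m))^k$, every factor $\mathbb{G}_{i,p}\mathbf{u}=\mathbb{G}_{i,p}\mathbb{L}^{-1}\mathbf{v}$ has Fourier symbol $g_{i,p}l^{-1}$, whose entries are, under Assumptions \ref{ass:A(1)} and \ref{ass : LinvG in L1}, rational functions lying in $L^2(\mathbb{R}^m)$ and hence decaying strictly faster than $|\xi|_2^{-m/2}$; exactly as in the proof of Lemma \ref{Banach algebra} (using Plancherel and $\|f\|_\infty\le\|\hat{f}\|_1$), this decay shows that $\mathbb{G}_{i,p}\mathbb{L}^{-1}$ maps $(H^n(\mathbb{R}^m))^k$ boundedly into $(H^{n+1}(\mathbb{R}^m))^k\cap(W^{n,\infty}(\mathbb{R}^m))^k$. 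Expanding $\mathbb{G}_i(\mathbf{u})=(\mathbb{G}_{i,1}\mathbb{L}^{-1}\mathbf{v})\cdots(\mathbb{G}_{i,i}\mathbb{L}^{-1}\mathbf{v})$ via the Leibniz rule, and in each resulting term placing the most-differentiated factor in $L^2$ (it carries at most $n+1$ derivatives, hence lies in $H^{n+1}$) while leaving the remaining factors in $L^\infty$ (each now carries at most $n$ derivatives, hence lies in $W^{n,\infty}$), shows that every term lies in $L^2$; thus $\mathbb{G}_i(\mathbf{u})\in(H^{n+1}(\mathbb{R}^m))^k$, and summing over $i\in\{2,\dots,N_{\mathbb{G}}\}$ gives $\mathbb{G}(\mathbf{u})\in(H^{n+1}(\mathbb{R}^m))^k$. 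The case $n=0$ recovers Lemma \ref{Banach algebra} itself.

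The step I expect to be the main obstacle is ensuring that each pass through this loop genuinely \emph{gains} a Sobolev order rather than merely reproducing the regularity already available: this hinges on the fact that the hypothesis $g_{i,p}l^{-1}\in\mathcal{M}_2$ (square-integrability, not just boundedness) forces genuine polynomial decay of the symbols $g_{i,p}l^{-1}$, so that $\mathbb{G}_{i,p}\mathbb{L}^{-1}$ is actually regularizing, together with the bookkeeping that when the derivatives are distributed across the factors of $\mathbb{G}_i$ by Leibniz, no factor other than the most-differentiated one ever requires more than the $W^{n,\infty}$-control just obtained. Once $\mathbf{u}\in(H^\infty(\mathbb{R}^m))^k$ has been reached, the conclusion that $\mathbf{u}$ is infinitely differentiable is immediate.
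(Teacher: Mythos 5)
Your proposal is correct and takes essentially the same approach as the paper: the paper's proof simply defers to Lemma \ref{Banach algebra} together with the bootstrapping argument of Proposition 2.5 in \cite{unbounded_domain_cadiot}, and your argument is exactly that bootstrap written out in full, with the per-step gain of one Sobolev order coming, as you correctly identify, from the $O\bigl(1/(1+|\xi|_2)\bigr)$ decay of the rational symbols $g_{i,p}l^{-1}$ enforced by the $\mathcal{M}_2$ hypothesis in Assumption \ref{ass : LinvG in L1}. The one dependency you lean on — boundedness of $l^{-1}$ on $\mathbb{R}^m$, which gives $\mathcal{H}^{(n)}\hookrightarrow (H^n(\mathbb{R}^m))^k$ — is the same implicit hypothesis the paper already uses to regard $\mathcal{H}$ as a subspace of $L^2$, so there is no gap relative to what the paper assumes.
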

\begin{proof}
The proof can easily be obtained combining Lemma \ref{Banach algebra} and  the bootstrapping argument presented in Proposition 2.5 in \cite{unbounded_domain_cadiot}.
\end{proof}

Notice that Assumptions \ref{ass:A(1)} and \ref{ass : LinvG in L1} provide a natural set-up to study localized solutions in systems of PDEs defined on $\R^m$. In particular, they provide the fundamental results which are required to extend the analysis derived in \cite{unbounded_domain_cadiot}. We now illustrate this set-up by studying the Gray-Scott system \eqref{eq : gray_scott cov}. In particular, in the rest of this paper we fix $k=2$ and $m=2$. Moreover, we show that \eqref{eq : gray_scott cov} satisfies Assumptions \ref{ass:A(1)} and \ref{ass : LinvG in L1}, and we provide explicit computations of the constants $\kappa_{i}$ introduced in Lemma \ref{Banach algebra}.

\subsection{The Gray-Scott System of Equations}\label{ssec : illustration gray scott}
 Studying localized patterns in the Gray-Scott system \eqref{eq : gray_scott cov}, and using the notations derived above, we look for $\mathbf{u} = (u_1,u_2)$ such that
\begin{align}\label{eq : zero finding problem original}
    \mathbb{F}(\mathbf{u}) \bydef \mathbb{L}\mathbf{u} + \mathbb{G}(\mathbf{u}) = 0
\end{align}
where $\mathbb{L}$ and $\mathbb{G}$ are defined in \eqref{definition_of_L} and \eqref{definition_of_G} respectively. We further define
\begin{align*}
    \mathbb{G}_2(\mathbf{u}) \bydef  \begin{bmatrix}
         u_1^2 \\ 0
    \end{bmatrix}
    \text{ and } ~
    \mathbb{G}_3(\mathbf{u}) \bydef \begin{bmatrix}
         (u_2 -\lambda_1 u_1)u_1^2 \\ 0
    \end{bmatrix}
\end{align*}
and observe that $\mathbb{G}(\mathbf{u}) = \mathbb{G}_2(\mathbf{u}) + \mathbb{G}_3(\mathbf{u})$.
In particular, defining $\mathbb{G}_{2,1} =  \mathbb{G}_{2,2} = \mathbb{G}_{3,2} = \mathbb{G}_{3,3} = \begin{bmatrix}
         1 & 0 \\
         0 & 0
    \end{bmatrix}$
    and $\mathbb{G}_{3,1} = \begin{bmatrix}
         -\lambda_1 & 1 \\
         0 & 0
    \end{bmatrix}$
we obtain that $\mathbb{G}_2(\mathbf{u}) =\mathbb{G}_{2,1}\mathbf{u}  \mathbb{G}_{2,2}\mathbf{u}  $ and that $\mathbb{G}_3(\mathbf{u}) =\mathbb{G}_{3,1}\mathbf{u}  \mathbb{G}_{3,2}\mathbf{u} \mathbb{G}_{3,3}\mathbf{u}$. Now that we introduced the notations of Section \ref{sec : general set up system of PDEs} to describe \eqref{eq : gray_scott cov}, we verify that Assumptions \ref{ass:A(1)} and \ref{ass : LinvG in L1} are satisfied.

To begin with, notice that $\mathcal{F}\left(\mathbb{L}\mathbf{u}\right) = l \hat{\mathbf{u}}$, where 
\begin{equation}\label{definition_of_l}
    l(\xi) \bydef \begin{bmatrix}
        l_{11}(\xi) &  0 \\
        l_{21}(\xi) & l_{22}(\xi)
    \end{bmatrix} \bydef \begin{bmatrix}
        -\lambda_1 |2\pi\xi|^2 - 1 & 0 \\
        \lambda_2 \lambda_1 - 1 & -|2\pi\xi|^2 - \lambda_2
    \end{bmatrix}
\end{equation}
for all $\xi \in \mathbb{R}^2$. In particular, $l(\xi)$ is invertible for all $\xi \in \R^2$ and
\begin{equation}\label{l_inverse}
    l^{-1}(\xi) = \begin{bmatrix}
        \frac{1}{l_{11}(\xi)} & 0 \\
        -\frac{l_{21}(\xi)}{l_{11}(\xi)l_{22}(\xi)} & \frac{1}{l_{22}(\xi)}
    \end{bmatrix}.
\end{equation}
Moreover, we readily have $|l^{-1}(\xi)| = \frac{1}{l_{11}(\xi)l_{22}(\xi)} \geq \frac{1}{\lambda_2} >0$ for all $\xi \in \R^2.$ This implies that Assumption \ref{ass:A(1)} is satisfied.

 Now, observe that for $(i,j) \in \{(2,1),(2,2),(3,2),(3,3)\}$, we have
\begin{align}\label{eq : gik linv for all ik}
    g_{i,j}l^{-1} &= \begin{bmatrix}
        \frac{1}{l_{11}} & 0 \\
        0 & 0
    \end{bmatrix}.
\end{align}
In particular, since $\frac{1}{l_{11}} \in L^2(\R^2)\bigcap L^\infty(\R^2)$, we obtain that $ g_{i,j}l^{-1} \in \mathcal{M}_1 \cap \mathcal{M}_2$ for all $(i,j) \in \{(2,1),(2,2),(3,2),(3,3)\}$.
Moreover, we have
\begin{align}\label{eq : g13 equation}
    g_{1,3}l^{-1} &= \begin{bmatrix}
        -\frac{\lambda_1}{l_{11}} - \frac{l_{21}}{l_{11}l_{22}} & \frac{1}{l_{22}} \\
        0 & 0
    \end{bmatrix}.
\end{align}
Similarly as above, since $\frac{1}{l_{11}}, \frac{1}{l_{22}} \in L^2(\R^2)\bigcap L^\infty(\R^2)$ and $l_{21} \in L^\infty(\R^2)$, we obtain that $ g_{1,3}l^{-1} \in \mathcal{M}_1 \cap \mathcal{M}_2$. This implies that Assumption \ref{ass : LinvG in L1} is satisfied.

Since both Assumption \ref{ass:A(1)} and \ref{ass : LinvG in L1} are satisfied, we denote $\mathcal{H}$ the Hilbert space introduced  in \eqref{def : Hilbert space Hl} for the specific operator $\mathbb{L}$ given in \eqref{definition_of_L}. Moreover, Assumption \ref{ass : LinvG in L1} provides that $\mathbb{G}$ is well-defined and smooth on $\mathcal{H}$, which provides the well-definedness of the zero finding problem $\mathbb{F}(\mathbf{u})=0$ in $\mathcal{H}$. In addition, Lemma \ref{Banach algebra} is applicable and we can compute explicit upper bounds for the operator norms of $\mathbb{G}_2$ and $\mathbb{G}_3$. This is achieved in the next lemma.
\begin{corollary}\label{cor : banach algebra1}
Let $\kappa_2, \kappa_3$ be non-negative constants defined as
\begin{align}\label{def : definition kappa2 kappa3}
    \kappa_2 \bydef  \frac{1}{2\sqrt{\lambda_1 \pi}} ~~ \text{ and } ~~
    \kappa_3 \bydef \frac{\sqrt{2}}{4\pi}\min\left\{\frac{1}{ \lambda_2\lambda_1}, ~ \frac{1}{\sqrt{\lambda_2 \lambda_1}} \right\} 
\end{align}
then
    \begin{align}\label{eq : inequality given by the kappas}
        \|\mathbb{G}_2(\mathbf{u})\|_2 \leq  \kappa_2 \|\mathbf{u}\|_{\mathcal{H}}^2 ~~\text{ and }~~
         \|\mathbb{G}_3(\mathbf{u})\|_2 \leq \kappa_3 \|\mathbf{u}\|_{\mathcal{H}}^3, ~~\text{ for all } \mathbf{u} \in \mathcal{H}.
    \end{align}
\end{corollary}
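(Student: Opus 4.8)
The statement is a direct specialization of Lemma \ref{Banach algebra} to $i=2$ and $i=3$, so the only work is to evaluate the constants appearing in \eqref{def : definition of kappai banach algebra}, i.e. to compute the $\mathcal{M}_1$- and $\mathcal{M}_2$-norms of the matrices $g_{i,j}l^{-1}$ already exhibited in \eqref{eq : gik linv for all ik} and \eqref{eq : g13 equation}. Each of these matrices has a single nonzero row, so both norms collapse to scalar quantities: the $\mathcal{M}_1$-norm is the supremum over $\xi\in\R^2$ of the Euclidean operator norm of that row, and the $\mathcal{M}_2$-norm is the $\ell^2$-combination of the $L^2(\R^2)$-norms of the entries of that row.

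The one computational identity I would isolate first is $\int_{\R^2}(a|\xi|^2+b)^{-2}\,d\xi = \frac{\pi}{ab}$ for $a,b>0$ (pass to polar coordinates and substitute $u=ar^2+b$). Applied to $\frac{1}{l_{11}}$, with $l_{11}(\xi) = -(\lambda_1|2\pi\xi|^2+1)$, this gives $\big\|\frac{1}{l_{11}}\big\|_{L^2(\R^2)} = \frac{1}{2\sqrt{\pi\lambda_1}}$, while $\sup_{\xi}\big|\frac{1}{l_{11}(\xi)}\big| = 1$ (attained at $\xi=0$). Since for $(i,j)\in\{(2,1),(2,2),(3,2),(3,3)\}$ the matrix $g_{i,j}l^{-1}$ has $\frac{1}{l_{11}}$ as its only nonzero entry, we get $\|g_{i,j}l^{-1}\|_{\mathcal{M}_1}=1$ and $\|g_{i,j}l^{-1}\|_{\mathcal{M}_2}=\frac{1}{2\sqrt{\pi\lambda_1}}$. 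Feeding $i=2$ into \eqref{def : definition of kappai banach algebra}, where the two factors coincide so the minimum over $p$ is immaterial, produces $\kappa_2 = 1\cdot\frac{1}{2\sqrt{\pi\lambda_1}}=\frac{1}{2\sqrt{\lambda_1\pi}}$, which is the left-hand inequality in \eqref{eq : inequality given by the kappas}.

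For $\mathbb{G}_3$ the only extra ingredient is the matrix $g_{3,1}l^{-1}$ from \eqref{eq : g13 equation}. Here I would first perform the algebraic simplification of its $(1,1)$-entry, where the numerator collapses because the $\lambda_1\lambda_2$ contributions cancel: $-\frac{\lambda_1}{l_{11}}-\frac{l_{21}}{l_{11}l_{22}} = \frac{\lambda_1(|2\pi\xi|^2+\lambda_2)-(\lambda_2\lambda_1-1)}{(\lambda_1|2\pi\xi|^2+1)(|2\pi\xi|^2+\lambda_2)} = \frac{1}{|2\pi\xi|^2+\lambda_2}$; combined with $\frac{1}{l_{22}(\xi)}=-\frac{1}{|2\pi\xi|^2+\lambda_2}$ this shows that $g_{3,1}l^{-1}$ equals $\frac{1}{|2\pi\xi|^2+\lambda_2}$ times the constant matrix whose first row is $(1,-1)$ and whose second row vanishes. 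That constant matrix has Euclidean operator norm $\sqrt2$, so $\|g_{3,1}l^{-1}\|_{\mathcal{M}_1}=\sqrt2\,\sup_\xi\frac{1}{|2\pi\xi|^2+\lambda_2}=\frac{\sqrt2}{\lambda_2}$, while $\|g_{3,1}l^{-1}\|_{\mathcal{M}_2}=\sqrt2\,\big\|\frac{1}{|2\pi\xi|^2+\lambda_2}\big\|_{L^2(\R^2)}=\frac{1}{\sqrt{2\pi\lambda_2}}$ by the integral identity with $a=4\pi^2$, $b=\lambda_2$. It then remains to evaluate the minimum over $p\in\{1,2,3\}$ in \eqref{def : definition of kappai banach algebra}: the choice $p=1$ yields $\frac{\sqrt2}{\lambda_2}\cdot\big(\frac{1}{2\sqrt{\pi\lambda_1}}\big)^2=\frac{\sqrt2}{4\pi\lambda_1\lambda_2}$, whereas $p=2$ (equivalently $p=3$) yields $\frac{1}{\sqrt{2\pi\lambda_2}}\cdot\frac{1}{2\sqrt{\pi\lambda_1}}=\frac{\sqrt2}{4\pi\sqrt{\lambda_1\lambda_2}}$, and retaining the smaller of the two reproduces exactly $\kappa_3=\frac{\sqrt2}{4\pi}\min\{\frac{1}{\lambda_1\lambda_2},\frac{1}{\sqrt{\lambda_1\lambda_2}}\}$, which is the right-hand inequality in \eqref{eq : inequality given by the kappas}. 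There is no substantive obstacle here; the only points needing attention are the cancellation in the $(1,1)$-entry of $g_{3,1}l^{-1}$ and keeping careful track of the $2\pi$ factors coming from the Fourier normalization when computing the $L^2$ integrals.
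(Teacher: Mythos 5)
Your proof is correct and follows essentially the same route as the paper's: both invoke Lemma~\ref{Banach algebra}, reduce to the $\mathcal{M}_1$- and $\mathcal{M}_2$-norms of $g_{i,j}l^{-1}$, use the cancellation $-\frac{\lambda_1}{l_{11}}-\frac{l_{21}}{l_{11}l_{22}}=-\frac{1}{l_{22}}$ (the paper states this as the identity \eqref{eq : equality of the lik}), and evaluate the same radial integral to get $\left\|\frac{1}{l_{11}}\right\|_2 = \frac{1}{2\sqrt{\pi\lambda_1}}$ and $\left\|\frac{1}{l_{22}}\right\|_2 = \frac{1}{2\sqrt{\pi\lambda_2}}$. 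Your factorization of $g_{3,1}l^{-1}$ as $\frac{1}{|2\pi\xi|^2+\lambda_2}$ times a fixed matrix with operator norm $\sqrt2$ is a slightly cleaner way to organize the $\mathcal{M}_1$-computation, but it is the same calculation in substance.
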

\begin{proof}
Using \eqref{eq : gik linv for all ik}, straightforward computations lead to 
\begin{align}
    \|g_{i,k}l^{-1}\|_{\mathcal{M}_1} = \sup_{\xi \in \R^2} \frac{1}{|l_{11}(\xi)|} = 1 \text{ and } \|g_{i,k}l^{-1}\|_{\mathcal{M}_2} = \left\|\frac{1}{l_{11}}\right\|_2
\end{align}
for  all $(i,k)  \in \left\{(2,1) ; ~(2,2); ~(3,1) ; ~(3,3) \right\}$. Now, notice that 
\begin{align}\label{eq : equality of the lik}
    \frac{\lambda_1}{l_{11}(\xi)} + \frac{l_{21}(\xi)}{l_{11}(\xi)l_{22}(\xi)} = \frac{1}{l_{22}(\xi)},
\end{align} for all $\xi \in \R^2$. Consequently, using \eqref{eq : g13 equation}, we get
\begin{align}
    \|g_{1,3}l^{-1}\|_{\mathcal{M}_1} &= \sup_{\xi \in \R^2}\sup\limits_{\substack{x \in \R^2\\ |x|_2=1}} \left| -\frac{\lambda_1}{l_{11}(\xi)}x_1 - \frac{l_{21}(\xi)}{l_{11}(\xi)l_{22}(\xi)}x_1 + \frac{x_2}{l_{22}(\xi)}\right|_2 \\
 &= \sup_{\xi \in \R^2}  \frac{1}{l_{22}(\xi)} \sup\limits_{\substack{x \in \R^2\\ |x|_2=1}} \left| x_2 - x_1 \right|_2 =  \frac{\sqrt{2}}{\lambda_2}.
\end{align}
Moreover, using \eqref{eq : equality of the lik}, we get
\begin{align*}
    \|g_{1,3}l^{-1}\|_{\mathcal{M}_2} = \sqrt{2}\left\|\frac{1}{l_{22}}\right\|_2.
\end{align*}
Consequently, using Lemma \ref{Banach algebra}, we obtain
\begin{align}
    \kappa_2 = \left\|\frac{1}{l_{11}}\right\|_{2} ~~ \text{ and } ~~ \kappa_3 = \sqrt{2}\min\left\{ \frac{1}{\lambda_2}\left\|\frac{1}{l_{11}}\right\|_{2}^2 , ~ \left\|\frac{1}{l_{11}}\right\|_{2} \left\|\frac{1}{l_{22}}\right\|_{2}\right\}.\label{kappas_with_norms}
\end{align}
In order to compute explicitly $\kappa_2$ and $\kappa_3$, it remains to compute $\left\|\frac{1}{l_{11}}\right\|_2$ and $\left\|\frac{1}{l_{22}}\right\|_2$. 
Observe first that
\begin{align}
    \left\|\frac{1}{l_{11}}\right\|_{2} &= \left\|\frac{1}{\lambda_1 |2\pi \xi|^2 + 1}\right\|_{2} = \frac{1}{\lambda_1} \left\|\frac{1}{|2\pi \xi|^2 + \frac{1}{\lambda_1}}\right\|_{2}.\label{rewrite_1_over_l_22}
\end{align}
Now, both $\frac{1}{l_{11}}$ and $\frac{1}{l_{22}}$ can be computed if we compute
\begin{align}
\left\|\frac{1}{|2\pi\xi|^2 + \eta}\right\|_{2}^2 &= \int_{\mathbb{R}^2} \frac{1}{(|2\pi \xi|^2 + \eta)^2} d\xi
\end{align}
for $\eta = \lambda_2, \frac{1}{\lambda_1}$.
Using polar coordinates and some standard integration techniques (see \cite{gradshteyn2014table} for example) we obtain
\begin{align}
\int_{\mathbb{R}^2} \frac{1}{(|2\pi \xi|^2 + \eta)^2} d\xi= \frac{1}{(2\pi)^3} \int_{0}^{\infty} \frac{r}{(r^2 + \frac{\eta}{(2\pi)^2})^2} dr = \frac{1}{4\eta \pi}.
\end{align}
Applying the above, we conclude the proof using that
\begin{align}\label{eq : norm of l11 and l22}
     \left\|\frac{1}{l_{11}}\right\|_{2} = \frac{1}{2\sqrt{\pi\lambda_1}} ~~ \text{ and } ~~ \left\|\frac{1}{l_{22}}\right\|_{2} = \frac{1}{2\sqrt{\pi\lambda_2}}.
\end{align}
\end{proof}
 The previous corollary is fundamental in our computer-assisted approach since it allows to use the mean value inequality on Banach spaces (as illustrated in Lemma \ref{lem : Bound Z_2}) and obtain explicit bounds while doing so. Before delving into the computer-assisted analysis, we introduce a specific set-up to enforce symmetries for the solutions and quotient out the natural invariants of the problem.

\par Supposing that $\mathbf{u} = (u_1,u_2)$ is a solution of \eqref{eq : gray_scott cov}, then any translation and rotation of the components $u_1$ and $u_2$ provides a new solution. Therefore, in order to isolate a pattern in the set of solutions, we choose to look for specific ones that are invariant under the symmetries of the square. More specifically, let $D_4$ be the dihedral group of order $8$ (i.e. the symmetry group of the square).
Then, for all $g \in D_4$, we enforce that $\mathbf{u}(gx) = \mathbf{u}(x)$ for all $x \in \R^2$. Not only does this quotient out the translation and rotation invariance in the set of solutions, but it also allows to enforce the $D_4$-symmetry.  With this in mind, we introduce the following Hilbert subspace $\mathcal{H}_{D_4} \subset \mathcal{H}$ which takes into account these symmetries \begin{equation}\label{H_l_D_4_definition}
         \mathcal{H}_{D_4} \bydef \{ \mathbf{u} \in \mathcal{H} \ | \ \mathbf{u}(x_1,x_2) = \mathbf{u}(-x_1,x_2) = \mathbf{u}(x_1,-x_2) = \mathbf{u}(x_2,x_1)\}.
    \end{equation}
Similarly, denote $L_{D_4}^2$ the Hilbert subspace of $L^2$ satisfying the $D_4$-symmetry.
In particular we notice that if $\mathbf{u} \in \mathcal{H}_{D_4}$, then $\mathbb{L}\mathbf{u} \in L^2_{D_4}$ and $\mathbb{G}(\mathbf{u}) \in L^2_{D_4}$ ; hence, it makes sense to define $\mathbb{L}$ and $\mathbb{G}$ as operators from $\mathcal{H}_{D_4}$ to $L^2_{D_4}$. 
 Finally, we look for solutions of the following problem
\begin{equation}\label{eq : f(u)=0 on H^l_D_4}
    \mathbb{F}(\mathbf{u}) = 0 ~~ \text{ and } ~~ \mathbf{u} \in \mathcal{H}_{D_4}.
\end{equation}
In particular, using Proposition \ref{prop : regularity of the solution general}, we obtain that solutions to \eqref{eq : f(u)=0 on H^l_D_4} are actually smooth (infinitely differentiable) solutions to \eqref{eq : gray_scott cov}. 

 Notice that what we have developed in this section is applicable for any choice of $\lambda_2,\lambda_1 > 0$. Now, under the assumption  $\lambda_2 \lambda_1 = 1$, the second equation of \eqref{eq : gray_scott cov} can be solved explicitly. Specifically, we obtain
\begin{align}\label{theta_sol_derive}
    \Delta u_2 - \lambda_2 u_2 = 0.
\end{align}
Now,  \eqref{theta_sol_derive} has a unique smooth solution vanishing at infinity which is the zero solution.
 Hence, for the special case $\lambda_2\lambda_1=1$, we can reduce \eqref{eq : gray_scott cov} to a scalar PDE, namely
\begin{equation}\label{gray_scott_reduced}
    0 = \lambda_1 \Delta u_1 + u_1^2 - \lambda_1 u_1^3 - u_1.
\end{equation}
This simplified form of \eqref{eq : gray_scott cov} has attracted a wide variety of studies (see \cite{castelli_gs}, \cite{exact_sol1D_Hale}, \cite{jay_jp_gs}). In particular, the authors of \cite{exact_sol1D_Hale} found multiple exact solutions in 1D. To the best of our knowledge, this is the only parameter regime under which an exact solution is known in \eqref{eq : gray_scott cov}. Note that the analysis we are about to present can be simplified in this case and we will discuss its specifics in Section \ref{sec : bounds_reduced}.
\begin{remark}\label{rem : not D4 symmetric}
By construction, the space $\mathcal{H}_{D_4}$ allows eliminating the translation and rotation invariance of the solutions. If one is interested in proving solutions that are not necessarily $D_4$-symmetric, one may restrict to a different symmetry and/or use the set-up of Section 5 in \cite{unbounded_domain_cadiot}. In this case, extra equations can be added with the same number of unfolding parameters to isolate the solution (see \cite{Lessard2021conserved,Lessard2021conserved_nbody} for illustrations).
\end{remark}
\subsection{Periodic spaces}\label{sec : periodic_spaces}

In this section, we recall some notations introduced in Section 2.4 of \cite{unbounded_domain_cadiot}. In particular, the objects defined in the above sections have a corresponding representation in Fourier series when restricted to a bounded domain. Specifically, we define our bounded domain of interest $$\Omega_0 \bydef (-d,d)^2$$  where $1 \leq d<\infty$.
Moreover, denote $$\tilde{n} = (\tilde{n}_1,\tilde{n}_2)\bydef\left(\frac{n_1}{2d},\frac{n_2}{2d}\right) \in \mathbb{R}^2$$ for all $(n_1,n_2) \in \mathbb{Z}^2$. As for the continuous case, we want to restrict to Fourier series representing $D_4$-symmetric functions. Let $\mathbf{u}_{n_1,n_2} = ((u_1)_{n_1,n_2},(u_2)_{n_1,n_2})$ be the $(n_1,n_2)$ Fourier coefficient of $\mathbf{u}$. In terms of Fourier coefficients, this restriction reads
\begin{align*}
    \mathbf{u}_{n_1,n_2} = \mathbf{u}_{-n_1,n_2} = \mathbf{u}_{n_1,-n_2} = \mathbf{u}_{n_2,n_1} \text{ for all } (n_1,n_2) \in \mathbb{Z}^2.
\end{align*}
In particular, when enforcing the $D_4$-symmetry, we can restrict the indices of Fourier coefficients from $\mathbb{Z}^2$ to the following reduced set
\begin{equation}\label{def : definition of J_redD4}
 J_{\mathrm{red}}(D_4) \bydef \left\{n \in \mathbb{Z}^2 \ | \ 0 \leq n_2 \leq n_1 \right\}.
\end{equation}
Furthermore, each index $n \in J_{\mathrm{red}}(D_4)$ has an associated orbit, denoted $\mathrm{orb}_{D_4}(n)$ (see \cite{gallianalgebra}), defined as 
\begin{align}
    \mathrm{orb}_{D_4}(n) = \{g \cdot n, \ \mathrm{for \ all} \ g \in D_4\}\label{def : orbit_D4}
\end{align}where $\cdot$ denotes the group action. When needed, the orbit can be used to construct the full series indexed on $\mathbb{Z}^2$. We chose this approach as the method presented by the authors of \cite{unbounded_domain_cadiot} is developed for Fourier series on a square and the method of \cite{olivier_radial} is only for radially symmetric solutions. Now, let $(\alpha_n)_{n \in J_{\mathrm{red}}(D_4)}$ be defined by 
\begin{equation}\label{def : alpha_n}
    \alpha_n \bydef \begin{cases}
        1 &\text{ if } n=(0,0)\\
       4 &\text{ if } n_1 = n_2 \text{ and } n_1 \neq 0\\
        4 &\text{ if } n_2 = 0 \text{ and } n_1 \neq 0\\
        8 &\text{ if } n_1\neq n_2  \text{ and } n_1\neq 0, n_2 \neq 0. 
    \end{cases}
\end{equation}
In particular, $\alpha_n$ is the size of $\mathrm{orb}_{D_4}(n)$. That is, $\alpha_n = |\mathrm{orb}_{D_4}(n)|$. Next, let $\ell^p(J_{\mathrm{red}}(D_4))$ denote the following Banach space
\begin{align}
    \ell^p({J_{\mathrm{red}}(D_4)}) \bydef \left\{U = (u_n)_{n \in J_{\mathrm{red}}(D_4)}: ~ \|U\|_p \bydef \left( \sum_{n \in J_{\mathrm{red}}(D_4)} \alpha_n|u_n|^p\right)^\frac{1}{p} < \infty \right\}.
\end{align}
In the above, note that $U$ is a sequence of scalars, and not a sequence of vectors.
In particular, $\ell^2(J_{\mathrm{red}}(D_4))$ is an Hilbert space associated to its natural inner product $(\cdot, \cdot)_2$ given by
\[
(U,V)_2 \bydef \sum_{n \in J_{\mathrm{red}}(D_4)} \alpha_n u_n \overline{v_n}
\]
for all $U = (u_n)_{n \in J_{\mathrm{red}}(D_4)}, V = (v_n)_{n \in J_{\mathrm{red}}(D_4)} \in \ell^2(J_{\mathrm{red}}(D_4)).$
Then, we define the Banach space $\ell^p_{D_4}$  as 
\begin{align}
   \ell^p_{D_4} \bydef \ell^p(J_{\mathrm{red}}(D_4)) \times \ell^p(J_{\mathrm{red}}(D_4)), ~ \text{ with norm } ~ \|\mathbf{U}\|_{p} = (\|U_1\|_{p}^p + \|U_2\|_{p}^p)^{\frac{1}{p}}.
\end{align}
 For a bounded operator $K : \ell^2_{D_4} \to \ell^2_{D_4}$ (resp. $K : \ell^2(J_{\mathrm{red}}(D_4)) \to \ell^2(J_{\mathrm{red}}(D_4))$), $K^*$ denotes the adjoint of $K$ in $\ell^2_{D_4}$ (resp. $\ell^2(J_{\mathrm{red}}(D_4))$). Now, similarly as what is achieved \cite{unbounded_domain_cadiot}, we  define $\gamma~:~L^2_{D_4}(\mathbb{R}^2) \to \ell^2(J_{\mathrm{red}}(D_4))$ as
\begin{align}
    \left(\gamma(u)\right)_n \bydef  \frac{1}{|\om|}\int_\om u(x) e^{-2\pi i \tilde{n}\cdot x}dx\label{single_gamma}
\end{align}
for all $n \in J_{\mathrm{red}}(D_4)$ and all $u \in L^2_{D_4}(\R^2)$. Similarly, we define $\gamma^\dagger : \ell^2(J_{\mathrm{red}}(D_4)) \to L^2_{D_4}(\mathbb{R}^2)$ as 
\begin{align}
    \gamma^\dagger\left(U\right)(x) \bydef \cha(x) \sum_{n \in J_{\mathrm{red}}(D_4)} u_n \sum_{m \in \mathrm{orb}_{D_4}(n)} e^{2\pi i \tilde{m} \cdot x}\label{single_gamma_dagger}
\end{align}
for all $x = (x_1,x_2) \in \mathbb{R}^2$ and all $U =\left(u_n\right)_{n \in J_{\mathrm{red}}(D_4)} \in \ell^2(J_{\mathrm{red}}(D_4))$, where $\cha$ is the characteristic function on $\om$. We now introduce a similar notation for functions in the product space $L^2_{D_4}$. In particular, define $\bgam : L^2_{D_4} \to \ell^2_{D_4}$ as
\begin{align}
\bgam(\mathbf{u}) = (\gamma(u_1),\gamma(u_2))\label{def : gamma}
\end{align}
and 
$\bgam^\dagger : \ell^2_{D_4} \to L^2_{D_4}$ as
\begin{align}
\bgam^\dagger(\mathbf{U}) = (\gamma^\dagger(U_1),\gamma^\dagger(U_2)).\label{def : gamma dagger}
\end{align}
More specifically, given $\mathbf{u} \in L^2_{D_4}$, $\bgam(\mathbf{u})$ represents the Fourier coefficients indexed on $J_{\mathrm{red}}(D_4)$ of the restriction of $\mathbf{u}$ on $\om$. Conversely, given a sequence $\mathbf{U}\in \ell^2_{D_4}$, $\bgam^\dagger\left(\mathbf{U}\right)$  is the function representation of $\mathbf{U}$ in $L^2_{D_4}.$ In particular, notice that $\bgam^\dagger\left(\mathbf{U}\right)(x) =0$ for all $x \notin \om.$  

Then, recall similar notations from \cite{unbounded_domain_cadiot}
\begin{align}
    L^2_{D_4,\om} \bydef \left\{\mathbf{u} \in L^2_{D_4} : \text{supp}(\mathbf{u}) \subset \overline{\om} \right\}~~ \text{ and } ~~
   \mathcal{H}_{D_4,\om} \bydef \left\{\mathbf{u} \in \mathcal{H}_{D_4} : \text{supp}(\mathbf{u}) \subset \overline{\om} \right\}.
\end{align}
Moreover, define $\mathcal{B}(L^2_{D_4})$ (respectively $\mathcal{B}(\ell^2_{D_4})$) as the space of bounded linear operators on $L^2_{D_4}$ (respectively $\ell^2_{D_4}$) and denote by $\mathcal{B}_\om(L^2_{D_4})$ the following subspace of $\mathcal{B}(L^2_{D_4})$
\begin{equation}\label{def : Bomega}
    \mathcal{B}_\om(L^2_{D_4}) \bydef \{\mathbb{K} \in \mathcal{B}(L^2_{D_4}) :  \mathbb{K} = \cha \mathbb{K} \cha\}.
\end{equation}
Finally, define $\bGam : \mathcal{B}(L^2_{D_4}) \to \mathcal{B}(\ell^2_{D_4})$ and $\bGam^\dagger : \mathcal{B}(\ell^2_{D_4}) \to \mathcal{B}(L^2_{D_4})$ as follows
\begin{equation}\label{def : Gamma and Gamma dagger}
    \bGam(\mathbb{K}) \bydef \bgam \mathbb{K} \bgam^\dagger ~~ \text{ and } ~~  \bGam^\dagger(K) \bydef \bgam^\dagger {K} \bgam 
\end{equation}
for all $\mathbb{K} \in \mathcal{B}(L^2_{D_4})$ and all $K \in \mathcal{B}(\ell^2_{D_4}).$

The maps defined above in \eqref{def : gamma}, \eqref{def : gamma dagger} and \eqref{def : Gamma and Gamma dagger} are fundamental in our analysis as they allow to pass from the problem on $\mathbb{R}^2$ to the one in $\ell^2_{D_4}$ and vice-versa. Furthermore, the following lemma, provides that this passage is an isometric isomorphism when restricted to the relevant spaces.

\begin{lemma}\label{lem : gamma and Gamma properties}
    The map $\sqrt{|\om|} \bgam : L^2_{D_4,\om} \to \ell^2_{D_4}$ (respectively $\bGam : \mathcal{B}_\om(L^2_{D_4}) \to \mathcal{B}(\ell^2_{D_4})$) is an isometric isomorphism whose inverse is given by $\frac{1}{\sqrt{|\om|}} \bgam^\dagger : \ell^2_{D_4} \to L^2_{D_4,\om}$ (respectively $\bGam^\dagger :   \mathcal{B}(\ell^2_{D_4}) \to \mathcal{B}_\om(L^2_{D_4})$). In particular,
    \begin{align}\label{eq : parseval's identity}
        \|\mathbf{u}\|_2 = \sqrt{\om}\|\mathbf{U}\|_2 \text{ and } \|\mathbb{K}\|_2 = \|K\|_2
    \end{align}
    for all $\mathbf{u} \in L^2_{D_4,\om}$ and $\mathbb{K} \in \mathcal{B}_\om(L^2_{D_4})$ where $\mathbf{U} \bydef \bgam(\mathbf{u})$ and $K \bydef \bGam(\mathbb{K})$.
\end{lemma}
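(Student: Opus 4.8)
\textbf{Proof plan for Lemma \ref{lem : gamma and Gamma properties}.}

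The plan is to establish the isometry property first, then deduce bijectivity, and finally handle the operator-level statement by a direct conjugation argument. First I would verify that $\gamma^\dagger$ maps into $L^2_{D_4,\om}$: by construction \eqref{single_gamma_dagger}, $\gamma^\dagger(U)$ is supported on $\overline{\om}$ and the symmetrization over $\mathrm{orb}_{D_4}(n)$ guarantees $D_4$-invariance, so $\bgam^\dagger$ indeed lands in $L^2_{D_4,\om}$; conversely $\bgam$ sends $L^2_{D_4,\om}$ into $\ell^2_{D_4}$ since a $D_4$-symmetric function has Fourier coefficients satisfying the orbit relations, so they are determined by their values on $J_{\mathrm{red}}(D_4)$. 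The core computation is Parseval: for $u \in L^2_{D_4,\om}$, expanding $u$ in the orthonormal Fourier basis $\{\frac{1}{\sqrt{|\om|}} e^{2\pi i \tilde m \cdot x}\}_{m \in \mathbb{Z}^2}$ on $\om$ gives $\|u\|_2^2 = |\om| \sum_{m \in \mathbb{Z}^2} |c_m|^2$ where $c_m$ is the $m$-th Fourier coefficient. Grouping the sum over $\mathbb{Z}^2$ into orbits and using that $c_m$ is constant on each orbit $\mathrm{orb}_{D_4}(n)$ with $|\mathrm{orb}_{D_4}(n)| = \alpha_n$, this becomes $|\om| \sum_{n \in J_{\mathrm{red}}(D_4)} \alpha_n |(\gamma(u))_n|^2 = |\om| \|\gamma(u)\|_2^2$, which is exactly $\|u\|_2 = \sqrt{|\om|}\|U\|_2$ after taking the square root. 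The product-space statement $\|\mathbf{u}\|_2 = \sqrt{|\om|}\|\mathbf{U}\|_2$ follows componentwise from \eqref{def : gamma} and the definition of the product norms.

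Next I would check that $\frac{1}{\sqrt{|\om|}}\bgam^\dagger$ is the two-sided inverse of $\sqrt{|\om|}\bgam$. The identity $\gamma(\gamma^\dagger(U)) = U$ for $U \in \ell^2(J_{\mathrm{red}}(D_4))$ is a direct orthogonality computation: plugging \eqref{single_gamma_dagger} into \eqref{single_gamma} and using $\frac{1}{|\om|}\int_\om e^{2\pi i (\tilde m - \tilde n)\cdot x} dx = \delta_{m,n}$ isolates the single term $u_n$ from the orbit sum (for $n \in J_{\mathrm{red}}(D_4)$, exactly one $m \in \mathrm{orb}_{D_4}(n)$ equals $n$). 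For the other direction, $\gamma^\dagger(\gamma(u)) = u$ on $\om$ holds because $\gamma^\dagger(\gamma(u))$ reconstructs the full Fourier series of the $D_4$-symmetric function $u$ (the orbit sum in \eqref{single_gamma_dagger} rebuilds all $\mathbb{Z}^2$-indexed coefficients from the reduced ones), and since $u$ is supported in $\overline{\om}$, equality on $\om$ gives equality in $L^2$. Bijectivity plus the isometry from the first step yields that $\sqrt{|\om|}\bgam$ is an isometric isomorphism.

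For the operator statement, given $\mathbb{K} \in \mathcal{B}_\om(L^2_{D_4})$ so that $\mathbb{K} = \cha \mathbb{K} \cha$, I would note that $\mathbb{K}$ restricts to a bounded operator on $L^2_{D_4,\om}$, and $\bGam(\mathbb{K}) = \bgam \mathbb{K} \bgam^\dagger = (\sqrt{|\om|}\bgam) \mathbb{K} (\tfrac{1}{\sqrt{|\om|}}\bgam^\dagger)$ is a conjugation of $\mathbb{K}|_{L^2_{D_4,\om}}$ by the isometric isomorphism just established, hence $\|\bGam(\mathbb{K})\|_2 = \|\mathbb{K}|_{L^2_{D_4,\om}}\|_2 = \|\mathbb{K}\|_2$, the last equality because $\mathbb{K} = \cha\mathbb{K}\cha$ forces its operator norm on all of $L^2_{D_4}$ to be attained on $L^2_{D_4,\om}$ (any input is first multiplied by $\cha$, and the output lies in $L^2_{D_4,\om}$). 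Conversely $\bGam^\dagger(K) = \bgam^\dagger K \bgam$ reverses this conjugation, and one checks $\bGam^\dagger \circ \bGam = \mathrm{id}$ on $\mathcal{B}_\om(L^2_{D_4})$ and $\bGam \circ \bGam^\dagger = \mathrm{id}$ on $\mathcal{B}(\ell^2_{D_4})$ using the function-level inverse identities, and that $\bGam^\dagger(K)$ indeed satisfies $\cha \bGam^\dagger(K)\cha = \bGam^\dagger(K)$ since $\bgam^\dagger$ produces functions supported in $\overline{\om}$ and $\bgam$ only reads values on $\om$. The main obstacle I anticipate is purely bookkeeping: being careful with the weights $\alpha_n$ and the orbit structure so that the grouping of the $\mathbb{Z}^2$ sum into $J_{\mathrm{red}}(D_4)$ orbits is done correctly (in particular handling the diagonal $n_1 = n_2$ and axis $n_2 = 0$ cases where the orbit is smaller), and making sure the $D_4$-symmetry of $\gamma^\dagger(U)$ is genuinely enforced by the orbit-sum construction rather than assumed.
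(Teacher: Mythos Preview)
Your proposal is correct and follows the standard route: Parseval on $\om$ with orbit-grouping to get the isometry, the orthogonality relations to verify the two-sided inverse, and conjugation by the isometric isomorphism for the operator-level statement. The paper itself does not spell out a proof but simply defers to Lemma~3.2 of \cite{unbounded_domain_cadiot}, which carries out precisely this argument in the scalar setting; your write-up is the natural componentwise extension to the product space $L^2_{D_4} = (L^2_{D_4}(\mathbb{R}^2))^2$ and $\ell^2_{D_4} = \ell^2(J_{\mathrm{red}}(D_4))^2$, with the only added bookkeeping being the orbit weights $\alpha_n$, which you have identified correctly.
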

\begin{proof}
The proof is obtained following similar steps as the ones of Lemma 3.2 in \cite{unbounded_domain_cadiot}.
\end{proof}
The above lemma not only provides a one-to-one correspondence between the elements in $L^2_{D_4,\om}$ (respectively $\mathcal{B}_\om(L^2_{D_4})$) and the ones in $\ell^2_{D_4}$ (respectively $\mathcal{B}(\ell^2_{D_4})$) but it also provides an identity on norms. This property is essential in our construction of an approximate inverse  in Section \ref{sec : the operator A}.

Now, we define the Hilbert space $\mathscr{h}$ as 
\begin{align*}
    \mathscr{h} \bydef \left\{ \mathbf{U}  \in \ell^2_{D_4} \text{ such that } \|\mathbf{U}\|_{\mathscr{h}} < \infty \right\}
\end{align*} associated to its inner product $(\cdot,\cdot)_{\mathscr{h}}$ and norm $\|\cdot\|_{\mathscr{h}}$ defined as 
\begin{align*}
    (\mathbf{U},\mathbf{V})_{\mathscr{h}} &\bydef \sum_{n \in J_{\mathrm{red}}(D_4)}  \alpha_n\left(l(\tilde{n})\mathbf{u}_n\right)\cdot \left(l(\tilde{n})\overline{\mathbf{v}_n}\right)\\
    \|\mathbf{U}\|_{\mathscr{h}} &\bydef \sqrt{(\mathbf{U},\mathbf{U})_{\mathscr{h}}}
\end{align*}
for all $\mathbf{U}=(\mathbf{u}_n)_{n \in J_{\mathrm{red}}(D_4)}, \mathbf{V}=(\mathbf{v}_n)_{n \in J_{\mathrm{red}}(D_4)} \in \mathscr{h}$ and $\alpha_n$ is defined as in \eqref{def : alpha_n}. Denote by $L : \mathscr{h} \to \ell^2_{D_4}$ and $G : \mathscr{h} \to \ell^2_{D_4}$ the Fourier coefficients representation of $\mathbb{L}$ and $\mathbb{G}$ respectively. More specifically,
\begin{align*}
    L \bydef \begin{bmatrix}
        L_{11} & 0\\
        L_{21} & L_{22}
    \end{bmatrix} \text{ and } G(\bU) \bydef \begin{bmatrix}
        (U_2 + 1 - \lambda_1 U_1)*U_1*U_1\\
        0
    \end{bmatrix}
\end{align*}
for all $\bU \in \mathscr{h}.$ For all $(i,j) \in \{(1,1),(2,1),(2,2)\}$, 
$L_{ij}$ is an infinite diagonal matrix with coefficients $\left(l_{ij}(\tilde{n})\right)_{n\in J_{\mathrm{red}}(D_4)}$ on its diagonal. Moreover $U*V \bydef \gamma(\gamma^\dagger(U)\gamma^\dagger(V))$ is defined as the discrete convolution (under $D_4$-symmetry). In particular, notice that Young's inequality for convolution is applicable 
\begin{align}
    \|U*V\|_{2} \leq \|U\|_{2} \|V\|_{1}\label{young_inequality}
\end{align}
for all $U \in \ell^2(J_{\mathrm{red}}(D_4)), V \in \ell^1(J_{\mathrm{red}}(D_4))$.
Furthermore, using the definition of $L$, notice that \[\|\mathbf{U}\|_{\mathscr{h}} = \|L\mathbf{U}\|_{2}.\] Finally, we define $F(\mathbf{U}) \bydef L\mathbf{U} + G(\mathbf{U})$ and introduce  
\begin{equation}\label{eq : F(U)=0 in X^l_D_4}
    F(\mathbf{U}) =0 ~~ \text{ and } ~~ \mathbf{U} \in \mathscr{h}
\end{equation}
as the periodic equivalent on $\om$ of \eqref{eq : f(u)=0 on H^l_D_4}.

\section{Setting up the computer-assisted approach}\label{sec : settingup}
The goal of this section is to set-up a computer-assisted approach based on a Newton-Kantorovich argument. We  state the necessary theorem  to perform such an analysis, and construct the related objects to apply it.
\subsection{Radii-Polynomial Theorem}\label{sec : radii_polynomial_theorem}
 Given $\mathbf{u}_0 \in \mathcal{H}_{D_4}$, an approximate solution to \eqref{eq : f(u)=0 on H^l_D_4}, and $\mathbb{A} : L^2_{D_4} \to \mathcal{H}_{D_4}$, an approximate inverse to $D\mathbb{F}(\mathbf{u}_0)$, we want to prove that there exists $r>0$ such that $\mathbb{T} : \overline{B_r(\mathbf{u}_0)} \to \overline{B_r(\mathbf{u}_0)}$ given by
\[
\mathbb{T}(\mathbf{u}) \bydef \mathbf{u} - \mathbb{A}\mathbb{F}(\mathbf{u})
\]
is well-defined and a contraction. In order to determine a possible value for $r>0$ that would provide the contraction, we wish to use a Radii-Polynomial theorem.  In particular, we  build $\mathbb{A} : L_{D_4}^2 \to \mathcal{H}_{D_4}$, $\mathcal{Y}_0, \mathcal{Z}_1 >0$ and  $\mathcal{Z}_2 : (0, \infty) \to [0,\infty)$ in such a way that the hypotheses of the following theorem are satisfied.
\begin{theorem}\label{th: radii polynomial}
Let $\mathbb{A} : L_{D_4}^2 \to \mathcal{H}_{D_4}$ be a bounded linear operator. Moreover, let $\mathcal{Y}_0, \mathcal{Z}_1$ be non-negative constants and let  $\mathcal{Z}_2 : (0, \infty) \to [0,\infty)$ be a non-negative function  such that for all $r>0$
  \begin{align}\label{eq: definition Y0 Z1 Z2}
    \|\mathbb{A}\mathbb{F}(\mathbf{u}_0)\|_{\mathcal{H}} \leq &\mathcal{Y}_0\\
    \|I_d - \mathbb{A}D\mathbb{F}(\mathbf{u}_0)\|_{\mathcal{H}} \leq &\mathcal{Z}_1\\
    \|\mathbb{A}\left({D}\mathbb{F}(\mathbf{u}_0 + \mathbf{h}) - D\mathbb{F}(\mathbf{u}_0)\right)\|_{\mathcal{H}} \leq &\mathcal{Z}_2(r)r, ~~ \text{for all } \mathbf{h} \in B_r(0)
\end{align}  
If there exists $r_0>0$ such that
\begin{equation}\label{condition radii polynomial}
    \frac{1}{2}\mathcal{Z}_2(r_0)r_0^2 - (1-\mathcal{Z}_1)r_0 + \mathcal{Y}_0 <0, \ and \ \mathcal{Z}_1 + \mathcal{Z}_2(r_0)r_0 < 1 
 \end{equation}
then there exists a unique $\tilde{\mathbf{u}} \in \overline{B_{r_0}(\mathbf{u}_0)} \subset \mathcal{H}_{D_4}$ such that $\mathbb{F}(\tilde{\mathbf{u}})=0$, where $B_{r_0}(\mathbf{u}_0)$ is the open ball of radius $r_0$ in $\mathcal{H}_{D_4}$ and centered at $\mathbf{u}_0$. 
\end{theorem}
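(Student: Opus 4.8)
The plan is to recognize this as a standard Newton--Kantorovich / radii-polynomial statement and to deduce it from the Banach fixed point theorem applied to $\mathbb{T}$ on the closed ball $\overline{B_{r_0}(\mathbf{u}_0)}$. First I would introduce the bound function $p(r) \bydef \frac{1}{2}\mathcal{Z}_2(r)r^2 - (1-\mathcal{Z}_1)r + \mathcal{Y}_0$ and record that the hypothesis \eqref{condition radii polynomial} says $p(r_0)<0$ together with $\mathcal{Z}_1 + \mathcal{Z}_2(r_0)r_0 < 1$. The key analytic input is the second-order Taylor/mean value estimate: for $\mathbf{u} \in \overline{B_{r_0}(\mathbf{u}_0)}$, writing $\mathbf{h} = \mathbf{u} - \mathbf{u}_0$, one has
\[
\mathbb{T}(\mathbf{u}) - \mathbf{u}_0 = \big(I_d - \mathbb{A}D\mathbb{F}(\mathbf{u}_0)\big)\mathbf{h} - \mathbb{A}\mathbb{F}(\mathbf{u}_0) - \mathbb{A}\int_0^1 \big(D\mathbb{F}(\mathbf{u}_0 + t\mathbf{h}) - D\mathbb{F}(\mathbf{u}_0)\big)\mathbf{h}\, dt,
\]
which is legitimate since $\mathbb{G}$ (hence $\mathbb{F}$) is smooth from $\mathcal{H}$ to $L^2$ by Lemma \ref{Banach algebra}, and $\mathbb{A}$ is bounded from $L^2_{D_4}$ to $\mathcal{H}_{D_4}$. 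Taking $\|\cdot\|_{\mathcal{H}}$-norms and using the three bounds in \eqref{eq: definition Y0 Z1 Z2} gives $\|\mathbb{T}(\mathbf{u}) - \mathbf{u}_0\|_{\mathcal{H}} \leq \mathcal{Z}_1 r_0 + \mathcal{Y}_0 + \mathcal{Z}_2(r_0) r_0 \cdot r_0 = r_0 + p(r_0) < r_0$, so $\mathbb{T}$ maps $\overline{B_{r_0}(\mathbf{u}_0)}$ into itself.

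Next I would prove the contraction property. For $\mathbf{u}, \mathbf{w} \in \overline{B_{r_0}(\mathbf{u}_0)}$, write $\mathbf{h} = \mathbf{u} - \mathbf{w}$ and use
\[
\mathbb{T}(\mathbf{u}) - \mathbb{T}(\mathbf{w}) = \int_0^1 \big(I_d - \mathbb{A}D\mathbb{F}(\mathbf{w} + t\mathbf{h})\big)\mathbf{h}\, dt = \big(I_d - \mathbb{A}D\mathbb{F}(\mathbf{u}_0)\big)\mathbf{h} - \mathbb{A}\int_0^1\big(D\mathbb{F}(\mathbf{w} + t\mathbf{h}) - D\mathbb{F}(\mathbf{u}_0)\big)\mathbf{h}\, dt.
\]
Since $\mathbf{w} + t\mathbf{h}$ stays in $\overline{B_{r_0}(\mathbf{u}_0)}$ for $t \in [0,1]$ by convexity, the second and third bounds of \eqref{eq: definition Y0 Z1 Z2} yield $\|\mathbb{T}(\mathbf{u}) - \mathbb{T}(\mathbf{w})\|_{\mathcal{H}} \leq \big(\mathcal{Z}_1 + \mathcal{Z}_2(r_0) r_0\big)\|\mathbf{h}\|_{\mathcal{H}}$, and the factor is strictly less than $1$ by the second inequality of \eqref{condition radii polynomial}. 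Here I should be slightly careful that $\mathcal{Z}_2$ is stated as a bound on $\|\mathbb{A}(D\mathbb{F}(\mathbf{u}_0 + \mathbf{h}) - D\mathbb{F}(\mathbf{u}_0))\|$ for $\mathbf{h} \in B_r(0)$, i.e.\ an operator-norm bound uniform over that ball, which is exactly what is needed to control the integrand after pulling the integral out.

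Finally, $\overline{B_{r_0}(\mathbf{u}_0)}$ is a closed, hence complete, subset of the Hilbert space $\mathcal{H}_{D_4}$, so the Banach fixed point theorem gives a unique fixed point $\tilde{\mathbf{u}} \in \overline{B_{r_0}(\mathbf{u}_0)}$ of $\mathbb{T}$; I would then note that any fixed point of $\mathbb{T}$ satisfies $\mathbb{A}\mathbb{F}(\tilde{\mathbf{u}}) = 0$, and argue that $\mathbb{A}$ is injective — or rather that on the relevant range $\mathbb{A}$ composed with $D\mathbb{F}(\mathbf{u}_0)$ is invertible, which follows from $\mathcal{Z}_1 < 1$ via a Neumann series showing $\mathbb{A}D\mathbb{F}(\mathbf{u}_0)$ is invertible on $\mathcal{H}_{D_4}$, hence $\mathbb{A}$ is surjective onto $\mathcal{H}_{D_4}$ and, combined with the structure of $\mathbb{F}$, lets us conclude $\mathbb{F}(\tilde{\mathbf{u}}) = 0$. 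I expect this last injectivity/invertibility bookkeeping to be the only genuinely delicate point; in the scalar case of \cite{unbounded_domain_cadiot} it is handled by the explicit block structure of $\mathbb{A}$, and I would follow that same route, referring forward to the construction of $\mathbb{A}$ in Section \ref{sec : the operator A} for the fact that $\mathbb{A}$ has trivial kernel on $L^2_{D_4}$.
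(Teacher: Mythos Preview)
Your overall strategy --- apply the Banach fixed point theorem to $\mathbb{T}(\mathbf{u}) = \mathbf{u} - \mathbb{A}\mathbb{F}(\mathbf{u})$ on $\overline{B_{r_0}(\mathbf{u}_0)}$, using the three bounds to establish self-mapping and contraction --- is exactly the approach the paper takes (the paper's proof is in fact little more than a citation to \cite{unbounded_domain_cadiot,van2021spontaneous} together with the statement of the two key estimates). Your contraction estimate and your treatment of the injectivity of $\mathbb{A}$ via the Neumann series for $\mathbb{A}D\mathbb{F}(\mathbf{u}_0)$ are both fine.

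There is, however, one concrete arithmetic gap in the self-map estimate. From the integral representation you write, bounding the integrand with $r=r_0$ gives
\[
\|\mathbb{T}(\mathbf{u})-\mathbf{u}_0\|_{\mathcal{H}} \;\le\; \mathcal{Y}_0 + \mathcal{Z}_1 r_0 + \mathcal{Z}_2(r_0)\,r_0^2,
\]
but this is \emph{not} equal to $r_0 + p(r_0)$: in fact $r_0 + p(r_0) = \mathcal{Y}_0 + \mathcal{Z}_1 r_0 + \tfrac{1}{2}\mathcal{Z}_2(r_0)\,r_0^2$, which is smaller by $\tfrac{1}{2}\mathcal{Z}_2(r_0)r_0^2$. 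Thus the hypothesis $p(r_0)<0$ does not by itself force your bound below $r_0$. The missing factor $\tfrac{1}{2}$ is exactly what the condition \eqref{condition radii polynomial} is calibrated to, so you cannot drop it. The fix is to use the sharper choice $r = t\|\mathbf{h}\|$ in the $\mathcal{Z}_2$-bound for each $t\in(0,1]$, obtaining $\|\mathbb{A}(D\mathbb{F}(\mathbf{u}_0+t\mathbf{h})-D\mathbb{F}(\mathbf{u}_0))\|_{\mathcal{H}} \le \mathcal{Z}_2(t\|\mathbf{h}\|)\,t\|\mathbf{h}\|$, and then integrating the extra factor of $t$; this requires (and uses) that $\mathcal{Z}_2$ is nondecreasing, which holds for the concrete $\mathcal{Z}_2$ constructed in Lemma~\ref{lem : Bound Z_2} but is not stated as a hypothesis of the theorem. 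The paper sidesteps this by quoting the estimates $\|\mathbb{T}(\mathbf{u})-\mathbf{u}_0\|_{\mathcal{H}} \le \tfrac{1}{1-\mathcal{Z}_1}\bigl(\tfrac{1}{2}\mathcal{Z}_2(r)r^2+\mathcal{Y}_0\bigr)$ and $\|\mathbb{T}(\mathbf{u})-\mathbb{T}(\mathbf{w})\|_{\mathcal{H}} \le \tfrac{\mathcal{Z}_2(r)r}{1-\mathcal{Z}_1}\|\mathbf{u}-\mathbf{w}\|_{\mathcal{H}}$ directly from the cited references, where the $\tfrac{1}{2}$ is already built in.
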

\begin{proof}
The proof of the above theorem relies on the application of  the Banach fixed point theorem to $\mathbb{T}$. Indeed, we want to show that $\mathbb{T}$ maps $ \overline{B_{r_0}(\mathbf{u}_0)}$ to itself and is contracting, for some $r_0$ to determine. The technical details for the derivation of condition \eqref{condition radii polynomial}  and the statement of the theorem
can be found in \cite{unbounded_domain_cadiot, van2021spontaneous}. In particular, the aforementioned work provide that 
\begin{align*}
    \|\mathbb{T}(\mathbf{u}) - \mathbf{u}_0\|_{\mathcal{H}} \leq \frac{1}{1- \mathcal{Z}_1} \left( \frac{1}{2}\mathcal{Z}_2(r)r^2  + \mathcal{Y}_0 \right)  \text{ and } \|\mathbb{T}(\mathbf{u}) - \mathbb{T}(\mathbf{w})\|_{\mathcal{H}} \leq \frac{\mathcal{Z}_2(r)r}{1-\mathcal{Z}_1} \|\mathbf{u} - \mathbf{w}\|_{\mathcal{H}}
\end{align*}
for all $r>0$ and all $\mathbf{u}, \mathbf{w} \in  \overline{B_{r}(\mathbf{u}_0)}$. If \eqref{condition radii polynomial} is satisfied for some $r_0>0$, then we obtain that 
\begin{align*}
    \|\mathbb{T}(\mathbf{u}) - \mathbf{u}_0\|_{\mathcal{H}} \leq r_0  \text{ and } \frac{\mathcal{Z}_2(r_0)r_0}{1-\mathcal{Z}_1} < 1
\end{align*}
for all  $\mathbf{u}, \mathbf{w} \in  \overline{B_{r_0}(\mathbf{u}_0)}$, implying that $\mathbb{T} : \overline{B_r(\mathbf{u}_0)} \to \overline{B_r(\mathbf{u}_0)}$
is well-defined and a contraction.
\end{proof}
In order to apply Theorem \ref{th: radii polynomial}, we first need to construct both $\mathbf{u}_0$ and $\mathbb{A}$. Their construction is presented in the next sections.

\subsection{Construction of the approximate solution \texorpdfstring{$\mathbf{u}_{0}$}{u0}}\label{sec : numerical construction}

In this section, we discuss the  construction of $\mathbf{u}_0$, which is an approximate solution to \eqref{eq : f(u)=0 on H^l_D_4}. This is generally a challenging problem. There is a rich variety of work on computing numerical solutions to reaction diffusion models (cf. \cite{jason_review_paper},\cite{wei_ring_2d_small_epsilon_unbounded},\cite{wei_spike_2d_unbounded},\cite{wei_asym_spike_2d},\cite{wei_multi_spike_2d}). We choose an approach based on the exact solution found by the authors of \cite{exact_sol1D_Hale}.  $\mathbf{u}_0$ is constructed numerically thanks to its Fourier coefficients representation. Fix $N \in \mathbb{N}$ to be the size of our numerical approximation for linear operators (i.e. matrices) and $N_0 \in \mathbb{N}$ to be the one of our Fourier coefficients approximations (i.e. vectors). Now, given $\mathcal{N} \in \mathbb{N}$, let us introduce the following projection operators 
 \begin{align}
 \nonumber
    (\pi^{\mathcal{N}}(V))_n  =  \begin{cases}
          v_n,  & n \in I^{\mathcal{N}} \\
              0, &n \notin I^{\mathcal{N}}
    \end{cases} ~~ \text{ and } ~~
     (\pi_{\mathcal{N}}(V))_n  =  \begin{cases}
          0,  & n \in I^{\mathcal{N}} \\
              v_n, &n \notin I^{\mathcal{N}}
    \end{cases}\label{def : piN and pisubN}
 \end{align}
    where $I^{\mathcal{N}} \bydef \{n \in J_{\mathrm{red}}(D_4), ~ n_1 \leq \mathcal{N}\}$ for all $V = (v_n)_{n \in  J_{\mathrm{red}}(D_4)} \in \ell^2(J_{\mathrm{red}}(D_4)).$ Then, we define
\begin{align}
    (\bpi^{\mathcal{N}}\mathbf{U})_n \bydef ((\pi^{\mathcal{N}}U_1)_n, (\pi^{\mathcal{N}}U_2)_n) ~~\text{and}~~(\bpi_{\mathcal{N}}\mathbf{U})_n \bydef ((\pi_{\mathcal{N}}U_1)_n, (\pi_{\mathcal{N}}U_2)_n)
\end{align}
for all $\mathbf{U} = \mathbf{u}_{n \in J_{\mathrm{red}}(D_4)} \in \ell^2_{D_4}$.
 To obtain a numerical approximation of a pattern, we begin by examining the case $\lambda_2\lambda_1 = 1$ (cf. \eqref{gray_scott_reduced}) and fix $0 < \lambda_1< \frac{2}{9}$. This simplifies to studying a scalar PDE and we look for an approximate solution $\overline{u}_{0,1} : \R^2 \to \R$. In \cite{exact_sol1D_Hale}, the authors found an exact solution $u : \R \to \R$ for the ODE equivalent of \eqref{gray_scott_reduced} given by
\begin{equation}\label{initial_guess_for_newton}
    {u}(x) = \frac{3}{1 + Q\mathrm{cosh}\left( \sqrt{\frac{x}{\lambda_1}}\right)}
\end{equation}
for all $x \in \R$, where $Q = \sqrt{1- \frac{9\lambda_1}{2}}$. We use this 1D solution and replace all instances of $x$ with $\sqrt{x^2 + y^2}$ to construct a function on $\R^2$ that we denote $\overline{u}_{0,1}$. Specifically,  \[\overline{u}_{0,1}(x,y) \bydef \frac{3}{1 + Q\mathrm{cosh}\left( \sqrt{\frac{\sqrt{x^2+y^2}}{\lambda_1}}\right)}\]
for all $(x,y) \in \R^2$. 
From here, we compute a $D_4$-Fourier series approximation of $\overline{u}_{0,1}$, which Fourier coefficients are given by a sequence $\overline{U}_{0,1} \in \ell^2(J_{\mathrm{red}}(D_4))$. In particular, $\overline{U}_{0,1}$ satisfies $(\overline{U}_{0,1},0) = \bpi^{N_0} (\overline{U}_{0,1},0)$, that is $\overline{U}_{0,1}$ only possesses a finite number of non-zero coefficients and hence can be seen as a vector. We then use Newton's method to improve this approximation. Once Newton's method has reached a wanted tolerance, we obtain an improved approximation that we still denote by $\overline{U}_{0,1}$.We used a tolerance of $1 \times 10^{-14}$ before terminating our Newton method. Since our approach relies on having a good approximate solution, we choose a low tolerance to ensure we have a good enough approximate solution to attempt a proof.  We acknowledge that our method is not a general approach. Generally, Newton's method is not guaranteed to converge. For the purpose of the Gray-Scott model, this approach was sufficient in computing the approximate solution. If Newton does not converge, one could attempt to use radial coordinates and finite differences, such as the authors of \cite{jason_spot_paper}, \cite{jason_ring_paper}, and \cite{sh_cadiot}. More generally, the interested reader could refer to the numerical work on the Gray-Scott model exposed in the Introduction (cf. \cite{jason_review_paper} for instance). Moreover, we still denote $\overline{u}_{0,1} \bydef \gamma^\dagger\left(\overline{U}_{0,1}\right)$ the function representation of $\overline{U}_{0,1}$ in $L^2_{D_4}(\mathbb{R}^2).$
Now, note that $(\overline{U}_{0,1},0)$ is also an approximate solution for \eqref{eq : F(U)=0 in X^l_D_4}. Consequently, from this approximate solution we use continuation to construct other approximate solution in the full system \eqref{eq : F(U)=0 in X^l_D_4}. In general, this allows us to construct an approximate solution  $\overline{\mathbf{U}}_0 = (\overline{U}_{0,1},\overline{U}_{0,2})$ to \eqref{eq : F(U)=0 in X^l_D_4} such that $\overline{\mathbf{U}}_0 = \bpi^{N_0} \overline{\mathbf{U}}_0$ (which is numerically represented as a vector).

 At this point, we now have a vector which represents the $D_4$ Fourier coefficients of $\overline{\mathbf{u}}_0 = \bgam^\dagger\left(\overline{\mathbf{U}}_0\right) \in L^2_{D_4}$.  However, $\overline{\mathbf{u}}_0$ is not necessarily smooth, that is  $\overline{\mathbf{u}}_0 \notin \mathcal{H}_{D_4}$ in general. Let us denote $\overline{\mathbf{u}}_0|_{\om}$  the restriction of $\overline{\mathbf{u}}_0$ on $\om$. In particular, since $\overline{\mathbf{U}}_0$ has a finite-number of non-zero coefficients, $\overline{\mathbf{u}}_0|_{\om}$ is smooth on $\overline{\om}$. Now,  by equivalence of norms, observe that $\mathcal{H}_{D_4}$ possesses the same functions as $\left(H^2(\R^2)\right)^2$ restricted to $D_4$ functions. Consequently, in order to ensure  $\overline{\mathbf{u}}_0 \in \mathcal{H}_{D_4}$, we need $\overline{\mathbf{u}}_0|_{\om}$ and its first derivatives to vanish on  $\partial \om$ (see \cite{mclean2000strongly} for zero extensions of functions with null trace). Since $D_4$ functions are also even, we automatically obtain that the first derivatives of $\overline{\mathbf{u}}_0|_\om$ are null on $\partial \om$. Furthermore, leveraging $D_4$ symmetry and periodicity, we only need to enforce $\overline{\mathbf{u}}_0|_{\om}(d,x_2)=0$ for all $x_2 \in (-d,d)$. This condition has an equivalent in terms of Fourier coefficients, which we translate as the kernel of a finite-dimensional periodic trace operator on Fourier coefficients (cf. Section 4.1 in \cite{unbounded_domain_cadiot}). Specifically, we build the matrix $\mathcal{T}$ which is an $ 2(N_0+1)$ by $(N_0+1)(N_0+2)$ matrix defined as 
\begin{align}
    (\mathcal{T}(\mathbf{V}))_{n_2} = \sum_{n_1 = 0}^{n_2-1} \tilde{\alpha}_{n_2,n_1} \mathbf{v}_{n_2,n_1} (-1)^{n_1} + \sum_{n_1 = n_2}^{N_0} \tilde{\alpha}_{n_1,n_2} \mathbf{v}_{n_1,n_2} (-1)^{n_1}.\label{coeffs_T_M}
\end{align}
where
\begin{align}
    \tilde{\alpha}_n \bydef \begin{cases}
        1 & \mathrm{\ if \ } n = (0,0) \\
        4 & \mathrm{\ if \ } n_1 = n_2 \neq 0 \\
        2 & \mathrm{\ if \ } n_1 \neq 0, n_2 = 0 \\
        4 & \mathrm{\ else}
    \end{cases}
\end{align}
and $\mathbf{V} = (\mathbf{v}_n)_{n \in I^N_0} \in \R^{(N_0+1)(N_0+2)}$. One can verify that having $\mathcal{T} \overline{\mathbf{U}}_0 =0$ (where we abuse notation and consider $\overline{\mathbf{U}}_0$ as a vector in $\R^{(N_0+1)(N_0+2)})$ implies that $\overline{\mathbf{u}}_0|_{\om}$ has a null trace of order 2 on $\partial \om$. Consequently, we want to construct a projection $\mathbf{U}_0^{N_0}$ of $\overline{\mathbf{U}}_0$ in the kernel of $\mathcal{T}$. Then, we define $\mathbf{U}_0^{N_0}$ as follows:
\[
\mathbf{U}_0^{N_0} \bydef \overline{\mathbf{U}}_{0} - D \mathcal{T}^{T}(\mathcal{T} D\mathcal{T}^{T})^{-1}\mathcal{T}\overline{\mathbf{U}}_{0},
\]
where $D$ is an invertible $(N_0+1)(N_0+2)$ square matrix. Using Remark 4.2 in \cite{unbounded_domain_cadiot}, we choose $D$ to be given by
\[
\left(D\mathbf{V}\right)_n \bydef \begin{bmatrix}
    \frac{(V_1)_n}{l_{11}(\tilde{n})} \\
    -\frac{(V_1)_n l_{21}(\tilde{n})}{l_{11}(\tilde{n})l_{22}(\tilde{n})}  + \frac{(V_2)_n}{l_{22}(\tilde{n})} 
\end{bmatrix}
\]
for all $n \in I^{N_0}$.
 Moreover, $\mathbf{U}_0^{N_0}$ is a vector in $\R^{(N_0+1)(N_0+2)}$ by construction and we denote $\mathbf{U}_0 $ its extension by zero in $\ell^2_{D_4}$, that is
\begin{align*}
   (\mathbf{U}_0)_n \bydef \begin{cases}
    (\mathbf{U}_0^{N_0})_n &\text{ if } n \in I^{N_0} \\
    0 &\text{ otherwise}.
\end{cases} 
\end{align*}
Since $\mathbf{U}_0^{N_0}$ is in the kernel of $\mathcal{T}$ by construction, we define
\begin{align}\label{def : construction of u0}
\mathbf{u}_0 \bydef \bgam^\dagger(\mathbf{U}_0)
\end{align}
and obtain that $\mathbf{u}_0|_{\om}$ has a null trace of order $2$ on $\om$. Using Lemma 4.4 in \cite{unbounded_domain_cadiot}, this implies that $\mathbf{u}_0 \in H^2(\R^2) \times H^2(\R^2)$. In practice, applying the trace will result in a worse numerical zero of the map on the bounded domain $\om$. Hence, it is crucial that we take $d$ large enough so that applying the trace does not significantly change the approximate solution we obtained. Numerically, after obtaining an initial approximate solution, we check that the value of this solution on the boundary is close to $0$. If it is not, then we increase $d$ and restart Newton before applying the trace. Therefore, we have successfully built an approximate solution $\mathbf{u}_0 \in \mathcal{H}_{D_4}$ such that $\mathrm{supp}(\mathbf{u}_0) \subset \overline{\om}$, associated to its Fourier coefficients representation $\mathbf{U}_0 = \bpi^{N_0} \mathbf{U}_0 \in \mathscr{h}$.
\subsection{The approximate inverse \texorpdfstring{$\mathbb{A}$}{A}}\label{sec : the operator A}

In this section, we focus our attention on the construction of $\mathbb{A} : L^2_{D_4} \to \mathcal{H}_{D_4}$, which is an approximate inverse to $D\mathbb{F}(\mathbf{u}_0)$. Following the approach presented in \cite{unbounded_domain_cadiot}, we observe that $\mathbb{L} : \mathcal{H}_{D_4} \to L^2_{D_4}$ (cf. \eqref{definition_of_L}) is an isometric isomorphism. Therefore, building $\mathbb{A}$ is equivalent to first construct $\mathbb{B} : L^2_{D_4} \to L^2_{D_4}$ approximating the inverse of $D\mathbb{F}(\mathbf{u}_0) \mathbb{L}^{-1}$ and then define $\mathbb{A} \bydef \mathbb{L}^{-1} \mathbb{B}$. Leveraging the properties of $\bgam$ and $\bGam$ in Lemma \ref{lem : gamma and Gamma properties}, we are able to compute $\mathbb{B}$ thanks to a bounded linear operator on Fourier coefficients. We present the specificities of the construction in the rest of this section.

We begin by constructing $B^N$, a numerical approximate inverse to $\bpi^N DF(\mathbf{U}_0)L^{-1}\bpi^N$. In particular, $B^N = \bpi^N B^N \bpi^N$ has a numerical matrix representation.  In the case of \eqref{eq : gray_scott cov}, observe that
\begin{align}
    DF(\mathbf{U}_0)L^{-1} &= I_d + DG(\mathbf{U}_0)L^{-1} \\
    &= I_d + \begin{bmatrix}
    DG_{11}(\mathbf{U}_0) & DG_{12}(\mathbf{U}_0) \\
    0 & 0
    \end{bmatrix}\begin{bmatrix}
        L_{11}^{-1} & 0 \\
        -L_{22}^{-1} L_{21} L_{11}^{-1} & L_{22}^{-1}
    \end{bmatrix} \\
    &= \begin{bmatrix}
        I_d & 0 \\
        0 & I_d
    \end{bmatrix} + \begin{bmatrix}
        DG_{11}(\mathbf{U}_0)L_{11}^{-1} - DG_{12}(\mathbf{U}_0)L_{22}^{-1} L_{21} L_{11}^{-1} & DG_{12}(\mathbf{U}_0)L_{22}^{-1} \\
        0 & 0
    \end{bmatrix} \\ 
    &= \begin{bmatrix}
      I_d + DG_{11}(\mathbf{U}_0)L_{11}^{-1} - DG_{12}(\mathbf{U}_0)L_{22}^{-1} L_{21} L_{11}^{-1} & DG_{12}(\mathbf{U}_0)L_{22}^{-1} \\
        0 & I_d  
    \end{bmatrix}.\label{expanded_DFLinverse}
\end{align}
Since \eqref{expanded_DFLinverse} is upper triangular by block, $B^N$ is chosen to be upper triangular by block as well. Moreover, the presence of an identity operator in the bottom-right corner implies that we can choose $B^N$ as 
\begin{equation}\label{B_in_full_equation}
    B^N \bydef \begin{bmatrix}
        B_{11}^N & B_{12}^N \\
        0 & \pi^N
    \end{bmatrix}.
\end{equation}
Both $B_{11}^N$ and $B_{12}^N$ are constructed numerically. 
Then define $B$ as 
\begin{align}
    B \bydef \bpi_N + B^N \bydef \begin{bmatrix}
        B_{11} & B_{12} \\
        0 & I_d
    \end{bmatrix}.\label{def : B_finite_infinite}
\end{align}
Finally, we define $\mathbb{B} : L^2_{D_4} \to L^2_{D_4}$ and $\mathbb{A} : L^2_{D_4} \to \mathcal{H}_{D_4}$ as 
\begin{align}\label{def : the operator A}
    \mathbb{A} \bydef \mathbb{L}^{-1} \mathbb{B} ~~\text{ and } ~~ \mathbb{B} \bydef \begin{bmatrix}
\mathbb{1}_{\mathbb{R}^2 \setminus \om} & 0 \\
0 & \mathbb{1}_{\mathbb{R}^2 \setminus \om} \end{bmatrix}+ \bGam^\dagger\left(B\right).
\end{align}
In particular, we have 
\begin{align}\label{def : definition of B by blocks}
    \mathbb{B} = \begin{bmatrix}
        \mathbb{B}_{11} & \mathbb{B}_{12}\\
        0 & I_d
    \end{bmatrix}
\end{align}
where $\mathbb{B}_{11} = \Gamma^\dagger(B_{11}^N + \pi_N)$ and $\mathbb{B}_{12} =  \Gamma^\dagger(B_{12}^N + \pi_N)$.
Now that we have constructed $\mathbf{u}_0$ and $\mathbb{A}$, we provide explicit computations for the bounds $\mathcal{Y}_0, \mathcal{Z}_1$, and $\mathcal{Z}_2$ in order to apply Theorem \ref{th: radii polynomial}. This is the emphasis of the next section.
\section{Computing the bounds of Theorem \ref{th: radii polynomial}}\label{sec : bounds}

In this section, we aim to compute the bounds for a system of semilinear PDEs. To simplify the computational details, we choose to perform the computations specifically on the Gray-Scott model \eqref{eq : gray_scott}. We emphasize that this approach can be generalized; however, such a generalization is purely computational. Hence, we use Gray-Scott to demonstrate in a simpler, more specific case while still demonstrating key issues.
We start by providing notations and results which we will be useful along the section. First, given $u \in L^\infty(\mathbb{R}^2)$,  denote by 
\begin{align}\label{def : multiplication operator}
    \mathbb{u}  \colon L^2(\mathbb{R}^2) &\to L^2(\mathbb{R}^2)\\
    v &\mapsto uv
\end{align}
 the multiplication operator associated to $u$. Similarly, given $U= (u_n)_{n \in J_{\mathrm{red}}(D_4)} \in \ell^1(J_{\mathrm{red}}(D_4))$ (recalling the definition of $J_{\mathrm{red}}(D_4)$ from \eqref{def : definition of J_redD4}),  denote by
\begin{align}\label{def : discrete conv operator}
    \mathbb{U} : \ell^2(J_{\mathrm{red}}(D_4)) &\to \ell^2(J_{\mathrm{red}}(D_4)) \\
    V &\mapsto  U * V
\end{align}
 the discrete convolution operator associated to $U$. We now define the following map $\varphi: \mathbb{R}^4 \to \mathbb{R}$ as 
\begin{equation}\label{definition_of_phi}
    \varphi(x_1, x_2, x_3, x_4) \bydef \min\left\{\max \{x_1, x_4\} + \max \{x_2, x_3\},~ \sqrt{x_1^2 + x_2^2 + x_3^2 + x_4^2}\right\}
\end{equation}
for a given $(x_1,x_2,x_3,x_4) \in \mathbb{R}^4$.
Now, when performing computer-assisted analysis on systems of equations, one has to compute operator norms in a product space. Doing this computation directly can become numerically expensive. To remedy potential numerical issues, it is, in practice, useful to estimate the norm of an operator by block in terms of the norms of its individual blocks. Having this strategy in mind, we derive the following lemma.
\begin{lemma}\label{lem : full_matrix_estimate}
Let $K_1,K_2,K_3,K_4 \in \mathcal{B}(X)$ for $X \in \left\{\ell^2(J_{\mathrm{red}}(D_4)),L^2_{D_4}(\mathbb{R}^2),\ell^2_{D_4}\right\}$ where $\mathcal{B}(X)$ is the set of bounded linear operators on $X$. Then,
    \begin{align}  
    \nonumber &\left\| \begin{bmatrix}
        K_1 & K_2 \\ 
        K_3 & K_4 \\
    \end{bmatrix} \right\|_2 \leq \varphi(\left\|K_1\right\|_{2},\left\|K_2\right\|_{2},\left\|K_3\right\|_{2},\left\|K_4\right\|_{2}),
    \end{align}
    where $\varphi$ is defined as in \eqref{definition_of_phi}.
\end{lemma}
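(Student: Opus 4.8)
The plan is to bound the operator norm of the $2\times 2$ block operator in two different ways and then take the minimum, which is exactly what $\varphi$ encodes. Write $M \bydef \begin{bmatrix} K_1 & K_2 \\ K_3 & K_4 \end{bmatrix}$ acting on $X \times X$, and for $(x,y) \in X \times X$ recall $\|(x,y)\|^2 = \|x\|_2^2 + \|y\|_2^2$. First I would establish the bound $\|M\|_2 \le \max\{\|K_1\|_2, \|K_4\|_2\} + \max\{\|K_2\|_2,\|K_3\|_2\}$. For this, split $M = D + A$ where $D = \begin{bmatrix} K_1 & 0 \\ 0 & K_4 \end{bmatrix}$ is the block-diagonal part and $A = \begin{bmatrix} 0 & K_2 \\ K_3 & 0 \end{bmatrix}$ is the block-anti-diagonal part, and use the triangle inequality $\|M\|_2 \le \|D\|_2 + \|A\|_2$. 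For the diagonal part, $\|D(x,y)\|^2 = \|K_1 x\|_2^2 + \|K_4 y\|_2^2 \le \|K_1\|_2^2\|x\|_2^2 + \|K_4\|_2^2\|y\|_2^2 \le \max\{\|K_1\|_2^2,\|K_4\|_2^2\}(\|x\|_2^2+\|y\|_2^2)$, giving $\|D\|_2 \le \max\{\|K_1\|_2,\|K_4\|_2\}$; the same computation applied to $A$, whose nonzero entries are $K_2$ (second row, for input $y$) and $K_3$ (first row, for input $x$... wait — careful: $A(x,y) = (K_2 y, K_3 x)$, so $\|A(x,y)\|^2 = \|K_2 y\|_2^2 + \|K_3 x\|_2^2 \le \max\{\|K_2\|_2^2,\|K_3\|_2^2\}(\|x\|_2^2+\|y\|_2^2)$), yields $\|A\|_2 \le \max\{\|K_2\|_2,\|K_3\|_2\}$. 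Summing gives the first estimate.

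Next I would establish the bound $\|M\|_2 \le \sqrt{\|K_1\|_2^2 + \|K_2\|_2^2 + \|K_3\|_2^2 + \|K_4\|_2^2}$. For an arbitrary unit vector $(x,y)$, compute directly
\begin{align*}
\|M(x,y)\|^2 &= \|K_1 x + K_2 y\|_2^2 + \|K_3 x + K_4 y\|_2^2 \\
&\le \big(\|K_1\|_2\|x\|_2 + \|K_2\|_2\|y\|_2\big)^2 + \big(\|K_3\|_2\|x\|_2 + \|K_4\|_2\|y\|_2\big)^2.
\end{align*}
Now view this as a quadratic form in $(\|x\|_2,\|y\|_2)$: writing $a = \|x\|_2$, $b = \|y\|_2$ with $a^2+b^2 = 1$, the right-hand side equals $\left\| \begin{bmatrix} \|K_1\|_2 & \|K_2\|_2 \\ \|K_3\|_2 & \|K_4\|_2 \end{bmatrix} \begin{bmatrix} a \\ b \end{bmatrix} \right\|_2^2$, which is bounded by the squared operator ($2$-)norm of the $2\times 2$ scalar matrix of block-norms, and in turn by its Frobenius norm squared $\|K_1\|_2^2 + \|K_2\|_2^2 + \|K_3\|_2^2 + \|K_4\|_2^2$. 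Taking square roots and the supremum over unit $(x,y)$ gives the second estimate.

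Finally, since both bounds hold simultaneously, $\|M\|_2$ is at most their minimum, which is precisely $\varphi(\|K_1\|_2,\|K_2\|_2,\|K_3\|_2,\|K_4\|_2)$ by definition \eqref{definition_of_phi}. The argument is uniform in the choice of $X \in \{\ell^2(J_{\mathrm{red}}(D_4)), L^2_{D_4}(\mathbb{R}^2), \ell^2_{D_4}\}$ since it uses only the Hilbert-space (or at minimum normed-space, for the triangle-inequality part) structure and the Pythagorean identity for the product norm $\|(x,y)\|^2 = \|x\|_2^2 + \|y\|_2^2$. There is no real obstacle here; the only point requiring a moment of care is tracking which block acts on which coordinate in the block-anti-diagonal decomposition, and — for the second bound — recognizing that the Cauchy–Schwarz/triangle step reduces the estimate to the norm of a $2\times 2$ matrix of scalars, whose $2$-norm is dominated by its Frobenius norm.
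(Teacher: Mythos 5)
Your proof is correct and takes essentially the same route as the paper: both use the triangle inequality on the block-diagonal/block-anti-diagonal decomposition for the first estimate, and both apply Cauchy--Schwarz to obtain the second. The only cosmetic difference is that you package the Cauchy--Schwarz step as the Frobenius-norm bound for the $2\times 2$ scalar matrix of block norms, whereas the paper carries out the same computation row by row; these are the same inequality.
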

\begin{proof}
To begin, observe that we can estimate 
\begin{align}
    \left\| \begin{bmatrix}
        K_1 & K_2 \\
        K_3 & K_4
    \end{bmatrix}\right\|_{2} &= \left\| \begin{bmatrix}
        K_1 & 0 \\
        0 & K_4
    \end{bmatrix} + \begin{bmatrix}
        0 & K_2 \\
        K_3 & 0
    \end{bmatrix}\right\|_{2} \\
    &\leq \left\|\begin{bmatrix}
        K_1 & 0 \\
        0 & K_4
    \end{bmatrix}\right\|_{2} + \left\|\begin{bmatrix}
        0 & K_2 \\
        K_3 & 0
    \end{bmatrix}\right\|_{2}.
\end{align}
Then, using the definition of the $2$-norm, we obtain
\begin{align}
    \left\|\begin{bmatrix}
        K_1 & K_2 \\
        K_3 & K_4
    \end{bmatrix}\right\|_{2} \leq \max(\|K_1\|_{2},\|K_4\|_{2}) + \max(\|K_2\|_{2},\|K_3\|_{2}).
\end{align}
On the other hand, let $y = (y_1,y_2) \in X,~ \|y\|_{2} = 1$. Then, notice that
\begin{align}
    \left\| \begin{bmatrix}
        K_1 & K_2 \\
        K_3 & K_4
    \end{bmatrix}\begin{bmatrix}
        y_1 \\ y_2
    \end{bmatrix}\right\|_{2} 
    = \sqrt{\|K_1 y_1 + K_2 y_2\|_2^2 + \|K_3 y_1 + K_4 y_2\|_2^2}.
\end{align}
Finally, we conclude the proof using Cauchy-Schwarz inequality
\begin{align}
    &\left\| \begin{bmatrix}
        K_1 & K_2 \\
        K_3 & K_4
    \end{bmatrix}\begin{bmatrix}
        y_1 \\ y_2
    \end{bmatrix}\right\|_{2}
    \\
    &\leq \sqrt{[(\|K_1\|_{2}^2 + \|K_2\|_{2}^2)^{\frac{1}{2}}( \|y_1\|_{2}^2 + \|y_2\|_{2}^2)^{\frac{1}{2}}]^2 + [(\|K_3\|_{2}^2 + \|K_4\|_{2}^2)^{\frac{1}{2}} (\|y_1\|_{2}^2 + \|y_2\|_{2}^2)^{\frac{1}{2}}]^2} \\
    &= \sqrt{(\|K_1\|_{2}^2 + \|K_2\|_{2}^2) \|y\|_{2}^2 + (\|K_3\|_{2}^2 + \|K_4\|_{2}^2)\|y\|_{2}^2} \\
    &= \sqrt{\|K_1\|_{2}^2 + \|K_2\|_{2}^2 + \|K_3\|_{2}^2 + \|K_4\|_{2}^2}.
\end{align}
\end{proof}
Hence, whenever we wish to compute the operator norm of an operator defined by blocks, the function $\varphi$ defined in \eqref{definition_of_phi} provides an upper bound using the operator norm of each block.

Finally, we provide the next lemma which allows us to estimate sums by integrals using a Riemann's sum argument.
\begin{lemma}\label{lem : riemann sum}
     Let $g : \R^2 \to \R$ be a $D_4$-symmetric function decreasing  in $|\xi|$. Then,
    \begin{align*}
         \sup_{q \in [d,\infty)} \frac{1}{4q^2}\sum_{n \in \mathbb{Z}^2} g\left(\frac{n}{2q}\right)^2 \leq  \|g\|^2_2 + \frac{1}{4d^2}g(0)^2 + \frac{2}{d} \int_{0}^{\infty}g({x_1},0)^2dx_1.
    \end{align*}
\end{lemma}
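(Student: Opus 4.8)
The plan is to read the left-hand side as a monotone Riemann sum approximating $\|g\|_2^2 = \int_{\R^2} g(\xi)^2\,d\xi$ and to control the overcounting by one-dimensional ``boundary'' contributions along the coordinate axes. Set $h \bydef \frac{1}{2q}$, so the quantity to estimate is $h^2 \sum_{n \in \mathbb{Z}^2} g(nh)^2$, and observe that $q \ge d$ forces $h \le \frac{1}{2d}$; this single inequality is what converts the mesh-dependent error terms into the $d$-dependent constants $\frac{2}{d}$ and $\frac{1}{4d^2}$ of the statement. We may assume $g \in L^2(\R^2)$, otherwise the right-hand side is infinite and there is nothing to prove, and (as in the intended application) that $g \ge 0$, so that $g^2$ inherits from $g$ the property of being non-increasing in $|\xi|$; in particular $(x_1,x_2) \mapsto g(x_1,x_2)^2$ is non-increasing in each variable on $[0,\infty)^2$.

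First I would use the $D_4$-symmetry of $g$ — concretely, invariance under the two axis reflections and under the coordinate swap — to fold the lattice sum onto the closed first quadrant, with the usual multiplicities:
\begin{align*}
\sum_{n \in \mathbb{Z}^2} g(nh)^2 = 4 \sum_{n_1, n_2 \ge 1} g(n_1 h, n_2 h)^2 + 4 \sum_{n_1 \ge 1} g(n_1 h, 0)^2 + g(0)^2 ,
\end{align*}
where the factor $4$ on the first term collects the four sign-images of a node with both coordinates nonzero, and the factor $4$ on the second collects the two sign-images on each of the two semi-axes, identified via the swap symmetry. Then comes the comparison with integrals. For $n_1, n_2 \ge 1$ and $(x_1, x_2)$ in the cell $Q_n \bydef [(n_1-1)h, n_1 h] \times [(n_2-1)h, n_2 h]$ one has $0 \le x_1 \le n_1 h$ and $0 \le x_2 \le n_2 h$, so coordinatewise monotonicity of $g^2$ on $[0,\infty)^2$ gives $g(x_1, x_2)^2 \ge g(n_1 h, n_2 h)^2$; integrating over $Q_n$ and summing yields $h^2 \sum_{n_1, n_2 \ge 1} g(n_1 h, n_2 h)^2 \le \int_0^\infty \int_0^\infty g^2 = \tfrac14 \|g\|_2^2$, the last equality again by the reflection symmetries of $g^2$. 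The identical one-dimensional estimate along the axis gives $h \sum_{n_1 \ge 1} g(n_1 h, 0)^2 \le \int_0^\infty g(x_1, 0)^2\, dx_1$, while the origin contributes exactly $h^2 g(0)^2$. Adding the three pieces,
\begin{align*}
h^2 \sum_{n \in \mathbb{Z}^2} g(nh)^2 \le \|g\|_2^2 + 4h \int_0^\infty g(x_1, 0)^2 \, dx_1 + h^2 g(0)^2 ,
\end{align*}
and finally replacing $4h$ by $\frac{2}{d}$ and $h^2$ by $\frac{1}{4d^2}$ (legitimate since $h \le \frac{1}{2d}$) and taking the supremum over $q \in [d, \infty)$ gives the asserted inequality.

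Since every step is an elementary estimate for monotone functions, I do not expect a genuine obstacle. The only points demanding care are bookkeeping ones: getting the multiplicities $4, 4, 1$ right in the symmetrization, and — at each grid node $nh$ — associating the cell $Q_n$ that lies \emph{between} the node and the origin (rather than the cell on the far side), which is precisely what makes monotonicity produce the domination $g(x)^2 \ge g(nh)^2$ instead of the reverse. It is also worth stating explicitly that the hypothesis ``$g$ decreasing in $|\xi|$'' is used only through coordinatewise monotonicity of $g^2$ on the first quadrant, which is all the Riemann-sum comparison requires.
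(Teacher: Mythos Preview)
Your proposal is correct and follows essentially the same route as the paper: both fold the lattice sum onto the first quadrant via $D_4$-symmetry with multiplicities $4,4,1$, bound each piece by the corresponding integral using monotonicity, and then use $q\ge d$ to convert the $q$-dependent error terms into the stated constants. Your version is somewhat more explicit about the cell-to-node pairing that makes the Riemann comparison go in the right direction, but the argument is the same.
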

\begin{proof}
Let $q \geq d$,  then since $g$ is $D_4$-symmetric and decreasing, we readily get
\begin{align}
     \nonumber \sum_{n \in \mathbb{Z}^2} g\left(\frac{n}{2q}\right)^2 &= 4\sum_{n \in \mathbb{N}^2} g\left(\frac{n}{2q}\right)^2 + g(0)^2 +  4\sum_{n_1=1}^\infty g\left((\frac{n_1}{2q},0)\right)^2\\
     &\leq 4\int_{0}^{\infty} \int_{0}^{\infty} g\left(\frac{x}{2q}\right)^2dx + g(0)^2 + 4 \int_{0}^{\infty}g\left(\frac{x_1}{2q},0\right)^2dx_1\\
     &= 4q^2 \|g\|^2_2 + g(0)^2 + 8q \int_{0}^{\infty}g({x_1},0)^2dx_1.
     \label{g_j_bound}
\end{align}
\end{proof}
Hence, whenever we wish to bound a sum, we can use Lemma \ref{lem : riemann sum} to compute an integral instead. We are now ready to begin the computation of the needed bounds to apply Theorem \ref{th: radii polynomial}.
\subsection{The Bound \texorpdfstring{$\mathcal{Y}_0$}{Y0}}\label{sec : Y0}
For scalar PDEs, Lemma 4.11 in \cite{unbounded_domain_cadiot} provides an explicit formula for the computation of the bound $\mathcal{Y}_0$. We show in the next lemma that such a result can similarly be derived for systems of PDEs. In particular, the obtained formula is compatible with the use of rigorous numerics (it can be computed thanks to the evaluation of norms of vectors).

\begin{lemma}\label{lem : bound Y_0}
Let $\mathcal{Y}_0 >0$ be such that 
\begin{equation}
    \mathcal{Y}_0 \bydef  |\om|^{\frac{1}{2}}\left(\|B^N  F(\mathbf{U}_0)\|_{2}^2 + \|(\bpi^{N_0}-\bpi^N)L\mathbf{U}_0 + (\bpi^{3N_0}-\bpi^N) G(\mathbf{U}_0)\|_{2}^2 \right)^{\frac{1}{2}}.
\end{equation}
Then $\|\mathbb{A}\mathbb{F}(\mathbf{u}_0)\|_{\mathcal{H}} \leq \mathcal{Y}_0.$
\end{lemma}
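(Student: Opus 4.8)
The strategy is to compute $\|\mathbb{A}\mathbb{F}(\mathbf{u}_0)\|_{\mathcal{H}}$ by passing to the Fourier-coefficient representation and exploiting the isometries established earlier. First I would use the fact that $\mathbb{A} = \mathbb{L}^{-1}\mathbb{B}$ and that $\mathbb{L} : \mathcal{H}_{D_4} \to L^2_{D_4}$ is an isometric isomorphism (so $\|\cdot\|_{\mathcal{H}} = \|\mathbb{L}\cdot\|_2$), which gives
\begin{align*}
\|\mathbb{A}\mathbb{F}(\mathbf{u}_0)\|_{\mathcal{H}} = \|\mathbb{L}\mathbb{L}^{-1}\mathbb{B}\mathbb{F}(\mathbf{u}_0)\|_2 = \|\mathbb{B}\mathbb{F}(\mathbf{u}_0)\|_2.
\end{align*}
Next, since $\mathbf{u}_0$ has support in $\overline{\om}$ with $\mathbf{U}_0 = \bpi^{N_0}\mathbf{U}_0$ and since $\mathbb{G}$ is cubic (so $\mathbb{G}(\mathbf{u}_0)$, in Fourier series on $\om$, involves only coefficients up to index $3N_0$), I would argue that $\mathbb{F}(\mathbf{u}_0) = \mathbb{L}\mathbf{u}_0 + \mathbb{G}(\mathbf{u}_0)$ is supported on $\overline{\om}$ and has a finite Fourier expansion there; concretely $\bgam(\mathbb{F}(\mathbf{u}_0)) = L\mathbf{U}_0 + G(\mathbf{U}_0)$, which lies in the range of $\bpi^{3N_0}$. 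The key point is that outside $\om$, $\mathbb{F}(\mathbf{u}_0)$ vanishes, so the operator $\mathbb{1}_{\R^2\setminus\om}$ appearing in $\mathbb{B}$ annihilates it, leaving $\mathbb{B}\mathbb{F}(\mathbf{u}_0) = \bGam^\dagger(B)\mathbb{F}(\mathbf{u}_0) = \bgam^\dagger B\,\bgam(\mathbb{F}(\mathbf{u}_0))$.

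Then I would invoke Lemma \ref{lem : gamma and Gamma properties}: since $B\bgam(\mathbb{F}(\mathbf{u}_0)) \in \ell^2_{D_4}$ and $\bgam^\dagger$ maps into $L^2_{D_4,\om}$ with $\|\bgam^\dagger(V)\|_2 = \sqrt{|\om|}\|V\|_2$, we get
\begin{align*}
\|\mathbb{B}\mathbb{F}(\mathbf{u}_0)\|_2 = \sqrt{|\om|}\,\|B(L\mathbf{U}_0 + G(\mathbf{U}_0))\|_2.
\end{align*}
Now I would split the computation using the block structure $B = \bpi_N + B^N$ and the fact that $B^N = \bpi^N B^N \bpi^N$. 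Writing $B\mathbf{V} = B^N\bpi^N\mathbf{V} + \bpi_N\mathbf{V}$ for $\mathbf{V} = L\mathbf{U}_0 + G(\mathbf{U}_0)$, and using that the two pieces have disjoint support in index space (one in $I^N$, one outside), Pythagoras gives $\|B\mathbf{V}\|_2^2 = \|B^N\bpi^N\mathbf{V}\|_2^2 + \|\bpi_N\mathbf{V}\|_2^2$. For the first term I would identify $\bpi^N\mathbf{V} = \bpi^N F(\mathbf{U}_0)$ and note that $B^N F(\mathbf{U}_0) = B^N\bpi^N F(\mathbf{U}_0)$ since $B^N$ is already padded by $\bpi^N$, yielding $\|B^N F(\mathbf{U}_0)\|_2$. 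For the tail term $\bpi_N\mathbf{V}$, I would use that $\mathbf{U}_0 = \bpi^{N_0}\mathbf{U}_0$ so $L\mathbf{U}_0 = \bpi^{N_0}L\mathbf{U}_0$ and $G(\mathbf{U}_0) = \bpi^{3N_0}G(\mathbf{U}_0)$, hence $\bpi_N(L\mathbf{U}_0 + G(\mathbf{U}_0)) = (\bpi^{N_0}-\bpi^N)L\mathbf{U}_0 + (\bpi^{3N_0}-\bpi^N)G(\mathbf{U}_0)$, matching the stated formula. Assembling the two pieces under a square root completes the proof.

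The main obstacle is the bookkeeping in the second paragraph: justifying carefully that $\mathbb{F}(\mathbf{u}_0)$ genuinely vanishes off $\overline{\om}$ (this uses that $\mathbf{u}_0 \in \mathcal{H}_{D_4,\om}$ has a null trace of order $2$, so $\mathbb{L}\mathbf{u}_0$ is supported on $\overline{\om}$ with no distributional boundary contributions, together with $\mathrm{supp}(\mathbb{G}(\mathbf{u}_0)) \subset \overline{\om}$), and that the Fourier-series truncation levels $N_0$ and $3N_0$ are exactly right — the linear part is degree one and the nonlinearity $\mathbb{G}$ is degree three, so a product of three truncated series of bandwidth $N_0$ has bandwidth $3N_0$. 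Everything else is a routine application of the isometry lemmas and orthogonality of the projections $\bpi^N$ and $\bpi_N$.
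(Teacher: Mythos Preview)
Your proposal is correct and follows essentially the same route as the paper: pass from $\|\mathbb{A}\mathbb{F}(\mathbf{u}_0)\|_{\mathcal{H}}$ to $\|\mathbb{B}\mathbb{F}(\mathbf{u}_0)\|_2$, use that $\mathbb{F}(\mathbf{u}_0)=\bgam^\dagger(F(\mathbf{U}_0))$ is supported in $\overline{\om}$, apply the isometry of Lemma~\ref{lem : gamma and Gamma properties} to reduce to $|\om|^{1/2}\|BF(\mathbf{U}_0)\|_2$, and then split via $B=\bpi_N+B^N$ with orthogonality. The paper compresses the last step by citing Lemma~4.11 of \cite{unbounded_domain_cadiot}, whereas you spell out the Pythagorean decomposition and the bandwidth counting ($N_0$ for $L\mathbf{U}_0$, $3N_0$ for the cubic $G(\mathbf{U}_0)$) explicitly; your remark on the null trace justifying $\mathrm{supp}(\mathbb{L}\mathbf{u}_0)\subset\overline{\om}$ is exactly the point underlying the paper's terse ``since $\mathbf{u}_0=\bgam^\dagger(\mathbf{U}_0)\in\mathcal{H}_{D_4}$''.
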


\begin{proof}
First, combining \eqref{def : definition of the norm and inner product Hl} and \eqref{def : the operator A} we get
\begin{align}
    \|\mathbb{A}\mathbb{F}(\mathbf{u}_0)\|_{\mathcal{H}} &= \|\mathbb{B}\mathbb{F}(\mathbf{u}_0)\|_{2}.
\end{align}
Then, since $\mathbf{u}_0 = \bgam^\dagger\left(\mathbf{U}_0\right) \in \mathcal{H}_{D_4}$, we have $\mathbb{F}(\mathbf{u}_0) = \bgam^\dagger\left(F(\mathbf{U}_0)\right)$. Moreover, using \eqref{def : the operator A} again, we get
\begin{align*}
    \mathbb{B}\mathbb{F}(\mathbf{u}_0) = \bgam^\dagger\left(BF(\mathbf{U}_0)\right)
\end{align*}
where $B$ is defined in \eqref{def : B_finite_infinite}.
Now, using Lemma \ref{lem : gamma and Gamma properties}, it yields
\begin{align}
    \|\mathbb{B}\mathbb{F}(\mathbf{u}_0)\|_{2} = |\om|^{\frac{1}{2}}\|BF(\mathbf{U}_0)\|_{2}.
\end{align}
We can now proceed similarly as in the proof of Lemma 4.11 in \cite{unbounded_domain_cadiot} to obtain the desired result.
\end{proof}
\subsection{The Bound \texorpdfstring{$\mathcal{Z}_2$}{Z2}}\label{sec : Z2}
Before we derive the bound $\mathcal{Z}_2$ for the case of the Gray-Scott system of equations, we state the following lemma which we will be useful in the analysis of $\mathcal{Z}_2$.
\begin{lemma}\label{lem : banach algebra}
Let $\kappa_2$ be defined in \eqref{def : definition kappa2 kappa3} and define $\kappa_0 >0$ as 
\begin{align}\label{def : kappa0}
    \kappa_0 \bydef \min\left\{ \max\left\{\left[\left(\lambda_1 \kappa_2 + \frac{1}{2\sqrt{\pi \lambda_2}}\right)^2 + \frac{1}{4\pi \lambda_2}\right]^{\frac{1}{2}}, \frac{\sqrt{2}}{2\sqrt{\pi \lambda_2}}\right\},  \frac{\kappa_2}{\lambda_2} \left((1-\lambda_2 \lambda_1)^2 + 1\right)^{\frac{1}{2}}\right\}.
\end{align}
Then, 
\begin{align}
\|u_1v_2\|_2 \leq \kappa_0 \|\mathbf{u}\|_{\mathcal{H}} \|\mathbf{v}\|_{\mathcal{H}}
\end{align}
for all $\mathbf{u}= (u_1,u_2), \mathbf{v}=(v_1,v_2) \in \mathcal{H}$.
\end{lemma}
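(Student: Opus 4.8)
The plan is to estimate $\|u_1 v_2\|_2$ by passing to Fourier space and exploiting the structure of $l^{-1}$, exactly as in the proof of Lemma \ref{Banach algebra} and Corollary \ref{cor : banach algebra1}. Writing $\mathbf{f} \bydef \mathbb{L}\mathbf{u}$ and $\mathbf{g} \bydef \mathbb{L}\mathbf{v}$, so that $\|\mathbf{u}\|_{\mathcal{H}} = \|\mathbf{f}\|_2$ and $\|\mathbf{v}\|_{\mathcal{H}} = \|\mathbf{g}\|_2$, we have $\hat u_1 = (l^{-1}\hat{\mathbf{f}})_1 = \frac{1}{l_{11}}\hat f_1$ and $\hat v_2 = (l^{-1}\hat{\mathbf{g}})_2 = -\frac{l_{21}}{l_{11}l_{22}}\hat g_1 + \frac{1}{l_{22}}\hat g_2$. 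The first step is to bound $\|u_1\|_\infty \le \|\hat u_1\|_1 \le \|\frac{1}{l_{11}}\|_2 \|\hat f_1\|_2 \le \kappa_2 \|\mathbf{f}\|_2$ (using $\|\frac{1}{l_{11}}\|_2 = \kappa_2$ from \eqref{eq : norm of l11 and l22}), and then $\|u_1 v_2\|_2 \le \|u_1\|_\infty \|v_2\|_2$; combined with $\|v_2\|_2 = \|\frac{1}{l_{22}}\hat{g}_2 - \frac{l_{21}}{l_{11}l_{22}}\hat g_1\|_2$, which I bound in $L^2$ using the componentwise structure. This will produce the first candidate inside the outer $\min$: using that $\frac{\lambda_1}{l_{11}} + \frac{l_{21}}{l_{11}l_{22}} = \frac{1}{l_{22}}$ (cf. \eqref{eq : equality of the lik}), the relevant $\mathcal{M}_2$-type quantity simplifies so that the contribution from $\hat g_1$ carries a factor $\lambda_1\kappa_2 + \frac{1}{2\sqrt{\pi\lambda_2}}$ and the contribution from $\hat g_2$ carries $\frac{1}{2\sqrt{\pi\lambda_2}} = \|\frac{1}{l_{22}}\|_2$, whence the term $\left[\left(\lambda_1\kappa_2 + \frac{1}{2\sqrt{\pi\lambda_2}}\right)^2 + \frac{1}{4\pi\lambda_2}\right]^{1/2}$; the alternative $\frac{\sqrt2}{2\sqrt{\pi\lambda_2}}$ inside the inner $\max$ comes from instead bounding $\|v_2\|_\infty$ directly (using $\|l^{-1}\|_{\mathcal{M}_1}$-style estimates on the second row of $l^{-1}$, which contributes the $\sqrt 2$) and pairing it with $\|u_1\|_2 \le \kappa_2\|\mathbf{f}\|_2$ — these two alternatives correspond to which of $u_1$, $v_2$ one puts in $L^\infty$ versus $L^2$.

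Next I would obtain the second candidate in the outer $\min$, namely $\frac{\kappa_2}{\lambda_2}\big((1-\lambda_2\lambda_1)^2 + 1\big)^{1/2}$. The idea here is to use the linear part of the Gray-Scott equations to rewrite $u_2$: since $\mathbb{L}_{22} u_2 = \Delta u_2 - \lambda_2 u_2$ and $\mathbb{L}_{21} = (\lambda_2\lambda_1 - 1)I$, from $\mathbf{f} = \mathbb{L}\mathbf{u}$ we get $f_2 = (\lambda_2\lambda_1-1)u_1 + \mathbb{L}_{22}u_2$, i.e. $v_2$ (resp.\ $u_2$) can be expressed via $\mathbb{L}_{22}^{-1}$ applied to $g_2 - (\lambda_2\lambda_1-1)v_1$. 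Taking Fourier transforms, $\hat v_2 = \frac{1}{l_{22}}\big(\hat g_2 - (\lambda_2\lambda_1 - 1)\hat v_1\big)$, and since $\frac{1}{|l_{22}(\xi)|} \le \frac{1}{\lambda_2}$ uniformly, $\|v_2\|_\infty \le \|\hat v_2\|_1 \le \frac{1}{\lambda_2}\big\| \hat g_2 - (\lambda_2\lambda_1-1)\hat v_1 \big\|_1$. A Cauchy–Schwarz / Hölder step in $\xi$ together with $\hat v_1 = \frac{1}{l_{11}}\hat g_1$ and $\|\frac{1}{l_{11}}\|_2 = \kappa_2$ then yields $\|v_2\|_\infty \le \frac{\kappa_2}{\lambda_2}\big((1 - \lambda_2\lambda_1)^2 |\hat g_1|^2 + |\hat g_2|^2\big)$-type bound, giving $\|v_2\|_\infty \le \frac{\kappa_2}{\lambda_2}\big((1-\lambda_2\lambda_1)^2 + 1\big)^{1/2}\|\mathbf{g}\|_2$; pairing with $\|u_1\|_2 \le \|\hat u_1\|_2 \le \|u_1\|_{\mathcal{H}}/1 \le \|\mathbf{f}\|_2$... — more carefully, $\|u_1\|_2 = \|\frac{1}{l_{11}}\hat f_1\|_2 \le \|\mathbf{f}\|_2$ since $|l_{11}|\ge 1$, but actually to match $\kappa_2$ one uses $\|u_1\|_\infty \le \kappa_2\|\mathbf f\|_2$ and $\|v_2\|_2$; I will sort out which pairing produces precisely the stated constant. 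The final bound is the minimum of these independently derived estimates.

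The main obstacle I anticipate is bookkeeping rather than conceptual: in the product space $\mathcal{H} = \mathcal{H}_1\times\mathcal{H}_2$ the norm $\|\mathbf{u}\|_{\mathcal{H}}^2 = \|\mathbb{L}\mathbf{u}\|_2^2$ couples the two components through the off-diagonal entry $\mathbb{L}_{21}$, so I must be careful that bounds like $\|\hat f_1\|_2 \le \|\mathbf{f}\|_2$ and $\|\hat g_j\|_2 \le \|\mathbf g\|_2$ are used with the correct constant $1$ (they hold because $\|\mathbf f\|_2^2 = \|f_1\|_2^2 + \|f_2\|_2^2$) and that when combining a supremum bound on one factor with an $L^2$ bound on the other, the two factors are controlled by $\|\mathbf u\|_{\mathcal H}$ and $\|\mathbf v\|_{\mathcal H}$ respectively without losing constants. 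The other subtlety is justifying the Cauchy–Schwarz step that turns $\big\|\alpha\,\hat g_1 + \beta\,\hat g_2\big\|_1$ (with $\alpha,\beta$ the relevant rational functions) into $\big(\|\alpha\|_2^2 + \|\beta\|_2^2\big)^{1/2}\|\mathbf g\|_2$ pointwise-then-integrate, which is exactly the $\mathcal{M}_1$ computation already carried out in Lemma \ref{Banach algebra}; invoking that lemma's argument directly should make this routine. Once these identifications are made, each of the three expressions inside the $\min$/$\max$ follows from one such estimate, and the proof concludes by taking the minimum.
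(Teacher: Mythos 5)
Your overall strategy matches the paper's: work on the Fourier side, apply the product estimate from Lemma~\ref{Banach algebra} with $\mathbb{G}_{2,1} = M_1 \bydef \begin{bmatrix} 1 & 0 \\ 0 & 0 \end{bmatrix}$ and $\mathbb{G}_{2,2} = M_2 \bydef \begin{bmatrix} 0 & 1 \\ 0 & 0 \end{bmatrix}$, compute the $\mathcal{M}_1$- and $\mathcal{M}_2$-type norms of $M_1 l^{-1}$ and $M_2 l^{-1}$, and take the minimum over which factor goes in $L^\infty$ versus $L^2$. The identity \eqref{eq : equality of the lik} is the right tool for simplifying the second row. However, the pairings you propose are swapped, and this is not merely presentational: it leads you to attribute an extra (and incorrect) factor of $\kappa_2$ in one candidate.

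Concretely, recall that the $L^2$ bound $\|M_j\mathbf{u}\|_2 \le C\|\mathbf{u}\|_{\mathcal H}$ comes from the pointwise estimate $|M_j l^{-1}(\xi)\hat{\mathbf{f}}(\xi)|_2 \le \left(\sup_\xi \sup_{|x|_2=1}|M_j l^{-1}(\xi)x|_2\right)|\hat{\mathbf{f}}(\xi)|_2$, i.e.\ the sup-over-$\xi$ (the paper's $\mathcal{M}_1$) norm, while the $L^\infty$ bound $\|M_j\mathbf{u}\|_\infty \le \|\widehat{M_j\mathbf{u}}\|_1 \le C\|\mathbf{u}\|_{\mathcal H}$ comes from Cauchy--Schwarz in $\xi$, i.e.\ the $L^2_\xi$ (the paper's $\mathcal{M}_2$) norm. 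Hence $\|u_1\|_2 \le \sup_\xi|1/l_{11}|\cdot\|\hat f_1\|_2 = 1\cdot\|\mathbf f\|_2$, \emph{not} $\kappa_2\|\mathbf f\|_2$; the constant $\kappa_2 = \|1/l_{11}\|_2$ enters only in the $L^\infty$ bound $\|u_1\|_\infty \le \kappa_2\|\mathbf f\|_2$. Pairing $\|u_1\|_2\|v_2\|_\infty$ therefore yields $1\cdot\bigl[(\lambda_1\kappa_2 + \tfrac{1}{2\sqrt{\pi\lambda_2}})^2 + \tfrac{1}{4\pi\lambda_2}\bigr]^{1/2}$ --- the first argument of the $\min$; and pairing $\|u_1\|_\infty\|v_2\|_2$ yields $\kappa_2\cdot\tfrac{1}{\lambda_2}\bigl((1-\lambda_2\lambda_1)^2+1\bigr)^{1/2}$ --- the second argument. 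You have these backwards, which is why you cannot see where the $\kappa_2$ lives; your own ``more carefully'' parenthetical spots the tension but does not resolve it.

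Your alternative derivation of the second candidate via $\hat v_2 = \tfrac{1}{l_{22}}\bigl(\hat g_2 - (\lambda_2\lambda_1-1)\hat v_1\bigr)$ is algebraically the same row of $l^{-1}$ as the paper uses (since $\hat v_1 = \hat g_1/l_{11}$ and $l_{21} = (\lambda_2\lambda_1-1)$), so the idea is sound; but if you then estimate the resulting $\|\hat v_2\|_1$ you are bounding $\|v_2\|_\infty$, and the natural Cauchy--Schwarz gives $\bigl(\|\tfrac{1}{l_{22}}\|_2^2 + |\lambda_2\lambda_1-1|^2\|\tfrac{1}{l_{11}l_{22}}\|_2^2\bigr)^{1/2}$, which does not equal $\tfrac{\kappa_2}{\lambda_2}\bigl((1-\lambda_2\lambda_1)^2+1\bigr)^{1/2}$ in general (they coincide only when $\lambda_1\lambda_2 = 1$). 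To land on the stated constant you must instead bound $\|v_2\|_2$ via the pointwise estimate $|\hat v_2(\xi)| \le \bigl((\tfrac{l_{21}}{l_{11}l_{22}}(\xi))^2 + (\tfrac{1}{l_{22}}(\xi))^2\bigr)^{1/2}|\hat{\mathbf g}(\xi)|_2$, take the supremum at $\xi=0$, and pair with $\|u_1\|_\infty \le \kappa_2\|\mathbf f\|_2$. Finally, the inner $\max\{\cdot, \tfrac{\sqrt 2}{2\sqrt{\pi\lambda_2}}\}$ in \eqref{def : kappa0} is vacuous (the first argument already dominates it, since $(\lambda_1\kappa_2 + \tfrac{1}{2\sqrt{\pi\lambda_2}})^2 \ge \tfrac{1}{4\pi\lambda_2}$), so your explanation of it as arising from a separate $\mathcal{M}_1$-style estimate is both unnecessary and, as stated, not quite right --- the quantity $\sqrt 2\|1/l_{22}\|_2$ appears elsewhere in the paper as the $\mathcal{M}_2$ norm of $g_{1,3}l^{-1}$, not of $M_2 l^{-1}$.
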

\begin{proof}
The proof can be found in Appendix \ref{apen : Z2}.
\end{proof}
\par With Lemma \ref{lem : banach algebra} available to us, we are now ready to compute the $\mathcal{Z}_2$ bound. We state the result in the following lemma.
\begin{lemma}\label{lem : Bound Z_2}
Let $\kappa_0$ and  $(\kappa_2, \kappa_3)$ be the bounds satisfying \eqref{def : kappa0} and \eqref{def : definition kappa2 kappa3} respectively. Moreover, define $q \in L^\infty(\R^2)\cap L^2_{D_4}(\R^2)$ and its associated Fourier coefficients $Q$ as  \begin{equation}\label{def : q}
        q \bydef u_{0,2} + \mathbb{1}_{\om} -3\lambda_1 u_{0,1}~~ \text{ and }~~
        Q \bydef \gamma(q).
\end{equation} 

Then, let
$\mathcal{Z}_2 : (0, \infty) \to [0, \infty)$ be defined  as 
\begin{align}
    \mathcal{Z}_2(r) &\bydef  2\left(\kappa_2^2 + 4 \kappa_0^2\right)^{\frac{1}{2}}\sqrt{\varphi(\mathcal{Z}_{2,1},\mathcal{Z}_{2,2},\mathcal{Z}_{2,2},\mathcal{Z}_{2,3})}   + 3\kappa_3 \max\{1,\| B_{11}^N\|_{2}\}r, 
\end{align}
for all $r \geq 0$,
where
\begin{align}
    &\mathcal{Z}_{2,1} \bydef  \| B_{11}^N(\mathbb{Q}^2 + \mathbb{U}_{0,1}^2)(B_{11}^N)^{*}\|_{2} \\
    &\mathcal{Z}_{2,2} \bydef \sqrt{\| B_{11}^N (\mathbb{Q}^2 + \mathbb{U}_{0,1}^2)\pi_N(\mathbb{Q}^2 + \mathbb{U}_{0,1}^2) (B_{11}^N)^{*} \|_{2}} \\
    &\mathcal{Z}_{2,3} \bydef \|Q^2 + U_{0,1}^2\|_{1},
\end{align}
and where $\varphi$ is defined in Lemma \ref{lem : full_matrix_estimate}. Then, $\|\mathbb{A}\left( D\mathbb{F}(\mathbf{u}_0 + \mathbf{h}) -D\mathbb{F}(\mathbf{u}_0)\right)\|_{\mathcal{H}} \leq \mathcal{Z}_2(r)r$ for all $r>0$ and all $\mathbf{h} \in \overline{B_r(0)} \subset \mathcal{H}$.
\end{lemma}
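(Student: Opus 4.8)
The plan is to bound the operator norm $\|\mathbb{A}\big(D\mathbb{F}(\mathbf{u}_0+\mathbf{h}) - D\mathbb{F}(\mathbf{u}_0)\big)\|_{\mathcal{H}}$ uniformly over $\mathbf{h}\in\overline{B_r(0)}$ by testing against $\mathbf{w}=(w_1,w_2)\in\mathcal{H}_{D_4}$ with $\|\mathbf{w}\|_{\mathcal{H}}\le 1$ and using that $\mathbb{L}:\mathcal{H}_{D_4}\to L^2_{D_4}$ is an isometric isomorphism, so $\|\mathbb{A}\mathbf{v}\|_{\mathcal{H}}=\|\mathbb{B}\mathbf{v}\|_2$. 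Since $\mathbb{L}$ is linear and $\mathbb{G}=\mathbb{G}_2+\mathbb{G}_3$ is a cubic polynomial, the difference is exact: $D\mathbb{F}(\mathbf{u}_0+\mathbf{h})-D\mathbb{F}(\mathbf{u}_0)=\Phi(\mathbf{h})+\Psi(\mathbf{h})$ with $\Phi(\mathbf{h}):=D^2\mathbb{G}(\mathbf{u}_0)[\mathbf{h},\,\cdot\,]$ and $\Psi(\mathbf{h}):=\tfrac12 D^3\mathbb{G}_3[\mathbf{h},\mathbf{h},\,\cdot\,]$ (the third derivative being a constant trilinear form). First I would record that, differentiating $\mathbb{G}_1(\mathbf{u})=(u_2+1-\lambda_1 u_1)u_1^2$, both $\Phi(\mathbf{h})\mathbf{w}$ and $\Psi(\mathbf{h})\mathbf{w}$ have vanishing second component and first components
\begin{align*}
\big(\Phi(\mathbf{h})\mathbf{w}\big)_1 &= 2\,q_\infty\, h_1 w_1 + 2\,u_{0,1}\big(h_1 w_2 + h_2 w_1\big), &
\big(\Psi(\mathbf{h})\mathbf{w}\big)_1 &= \big(2 h_1 h_2 - 3\lambda_1 h_1^2\big) w_1 + h_1^2 w_2,
\end{align*}
where $q_\infty:=u_{0,2}+1-3\lambda_1 u_{0,1}$ coincides with $q$ on $\om$ and equals $1$ on $\R^2\setminus\om$. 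I would then estimate $\|\mathbb{B}\Psi(\mathbf{h})\mathbf{w}\|_2$ and $\|\mathbb{B}\Phi(\mathbf{h})\mathbf{w}\|_2$ separately and add.

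For the cubic contribution I would use that a vector $(v_1,0)$ satisfies $\mathbb{B}(v_1,0)=\big(\mathbb{1}_{\R^2\setminus\om}v_1+\gamma^\dagger(B_{11}\gamma(v_1)),\,0\big)$, and since the two summands of the first component have disjoint support, $\|\mathbb{B}(v_1,0)\|_2^2=\|\mathbb{1}_{\R^2\setminus\om}v_1\|_2^2+|\om|\,\|B_{11}\gamma(v_1)\|_2^2\le\max\{1,\|B_{11}\|_2^2\}\|v_1\|_2^2$, with $\|B_{11}\|_2=\|B_{11}^N+\pi_N\|_2=\max\{1,\|B_{11}^N\|_2\}$ (block structure). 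Applying this to $v_1=(\Psi(\mathbf{h})\mathbf{w})_1$ and noting that $\tfrac12 D^3\mathbb{G}_3[\mathbf{h},\mathbf{h},\mathbf{w}]$ is a sum of rank-one trilinear pieces $(\mathbb{G}_{3,\sigma(1)}\bullet)(\mathbb{G}_{3,\sigma(2)}\bullet)(\mathbb{G}_{3,\sigma(3)}\bullet)$ with two arguments equal to $\mathbf{h}$ and one to $\mathbf{w}$, each obeying the multilinear version of the estimate behind Corollary \ref{cor : banach algebra1} (same computation as in Lemma \ref{Banach algebra}), I obtain $\|(\Psi(\mathbf{h})\mathbf{w})_1\|_2\le 3\kappa_3\|\mathbf{h}\|_{\mathcal{H}}^2\|\mathbf{w}\|_{\mathcal{H}}$, hence $\|\mathbb{B}\Psi(\mathbf{h})\mathbf{w}\|_2\le 3\kappa_3\max\{1,\|B_{11}^N\|_2\}\,r\,\|\mathbf{w}\|_{\mathcal{H}}$ — the second term of $\mathcal{Z}_2(r)\,r$.

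For the quadratic contribution I would write $(\Phi(\mathbf{h})\mathbf{w})_1=2(q_\infty f+u_{0,1}g)$ with $f:=h_1 w_1$, $g:=h_1 w_2+h_2 w_1$. Since $q$ and $u_{0,1}$ are supported on $\overline{\om}$, one has the convolution identities $\gamma(qf)=\mathbb{Q}\gamma(f)$ and $\gamma(u_{0,1}g)=\mathbb{U}_{0,1}\gamma(g)$, while $q_\infty\equiv 1$ off $\om$, so $\mathbb{B}\Phi(\mathbf{h})\mathbf{w}=\big(2\mathbb{1}_{\R^2\setminus\om}f+2\gamma^\dagger\big(B_{11}(\mathbb{Q}\gamma(f)+\mathbb{U}_{0,1}\gamma(g))\big),\,0\big)$. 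Writing $\mathbb{Q}\gamma(f)+\mathbb{U}_{0,1}\gamma(g)=\begin{bmatrix}\mathbb{Q}&\mathbb{U}_{0,1}\end{bmatrix}\begin{bmatrix}\gamma(f)\\\gamma(g)\end{bmatrix}$, the crucial step is $\big\|B_{11}\begin{bmatrix}\mathbb{Q}&\mathbb{U}_{0,1}\end{bmatrix}\big\|_2^2=\big\|B_{11}(\mathbb{Q}^2+\mathbb{U}_{0,1}^2)B_{11}^*\big\|_2$ (using self-adjointness of the convolution operators by the real, even $D_4$-sequences $Q$, $U_{0,1}$): decomposing $B_{11}=B_{11}^N+\pi_N$ into its $\pi^N$/$\pi_N$ blocks, the four blocks of $B_{11}(\mathbb{Q}^2+\mathbb{U}_{0,1}^2)B_{11}^*$ are $B_{11}^N(\mathbb{Q}^2+\mathbb{U}_{0,1}^2)(B_{11}^N)^*$, $B_{11}^N(\mathbb{Q}^2+\mathbb{U}_{0,1}^2)\pi_N$, its adjoint, and $\pi_N(\mathbb{Q}^2+\mathbb{U}_{0,1}^2)\pi_N$, with operator norms $\mathcal{Z}_{2,1}$, $\mathcal{Z}_{2,2}$, $\mathcal{Z}_{2,2}$ and (by Young's inequality \eqref{young_inequality}) at most $\|Q^2+U_{0,1}^2\|_1=\mathcal{Z}_{2,3}$; Lemma \ref{lem : full_matrix_estimate} then gives $\big\|B_{11}(\mathbb{Q}^2+\mathbb{U}_{0,1}^2)B_{11}^*\big\|_2\le\varphi(\mathcal{Z}_{2,1},\mathcal{Z}_{2,2},\mathcal{Z}_{2,2},\mathcal{Z}_{2,3})$. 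Combining this with Parseval ($\sqrt{|\om|}\,\|\gamma(\cdot)\|_2=\|\mathbb{1}_\om\,\cdot\,\|_2$), the disjoint-support identity for $\mathbb{B}(v_1,0)$, the products $\|f\|_2\le\kappa_2\|\mathbf{h}\|_{\mathcal{H}}\|\mathbf{w}\|_{\mathcal{H}}$ (bilinear version of Corollary \ref{cor : banach algebra1}) and $\|g\|_2\le 2\kappa_0\|\mathbf{h}\|_{\mathcal{H}}\|\mathbf{w}\|_{\mathcal{H}}$ (Lemma \ref{lem : banach algebra}), and the fact $\varphi(\mathcal{Z}_{2,1},\mathcal{Z}_{2,2},\mathcal{Z}_{2,2},\mathcal{Z}_{2,3})\ge 1$ (which holds since $\mathcal{Z}_{2,3}$ dominates the $(0,0)$ coefficient of $q^2+u_{0,1}^2$, i.e. $\mathcal{Z}_{2,3}\ge\tfrac1{|\om|}\int_\om(q^2+u_{0,1}^2)$, and $q\approx 1$ where there is no pattern), I get $\|\mathbb{B}\Phi(\mathbf{h})\mathbf{w}\|_2\le 2(\kappa_2^2+4\kappa_0^2)^{1/2}\sqrt{\varphi(\mathcal{Z}_{2,1},\mathcal{Z}_{2,2},\mathcal{Z}_{2,2},\mathcal{Z}_{2,3})}\,r\,\|\mathbf{w}\|_{\mathcal{H}}$ — the first term of $\mathcal{Z}_2(r)\,r$. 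Adding the two bounds and taking the supremum over $\|\mathbf{w}\|_{\mathcal{H}}\le 1$ finishes the proof.

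The hard part is that, contrary to the scalar situation of \cite{unbounded_domain_cadiot}, the difference $D\mathbb{F}(\mathbf{u}_0+\mathbf{h})-D\mathbb{F}(\mathbf{u}_0)$ is \emph{not} supported on $\overline{\om}$: the constant term ``$+1$'' in $\mathbb{G}_1$ produces a piece $2h_1 w_1$ that persists on all of $\R^2$, while $\mathbf{h},\mathbf{w}$ have full support. So the estimate cannot be pushed directly into $\ell^2_{D_4}$; one must carry the off-$\om$ term $\mathbb{1}_{\R^2\setminus\om}h_1 w_1$ along and show that it recombines with the on-$\om$ part through the $\mathbb{1}_{\R^2\setminus\om}$ block of $\mathbb{B}$ — this is exactly where the splitting $q_\infty=q+\mathbb{1}_{\R^2\setminus\om}$ and the inequality $\varphi(\mathcal{Z}_{2,1},\mathcal{Z}_{2,2},\mathcal{Z}_{2,2},\mathcal{Z}_{2,3})\ge 1$ are used to absorb $\|\mathbb{1}_{\R^2\setminus\om}h_1 w_1\|_2^2$ without loss. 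A further technical ingredient is the identity $\gamma(qf)=\mathbb{Q}\gamma(f)$, valid for any $q$ supported on $\overline{\om}$ and any $f\in L^2_{D_4}(\R^2)$, which is what lets one replace multiplication by $q_\infty$ with the discrete convolution $\mathbb{Q}$ and thereby package all the dependence on $u_{0,1},u_{0,2}$ into $\mathbb{Q}^2+\mathbb{U}_{0,1}^2$.
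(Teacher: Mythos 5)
Your overall strategy matches the paper's: split the difference into the exact quadratic piece $\Phi(\mathbf{h})=D^2\mathbb{G}(\mathbf{u}_0)[\mathbf{h},\cdot]$ and cubic piece $\Psi(\mathbf{h})=\tfrac12D^3\mathbb{G}_3[\mathbf{h},\mathbf{h},\cdot]$, bound the cubic contribution by $3\kappa_3\max\{1,\|B_{11}^N\|_2\}r^2$ via Corollary~\ref{cor : banach algebra1}, and for the quadratic contribution pass the coefficient operator $\begin{bmatrix}\mathbb{q}&\mathbb{u}_{0,1}\end{bmatrix}$ through $\mathbb{B}_{11}$, use the adjoint trick $\|B_{11}\begin{bmatrix}\mathbb{Q}&\mathbb{U}_{0,1}\end{bmatrix}\|_2^2=\|B_{11}(\mathbb{Q}^2+\mathbb{U}_{0,1}^2)B_{11}^{*}\|_2$, and finish with Lemma~\ref{lem : full_matrix_estimate} plus Corollary~\ref{cor : banach algebra1}/Lemma~\ref{lem : banach algebra}. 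Where you go beyond the paper's write-up is in observing that the true second derivative contains $q_\infty=u_{0,2}+1-3\lambda_1 u_{0,1}$ rather than $q=u_{0,2}+\mathbb{1}_\om-3\lambda_1 u_{0,1}$; these differ by $\mathbb{1}_{\R^2\setminus\om}$, and because $\mathbf{h},\mathbf{w}$ have full support, the off-$\om$ contribution $2\mathbb{1}_{\R^2\setminus\om}h_1w_1$ survives after applying $\mathbb{B}_{11}=\mathbb{1}_{\R^2\setminus\om}+\Gamma^\dagger(B_{11})$. The paper's proof writes the coefficient as $\mathbb{q}$ (which is $0$ off $\om$) and so silently drops this piece; your bookkeeping, splitting $q_\infty=q+\mathbb{1}_{\R^2\setminus\om}$ and carrying $\mathbb{1}_{\R^2\setminus\om}f$ separately with its disjoint support, is the careful way to proceed.

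There is however a genuine gap at the recombination step. After the disjoint-support identity you arrive at
\[
\|\mathbb{B}\Phi(\mathbf{h})\mathbf{w}\|_2^2 \;\le\; 4\,\|\mathbb{1}_{\R^2\setminus\om}f\|_2^2 \;+\; 4\,\varphi(\mathcal{Z}_{2,1},\mathcal{Z}_{2,2},\mathcal{Z}_{2,2},\mathcal{Z}_{2,3})\bigl(\|\mathbb{1}_\om f\|_2^2+\|\mathbb{1}_\om g\|_2^2\bigr),
\]
and to absorb the first term into $4\varphi(\cdot)(\|f\|_2^2+\|g\|_2^2)$ you need $\varphi(\mathcal{Z}_{2,1},\mathcal{Z}_{2,2},\mathcal{Z}_{2,2},\mathcal{Z}_{2,3})\ge 1$. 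You justify this only by ``$\mathcal{Z}_{2,3}\ge\tfrac1{|\om|}\int_\om(q^2+u_{0,1}^2)$ and $q\approx1$ where there is no pattern,'' which is a heuristic, not a proof: $\tfrac1{|\om|}\int_\om q^2 = 1 + \tfrac2{|\om|}\int_\om(u_{0,2}-3\lambda_1 u_{0,1}) + \tfrac1{|\om|}\int_\om(u_{0,2}-3\lambda_1 u_{0,1})^2$, and the middle term can be negative, so nothing forces this quantity to exceed $1$ a priori. What your computation unconditionally delivers is
\[
\|\mathbb{B}\Phi(\mathbf{h})\mathbf{w}\|_2 \;\le\; 2\sqrt{\kappa_2^2+4\kappa_0^2}\,\sqrt{\max\{1,\varphi(\mathcal{Z}_{2,1},\mathcal{Z}_{2,2},\mathcal{Z}_{2,2},\mathcal{Z}_{2,3})\}}\;r\,\|\mathbf{w}\|_{\mathcal{H}}.
\]
So either replace $\varphi$ by $\max\{1,\varphi\}$ in $\mathcal{Z}_2(r)$ (harmless, and costs nothing whenever $\varphi\ge1$ holds), or state $\varphi\ge 1$ as an explicit hypothesis to be verified in the computer-assisted part of the proof; as written, the step is not rigorous.
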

\begin{proof}
The proof can be found in Appendix \ref{apen : Z2}.
\end{proof}
\subsection{The Bound \texorpdfstring{$\mathcal{Z}_1$}{Zfull1}}\label{sec : Z1full}
In this section, we study the bound $\mathcal{Z}_1$ satisfying $\|I_d - \mathbb{A} D\mathbb{F}(\mathbf{u}_0)\|_{\mathcal{H}} \leq \mathcal{Z}_1$. In other terms, $\mathcal{Z}_1$ controls the accuracy of $\mathbb{A}$ as an approximate inverse for $ D\mathbb{F}(\mathbf{u}_0) : \mathcal{H}_{D_4} \to L^2_{D_4}.$
We begin by defining ${v}_1, v_2 \in L^\infty(\R^2) \cap L^2_{D_4}(\R^2)$ as 
\[
v_1 \bydef 2u_{0,2}u_{0,1} + 2u_{0,1} - 3\lambda_1 u_{0,1}^2 ~~\text{and}~~v_2 \bydef u_{0,1}^2. 
\]
In particular, $D\mathbb{G}(\mathbf{u}_0)$ can then be written as 
\begin{align}
    D\mathbb{G}(\mathbf{u}_0) \bydef \begin{bmatrix}
        \mathbb{v}_1 & \mathbb{v}_2 \\
        0 & 0
    \end{bmatrix},\label{def : w_0_in_full_equation}
\end{align}
where $\mathbb{v}_1$ and $\mathbb{v}_2$ are the multiplication operators (cf. \eqref{def : multiplication operator}) associated to $v_1$ and $v_2$ respectively. Next,  define 
\begin{align}
V_1 \bydef \gamma(v_1)\text{,}~~ V_2 \bydef \gamma(v_2), ~~ V_1^{N} \bydef \pi^{2N} V_1\text{ and}~~ V_2^N \bydef \pi^{2N} V_2.\label{def : V_1 and V_2}
\end{align}
Since $\mathbf{U}_0 = \pi^{N_0} \mathbf{U}_0$, we have $V_i =  \pi^{2N_0}V_i$ ($i \in \{1,2\}$) by definition. 
Then, we define 
\begin{align}
     DG^N(\mathbf{U}_0) = \begin{bmatrix}
         \mathbb{V}_1^N & \mathbb{V}_2^N\\
         0 & 0
     \end{bmatrix},
\end{align}
where $\mathbb{V}_1^N, \mathbb{V}_2^N$ are the discrete convolution operators (cf. \eqref{def : discrete conv operator}) associated to $V_1^N, V^N_2$ respectively.
Similarly, 
\begin{align}
    D\mathbb{G}^N(\mathbf{u}_0) \bydef \bGam^\dagger(DG^N(\mathbf{U}_0)), ~~ v_1^N \bydef \gamma^\dagger(V_1^N) ~~\text{ and } ~~ v_2^N \bydef \gamma^\dagger(V_2^N).\label{def : bbDG_N}
\end{align}
 The above notations allow to decompose both $D\mathbb{G}(\mathbf{u}_0)$ and $D{G}(\mathbf{U}_0)$ using a truncation of size $N$. In particular, we can produce an explicit bound $\mathcal{Z}_1$.
\begin{lemma}\label{lem : Z_full_1}
Let $\left(\mathcal{Z}_{u,k,j}\right)_{k \in \{1,2\}, j \in \{1,2,3\}}$ be bounds satisfying
\begin{align}
&\mathcal{Z}_{u,1,1} \geq \|\mathbb{1}_{\mathbb{R}^2\setminus\om} \mathbb{L}_{11}^{-1} \mathbb{v}_1^N\|_{2} \\
    &\mathcal{Z}_{u,1,2} \geq \|\mathbb{1}_{\mathbb{R}^2\setminus\om}\mathbb{L}_{22}^{-1}\mathbb{v}_2^N\|_{2}
    \\
    &\mathcal{Z}_{u,1,3} \geq \|\mathbb{1}_{\mathbb{R}^2\setminus\om}\mathbb{L}_{22}^{-1}\mathbb{L}_{21}\mathbb{L}_{11}^{-1}\mathbb{v}_2^N\|_{2}\\
&\mathcal{Z}_{u,2,1} \geq \|\mathbb{1}_{\om} (\mathbb{L}_{11}^{-1} - \Gamma^\dagger(L_{11}^{-1}))\mathbb{v}_1^N\|_{2} \\
    &\mathcal{Z}_{u,2,2} \geq \|\mathbb{1}_{\om} (\mathbb{L}_{22}^{-1} - \Gamma^\dagger(L_{22}^{-1}))\mathbb{v}_2^N\|_{2}
    \\
    &\mathcal{Z}_{u,2,3} \geq \|\mathbb{1}_{\om}(\mathbb{L}_{22}^{-1}\mathbb{L}_{21}\mathbb{L}_{11}^{-1} - \Gamma^\dagger(L_{22}^{-1} L_{21} L_{11}^{-1}))\mathbb{v}_2^N\|_{2}.
\end{align} 
Moreover, given $k \in \{1,2\}$, define  $\mathcal{Z}_{u,k} \bydef \sqrt{(\mathcal{Z}_{u,k,1} + \mathcal{Z}_{u,k,3})^2 + \mathcal{Z}_{u,k,2}^2} $.
Then, it follows that $\mathcal{Z}_{u,1}$ and $\mathcal{Z}_{u,2}$ satisfy
\begin{align} 
\mathcal{Z}_{u,1} &\geq \|\mathbb{1}_{\mathbb{R}^2 \setminus \om} D\mathbb{G}^N(\mathbf{u}_0) \mathbb{L}^{-1}\|_{2}\label{def : Zu1}\\
    \mathcal{Z}_{u,2} &\geq \|\mathbb{1}_{\om}D\mathbb{G}^N(\mathbf{u}_0)(\bGam^\dagger(L^{-1}) - \mathbb{L}^{-1})\|_{2}.\label{def : Zu2}
\end{align}
Furthermore, let $Z_1$ be a non-negative constant satisfying
\begin{align}
Z_1 &\geq \|I_d - B(I_d + DG^N(\mathbf{U}_0)L^{-1})\|_{2}.
\end{align}
Also define  $\mathcal{Z}_u  \bydef \sqrt{\mathcal{Z}_{u,1}^2 + \mathcal{Z}_{u,2}^2} $.
Let $\varphi$ be defined as in \eqref{definition_of_phi}. Finally, defining $\mathcal{Z}_1>0$ as
\begin{equation}\label{eq : first definition Z1}
    \mathcal{Z}_1 \bydef Z_1 + \max\left\{1, \|B_{11}^N\|_{2}\right\} \left(\mathcal{Z}_{u} +  \varphi\left(1,0,\frac{|\lambda_1\lambda_2-1|}{\lambda_2},\frac{1}{\lambda_2}\right) \sqrt{\|V_1^N - V_1\|_{1}^2 + \|V_2^N - V_2\|_{1}^2}\right),
\end{equation}
it follows that $ \|I_d -{\mathbb{A}}D\mathbb{F}(\mathbf{u}_0)\|_{\mathcal{H}} \leq \mathcal{Z}_1.$
\end{lemma}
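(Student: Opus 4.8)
## Proof proposal for Lemma \ref{lem : Z_full_1}

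The plan is to decompose the operator $I_d - \mathbb{A}D\mathbb{F}(\mathbf{u}_0)$ into a ``finite'' part that is handled by the matrix $B$ acting on Fourier coefficients, plus several ``tail'' and ``discretization-error'' corrections, and then bound each piece using Lemma \ref{lem : gamma and Gamma properties} (to pass between $L^2_{D_4,\om}$ and $\ell^2_{D_4}$ isometrically) and Lemma \ref{lem : full_matrix_estimate} (to estimate block operator norms via $\varphi$). First I would write $\mathbb{A}D\mathbb{F}(\mathbf{u}_0) = \mathbb{L}^{-1}\mathbb{B}D\mathbb{F}(\mathbf{u}_0)$ and use that $\mathbb{L}:\mathcal{H}_{D_4}\to L^2_{D_4}$ is an isometric isomorphism, so that $\|I_d - \mathbb{A}D\mathbb{F}(\mathbf{u}_0)\|_{\mathcal{H}} = \|I_d - \mathbb{B}D\mathbb{F}(\mathbf{u}_0)\mathbb{L}^{-1}\|_2$. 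Then I would insert the splitting $D\mathbb{F}(\mathbf{u}_0)\mathbb{L}^{-1} = I_d + D\mathbb{G}(\mathbf{u}_0)\mathbb{L}^{-1}$, and further split $D\mathbb{G}(\mathbf{u}_0) = D\mathbb{G}^N(\mathbf{u}_0) + \big(D\mathbb{G}(\mathbf{u}_0) - D\mathbb{G}^N(\mathbf{u}_0)\big)$, using the notations $v_i$, $v_i^N$, $V_i$, $V_i^N$ set up just before the lemma.

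Next I would treat the three resulting contributions separately. For the term involving $D\mathbb{G}^N(\mathbf{u}_0)\mathbb{L}^{-1}$, I would write $\mathbb{B} = \mathbb{1}_{\mathbb{R}^2\setminus\om} + \bGam^\dagger(B)$ (componentwise as in \eqref{def : the operator A}) and compare $\mathbb{L}^{-1}$ restricted to $\om$ with its Fourier-coefficient analogue $\bGam^\dagger(L^{-1})$. This produces exactly the operators appearing in \eqref{def : Zu1} and \eqref{def : Zu2}: the part supported on $\mathbb{R}^2\setminus\om$ is controlled by $\mathcal{Z}_{u,1}$, and the mismatch between $\mathbb{L}^{-1}$ and $\bGam^\dagger(L^{-1})$ on $\om$ is controlled by $\mathcal{Z}_{u,2}$; combining them (again via $\varphi$, or just $\sqrt{\mathcal{Z}_{u,1}^2+\mathcal{Z}_{u,2}^2}$) gives $\mathcal{Z}_u$. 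The block structure of $D\mathbb{G}^N(\mathbf{u}_0)$ (nonzero only in the top row, with entries $\mathbb{v}_1^N$ and $\mathbb{v}_2^N$) is what lets me reduce $\mathcal{Z}_{u,k}$ to the scalar quantities $\mathcal{Z}_{u,k,j}$: for $\mathcal{Z}_{u,1}$ I push $\mathbb{1}_{\mathbb{R}^2\setminus\om}$ through $\mathbb{L}^{-1}$, whose block lower-triangular inverse has entries $\mathbb{L}_{11}^{-1}$, $-\mathbb{L}_{22}^{-1}\mathbb{L}_{21}\mathbb{L}_{11}^{-1}$, $\mathbb{L}_{22}^{-1}$, matching the three bounds $\mathcal{Z}_{u,1,1}$, $\mathcal{Z}_{u,1,3}$, $\mathcal{Z}_{u,1,2}$; similarly for $\mathcal{Z}_{u,2}$ with $\mathbb{L}^{-1} - \bGam^\dagger(L^{-1})$. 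For the ``finite'' part $\mathbb{1}_\om\bGam^\dagger(B)\big(I_d + D\mathbb{G}^N(\mathbf{u}_0)\bGam^\dagger(L^{-1})\big)$, Lemma \ref{lem : gamma and Gamma properties} converts this to $\|I_d - B(I_d + DG^N(\mathbf{U}_0)L^{-1})\|_2$, which is the quantity bounded by $Z_1$ and computed rigorously on finite matrices. Finally, for the truncation-error term $D\mathbb{G}(\mathbf{u}_0) - D\mathbb{G}^N(\mathbf{u}_0)$, whose symbol (in Fourier coefficients) is the convolution by $V_i - V_i^N$, I would bound its $L^2$ operator norm by the $\ell^1$ norm of $V_i - V_i^N$ using Young's inequality \eqref{young_inequality}, composed with $\mathbb{L}^{-1}$, whose block-operator norm on the relevant space is bounded using $\varphi(1,0,|\lambda_1\lambda_2-1|/\lambda_2,1/\lambda_2)$ (from the explicit entries of $l^{-1}$ in \eqref{l_inverse} and the sup bounds already computed in Corollary \ref{cor : banach algebra1}). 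Multiplying through by $\max\{1,\|B_{11}^N\|_2\}$, which bounds the relevant operator norm of $\mathbb{B}$ on the tail, and collecting the three estimates yields \eqref{eq : first definition Z1}.

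The main obstacle, I expect, is the careful bookkeeping in separating the operator $\mathbb{B}(I_d + D\mathbb{G}(\mathbf{u}_0)\mathbb{L}^{-1})$ into the piece that ``lives on $\ell^2_{D_4}$'' (where $B$ and $L^{-1}$ act as genuine matrices and the error is $Z_1$) versus the pieces that detect the unboundedness of $\R^2$ — namely the components supported on $\mathbb{R}^2\setminus\om$ and the discrepancy between the true resolvent $\mathbb{L}^{-1}$ and its periodic truncation $\bGam^\dagger(L^{-1})$ on $\om$. Getting the domains of the characteristic functions $\mathbb{1}_\om$ and $\mathbb{1}_{\mathbb{R}^2\setminus\om}$ to line up correctly through the compositions, and ensuring that the cross terms are either absorbed into $\mathcal{Z}_u$ or vanish (because $\bGam^\dagger$ produces functions supported in $\overline{\om}$ while the multiplication by $\mathbb{1}_{\mathbb{R}^2\setminus\om}$ kills them), is the delicate step; the rest is repeated application of the triangle inequality, Lemma \ref{lem : full_matrix_estimate}, and Young's inequality. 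This follows the same skeleton as the scalar bound $\mathcal{Z}_1$ in \cite{unbounded_domain_cadiot}, but the $2\times 2$ block structure of $\mathbb{L}$ and $D\mathbb{G}$ forces the extra indexing by $j \in \{1,2,3\}$ and the use of $\varphi$ throughout.
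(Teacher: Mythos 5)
Your decomposition matches the paper's proof exactly: pass to the $L^2$ norm via the isometry $\mathbb{L}$, apply the triangle inequality twice to split $I_d-\mathbb{B}(I_d+D\mathbb{G}(\mathbf{u}_0)\mathbb{L}^{-1})$ into the discrete piece bounded by $Z_1$, the discretization/leakage piece bounded by $\max\{1,\|B_{11}^N\|_2\}\mathcal{Z}_u$, and the truncation-error piece handled by Young's inequality and the $\varphi$ estimate on $\mathbb{L}^{-1}$, with the $\mathcal{Z}_{u,k,j}$ arising from the block-triangular structure of $\mathbb{L}^{-1}$ (the paper carries this out on the adjoint $\mathbb{L}^{-*}D\mathbb{G}^N(\mathbf{u}_0)^*$, which is what your ``push through'' step amounts to). The proposal is correct and follows the same route as the paper.
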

\begin{proof}
The proof can be found in Appendix \ref{apen : Z1}.
\end{proof}
\begin{remark}
    In practice, especially if $N_0$ is bigger than $N$, it is useful to work with $V_i^N$ instead of $V_i$. This leads to the error term controlled by $V_i^N - V_i$ (cf. Lemma \ref{lem : Z_full_1}), which is small if the coefficients of $V_i$ decay rapidly.
\end{remark}
We must now compute each bound: $Z_1, \mathcal{Z}_{u,1},$ and $\mathcal{Z}_{u,2}$. This will be the emphasis of the next three sections.
\subsubsection{Computation of \texorpdfstring{$Z_1$}{Z1}}\label{sec : Z1}
This section derives a formula for the bound $Z_1$ satisfying $\|I - B(I_d + DG^N(\mathbf{U}_0)L^{-1})\|_{2} \leq Z_1$. Note that this is the usual bound one computes when proving periodic solutions to partial differential equations (see \cite{van2021spontaneous} for instance). We outline its computation in the following lemma.

\begin{lemma}\label{lem : Z1_bound}
Let $M^N \bydef \bpi^N + DG^N(\mathbf{U}_0)L^{-1}$ and $M \bydef \bpi_N + M^N = I_d + DG^N(\mathbf{U}_0)L^{-1}$. Recall that $\varphi$ is defined as in \eqref{definition_of_phi}. Then, let $Z_1 > 0$ be such that
\begin{equation}\label{ineq : Z}
    Z_1 \bydef \varphi(Z_{1,1},Z_{1,2},Z_{1,3},Z_{1,4})
\end{equation}
where 
\begin{align}
&Z_{1,1} \bydef \sqrt{\|(\bpi^N - B^NM^N)(\bpi^N -  (M^N)^*(B^N)^*)\|_{2}} \\
    &Z_{1,2} \bydef \max_{n \in J_{\mathrm{red}}(D_4) \setminus I^N} \sqrt{\left(\frac{\sqrt{\|M_1\|_{2}}}{|l_{11}(\tilde{n})|}  + \frac{|l_{21}(\tilde{n})|\sqrt{\|M_2\|_{2}}}{|l_{22}(\tilde{n}) l_{11}(\tilde{n})|}\right)^2 + \frac{\|M_2\|_{2}}{|l_{22}(\tilde{n})|^2}} \\
    &Z_{1,3} \bydef \sqrt{\|\bpi^NL^{-*}DG^N(\mathbf{U}_0)^{*}\bpi_NDG^N(\mathbf{U}_0)L^{-1}\bpi^N\|_{2}} \\
    &Z_{1,4} \bydef \max_{n \in J_{\mathrm{red}}(D_4)\setminus I^N}\sqrt{\left( \frac{\|V_1^N\|_{1}}{|l_{11}(\tilde{n})|}  + \frac{|l_{21}(\tilde{n})|\|V_2^N\|_{1}}{|l_{22}(\tilde{n}) l_{11}(\tilde{n})|} \right)^2 + \frac{\|V_2^N\|_{1}^2}{|l_{22}(\tilde{n})|^2} } 
\end{align}\par and $M_1,M_2$ are given by
\begin{align*}
    M_1 \bydef   B_{11}^N \mathbb{V}_1^N \pi_N \mathbb{V}_1^N (B_{11}^N)^{*} ~~\text{and}~~
    M_2 \bydef  B_{11}^N \mathbb{V}_2^N \pi_N \mathbb{V}_2^N (B_{11}^N)^{*}.
\end{align*}
Then, $\|I_d - B(I_d + DG^N(\mathbf{U}_0)L^{-1})\|_{2}  = \|I_d - BM\|_{2} \leq Z_1$.
\end{lemma}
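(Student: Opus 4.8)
The plan is to estimate $\|I_d - BM\|_2$ by exploiting the block-triangular structure of both $B$ (cf. \eqref{def : B_finite_infinite}) and $M$, and the splitting $M = \bpi_N + M^N$, $B = \bpi_N + B^N$. First I would write out $I_d - BM$ explicitly. Since $B^N = \bpi^N B^N \bpi^N$ and $M^N = \bpi^N M^N$ (the latter because $DG^N(\mathbf{U}_0)$ has image in the range of $\bpi^N$ — recall $V_i^N = \pi^{2N} V_i$ so the convolution operators interact with $\bpi^N$ appropriately), the product $BM$ decomposes into a finite-dimensional part acting on the range of $\bpi^N$, a tail part, and cross terms. The key algebraic identity is
\[
I_d - BM = (\bpi^N - B^N M^N) - \bpi_N M^N - B^N M^N \bpi_N + (\text{terms that vanish}),
\]
which one organizes as a $2\times 2$ block operator with respect to the decomposition $\ell^2_{D_4} = \mathrm{range}(\bpi^N) \oplus \mathrm{range}(\bpi_N)$; Lemma \ref{lem : full_matrix_estimate} then bounds its norm by $\varphi(Z_{1,1},Z_{1,2},Z_{1,3},Z_{1,4})$ once I identify each block with the four quantities in the statement.

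Next I would bound each of the four blocks. The block $Z_{1,1}$ is the purely finite-dimensional one, $\bpi^N - B^N M^N$ restricted to $\mathrm{range}(\bpi^N)$; here I use $\|K\|_2 = \sqrt{\|KK^*\|_2}$ to rewrite it as $\sqrt{\|(\bpi^N - B^NM^N)(\bpi^N - (M^N)^*(B^N)^*)\|_2}$, which is what is written. The block $Z_{1,4}$ comes from the off-diagonal piece $\bpi_N DG^N(\mathbf{U}_0) L^{-1} \bpi^N$ — wait, more precisely the piece mapping $\mathrm{range}(\bpi^N)$ into $\mathrm{range}(\bpi_N)$ via $M^N$; since $M^N = \bpi^N + DG^N(\mathbf{U}_0)L^{-1}$ and $\bpi^N$ contributes nothing to $\bpi_N M^N$, this is $\bpi_N DG^N(\mathbf{U}_0)L^{-1}$, whose norm I bound using the explicit block form of $L^{-1}$ in \eqref{l_inverse} together with Young's inequality \eqref{young_inequality} applied to the convolution operators $\mathbb{V}_i^N$ (giving the $\|V_i^N\|_1$ factors) and the fact that the diagonal multipliers $1/l_{11}(\tilde n)$, $l_{21}(\tilde n)/(l_{11}l_{22})(\tilde n)$, $1/l_{22}(\tilde n)$ attain their suprema over $n \notin I^N$ at the indicated maximum; the $\varphi$-type combination inside the square root of $Z_{1,4}$ reflects the $2\times 2$ structure of $L^{-1}$ acting on the two components. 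The block $Z_{1,2}$ is the analogous tail estimate but with $B^N M^N$ in place of the identity — it controls $\bpi_N$ composed with the finite-rank part re-expanded through $L^{-1}$, and the appearance of $M_1, M_2$ (which are $B_{11}^N \mathbb{V}_i^N \pi_N \mathbb{V}_i^N (B_{11}^N)^*$) comes from isolating the $\pi_N$-tail of the convolution and using $\|K\|_2 = \sqrt{\|KK^*\|_2}$ once more. Finally $Z_{1,3}$ is the norm of $\bpi^N L^{-*} DG^N(\mathbf{U}_0)^* \bpi_N DG^N(\mathbf{U}_0) L^{-1} \bpi^N$, which captures the contribution of the tail $\bpi_N$ sandwiched between two copies of the finite-rank perturbation pulled back to $\mathrm{range}(\bpi^N)$ — again rewritten as a $\sqrt{\|\cdot\|_2}$ of a self-adjoint operator so it is numerically computable.

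The main obstacle I anticipate is the bookkeeping: correctly identifying which of the cross terms in $I_d - BM$ genuinely vanish (using $B^N \bpi_N = 0$, $\bpi^N \bpi_N = 0$, and $DG^N(\mathbf{U}_0) = \bpi^N DG^N(\mathbf{U}_0)\bpi^{2N_0}$ up to the action on components), and then matching the surviving four blocks — one finite-dimensional, one purely tail, and two mixed — against the precise forms of $Z_{1,1},\dots,Z_{1,4}$, being careful that the two components of the system force the inner $2\times 2$ structure of $L^{-1}$ to be resolved via $\varphi$ inside $Z_{1,2}$ and $Z_{1,4}$. The estimates themselves reduce to Young's inequality \eqref{young_inequality}, Plancherel/Lemma \ref{lem : gamma and Gamma properties}, and the elementary identity $\|K\|_2 = \|K^*\|_2 = \sqrt{\|KK^*\|_2}$; once the decomposition is set up correctly these are routine. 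I would close by invoking Lemma \ref{lem : full_matrix_estimate} on the $2\times 2$ block operator to conclude $\|I_d - BM\|_2 \leq \varphi(Z_{1,1},Z_{1,2},Z_{1,3},Z_{1,4}) = Z_1$.
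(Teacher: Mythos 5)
Your proposal follows essentially the same route as the paper: decompose $I_d - BM$ into the $2\times 2$ block operator relative to $\mathrm{range}(\bpi^N)\oplus\mathrm{range}(\bpi_N)$, bound the $(1,1)$ block by $\|K\|_2 = \sqrt{\|KK^*\|_2}$ (giving $Z_{1,1}$), bound the off-diagonal blocks via the adjoint trick and the explicit block-triangular form of $L^{-1}$ (giving $Z_{1,2}$ with the $M_1,M_2$ and $Z_{1,3}$), use Young's inequality to produce the $\|V_i^N\|_1$ factors in the tail-tail block (giving $Z_{1,4}$), and conclude with Lemma~\ref{lem : full_matrix_estimate}.

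One small bookkeeping slip: in the sentence introducing $Z_{1,4}$ you first identify it with $\bpi_N D G^N(\mathbf{U}_0) L^{-1}\bpi^N$, but that is the $(2,1)$ block already captured by $Z_{1,3}$; $Z_{1,4}$ bounds the diagonal tail block $-\bpi_N D G^N(\mathbf{U}_0) L^{-1}\bpi_N$. You clearly understand this (``one finite-dimensional, one purely tail, and two mixed''), and the estimates you describe for $Z_{1,4}$ (Young's inequality yielding $\|V_i^N\|_1$) are indeed the ones for that diagonal block, so this is a mislabelling in the sketch rather than a flaw in the argument --- but it matters for the order of the arguments to $\varphi$, since $\varphi$ treats the pair of diagonal entries $(x_1,x_4)$ differently from the off-diagonal pair $(x_2,x_3)$, so you would want to double-check the assignment when writing the full proof.
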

\begin{proof}
The proof can be found in Appendix \ref{apen : Z1}.
\end{proof}
\begin{remark}
Note that the maxima in the definition of $Z_{1,2}$ and $Z_{1,4}$ are well-defined and finite since, as a function of $n$, the functions under the maxima are strictly decreasing.  
\end{remark}
\subsubsection{Computation of \texorpdfstring{$\mathcal{Z}_{u,1}$}{Zu}}\label{sec : Zu1}

Let $f_{11}$, $f_{22}$ and $f_3$ be functions on $\R^2$ defined as follows
\\ \noindent\begin{minipage}{.5\linewidth}
\begin{equation}\label{def : f11}
 f_{11} \bydef \mathcal{F}^{-1}\left(\frac{1}{l_{11}}\right)  \end{equation}
\end{minipage}%
\begin{minipage}{.5\linewidth}
\begin{equation}\label{def : f22}
f_{22} \bydef \mathcal{F}^{-1}\left(\frac{1}{l_{22}}\right)\end{equation}
\end{minipage}
\begin{align}
    f_3\bydef \mathcal{F}^{-1}\left(\frac{l_{21}}{l_{11}l_{22}}\right).\label{def : f3}
\end{align}
In particular, notice that 
\begin{align}
    \mathbb{L}^{-1}_{11} u = f_{11}*u, ~~ \mathbb{L}^{-1}_{22} u = f_{22}*u ~~ \text{ and } ~~ \mathbb{L}_{22}^{-1}\mathbb{L}_{21}\mathbb{L}_{11}^{-1} u = f_3*u\label{f_j_action}
\end{align}
for all $u \in L^2(\R^2).$ We can therefore transform accordingly the inequalities satisfied by the bounds $\mathcal{Z}_{u,1,j}$ for $j = 1,2,3$ in Lemma \ref{lem : Z_full_1} and obtain 
\begin{align}\label{eq : operator norm Zu1}
    \mathcal{Z}_{u,1,1} \geq \|\mathbb{1}_{\mathbb{R}^2\setminus\om} f_{11}* \mathbb{v}_1^N\|_{2}, ~~ 
    \mathcal{Z}_{u,1,2} \geq \|\mathbb{1}_{\mathbb{R}^2\setminus\om}f_{22}*\mathbb{v}_2^N\|_{2} ~ \text{ and } ~
    \mathcal{Z}_{u,1,3} \geq \|\mathbb{1}_{\mathbb{R}^2\setminus\om}f_3*\mathbb{v}_2^N\|_{2}
\end{align}
where we define the linear operator $f * \mathbb{v} : L^2(\R^2) \to L^2(\R^2)$ as $(f*\mathbb{v})u \bydef f*(vu)$ for all $u \in L^2(\mathbb{R}^2)$.
The next proposition then shows that the operator norms in \eqref{eq : operator norm Zu1} can be bounded using norms of functions. 
\begin{prop}\label{prop : introduce_f}
    Let $f,v \in L^2(\R^2)$  and assume that $f^2,v^2 \in L^1(\R^2)$. Then, given a domain (possibly unbounded) $\Omega \subset \R^2$,  ${\|\mathbb{1}_{\Omega}(f^2*v^2)\|_{1}}$ is finite and we have
    \begin{align*}
        \|\mathbb{1}_{\Omega}f*(vu)\|_{2} \leq \sqrt{\|\mathbb{1}_{\Omega}(f^2*v^2)\|_{1}} \|u\|_{2}
    \end{align*}
    for all $u \in L^2(\R^2)$.
\end{prop}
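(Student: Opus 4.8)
The plan is to pass to the Fourier side, use Plancherel together with Young's convolution inequality to reduce the operator bound to a pointwise bound on $|(f^2*v^2)(x)|$, and finally identify the latter with a supremum over $\Omega$ realized through Cauchy--Schwarz in $L^2$. First I would set $w \bydef vu$ and observe that, since $v\in L^2$ and $u\in L^2$, we have $w\in L^1(\R^2)$ with $\|w\|_1\le \|v\|_2\|u\|_2$; hence $\widehat w = \hat v * \hat u$ makes sense and $f*w$ is well defined as an $L^2$ function because $\mathcal{F}(f*w)=\hat f\,\widehat w$ with $\hat f\in L^2$ (as $f\in L^2$) and $\widehat w\in L^\infty$ (as $w\in L^1$). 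The key pointwise estimate is that for a.e.\ $x$,
\[
|(f*w)(x)|^2 = \left|\int_{\R^2} f(x-y)\,v(y)\,u(y)\,dy\right|^2 \le \left(\int_{\R^2} |f(x-y)|^2|v(y)|^2\,dy\right)\left(\int_{\R^2}|u(y)|^2\,dy\right) = (f^2*v^2)(x)\,\|u\|_2^2,
\]
by Cauchy--Schwarz applied to the splitting $f(x-y)v(y)\cdot u(y)$. Integrating this inequality over $\Omega$ gives
\[
\|\mathbb{1}_\Omega f*(vu)\|_2^2 = \int_\Omega |(f*w)(x)|^2\,dx \le \|u\|_2^2 \int_\Omega (f^2*v^2)(x)\,dx = \|\mathbb{1}_\Omega(f^2*v^2)\|_1\,\|u\|_2^2,
\]
where the last equality uses that $f^2*v^2\ge 0$. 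Taking square roots yields the claimed inequality.

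It remains to justify that $\|\mathbb{1}_\Omega(f^2*v^2)\|_1$ is finite. Since $f^2,v^2\in L^1(\R^2)$ by hypothesis, Young's inequality (the $L^1*L^1\hookrightarrow L^1$ case) gives $f^2*v^2\in L^1(\R^2)$ with $\|f^2*v^2\|_1 \le \|f^2\|_1\|v^2\|_1 = \|f\|_2^2\|v\|_2^2 < \infty$; a fortiori its restriction to any measurable $\Omega\subset\R^2$ has finite $L^1$ norm, and in fact $\|\mathbb{1}_\Omega(f^2*v^2)\|_1 \le \|f\|_2^2\|v\|_2^2$. This also retroactively confirms that the middle display is finite and the interchange of the order of integration (Tonelli, everything nonnegative) used to write $\int_\Omega(f^2*v^2) = \int_\Omega\int |f(x-y)|^2|v(y)|^2\,dy\,dx$ is legitimate.

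The only mild subtlety — and the main thing to be careful about rather than a genuine obstacle — is the measurability and finiteness bookkeeping: ensuring $f*w$ is a well-defined pointwise-a.e.\ function (not merely an $L^2$ limit) so that the Cauchy--Schwarz pointwise bound is meaningful, and ensuring the double integral may be manipulated freely. Both are handled by the observation $w\in L^1$, which makes $f*w$ defined a.e.\ by an absolutely convergent integral for a.e.\ $x$ (since $\int|f(x-y)||w(y)|\,dy<\infty$ a.e., as $|f|*|w|\in L^2\subset L^1_{loc}$), together with nonnegativity of the integrands in the final step, which legitimizes Tonelli. No heavier machinery is needed.
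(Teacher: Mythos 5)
Your proof is correct and uses the same key argument as the paper: apply Cauchy--Schwarz to the splitting $f(x-y)v(y)\cdot u(y)$ inside the convolution integral, integrate over $\Omega$, and invoke Young's $L^1*L^1$ inequality for finiteness of $\|\mathbb{1}_\Omega(f^2*v^2)\|_1$. The Fourier-side remarks and the Tonelli/measurability bookkeeping you add are harmless extra justification but do not change the route.
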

\begin{proof}
Since $f^2,v^2 \in L^1(\R^2)$, Young's inequality for the convolution provides that ${\|\mathbb{1}_{\Omega}(f^2*v^2)\|_{1}}$ is finite. Then, given $u \in L^2(\R^2)$, we use Cauchy-Schwatz inequality to get
    \begin{align*}
       \|\mathbb{1}_{\Omega}(f*(vu))\|_{2}^2 &=  \int_{\Omega}\left| \int_{\R^2} f(x-y)v(y)u(y) dy \right|^{2} dx\\
       &\leq \|u\|_{2}^2 \int_{\Omega}\int_{\R^2} |f(x-y)v(y)|^2 dy dx = \|\mathbb{1}_{\Omega}(f^2*v^2)\|_{1} \|u\|_{2}^2,
    \end{align*}
    which concludes the proof.
\end{proof}
Using the above proposition, if we can control explicitly the functions $f_{11}$, $f_{22}$ and $f_3$, then we will be able to determine an explicit formula for the bounds $\mathcal{Z}_{u,1,j}$. 
Specifically, we can choose $\mathcal{Z}_{u,1,j}$ such that 
\begin{align}
    &\mathcal{Z}_{u,1,1} \geq \sqrt{\|\mathbb{1}_{\mathbb{R}^2\setminus\om} (f_{11}^2* ({v}_1^N)^2\|_{1}} \\
    &\mathcal{Z}_{u,1,2} \geq \sqrt{\|\mathbb{1}_{\mathbb{R}^2\setminus\om}(f_{22}^2*({v}_2^N)^2)\|_{1}}\label{eq : new definition of Zu1 in equation} \\
    &\mathcal{Z}_{u,1,3} \geq \sqrt{\|\mathbb{1}_{\mathbb{R}^2\setminus\om}(f_3^2*(\mathbb{v}_2^N)^2)\|_{1}}.
\end{align}
In order to compute the bounds \eqref{eq : new definition of Zu1 in equation} explicitly, we want to prove that $f_{11}$, $f_{22}$ and $f_3$ are exponentially decaying at infinity. Indeed, letting 
\begin{align}\label{def : definition of a1 and a2}
    a_1 \bydef \sqrt{\frac{1}{\lambda_1}} ~~ \text{ and }~~ a_2 \bydef \sqrt{\lambda_2},
\end{align}
 we claim that there exists non-negative constants $C_0(f_{11})$ and $C_0(f_{22})$ such that 
\\ \noindent\begin{minipage}{.5\linewidth}
\begin{equation}\label{def : constant C0f11}
 |f_{11}(x)| \leq C_0(f_{11}) \frac{e^{-a_1 |x|}}{|x|^{1/4}_1}  \end{equation}
\end{minipage}%
\begin{minipage}{.5\linewidth}
\begin{equation}\label{def : constant C0f22}
|f_{22}(x)| \leq C_0(f_{22}) \frac{e^{-a_2|x|}}{|x|^{1/4}_1}\end{equation}
\end{minipage}
\begin{align}
    |f_3(x)| \leq \begin{cases}
        C_0(f_{22}) \frac{e^{-a_2|x|}}{|x|^{\frac{1}{4}}} & \mathrm{\ if \ } \frac{1}{\lambda_1} \geq \lambda_2 \\
       \frac{1}{a_1^2} C_0(f_{11}) \frac{e^{-a_1|x|}}{|x|^{\frac{1}{4}}} & \mathrm{\ if \ } \frac{1}{\lambda_1} \leq \lambda_2 \end{cases}\label{def : C3}
\end{align}
\\ for all $x \in \mathbb{R}^2 \setminus \{0\}.$ We prove such a claim in the following lemma.
\begin{lemma}\label{lem : constants_for_f}
Let $f_{11}, f_{22},$ and $ f_3$ be defined in \eqref{def : f11}, \eqref{def : f22}, and \eqref{def : f3} respectively and let $a_1,a_2$ be defined in \eqref{def : definition of a1 and a2}. Moreover, let $C_0(f_{11})$ and $C_0(f_{22})$ be non-negative constants defined as 
\begin{align}
    C_0(f_{11}) \bydef \max \left\{a_1^2 2e^{\frac{5}{4}}\left(\frac{2}{a_1}\right)^{\frac{1}{4}},~ a_1^2 \sqrt{\frac{\pi}{2\sqrt{a_1}}} \right\} ~~\text{and}~~C_0(f_{22}) \bydef \max \left\{ 2e^{\frac{5}{4}}\left(\frac{2}{a_2}\right)^{\frac{1}{4}},~ \sqrt{\frac{\pi}{2\sqrt{a_2}}} \right\}.
\end{align}
Then, \eqref{def : constant C0f11}, \eqref{def : constant C0f22}, and \eqref{def : C3} are satisfied.
\end{lemma}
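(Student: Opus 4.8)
The plan is to reduce the whole statement to a single pointwise estimate on the inverse Fourier transform of $\big(|2\pi\xi|^2+a^2\big)^{-1}$, and then to dispatch $f_3$ algebraically using identity \eqref{eq : equality of the lik}. Write $G_a \bydef \mathcal{F}^{-1}\big((|2\pi\xi|^2+a^2)^{-1}\big)$ for $a>0$; although $\big(|2\pi\xi|^2+a^2\big)^{-1}$ is not in $L^1(\R^2)$, $G_a$ is a genuine function, namely $G_a = \tfrac{1}{2\pi}K_0(a|\cdot|)$, the fundamental solution of $-\Delta+a^2$ on $\R^2$ (in particular $G_a>0$, radially strictly decreasing, exponentially decaying, with only a logarithmic singularity at the origin). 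From \eqref{definition_of_l}--\eqref{l_inverse} and the notation \eqref{def : definition of a1 and a2} (so $a_1^2=1/\lambda_1$, $a_2^2=\lambda_2$) one reads off $\tfrac{1}{l_{11}(\xi)} = -a_1^2\big(|2\pi\xi|^2+a_1^2\big)^{-1}$ and $\tfrac{1}{l_{22}(\xi)} = -\big(|2\pi\xi|^2+a_2^2\big)^{-1}$, hence $f_{11} = -a_1^2 G_{a_1}$ and $f_{22} = -G_{a_2}$. Thus \eqref{def : constant C0f11}--\eqref{def : constant C0f22} follow once $G_a$ is controlled, and \eqref{def : C3} will follow from a relation between $f_3$, $f_{11}$ and $f_{22}$.

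For the core estimate I would first record the subordination identity
\[
G_a(x) = \frac{1}{4\pi}\int_0^\infty \frac{e^{-a^2 t - |x|^2/(4t)}}{t}\,dt, \qquad x\neq 0,
\]
obtained by writing $\big(|2\pi\xi|^2+a^2\big)^{-1} = \int_0^\infty e^{-t(|2\pi\xi|^2+a^2)}\,dt$, applying Fubini, and evaluating the $2$D Gaussian Fourier integral. Completing the square, $a^2 t + \tfrac{|x|^2}{4t} = a|x| + \big(a\sqrt t - \tfrac{|x|}{2\sqrt t}\big)^2$, and substituting $u = a\sqrt t - \tfrac{|x|}{2\sqrt t}$ (a $C^1$-bijection of $(0,\infty)$ onto $\R$ with $\tfrac{dt}{t} = \tfrac{2\,du}{\sqrt{u^2+2a|x|}}$) gives
\[
G_a(x) = \frac{e^{-a|x|}}{2\pi}\int_{\R}\frac{e^{-u^2}}{\sqrt{u^2+2a|x|}}\,du.
\]
Now $u^2+2a|x|\ge 2^{3/2}|u|(a|x|)^{1/2}$, so $\tfrac{1}{\sqrt{u^2+2a|x|}}\le 2^{-3/4}(a|x|)^{-1/4}|u|^{-1/2}$, and $\int_{\R}|u|^{-1/2}e^{-u^2}\,du = \Gamma(\tfrac14)$; hence $G_a(x)\le \tfrac{\Gamma(1/4)}{2^{7/4}\pi}\,a^{-1/4}|x|^{-1/4}e^{-a|x|}$, already of the desired shape. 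To land on the precise constants in \eqref{def : constant C0f11}--\eqref{def : constant C0f22}, which are maxima of two terms, one instead splits $\R^2$ at a radius of order $1/a$: on $\{|x|\ge 1/a\}$ use the classical bound $K_0(z)\le\sqrt{\pi/(2z)}\,e^{-z}$ together with $|x|^{-1/2}\le a^{1/4}|x|^{-1/4}$, and on $\{0<|x|<1/a\}$ use $K_0(z)\le\ln(1+2/z)$ together with the uniform boundedness of $|x|^{1/4}\ln\!\big(1+\tfrac{2}{a|x|}\big)e^{a|x|}$ there; the larger of the two regional constants gives \eqref{def : constant C0f11} and \eqref{def : constant C0f22} (using $|x|\le|x|_1\le\sqrt2\,|x|$ to trade the Euclidean norm for the $\ell^1$ norm in the denominator).

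It remains to prove \eqref{def : C3}. Identity \eqref{eq : equality of the lik} reads $\tfrac{l_{21}}{l_{11}l_{22}} = \tfrac{1}{l_{22}} - \tfrac{\lambda_1}{l_{11}}$, hence $f_3 = f_{22} - \lambda_1 f_{11}$; since $\lambda_1 a_1^2 = 1$ this collapses to $f_3 = G_{a_1} - G_{a_2}$, with no residual dependence on the coefficient $\lambda_1\lambda_2-1$ --- this cancellation is precisely why one routes through \eqref{eq : equality of the lik}. Using that $a\mapsto G_a(x) = \tfrac{1}{2\pi}K_0(a|x|)$ is positive and strictly decreasing for every fixed $x\neq 0$: if $1/\lambda_1\ge\lambda_2$ (i.e. $a_1\ge a_2$) then $0\le G_{a_2}(x)-G_{a_1}(x)\le G_{a_2}(x)$, so $|f_3(x)|\le G_{a_2}(x) = |f_{22}(x)|$ and the bound on $f_{22}$ yields the first case of \eqref{def : C3}; if $1/\lambda_1\le\lambda_2$ (i.e. $a_1\le a_2$) then $0\le G_{a_1}(x)-G_{a_2}(x)\le G_{a_1}(x) = \tfrac{1}{a_1^2}|f_{11}(x)|$, which yields the second case.

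The only genuinely delicate point is the core estimate on $G_a$: one needs the \emph{sharp} exponential rate $e^{-a|x|}$ --- so any AM--GM splitting inside the exponent that degrades the rate is disallowed --- together with the $|x|^{-1/4}$ prefactor, which must be weak enough at the origin to absorb the logarithmic singularity of $K_0$ yet still compatible with the true $|x|^{-1/2}$ far-field decay, and all of this with explicit constants. The subordination/change-of-variables computation produces the sharp rate cleanly, and matching the exact constants of the statement is then bookkeeping in the two-region split; the identification of $f_{11},f_{22}$ with multiples of $K_0$ and the reduction of $f_3$ via \eqref{eq : equality of the lik} are routine.
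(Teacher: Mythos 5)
Your proposal is correct, and its skeleton matches the paper's: identify $f_{11}$ and $f_{22}$ as scalar multiples of the Macdonald function $K_0$, bound $K_0(z)$ by a constant times $e^{-z}z^{-1/4}$ with explicit constants, and dispatch $f_3$ by combining identity \eqref{eq : equality of the lik} with the monotonicity of $a\mapsto K_0(a|x|)$; in particular, your $f_3$ reduction is essentially verbatim the paper's. Where you genuinely diverge is in how the pointwise bound on $K_0$ is obtained. The paper argues in two regions: for $|x|\ge 1/a_j$ it invokes $K_0(z)\le\sqrt{\pi/(2z)}\,e^{-z}$ from \cite{yangchu2017inequalities}, while for $0<|x|<1/a_j$ it inserts the full power-series expansion of $K_0$, bounds the digamma coefficients via $|\psi(k+1)|/k!\le 1$, and then absorbs the resulting $1+\left|\ln(a_jz/2)\right|$ into $z^{-1/4}e^{-a_jz}$ by an elementary calculus estimate. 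Your primary route --- the subordination/heat-kernel representation $G_a(x)=\tfrac{e^{-a|x|}}{2\pi}\int_\R e^{-u^2}\big(u^2+2a|x|\big)^{-1/2}\,du$ followed by AM--GM on the integrand --- is a genuinely more elementary argument: it produces the desired shape in a single self-contained computation, preserves the sharp rate $e^{-a|x|}$, and yields the explicit constant $\Gamma(1/4)/(2^{7/4}\pi)\approx 0.34$, considerably smaller than the paper's effective $2^{5/4}e^{5/4}\approx 8.3$. Since a smaller constant \emph{a fortiori} implies \eqref{def : constant C0f11}--\eqref{def : constant C0f22} (even after paying the $2^{1/8}$ factor needed to swap $|x|^{-1/4}$ for $|x|_1^{-1/4}$ in the denominator, a point you gloss over slightly), this is already a complete proof. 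The part I would push back on is the secondary two-region argument you offer ``to land on the precise constants'': you do not need to match them, and the near-origin inequality $K_0(z)\le\ln(1+2/z)$ you invoke there is neither the paper's nor cited and would itself require a proof with explicit constants before being admissible in a computer-assisted pipeline --- commit to the subordination route and drop this variant. One last observation, in your favor: you correctly record $G_a=\tfrac{1}{2\pi}K_0(a|\cdot|)$, whereas the paper's \eqref{def : computation of f11 and f22 formulas} states $f_{11}=-a_1^2K_0(a_1|\cdot|)$, apparently dropping the $1/(2\pi)$ from the planar Fourier inversion; this does not invalidate the paper's lemma (its constants are then simply a harmless factor of $2\pi$ too conservative), but it does mean your bound is the correct and sharper one.
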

\begin{proof}
The proof can be found in Appendix \ref{apen : Z1}.
\end{proof} 

\begin{remark}
    Notice that $\frac{1}{l_{11}}, \frac{1}{l_{22}} \notin L^1(\R^2)$ but simply $\frac{1}{l_{11}}, \frac{1}{l_{22}} \in L^2(\R^2)$. This property gives birth to a singularity at zero for $f_{11}$ and $f_{22}$, which we must take into consideration. In particular, we control the singularity by $\frac{1}{|x|^{\frac{1}{4}}}$. This is notably different from the set-up introduced in \cite{unbounded_domain_cadiot} since it was assumed that $\frac{1}{l} \in L^1(\R^m)$. Under this assumption, $\mathcal{F}^{-1}(\frac{1}{l})$ is a continuous function on $\R^m$, and a uniform exponential decay can be obtained.
\end{remark}
With Lemma \ref{lem : constants_for_f} available, combined with \eqref{eq : new definition of Zu1 in equation}, we are now ready to compute $\mathcal{Z}_{u,1}$. In particular, we provide explicit formulas for the bounds $\mathcal{Z}_{u,1,j}$ in the following lemma.
\begin{lemma}\label{lem : lemma Z11full}
Let $a_1, a_2$ be given in \eqref{def : definition of a1 and a2} and let
 $$E_j \bydef \gamma\left(\mathbb{1}_{\om}(x) \left(\frac{1}{2}(\cosh(2a_jx_1) + \cosh(2a_jx_2)\right)\right)$$ for $j = 1,2$. Moreover, let $\left(\mathcal{Z}_{u,1,j}\right)_{j \in \{1,2,3\}}$ be bounds defined as 
\begin{align}
&\mathcal{Z}_{u,1,1} \bydef \frac{\sqrt{2}C_0(f_{11})(2\pi)^{\frac{1}{4}}e^{-a_1d}\sqrt{|\om|}}{a_1^{\frac{3}{4}}}\sqrt{(V_1^N,V_1^N* E_1)_2} \\
    &\mathcal{Z}_{u,1,2} \bydef \frac{\sqrt{2}C_0(f_{22})(2\pi)^{\frac{1}{4}}e^{-a_2d}\sqrt{|\om|}}{a_2^{\frac{3}{4}}} \sqrt{(V_2^N,V_2^N * E_2)_2} \\
    &\mathcal{Z}_{u,1,3} \bydef \begin{cases}
        \mathcal{Z}_{u,1,2} & \mathrm{\ if \ } \frac{1}{\lambda_1} \geq \lambda_2 \\
        \frac{\sqrt{2}\lambda_2C_0(f_{11})(2\pi)^{\frac{1}{4}}e^{-a_1d}\sqrt{|\om|}}{a_1^{\frac{3}{4}}}\sqrt{(V_2^N,V_2^N* E_1)_2} & \mathrm{\ if \ } \frac{1}{\lambda_1} \leq \lambda_2
    \end{cases}
\end{align}
where $V_1^N,V_2^N$ are given in \eqref{def : w_0_in_full_equation}. Then $ \mathcal{Z}_{u,1} =  
    \sqrt{\left( \mathcal{Z}_{u,1,1} + \mathcal{Z}_{u,1,3}\right)^2 + \mathcal{Z}_{u,1,2}^2}$ satisfies \eqref{def : Zu1}. That is, $\|\mathbb{1}_{\mathbb{R}^2 \setminus \om} D\mathbb{G}^N(\mathbf{u}_0)\mathbb{L}^{-1}\|_{2} \leq \mathcal{Z}_{u,1}.$ 
\end{lemma}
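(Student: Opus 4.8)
The plan is to apply Proposition \ref{prop : introduce_f} to turn the operator norms in \eqref{eq : operator norm Zu1} into $L^1$-norms of convolutions, bound those $L^1$-norms using the pointwise exponential estimates of Lemma \ref{lem : constants_for_f}, and re-express the result via the Parseval identity of Lemma \ref{lem : gamma and Gamma properties}. By \eqref{eq : new definition of Zu1 in equation} it suffices to verify that the constants in the statement satisfy $\mathcal{Z}_{u,1,1}^2\ge\|\mathbb{1}_{\R^2\setminus\om}(f_{11}^2*(v_1^N)^2)\|_{1}$, and the analogous inequalities for $j=2$ (with $f_{22},v_2^N$) and $j=3$ (with $f_3,v_2^N$); Lemma \ref{lem : Z_full_1} then gives that $\mathcal{Z}_{u,1}=\sqrt{(\mathcal{Z}_{u,1,1}+\mathcal{Z}_{u,1,3})^2+\mathcal{Z}_{u,1,2}^2}$ satisfies \eqref{def : Zu1}. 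I will treat $j=1$ in detail; $j=2$ is identical after replacing $(f_{11},a_1,v_1^N,E_1)$ by $(f_{22},a_2,v_2^N,E_2)$, and $j=3$ follows from the two-case bound \eqref{def : C3} on $f_3$: for $\tfrac1{\lambda_1}\ge\lambda_2$ the $f_{22}$-estimate applied to $v_2^N$ gives $\mathcal{Z}_{u,1,3}=\mathcal{Z}_{u,1,2}$, and for $\tfrac1{\lambda_1}\le\lambda_2$ the $f_{11}$-estimate applied to $v_2^N$ together with the prefactor in \eqref{def : C3} gives the stated $\mathcal{Z}_{u,1,3}$.

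For $j=1$, since $v_1^N=\gamma^\dagger(V_1^N)$ is supported in $\overline{\om}$ and all integrands are nonnegative, Tonelli's theorem yields $\|\mathbb{1}_{\R^2\setminus\om}(f_{11}^2*(v_1^N)^2)\|_{1}=\int_{\om}(v_1^N(y))^2\big(\int_{\R^2\setminus\om}f_{11}^2(x-y)\,dx\big)\,dy$, and I estimate the inner integral uniformly in $y\in\om$. As $l_{11}$ depends on $\xi$ only through $|\xi|$, $f_{11}$ is radial; writing $\R^2\setminus\om=\{|x_1|\ge d\}\cup\{|x_2|\ge d\}$, substituting $w=x-y$, and using the radial symmetry of $f_{11}^2$, each of the four half-plane contributions reduces to the one-dimensional tail $\Psi(a)\bydef\int_{\{w\in\R^2:\,w_1\ge a\}}f_{11}^2(w)\,dw$ ($a>0$), so that
\[
\int_{\R^2\setminus\om}f_{11}^2(x-y)\,dx\le\Psi(d-y_1)+\Psi(d+y_1)+\Psi(d-y_2)+\Psi(d+y_2).
\]

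The quantitative core is the bound $\Psi(a)\le\frac{C_0(f_{11})^2\sqrt{2\pi}}{2a_1^{3/2}}e^{-2a_1a}$ for all $a>0$. I would prove it by inserting $f_{11}^2(w)\le C_0(f_{11})^2 e^{-2a_1|w|}/|w|_1^{1/2}$ from Lemma \ref{lem : constants_for_f}, splitting the Euclidean exponential so as to extract the full factor $e^{-2a_1a}$ produced by the constraint $w_1\ge a$ while retaining enough transverse decay to evaluate the remaining integral explicitly (a Gaussian/polar computation in the spirit of Corollary \ref{cor : banach algebra1}), and carefully handling the integrable singularity $|w|_1^{-1/2}$ at the origin, which is genuinely present because $x$ can sit on $\partial\om$ arbitrarily close to $y$. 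I expect this to be the principal difficulty: getting the sharp decay rate $e^{-2a_1a}$ (rather than a weaker one such as $e^{-\sqrt2\,a_1a}$) out of $e^{-2a_1|w|}$ on the half-plane $\{w_1\ge a\}$ is delicate because the Euclidean norm couples the two variables, and the estimate must simultaneously keep the weight integrable and land on exactly the constant $\sqrt{2\pi}\,C_0(f_{11})^2/(2a_1^{3/2})$.

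Granting the $\Psi$-bound and using $e^{-2a_1(d-y_i)}+e^{-2a_1(d+y_i)}=2e^{-2a_1d}\cosh(2a_1y_i)$, one obtains
\[
\int_{\R^2\setminus\om}f_{11}^2(x-y)\,dx\le\frac{C_0(f_{11})^2\sqrt{2\pi}}{a_1^{3/2}}e^{-2a_1d}\big(\cosh(2a_1y_1)+\cosh(2a_1y_2)\big).
\]
Substituting back, and recalling that $\gamma^\dagger(E_1)(y)=\tfrac12\mathbb{1}_{\om}(y)\big(\cosh(2a_1y_1)+\cosh(2a_1y_2)\big)$ since $\gamma^\dagger\gamma=\mathrm{Id}$ on $L^2_{D_4,\om}$ (Lemma \ref{lem : gamma and Gamma properties}), the Parseval identity of Lemma \ref{lem : gamma and Gamma properties} — namely $\int_{\om}(v_1^N)^2\,\gamma^\dagger(E_1)\,dy=|\om|\,(V_1^N,V_1^N*E_1)_2$ — gives $\|\mathbb{1}_{\R^2\setminus\om}(f_{11}^2*(v_1^N)^2)\|_{1}\le\tfrac{2\sqrt{2\pi}\,C_0(f_{11})^2}{a_1^{3/2}}e^{-2a_1d}|\om|\,(V_1^N,V_1^N*E_1)_2=\mathcal{Z}_{u,1,1}^2$, which is the required inequality. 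The cases $j=2,3$ follow as indicated, so $\mathcal{Z}_{u,1}$ satisfies \eqref{def : Zu1}.
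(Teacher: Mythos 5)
Your proposal reproduces the paper's architecture faithfully: Proposition \ref{prop : introduce_f} to pass from operator norms to $L^1$-norms of convolutions, the exponential bounds of Lemma \ref{lem : constants_for_f}, a half-plane decomposition of $\R^2\setminus\om$, and Parseval to rewrite the resulting weighted $L^2$-integral of $v_1^N$ as the discrete inner product $(V_1^N,V_1^N*E_1)_2$. Your reorganization is, if anything, slightly cleaner than the paper's: you introduce the one-sided tail $\Psi(a)\bydef\int_{\{w_1\ge a\}}f_{11}^2(w)\,dw$, sum the four contributions $\Psi(d\pm y_i)$, and let the $\cosh$ factors emerge directly, whereas the paper uses the $D_4$-symmetry of $f_{11}$ and $v_1^N$ twice to reduce to a single quadrant integral before reassembling $\cosh(2a_1x_1)+\cosh(2a_1x_2)$. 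The constants you arrive at (modulo the $\Psi$-bound) are the stated ones, and the treatment of $j=2,3$ via \eqref{def : C3} is correct.

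The genuine gap is precisely the estimate you flag as ``the principal difficulty'': you never prove $\Psi(a)\le\frac{C_0(f_{11})^2\sqrt{2\pi}}{2a_1^{3/2}}e^{-2a_1a}$, and your worry about extracting $e^{-2a_1a}$ from a Euclidean exponential on a half-plane is well founded — with $e^{-2a_1|w|_2}$ the cross-sectional integral does not converge uniformly at rate $e^{-2a_1a}$ (Laplace's method shows the rate is right but the constant degrades in $a$). The paper sidesteps this entirely by reading the bound of Lemma \ref{lem : constants_for_f} in the $\ell^1$-form, $|f_{11}(w)|\le C_0(f_{11})\,e^{-a_1|w|_1}/|w|_1^{1/4}$ with $|w|_1=|w_1|+|w_2|$ — note the $|w|_1$ already present in the denominator of \eqref{def : constant C0f11}. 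Then the exponential factors, $e^{-2a_1|w|_1}=e^{-2a_1|w_1|}e^{-2a_1|w_2|}$, and after dropping $|w_1|$ from the denominator (so $1/\sqrt{|w|_1}\le 1/\sqrt{|w_2|}$), the half-plane integral separates as $\int_a^\infty e^{-2a_1w_1}\,dw_1\cdot\int_\R e^{-2a_1|w_2|}|w_2|^{-1/2}\,dw_2 = \frac{e^{-2a_1a}}{2a_1}\sqrt{\frac{2\pi}{a_1}}$, which is exactly your target bound for $\Psi$. This factorization — the key step \eqref{eq : computation exponential decay 2} in the paper's appendix — is what you were missing. Once you use the $\ell^1$-norm in the exponent as the paper does, the singularity causes no trouble and the ``delicate'' step becomes a one-line Gamma-function computation; without it, the approach you sketch does not close.
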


\begin{proof} 
The proof can be found in Appendix \ref{apen : Z1}.
\end{proof}
\subsubsection{Computation of \texorpdfstring{$\mathcal{Z}_{u,2}$}{Zu2}}\label{sec : Zu2}
In this section we focus our attention on the computation of a bound 
$\mathcal{Z}_{u,2}$ satisfying $$\|\mathbb{1}_{\om}D\mathbb{G}^N(\mathbf{u}_0)(\bGam^\dagger(L^{-1}) - \mathbb{L}^{-1})\|_{2} \leq \mathcal{Z}_{u,2}.$$ To compute such a bound, we opt for a different approach than the one presented in \cite{unbounded_domain_cadiot}. We present the application of such an approach on the Gray-Scott model, but a similar analysis could be carried out in the class of system of PDEs presented in Section \ref{sec : general set up system of PDEs}.  First, similarly to the computation of $\mathcal{Z}_{u,1}$, we use the fact that the action of $\mathbb{L}^{-1}$ can be written as a convolution (cf. \eqref{f_j_action}). We use this fact and the exponential decay of $f_{11}, f_{22}$ and $f_3$ to obtain an explicit formula for $\mathcal{Z}_{u,2}$. In particular, using Green's identity (cf. \eqref{green_step1}), we are able to derive this explicit upper bound which depends only on an integral computation on $\partial \om$. We can compute this integral using Lemma \ref{lem : first_int_Z_u_2} in the appendix. The obtained results are summarized in the following lemma.
\begin{lemma}\label{lem : lemma Z12full}
Let $a_1$ and $a_2$ be defined in \eqref{def : definition of a1 and a2}.Let $C_{11}(f_{11}), C_{12}(f_{11}), C_{11}(f_{22})$, and $ C_{12}(f_{22})$ be non-negative constants defined as 
\begin{align}
    &C_{11}(f_{11}) \bydef  a_1^3 \sqrt{\frac{\pi}{2}}\frac{1}{\sqrt{a_1 + 1}}\left(1 + \frac{1}{a_1}\right),~~
    C_{12}(f_{11}) \bydef a_1^2\sqrt{\frac{\pi}{2}} \left(\sqrt{2}a_1 + 1\right), \\
    &C_{11}(f_{22}) \bydef a_2\sqrt{\frac{\pi}{2}}\frac{1}{\sqrt{a_2  + 1}}\left(1+\frac{1}{a_2}\right), ~~
    C_{12}(f_{22}) \bydef \sqrt{\frac{\pi}{2}}\left(\sqrt{2}a_2 + 1\right).
\end{align}
Then, given $j \in \{1,2\}$, let $C_j, \mathcal{C}_{1,j}, $ and $\mathcal{C}_{2,j},$ be non-negative constants defined as
\begin{align}
    &C_j \bydef \sqrt{\frac{d^2}{16a_j^2\pi^5} + \frac{1}{a_j^4} + \frac{d}{a_j^3}} \\
    &\mathcal{C}_{1,j} \bydef 2\sqrt{|\om|} e^{-a_jd}\frac{C_{11}(f_{jj})e^{-a_j} + C_{12}(f_{jj})}{a_j} \label{def : C_epsilon}  \\
    \nonumber
    &\mathcal{C}_{2,j} \bydef 2\sqrt{|\om|} C_{11}(f_{jj})\left( \ln(2)^2 + 2 \ln(2) + 2\right)^{\frac{1}{2}}.
    \end{align}
Moreover, given $w \in H^2(\mathbb{R}^2)$, define
\begin{align}
    &C(w) \bydef \frac{1}{\sqrt{|\om|}}\left( \frac{1}{2}\|\mathbb{1}_{(d-1,d)^2}\partial_{x_1} w\|_2\|\mathbb{1}_{(d-1,d)^2} w\|_2 + \|\mathbb{1}_{(d-1,d)} w(d,\cdot)\|_2^2\right)^{\frac{1}{2}}.
\end{align}
Finally, let $E_j$ be defined as in Lemma \ref{lem : lemma Z11full} and
 let $\left(\mathcal{Z}_{u,2,j}\right)_{j \in \{1,2,3\}}$ be bounds defined as 
\begin{align}
&\mathcal{Z}_{u,2,1} \bydef  \frac{4C_1}{\sqrt{|\om|}} \left(\mathcal{C}_{1,1}\sqrt{(V_1^N,E_1 * V_1^N)_2} +\mathcal{C}_{2,1}C(v_1^N)\right) \\
&\mathcal{Z}_{u,2,2} \bydef \frac{4C_2}{\sqrt{|\om|}} \left(\mathcal{C}_{1,2}\sqrt{(V_2^N,E_2 * V_2^N)_2} +\mathcal{C}_{2,2}C(v_2^N)\right) 
\end{align}
\begin{align}
    &\mathcal{Z}_{u,2,3} \bydef  \min(C_1,C_2) \left(\frac{\mathcal{Z}_{u,2,2}}{C_2} + \frac{4\lambda_1}{\sqrt{|\om|}}\left(\mathcal{C}_{1,1}\sqrt{(V_2^N,E_1 * V_2^N)_2} +\mathcal{C}_{2,1}C(v_2^N)\right)\right).
\end{align}
Then, $ \mathcal{Z}_{u,2} = \sqrt{\left( \mathcal{Z}_{u,2,1} + \mathcal{Z}_{u,2,3}\right)^2 + \mathcal{Z}_{u,2,2}^2}$ satisfies \eqref{def : Zu2}, $\|\mathbb{1}_{\om}D\mathbb{G}^N(\mathbf{u}_0)(\bGam^\dagger(L^{-1}) - \mathbb{L}^{-1})\|_{2} \leq \mathcal{Z}_{u,2}$.
\end{lemma}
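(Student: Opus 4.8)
The plan is to reduce the operator-norm estimate $\|\mathbb{1}_{\om}D\mathbb{G}^N(\mathbf{u}_0)(\bGam^\dagger(L^{-1}) - \mathbb{L}^{-1})\|_{2} \leq \mathcal{Z}_{u,2}$ to three scalar convolution estimates $\mathcal{Z}_{u,2,j}$, exactly mirroring the block decomposition already used in Lemma \ref{lem : Z_full_1}. Indeed, writing $D\mathbb{G}^N(\mathbf{u}_0)$ in the block form \eqref{def : w_0_in_full_equation} and expanding $\bGam^\dagger(L^{-1}) - \mathbb{L}^{-1}$ using the triangular structure \eqref{l_inverse} of $l^{-1}$, the relevant entries involve $\mathbb{1}_{\om}(\mathbb{L}_{11}^{-1} - \Gamma^\dagger(L_{11}^{-1}))\mathbb{v}_1^N$, $\mathbb{1}_{\om}(\mathbb{L}_{22}^{-1} - \Gamma^\dagger(L_{22}^{-1}))\mathbb{v}_2^N$, and $\mathbb{1}_{\om}(\mathbb{L}_{22}^{-1}\mathbb{L}_{21}\mathbb{L}_{11}^{-1} - \Gamma^\dagger(L_{22}^{-1}L_{21}L_{11}^{-1}))\mathbb{v}_2^N$; the final assembly via $\varphi$ is already stated in Lemma \ref{lem : Z_full_1} as $\mathcal{Z}_{u,2} = \sqrt{(\mathcal{Z}_{u,2,1} + \mathcal{Z}_{u,2,3})^2 + \mathcal{Z}_{u,2,2}^2}$, so it suffices to bound each of the three pieces.

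For a single piece, say the $f_{22}$ one, the key observation is that $\mathbb{L}_{22}^{-1}$ acts as convolution with $f_{22}$ (cf. \eqref{f_j_action}), whereas $\Gamma^\dagger(L_{22}^{-1})$ acts, on functions supported in $\overline{\om}$, as convolution with the \emph{periodization} of $f_{22}$ restricted to $\om$. Hence $(\mathbb{L}_{22}^{-1} - \Gamma^\dagger(L_{22}^{-1}))$ applied to a function supported in $\overline{\om}$ equals, on $\om$, convolution against the "tail" $f_{22}$ minus its periodized copies — a kernel built from translates $f_{22}(\cdot - 2d\, k)$, $k \in \mathbb{Z}^2\setminus\{0\}$. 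Using Green's identity (this is the step referenced as \eqref{green_step1}), I would rewrite the resulting double integral over $\om\times\om$ as a boundary integral over $\partial\om$, where the exponential decay $|f_{22}(x)| \le C_0(f_{22}) e^{-a_2|x|}/|x|^{1/4}$ from Lemma \ref{lem : constants_for_f} makes the tail contributions explicitly summable. Combining Proposition \ref{prop : introduce_f}-type Cauchy–Schwarz estimates with the one-dimensional integral computations of Lemma \ref{lem : first_int_Z_u_2} then produces the two contributions appearing in $\mathcal{Z}_{u,2,2}$: the term $\mathcal{C}_{1,2}\sqrt{(V_2^N,E_2 * V_2^N)_2}$, where $E_2$ packages the $\cosh(2a_2 x_j)$ weights generated by the exponential kernel integrated against $\om$, and the term $\mathcal{C}_{2,2}C(v_2^N)$, where $C(v_2^N)$ collects the boundary traces of $v_2^N$ and its derivative on the annular region $(d-1,d)^2$ coming from the Green's-identity boundary term. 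The prefactor $C_2 = \sqrt{d^2/(16a_2^2\pi^5) + 1/a_2^4 + d/a_2^3}$ absorbs the polynomial-in-$d$ growth from integrating the algebraic singularity $|x|^{-1/4}$ and the Jacobian on $\partial\om$. The $f_{11}$ piece is identical with $a_1$, $C_0(f_{11})$, $C_{11}(f_{11})$, $C_{12}(f_{11})$ in place of their $a_2$ analogues, giving $\mathcal{Z}_{u,2,1}$.

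For $\mathcal{Z}_{u,2,3}$, I would use the algebraic identity \eqref{eq : equality of the lik}, namely $\frac{\lambda_1}{l_{11}} + \frac{l_{21}}{l_{11}l_{22}} = \frac{1}{l_{22}}$, to write $f_3 = \frac{1}{\lambda_1}(f_{22} - \text{something}) $ — more precisely, to decompose the third kernel as a combination of $f_{22}$ and $\lambda_1 f_{11}$ (this is exactly why \eqref{def : C3} gives two cases depending on whether $1/\lambda_1 \ge \lambda_2$, i.e. which exponential rate dominates). Bounding the two summands separately by the already-established estimates for the $f_{22}$ and $f_{11}$ convolutions — the first reusing $\mathcal{Z}_{u,2,2}/C_2$ up to the prefactor $\min(C_1,C_2)$, the second picking up the extra factor $\lambda_1$ and the $f_{11}$-type constants $\mathcal{C}_{1,1}$, $\mathcal{C}_{2,1}$ with $E_1$ and $C(v_2^N)$ — gives the stated formula for $\mathcal{Z}_{u,2,3}$. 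The main obstacle is the careful bookkeeping in the Green's-identity step: one must track exactly which boundary terms survive on $\partial\om$, verify that the tail-of-periodization kernel is what Green's identity is applied to (not $f_{jj}$ itself), and control the interplay between the $|x|^{-1/4}$ singularity at the origin and the exponential decay so that all the constants $C_{11}, C_{12}, \mathcal{C}_{1,j}, \mathcal{C}_{2,j}, C_j$ come out finite and explicit; the rest is a routine (if lengthy) chain of Cauchy–Schwarz and Young's-inequality estimates of the type already carried out in Proposition \ref{prop : introduce_f} and Lemma \ref{lem : lemma Z11full}.
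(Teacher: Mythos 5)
Your outline is right about the decomposition into three scalar pieces and the final Pythagorean assembly (which comes from the same block-triangular argument as in Lemma \ref{lem : Z_full_1}), you correctly identify the algebraic identity $\mathbb{L}_{22}^{-1}\mathbb{L}_{21}\mathbb{L}_{11}^{-1} = \mathbb{L}_{22}^{-1} - \lambda_1 \mathbb{L}_{11}^{-1}$ as the key to $\mathcal{Z}_{u,2,3}$, and you correctly point to Lemma \ref{lem : first_int_Z_u_2} for the boundary-integral computations. However, the central mechanism of the proof is missing from your account, and where you do invoke Green's identity you place it on the wrong domain.

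The paper does not integrate by parts on $\om\times\om$ against a periodization-tail kernel. Rather, for $g \bydef \mathbb{1}_{\om}(\mathbb{L}_{jj}^{-1}-\Gamma^\dagger(L_{jj}^{-1}))w$ it starts from the closed-form expression for the Fourier coefficients $g_n = -\tfrac{1}{|\om|}\int_{\R^2\setminus\om}\mathbb{L}_{jj}^{-1}w(x)e^{-2\pi i\tilde n\cdot x}\,dx$, multiplies by $|2\pi\tilde n|^2$, and applies Green's identity on the \emph{exterior} domain $\R^2\setminus\om$. Using $\mathbb{L}_{jj}=\tfrac{1}{a_j^2}(\Delta - a_j^2 I_d)$ and the fact that $w$ is supported in $\overline{\om}$, the interior Laplacian term regenerates $-a_j^2|\om|g_n$, and solving for $g_n$ yields $g_n = -\tfrac{1}{|\om|}\tfrac{1}{a_j^2+|2\pi\tilde n|^2}\int_{\partial\om}e^{-2\pi i\tilde n\cdot x}\nabla\mathbb{L}_{jj}^{-1}w\cdot dS$. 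It is precisely this extra $(a_j^2+|2\pi\tilde n|^2)^{-1}$ factor that makes the sequence $(g_n)$ summable after Parseval, and the prefactor $C_j$ comes from the Riemann-sum estimate (Lemma \ref{lem : riemann sum}) of $\sum_n \alpha_n\,(a_j^2+|2\pi\tilde n|^2)^{-2}$, not from "integrating the $|x|^{-1/4}$ singularity and the Jacobian on $\partial\om$" as you write. Without the gain-in-$n$ step, summing the boundary integral over the full lattice would not converge and the bound would not close. So the gap is concrete: you must produce the identity $|2\pi\tilde n|^2 g_n = -a_j^2 g_n - \tfrac{1}{|\om|}\int_{\partial\om}e^{-2\pi i\tilde n\cdot x}\nabla\mathbb{L}_{jj}^{-1}w\cdot dS$, which is what Green's identity on the exterior delivers and which your proposal does not supply.
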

\begin{proof}
We begin by focusing on the term $\mathcal{Z}_{u,2,1}$. Let $u \in L^2_{D_4}(\R^2)$ such that $\|u\|_2=1$ and define $w_1 \bydef v^N_1 u$. Then, define $g\bydef \mathbb{1}_{\Omega_0} (\mathbb{L}_{11}^{-1} - \Gamma^\dagger(L_{11}^{-1}))w_1$. By construction, $g \in L^2_{D_4}(\mathbb{R}^2)$ and $\mathrm{supp}(g) \subset \overline{\Omega_0}$. Therefore, $g$ has a a representation as a sequence of Fourier coefficients $\gamma(g) \bydef (g_n)_{n \in J_{\mathrm{red}}(D_4)} \in \ell^2(J_{\mathrm{red}}(D_4)) $ (recall \eqref{def : definition of J_redD4} for a definition of $J_{\mathrm{red}}(D_4)$) such that
\[
g_n = \frac{1}{|\Omega_0|} \int_{\Omega_0} (\mathbb{L}_{11}^{-1} - \Gamma^\dagger(L_{11}^{-1}))w_1(x)e^{-2\pi i\tilde{n}x}dx
\]
for all $n \in J_{\mathrm{red}}(D_4).$
As shown in Theorem 3.9 of \cite{unbounded_domain_cadiot}, we have that
\begin{equation}
    g_n = - \frac{1}{|\om|}\int_{\mathbb{R}^2\setminus\om} \mathbb{L}_{11}^{-1} w_1(x) e^{-2\pi i\tilde{n}.x}dx.
\end{equation}
Now, observe that we can write $g_n$ as
\[
|2\pi \tilde{n}|^2 g_n = \frac{1}{|\om|}\int_{\mathbb{R}^2\setminus \Omega_0} \mathbb{L}_{11}^{-1} w_1(x) \Delta (e^{-2\pi i\tilde{n}x})dx.
\]
Moreover, we can apply Green's identity to obtain
{\small\begin{align}
 &\int_{\mathbb{R}^2\setminus \Omega_0} \mathbb{L}_{11}^{-1} w_1(x) \Delta (e^{-2\pi i\tilde{n}x})dx\label{green_step1} \\ 
\nonumber &= \int_{\mathbb{R}^2\setminus \Omega_0} \Delta \mathbb{L}_{11}^{-1} w_1(x) e^{-2\pi i\tilde{n}x} dx - \int_{\partial \Omega_0} e^{-2\pi i\tilde{n}x} \nabla \mathbb{L}_{11}^{-1}w_1(x).dS(x) + \int_{\partial \Omega_0} \mathbb{L}_{11}^{-1}w_1(x) \nabla \left(e^{-2\pi i\tilde{n}x}\right).dS(x).
\end{align}}
 Since $w \in L^2_{D_4}(\mathbb{R}^2)$, it follows that
\[
\int_{\partial \Omega_0} \mathbb{L}_{11}^{-1}w_1(x) \nabla \left(e^{-2\pi i\tilde{n}x}\right).dS(x) = 0.
\]
Hence, we obtain that \eqref{green_step1} is equivalent to
{\small\begin{align}
 &\int_{\mathbb{R}^2\setminus \Omega_0} \mathbb{L}_{11}^{-1} w_1(x) \Delta (e^{-2\pi i\tilde{n}x})dx= \int_{\mathbb{R}^2\setminus \Omega_0} \Delta \mathbb{L}_{11}^{-1} w_1(x) e^{-2\pi i\tilde{n}x} dx - \int_{\partial \Omega_0} e^{-2\pi i\tilde{n}x} \nabla \mathbb{L}_{11}^{-1}w_1(x).dS(x).\label{green_step}
\end{align}}
Now, recalling that $\mathbb{L}_{11} = \frac{1}{a_1^2}\left(\Delta - a_1^2 I_d\right)$, observe that the first term of \eqref{green_step} can be rewritten as
\begin{align}
\\ \nonumber \int_{\mathbb{R}^2\setminus \Omega_0} \Delta \mathbb{L}_{11}^{-1} w_1(x) e^{-2\pi i\tilde{n}x} dx &= \int_{\mathbb{R}^2\setminus \Omega_0} (\Delta - a_1^2 + a_1^2) \mathbb{L}_{11}^{-1} w_1(x) e^{-2\pi i\tilde{n}x} dx 
\\ \nonumber
&\hspace{-3.0cm}= \int_{\mathbb{R}^2\setminus \Omega_0} (\Delta -a_1^2) \mathbb{L}_{11}^{-1} w_1(x) e^{-2\pi i\tilde{n}x} dx + \int_{\mathbb{R}^2\setminus \Omega_0} a_1^2 \mathbb{L}_{11}^{-1} w_1(x) e^{-2\pi i\tilde{n}x} dx
\\ \nonumber
&\hspace{-3.0cm}= a_1^2 \int_{\mathbb{R}^2\setminus \Omega_0} \mathbb{L}_{11} \mathbb{L}_{11}^{-1} w_1(x) e^{-2\pi i\tilde{n}x} dx - a_1^2 g_n
\\ \nonumber
&\hspace{-3.0cm}= a_1^2\int_{\mathbb{R}^2\setminus \Omega_0} w_1(x) e^{-2\pi i\tilde{n}x} dx -  a_1^2|\om| g_n \\
&\hspace{-3.0cm} = -a_1^2 |\om| g_n,\label{step_to_refer_later_Zu23}
\end{align}
where we used that supp$(w_1) \subset \overline{\om}$ for the last step.
Therefore we find that 
\[
|2\pi \tilde{n}|^2 g_n = -\frac{1}{|\om|}\int_{\partial \Omega_0}   {e^{-2\pi i\tilde{n}x}} \nabla \mathbb{L}_{11}^{-1}w_1(x).dS(x) - a_1^2g_n
\]
or equivalently
\begin{align}
g_n = -\frac{1}{|\om|}\frac{1}{a_1^2+|2\pi \tilde{n}|^2}\int_{\partial \Omega_0}   {e^{-2\pi i\tilde{n}x}} \nabla \mathbb{L}_{11}^{-1}w_1(x).dS(x). \label{g_n_result_for_Zu23}
\end{align}
for all $n \in J_{\mathrm{red}}(D_4)$. This implies that
\begin{align}
|g_n| \leq \frac{1}{|\om|}\frac{1}{a_1^2+|2\pi \tilde{n}|^2} \int_{\partial \Omega_0}    \nabla |\mathbb{L}_{11}^{-1}w_1(x).dS(x)|.
\end{align}
for all $n \in J_{\mathrm{red}}(D_4)$. Next, notice that 
\begin{align}
\nonumber \int_{\partial \Omega_0}    \nabla |\mathbb{L}_{11}^{-1}w_1(x).dS(x)| &= 2\int_{-d}^d    |\partial_{x_1} \mathbb{L}_{11}^{-1}w_1(d,x_2)|dx_2 + 2\int_{-d}^d    |\partial_{x_2} \mathbb{L}_{11}^{-1}w_1(x_1,d)|dx_1 \\
&= 4\int_{-d}^d    |\partial_{x_1} \mathbb{L}_{11}^{-1}w_1(d,x_2)|dx_2\label{two_integrals_grad_L}
\end{align}
as $w_1 \in L^2_{D_4}(\mathbb{R}^2)$.  Therefore, using Parseval's identity, we get
\begin{align}
\nonumber
\|\cha \left(\mathbb{L}_{11} - \Gamma^\dagger(L_{11}^{-1})\right)w_1\|_2^2 &= |\om| \sum_{n \in J_{\mathrm{red}}(D_4)}\alpha_n|g_n|^2 \\
&\hspace{-0.3cm}\leq \frac{1}{|\om|} \left(\int_{-d}^d    |\partial_{x_1} \mathbb{L}_{11}^{-1}w_1(d,x_2)|dx_2\right)^2 \sum_{ n \in J_{\mathrm{red}}(D_4)}\frac{16\alpha_n}{(a_1^2+|2\pi \tilde{n}|^2)^2}.\label{g_n_ineq}
\end{align}
For $\mathcal{Z}_{u,2,2}$ we follow similar steps by using $\mathbb{L}_{22}$, $V_2$, and $E_2$ instead of $\mathbb{L}_{11}$, $V_1$, and  $E_1$ respectively.
Specifically, we obtain that
\begin{align}\label{eq : gn ineq 2}
    \|\cha \left(\mathbb{L}_{22}^{-1} - \Gamma^\dagger(L_{22}^{-1})\right)w_2\|_2^2 \leq \frac{1}{|\om|} \left(\int_{-d}^d    |\partial_{x_1} \mathbb{L}_{22}^{-1}w_2(d,x_2)|dx_2\right)^2 \sum_{ n \in J_{\mathrm{red}}(D_4)}\frac{16\alpha_n}{(a_2^2+|2\pi \tilde{n}|^2)^2}.
\end{align}
 For $\mathcal{Z}_{u,2,3}$, we  begin similarly and define $g = \mathbb{1}_{\om} (\mathbb{L}_{22}^{-1} \mathbb{L}_{21}\mathbb{L}_{11}^{-1} - \Gamma^\dagger(L_{22}^{-1} L_{21} L_{11}^{-1}))w_2$. Now, using \eqref{eq : equality of the lik}, we obtain
 \begin{align}
     \mathbb{L}_{22}^{-1} \mathbb{L}_{21} \mathbb{L}_{11}^{-1} = \mathbb{L}_{22}^{-1} - \lambda_1 \mathbb{L}_{11}^{-1}.
 \end{align}
 Hence, if we let
 \begin{align}
     &(g_1)_n = \frac{1}{|\om|} \int_{\om} (\mathbb{L}_{11}^{-1} - \Gamma^\dagger(L_{11}^{-1})) w_2(x) e^{-2\pi i \tilde{n} \cdot x} \\
     &(g_2)_n = \frac{1}{|\om|} \int_{\om} (\mathbb{L}_{22}^{-1} - \Gamma^\dagger(L_{22}^{-1})) w_2(x) e^{-2\pi i \tilde{n} \cdot x},
 \end{align}
we then have
\begin{align}
    g_n = (g_2)_n - \lambda_1 (g_1)_n.
\end{align}
 Therefore, using \eqref{g_n_ineq} and \eqref{eq : gn ineq 2}, it follows that
\begin{align*}
    |g_n| &\leq \frac{4}{a_2^2 + |2\pi \tilde{n}|^2}\int_{-d}^d |\partial_{x_1} \mathbb{L}_{22}^{-1} w_2(d,x_2)| dx_2 + \frac{4\lambda_1}{a_1^2 + |2\pi \tilde{n}|^2}\int_{-d}^d |\partial_{x_1} \mathbb{L}_{11}^{-1} w_2(d,x_2)| dx_2 \\
    &\leq \frac{4}{\min(a_1,a_2)^2 + |2\pi \tilde{n}|^2} \left( \int_{-d}^d |\partial_{x_1} \mathbb{L}_{22}^{-1} w_2(d,x_2)|dx_2 + \lambda_1 \int_{-d}^d |\partial_{x_1} \mathbb{L}_{11}^{-1} w_2(d,x_2)| dx_2\right)
\end{align*}
for all $n \in J_{\mathrm{red}}(D_4)$.
 Consequently, using Parseval's identity, we get
{\small\begin{align}
     &\|g\|_{2}^2 = |\om| \sum_{n \in J_{\mathrm{red}}(D_4)} \alpha_n |g_n|^2 \\
     &\leq \frac{16}{|\om|} \sum_{n \in J_{\mathrm{red}}(D_4)}\hspace{-0.14cm}\frac{\alpha_n}{(\min(a_1,a_2)^2 + |2\pi \tilde{n}|^2)^2} \left(\int_{-d}^d |\partial_{x_1} \mathbb{L}_{22}^{-1} w_2(d,x_2)| dx_2 + \lambda_1\int_{-d}^d |\partial_{x_1} \mathbb{L}_{11}^{-1} w_2(d,x_2)| dx_2\right)^2.
 \end{align}}
We now wish to compute $\sum_{n \in J_{\mathrm{red}}(D_4)} \frac{\alpha_n}{(a_j^2 + |2\pi\tilde{n}|^2)^2}$ for $j = 1,2$. We define $g_j(x) \bydef \frac{1}{\left(\frac{a_j^2}{4\pi^2} + \left|x\right|^2\right)}$, and write
\begin{align}
    \nonumber \sum_{n \in J_{\mathrm{red}}(D_4)} \frac{\alpha_n}{(a_j^2 + |2\pi\tilde{n}|^2)^2} = \sum_{n \in \mathbb{Z}^2} \frac{1}{(a_j^2 + |2\pi\tilde{n}|^2)^2} = \sum_{n \in \mathbb{Z}^2} \frac{1}{\left(a_j^2 + \left|\frac{\pi n}{d}\right|^2\right)^2}
    = \frac{1}{16\pi^4} \sum_{n \in \mathbb{Z}^2} g_j\left(\frac{n}{2d}\right)^2.
\end{align}
Using Lemma \ref{lem : riemann sum}, we obtain that
\begin{align}
    \nonumber \frac{1}{16\pi^4} \sum_{n \in \mathbb{Z}^2} g_j\left(\frac{n}{2d}\right)^2 &= \frac{1}{16\pi^4}\left[4d^2 \|g_j\|_{2}^2 + g_j(0)^2 + 8d \int_{0}^{\infty} g_j(x_1,0)^2 dx_1\right] \\ \nonumber
    &= \frac{1}{16\pi^4}\left[4d^2\left(\frac{1}{4a_j^2\pi}\right) + \frac{16\pi^4}{a_j^4} + 8d \left(\frac{2\pi^4}{a_j^3}\right)\right] \\ 
    &= \frac{d^2}{16a_j^2\pi^5} + \frac{1}{a_j^4} + \frac{d}{a_j^3} \bydef \sqrt{C_j}.\label{Sigma_j_val}
\end{align}
Then, we need to compute integrals of the form $\int_{-d}^d |\partial_{x_1} \mathbb{L}_{jj}^{-1} w_i(d,x_2)| dx_2$. For this, we use Lemma \ref{lem : first_int_Z_u_2}.
This concludes the computation of $\mathcal{Z}_{u,2}$.
\end{proof} 
\begin{remark}\label{rem : Zu2}
The computation of $\mathcal{Z}_{u,1}$  presented in Lemma \ref{lem : lemma Z11full} follows similar steps as the one presented in Theorem 3.9 of \cite{unbounded_domain_cadiot}. On the other hand, a different direction was chosen for the treatment of $\mathcal{Z}_{u,2}$. In particular, it appears that following Lemma \ref{lem : lemma Z12full} can provide a sharper upper bound than the one introduced in \cite{unbounded_domain_cadiot}. Indeed, by construction, one must have $\mathcal{Z}_{u,2} \geq \mathcal{Z}_{u,1}$ using Theorem 3.9 of \cite{unbounded_domain_cadiot}. Instead, using the above lemma, this is not necessarily the case anymore. In practice, we actually obtain $\mathcal{Z}_{u,2} \ll \mathcal{Z}_{u,1}$ (see \eqref{eq : values for the bounds} for instance), improving the computation of the bound $\mathcal{Z}_{u,2}$. Such an improvement of the bound can, in practice, facilitate the establishment of a successful computer-assisted proof.
\end{remark}

\subsection{Proof of Periodic Solutions}

In the previous sections, we leveraged the properties of Fourier series in order to approximate objects on the unbounded domain $\R^2$. Specifically, recall that our approximate solution $\mathbf{u}_0$ is constructed in Section \ref{def : construction of u0} thanks to its Fourier coefficients, that is $\mathbf{u}_0 \bydef \bgam^\dagger\left(\mathbf{U}_0\right)$. By construction, $\mathbf{u}_0$ is smooth and has its support contain in $\om = (-d,d)^2$. Consequently, we would like to conclude about the existence of a periodic solution of period $2q$, for all $q \in [d,\infty]$, when proving the existence of a localized pattern using Theorem \ref{th: radii polynomial}. This problem has been treated in \cite{sh_cadiot} for the case of the Swift-Hohenberg PDE and it was proven that, under certain conditions to satisfy, that there actually exists an unbounded branch of periodic solutions, parametrized by their period, which tends to the localized pattern as the period goes to infinity. We derive in the next theorem a similar result in the case of the Gray-Scott model.
\begin{theorem}[Family of periodic solutions]\label{th : radii periodic}
Let $A : \ell^2_{D_4} \to \mathscr{h}$ be defined  in Section  \ref{sec : the operator A} and let $\Omega_q \bydef (-q,q)^2$ for all $q>0$. Then, let $\mathcal{Y}_0$, $(\mathcal{Z}_{2,1},\mathcal{Z}_{2,2}, \mathcal{Z}_{2,3}$)  and $(\mathcal{Z}_1,\mathcal{Z}_u)$ be the bounds defined in Sections \ref{sec : Y0}, \ref{sec : Z2} and \ref{sec : Z1full} respectively. Also let 
\begin{align}
    \nonumber &\widehat{\kappa}_{0,1} \bydef   \sqrt{\left(\lambda_1 \widehat{\kappa}_2+ \sqrt{\frac{1}{4\pi \lambda_2} + \frac{1}{4d^2\lambda_2^2}+\frac{1}{2d}\frac{\pi}{\sqrt{\lambda_2}}}\right)^2 + \frac{1}{4\pi \lambda_2} + \frac{1}{4d^2\lambda_2^2}+\frac{1}{2d}\frac{\pi}{\sqrt{\lambda_2}}} \\
    &\widehat{\kappa}_{0,2} \bydef   \sqrt{2}\sqrt{\frac{1}{4\pi \lambda_2} + \frac{1}{4d^2\lambda_2^2}+\frac{1}{2d}\frac{\pi}{\sqrt{\lambda_2}}}.\label{def : kappa01_02}
\end{align}
Using \eqref{def : kappa01_02}, let 
\begin{align}
    &\widehat{\kappa}_2 \bydef \sqrt{\frac{1}{4\pi \lambda_1} + \frac{1}{4d^2} + \frac{1}{2d}\frac{\pi}{\sqrt{\lambda_1}}}\\
    &\widehat{\kappa}_3 \bydef \sqrt{2}\min\left\{ \frac{\widehat{\kappa}_2^2}{\lambda_2} , ~ \widehat{\kappa}_2 \sqrt{\frac{1}{4\pi \lambda_2} + \frac{1}{4d^2\lambda_2^2}+\frac{1}{2d}\frac{\pi}{\sqrt{\lambda_2}}}\right\}  \\
    &\widehat{\kappa}_0 \bydef \min\left\{\max(\widehat{\kappa}_{0,1},\widehat{\kappa}_{0,2}) , ~\widehat{\kappa}_2 \frac{1}{\lambda_2}\left(\left({1-\lambda_2\lambda_1}\right)^2 +  1 \right)^{\frac{1}{2}}\right\}.\label{eq : kappas in the Theorem periodic solutions}
\end{align}
Now, define $\widehat{\mathcal{Z}}_1$ and $\widehat{\mathcal{Z}}_2(r)$ as
\begin{align*}
    \widehat{\mathcal{Z}}_1 &\bydef \mathcal{Z}_1 +\max\{1, \|B_{11}^N\|_{2}\}\mathcal{Z}_{u}\\
     \widehat{\mathcal{Z}}_2(r) &\bydef  2\sqrt{\varphi(\mathcal{Z}_{2,1},\mathcal{Z}_{2,2},\mathcal{Z}_{2,2},\mathcal{Z}_{2,3})} \sqrt{\widehat{\kappa}_2^2 + 4\widehat{\kappa}_0^2} + 3\widehat{\kappa}_3 \max\{1, \|B_{11}^N\|_{2}\} r
\end{align*}
for all $r>0$, and $\widehat{\mathcal{Y}}_0 \bydef \mathcal{Y}_0$.
If there exists $r>0$ such that  
\begin{align}\label{eq : radii condition periodic}
    \frac{1}{2}\widehat{\mathcal{Z}}_2(r)r^2 - (1-\widehat{\mathcal{Z}}_1)r + \widehat{\mathcal{Y}}_0 <0, \ \mathrm{and} \ \widehat{\mathcal{Z}}_2(r) r + \widehat{\mathcal{Z}}_1 < 1,
\end{align}
then there exists a smooth curve 
\[
\left\{\tilde{\mathbf{u}}(q) : q \in [d,\infty]\right\} \subset C^\infty(\R^2) \times C^\infty(\R^2)
\]
such that $\tilde{\mathbf{u}}(q)$ is a $D_4$-symmetric periodic solution to \eqref{eq : gray_scott cov} with period $2q$ in both variables. In particular, $\tilde{\mathbf{u}}(\infty)$ is a localized pattern on $\R^2.$
\end{theorem}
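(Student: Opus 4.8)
The plan is to reduce the statement about the branch $\{\tilde{\mathbf{u}}(q)\}_{q\in[d,\infty]}$ to a single uniform application of a Newton–Kantorovich argument, carried out not in $\mathcal{H}_{D_4}$ but in the periodic spaces $\mathscr{h}$ attached to each period $2q$, with estimates that are \emph{independent of $q\ge d$}. This is exactly the strategy of Theorem 3.7 in \cite{sh_cadiot}, adapted to the Gray-Scott operators $\mathbb{L},\mathbb{G}$. First I would set up, for each $q\ge d$, the periodic zero-finding problem $F_q(\mathbf{U})=L_q\mathbf{U}+G_q(\mathbf{U})=0$ on the torus of side $2q$, together with the natural embedding of the fixed approximate solution $\mathbf{u}_0$ (which has support in $\om\subset\Omega_q$) and of the fixed approximate inverse $\mathbb{A}=\mathbb{L}^{-1}\mathbb{B}$, whose finite part $B^N$ and the operator $B_{11}^N$ are reused verbatim for every $q$. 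The key structural point, as in \cite{unbounded_domain_cadiot, sh_cadiot}, is that $\bgam^\dagger,\bGam^\dagger$ and the convolution structure behave coherently across periods because $\mathbf{u}_0$ and $v_1,v_2$ are supported in $\om$; thus $\mathcal{Y}_0$ is literally unchanged, and the finite-rank contributions to $\mathcal{Z}_1$ are unchanged.

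The heart of the argument is to show that the $q$-dependent bounds are all dominated, uniformly in $q\ge d$, by the hatted quantities $\widehat{\mathcal{Y}}_0,\widehat{\mathcal{Z}}_1,\widehat{\mathcal{Z}}_2$. For $\widehat{\mathcal{Z}}_1$: the only genuinely $q$-dependent piece of the $\mathcal{Z}_1$ analysis is the "tail" term measuring the discrepancy between the true inverse $\mathbb{L}^{-1}$ and its periodic truncation; in the localized ($q=\infty$) case this was $\max\{1,\|B_{11}^N\|_2\}\,\mathcal{Z}_u$, and for finite $q$ one gets the analogous term with $\om$ replaced by $\Omega_q$ in the indicator functions of Lemma \ref{lem : Z_full_1}. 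Since $\mathbb{1}_{\mathbb{R}^2\setminus\Omega_q}\le \mathbb{1}_{\mathbb{R}^2\setminus\om}$ pointwise and the relevant kernels $f_{11},f_{22},f_3$ decay exponentially (Lemma \ref{lem : constants_for_f}), each such term is bounded by its $q=d$ value, which is absorbed into $\mathcal{Z}_u$; hence $\widehat{\mathcal{Z}}_1$ works for all $q$. For $\widehat{\mathcal{Z}}_2$: one re-runs Lemma \ref{lem : Bound Z_2}, but now the constants $\kappa_2,\kappa_3,\kappa_0$ must be replaced by their \emph{periodic} analogues — these are obtained by replacing the $L^2(\mathbb{R}^2)$ norms $\|1/l_{11}\|_2,\|1/l_{22}\|_2$ by $\ell^2$-sums $\big(\tfrac{1}{4q^2}\sum_{n\in\mathbb{Z}^2}|l_{jj}(n/2q)|^{-2}\big)^{1/2}$, and Lemma \ref{lem : riemann sum} bounds these sums for all $q\ge d$ by $\|1/l_{jj}\|_2^2+\tfrac{1}{4d^2}|l_{jj}(0)|^{-2}+\tfrac{2}{d}\int_0^\infty|l_{jj}(x_1,0)|^{-2}dx_1$. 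Plugging $\eta=1/\lambda_1,\lambda_2$ and using $\int_0^\infty(r^2+\eta)^{-2}\,r\,dr$ from the proof of Corollary \ref{cor : banach algebra1} gives exactly the expressions under the square roots in \eqref{def : kappa01_02}, \eqref{eq : kappas in the Theorem periodic solutions}; so $\widehat{\kappa}_2,\widehat{\kappa}_3,\widehat{\kappa}_0$ are uniform upper bounds for the periodic Banach-algebra constants, whence $\widehat{\mathcal{Z}}_2(r)$ dominates every $\mathcal{Z}_{2}^{(q)}(r)$. The $\varphi(\mathcal{Z}_{2,1},\mathcal{Z}_{2,2},\mathcal{Z}_{2,2},\mathcal{Z}_{2,3})$ factor is $q$-independent since it involves only $B_{11}^N$, $\mathbb{Q},\mathbb{U}_{0,1}$, all supported in $\om$.

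Granting these uniform bounds, condition \eqref{eq : radii condition periodic} implies that for \emph{every} $q\in[d,\infty]$ there is a common radius $r$ for which the periodic radii-polynomial inequalities hold, so Theorem \ref{th: radii polynomial} (in its periodic incarnation on $\Omega_q$) yields a unique $\tilde{\mathbf{U}}(q)\in\overline{B_r(\mathbf{U}_0)}\subset\mathscr{h}_q$ with $F_q(\tilde{\mathbf{U}}(q))=0$; setting $\tilde{\mathbf{u}}(q)\bydef\bgam^\dagger_q(\tilde{\mathbf{U}}(q))$ gives a $D_4$-symmetric $2q$-periodic classical solution of \eqref{eq : gray_scott cov} by Proposition \ref{prop : regularity of the solution general}, and $q=\infty$ recovers the localized pattern of Theorem \ref{th: radii polynomial}. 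Smoothness of the curve $q\mapsto\tilde{\mathbf{u}}(q)$ in $q$ follows from the uniform contraction via an implicit-function / uniform-Lipschitz argument on the joint fixed-point map $(q,\mathbf{U})\mapsto\mathbf{U}-A_qF_q(\mathbf{U})$, exactly as in \cite{sh_cadiot}, together with continuity of $q\mapsto L_q$ and of the convolution kernels; continuity up to $q=\infty$ uses the exponential localization of $\tilde{\mathbf{u}}(q)-\mathbf{u}_0$ uniformly in $q$, again inherited from the decay of $f_{11},f_{22},f_3$.

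I expect the main obstacle to be the \emph{uniformity in $q$ of the tail estimates entering $\mathcal{Z}_1$ and $\mathcal{Z}_u$} — i.e. carefully tracking that replacing $\mathbb{R}^2\setminus\om$ by $\mathbb{R}^2\setminus\Omega_q$ (and $\om$ by $\Omega_q$ in the "near-diagonal" piece of Lemma \ref{lem : Z_full_1}) only improves the bounds, and that the periodic truncation operator $\bGam^\dagger_q(L_q^{-1})$ converges to $\mathbb{L}^{-1}$ on the relevant supported functions at a rate controlled uniformly by the $q=d$ Green's-function computation of Lemma \ref{lem : lemma Z12full}. The Banach-algebra side (replacing $\|1/l_{jj}\|_2$ by Riemann sums) is essentially bookkeeping via Lemma \ref{lem : riemann sum}; the delicate part is the operator-theoretic comparison of the approximate inverses across the one-parameter family of periodic spaces and the transfer of the regularity/continuity statements to the limit $q=\infty$.
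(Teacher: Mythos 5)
Your proposal follows the same high-level strategy as the paper (and as Theorem 3.7 of \cite{sh_cadiot}): set up a family of periodic problems with period $2q$, reuse the fixed $\mathbf{u}_0$ and the fixed finite matrices, and show that the hatted bounds dominate the $q$-dependent radii-polynomial bounds uniformly for $q\ge d$, with the key analytic step being Lemma~\ref{lem : riemann sum} applied to $\mathcal{K}_1$ and $\mathcal{K}_2$ to produce $\widehat{\kappa}_0,\widehat{\kappa}_2,\widehat{\kappa}_3$. The paper's own proof is essentially only this last step plus a reference to \cite{sh_cadiot}; you supply a fuller narrative around it.

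However, your account of $\widehat{\mathcal{Z}}_1$ has a genuine gap. You argue that in the period-$2q$ setting the tail term has $\om$ replaced by $\Omega_q$, that $\mathbb{1}_{\mathbb{R}^2\setminus\Omega_q}\le\mathbb{1}_{\mathbb{R}^2\setminus\om}$ and the kernels decay, so the tail is bounded by its $q=d$ value, ``which is absorbed into $\mathcal{Z}_u$; hence $\widehat{\mathcal{Z}}_1$ works.'' That reasoning, if correct, would only show that the periodic tail is no larger than the $\mathcal{Z}_u$ already sitting inside $\mathcal{Z}_1$ (recall from Lemma~\ref{lem : Z_full_1} that $\mathcal{Z}_1$ already contains $\max\{1,\|B_{11}^N\|_2\}\mathcal{Z}_u$). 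It would thus justify $\widehat{\mathcal{Z}}_1=\mathcal{Z}_1$, not the stated $\widehat{\mathcal{Z}}_1=\mathcal{Z}_1+\max\{1,\|B_{11}^N\|_2\}\mathcal{Z}_u$. The extra additive $\mathcal{Z}_u$ is not a re-counting of the same tail at a larger period; it arises from a structurally different discrepancy — between the period-$2q$ inverse $L_q^{-1}$ (evaluated on the frequency grid $n/(2q)$) and the fixed operator $\bGam^\dagger(L^{-1})$ used to build $B^N$ at period $2d$ (grid $n/(2d)$). Your monotonicity-of-indicators argument does not touch that second error term, and without it the definition of $\widehat{\mathcal{Z}}_1$ is unexplained. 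This is precisely the piece the paper silently defers to \cite{sh_cadiot}; a complete proof would need to spell out this cross-period comparison rather than assert absorption.

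One minor factual slip: the one-dimensional integral needed for the Riemann-sum bound is $\int_0^\infty(x^2+\eta)^{-2}\,dx = \frac{\pi}{4\eta^{3/2}}$, not the radial integral $\int_0^\infty r(r^2+\eta)^{-2}\,dr$ from Corollary~\ref{cor : banach algebra1} that you cite; the latter computes the $L^2(\mathbb{R}^2)$ norm, whereas the extra correction terms of Lemma~\ref{lem : riemann sum} come from the origin and the half-axis and require the 1D integral.
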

\begin{proof}
Following \eqref{kappas_with_norms}, we proved that  
 \begin{align*}
        \|\mathbb{G}_2(\mathbf{u})\|_2 \leq  \kappa_2 \|\mathbf{u}\|_{\mathcal{H}}^2 ~~\text{ and }~~
         \|\mathbb{G}_3(\mathbf{u})\|_2 \leq \kappa_3 \|\mathbf{u}\|_{\mathcal{H}}^3, ~~\text{ for all } \mathbf{u} \in \mathcal{H}.
    \end{align*}
    where
\begin{align}
    \kappa_2 = \left\|\frac{1}{l_{11}}\right\|_{2} ~~ \text{ and } ~~ \kappa_3 = \sqrt{2}\min\left\{ \frac{1}{\lambda_2}\left\|\frac{1}{l_{11}}\right\|_{2}^2 , ~ \left\|\frac{1}{l_{11}}\right\|_{2} \left\|\frac{1}{l_{22}}\right\|_{2}\right\}. \label{eq : kappa2 and 3 in lemma}
\end{align}
Similarly, following the proof of Lemma \ref{lem : banach algebra} we proved that
\begin{align}
\|u_1v_2\|_2 \leq \kappa_0 \|\mathbf{u}\|_{\mathcal{H}} \|\mathbf{v}\|_{\mathcal{H}}
\end{align}
for all $\mathbf{u}= (u_1,u_2), \mathbf{v}=(v_1,v_2) \in \mathcal{H}$, where
{\small\begin{align}
\kappa_0 =  \min\left\{\max\left\{ \left( \left(\left\|\frac{\lambda_1}{l_{11}}\right\|_{2} +  \left\|\frac{1}{l_{22}}\right\|_2\right)^2 + \left\|\frac{1}{l_{22}}\right\|_2^2\right)^{\frac{1}{2}}, ~ \left\|\frac{\sqrt{2}}{l_{22}}\right\|_2\right\}, ~ \left\|\frac{1}{l_{11}}\right\|_2\frac{1}{\lambda_2}\left(\left({1-\lambda_2\lambda_1}\right)^2 +  1 \right)^{\frac{1}{2}}\right\}.\label{eq : kappa0 in lemma}
\end{align}}
Now, notice that $\kappa_0, \kappa_2$ and $\kappa_3$ defined above depend on the integral computations of $\|\frac{1}{l_{11}}\|_2$ and $\|\frac{1}{l_{22}}\|_2$. Following the proof of Theorem 3.7 in \cite{sh_cadiot}, we need to control the constants $\mathcal{K}_1$ and $\mathcal{K}_2$ given by
\begin{align*}
    \mathcal{K}_1 \bydef \sup_{q \in [d,\infty)} \frac{1}{\sqrt{|\om|}} \left(\sum_{n \in \mathbb{Z}^2} \frac{1}{l_{11}\left(\frac{n}{2q}\right)^2} \right)^{\frac{1}{2}} ~~\text{and}~~
    \mathcal{K}_2 \bydef \sup_{q \in [d,\infty)} \frac{1}{\sqrt{|\om|}} \left(\sum_{n \in \mathbb{Z}^2} \frac{1}{l_{22}\left(\frac{n}{2q}\right)^2} \right)^{\frac{1}{2}}
\end{align*}
and replace all instances of $\|\frac{1}{l_{11}}\|_2$ and $\|\frac{1}{l_{22}}\|_2$ in \eqref{eq : kappa2 and 3 in lemma} and \eqref{eq : kappa0 in lemma} by $\mathcal{K}_1$ and $\mathcal{K}_2$ respectively. 
In particular, if we can show that
\begin{align}
    &\widehat{\kappa}_2 \geq \mathcal{K}_1 \\
    &\widehat{\kappa}_3 \geq \sqrt{2} \min\left\{\frac{1}{\lambda_2} \widehat{\kappa}_2^2, \widehat{\kappa}_2\mathcal{K}_2\right\} \\
    &\widehat{\kappa}_0 \geq \min\left\{\max\left\{\left[\left(\lambda_1 \widehat{\kappa}_2 + \mathcal{K}_2\right)^2 + \mathcal{K}_2^2\right]^{\frac{1}{2}},\sqrt{2}\mathcal{K}_2\right\}, \widehat{\kappa}_2\frac{1}{\lambda_2} ((1-\lambda_2 \lambda_1)^2 + 1)^{\frac{1}{2}}\right\},\label{K_j_defs_kappas}
\end{align}
then, similarly as what was achieved in \cite{sh_cadiot}, we conclude the proof using a fixed point argument.

First, observe that $\frac{1}{l_{11}^2}$ and $ \frac{1}{l_{22}^2}$ are $D_4$-symmetric and decreasing in $|\xi|$. Hence, we can apply Lemma \ref{lem : riemann sum} to $\mathcal{K}_1$ and $\mathcal{K}_2$. Using \eqref{Sigma_j_val}, we obtain 
\begin{align}
    \mathcal{K}_1 \leq \sqrt{\left\|\frac{1}{l_{11}}\right\|_{2}^2 + \frac{1}{4d^2l_{11}(0)^2} + \int_{0}^{\infty} \frac{1}{l_{11}(x_1,0)^2} dx_1}
    &\leq \sqrt{\frac{1}{4\pi \lambda_1} + \frac{1}{4d^2} + \frac{d}{\sqrt{\lambda_1}}},\label{int_comp_periodic1}
    \end{align}
    \begin{align}
    \mathcal{K}_2 \leq \sqrt{\left\|\frac{1}{l_{22}}\right\|_{2}^2 + \frac{1}{4d^2l_{22}(0)^2} + \int_{0}^{\infty} \frac{1}{l_{22}(x_1,0)^2} dx_1}
    &\leq \sqrt{\frac{1}{4\pi\lambda_2} + \frac{1}{4d^2\lambda_2^2} + \frac{d}{\lambda_2^{\frac{3}{2}}}}.\label{int_comp_periodic2}
\end{align}
 This proves the theorem.
\end{proof}
Theorem \ref{th : radii periodic} provides the existence of a branch of periodic solutions if \eqref{condition radii polynomial} is satisfied for the newly defined bounds $\widehat{\mathcal{Y}}_0, \widehat{\mathcal{Z}}_1, \widehat{\mathcal{Z}}_2$. In particular, given $j \in \{0,2,3\}$ and using the definitions of $\hat{\kappa}_j$,  we have that $\kappa_j = \hat{\kappa}_j + \mathcal{O}(\frac{1}{d}).$ Consequently, this implies that $\mathcal{Z}_2$ and $\widehat{\mathcal{Z}}_2$ are close if $d$ is sufficiently big. Similarly, since $\widehat{\mathcal{Z}}_1 = \mathcal{Z}_1 + \max\{1, \|B_{11}^N\|_{2}\}\mathcal{Z}_{u}$, we obtain that $\widehat{\mathcal{Z}}_1 \approx \mathcal{Z}_1$ if $\mathcal{Z}_u$ is sufficiently small (which is the case as $d$ gets bigger). This implies that, in practice, if $d$ is big enough then  \eqref{eq : radii condition periodic} is easily satisfied if \eqref{condition radii polynomial} is. Consequently, one can prove the existence of a branch of periodic solutions limiting the localized pattern with very light additional analysis.
\section{The scalar case \texorpdfstring{$\lambda_1 \lambda_2 = 1$}{red}}\label{sec : bounds_reduced}
In this section, we focus on studying the special case $\lambda_1 \lambda_2 =1$. Recall that in this case, \eqref{eq : gray_scott cov} can be studied as the scalar equation \eqref{gray_scott_reduced}
\begin{align}\label{eq : reduced equation}
    \lambda_1 \Delta u + u^2 - \lambda_1 u^3 - u =0.
\end{align}
Recalling that $\mathbb{L}_{11}$ is defined as in \eqref{definition_of_L}, we introduce the \emph{reduced}  zero-finding problem $\mathbb{F}_r(u) = 0$ where $\mathbb{F}_r$ is defined as 
\begin{align}
    \mathbb{F}_r(u) \bydef \mathbb{L}_{11}u + \mathbb{G}_r(u) 
\end{align}
 and where
\begin{align}
    \mathbb{G}_r(u) \bydef (1 - \lambda_1 u)u^2
\end{align}
is the \emph{reduced} nonlinear term. In particular, \eqref{eq : reduced equation} can be studied using the set-up developed in \cite{unbounded_domain_cadiot}.
To match the analysis derived in the aforementioned paper, we re-define the Hilbert space $\mathcal{H}$ as follows
 \begin{align*}
     \mathcal{H} \bydef \left\{u \in L^2(\R^2), ~ \|u\|_{\mathcal{H}} <\infty\right\}
 \end{align*}
 where we also re-define the norm $\|\cdot\|_{\mathcal{H}}$ as 
\begin{align*}
     \|u\|_{\mathcal{H}} \bydef \|\mathbb{L}_{11}u\|_2
\end{align*}
for all $u \in \mathcal{H}.$ Similarly, we re-define $\mathcal{H}_{D_4}$ as the restriction of $\mathcal{H}$ to $D_4$-symmetric functions.
 We now prove that $\mathbb{F}_r$ is well-defined on $\mathcal{H}$ by proving the well-definedness of $\mathbb{G}_r: \mathcal{H} \to L^2(\mathbb{R}^2)$. In particular, we prove that the product from $\mathcal{H} \times \mathcal{H}$ to $L^2(\R)$ is a bounded bilinear operator.
\begin{lemma}\label{lem : banach algebra reduced}
Let $u,v,w \in \mathcal{H}$ and recall $\kappa_2 = \frac{1}{2\sqrt{\lambda_1 \pi}}$ from Corollary \ref{cor : banach algebra1}. Then,
\begin{align}
    \|uv\|_{2} \leq \kappa_2 \|u\|_{\mathcal{H}} \|v\|_{\mathcal{H}} ~~\text{and}~~ \|uvw\|_{2} \leq \kappa_2^2 \|u\|_{\mathcal{H}} \|v\|_{\mathcal{H}} \|w\|_{\mathcal{H}}.
\end{align}
\end{lemma}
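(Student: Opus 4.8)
The plan is to reduce both inequalities to the pointwise estimate $\|u\|_\infty \le \|\hat{u}\|_1$ combined with the Cauchy--Schwarz inequality, exactly mirroring the argument of Lemma \ref{Banach algebra} and Corollary \ref{cor : banach algebra1}. First I would recall from \eqref{eq : norm of l11 and l22} that $\left\|\frac{1}{l_{11}}\right\|_2 = \frac{1}{2\sqrt{\lambda_1\pi}} = \kappa_2 < \infty$, so that $\frac{1}{l_{11}} \in L^2(\R^2)$; and I would observe that $|l_{11}(\xi)| = \lambda_1|2\pi\xi|^2 + 1 \ge 1$ for all $\xi \in \R^2$, hence $\sup_{\xi \in \R^2}\frac{1}{|l_{11}(\xi)|} = 1$ (the supremum being attained at $\xi = 0$).

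Next, for any $u \in \mathcal{H}$ I would write $\hat{u} = \frac{1}{l_{11}}\,(l_{11}\hat{u})$, note that $\hat{u} \in L^1(\R^2)$ by Cauchy--Schwarz, and then estimate, using $\|u\|_\infty \le \|\hat{u}\|_1$, Cauchy--Schwarz in $L^2(\R^2)$, and Plancherel's theorem,
\[
\|u\|_\infty \le \|\hat{u}\|_1 \le \left\|\tfrac{1}{l_{11}}\right\|_2 \|l_{11}\hat{u}\|_2 = \kappa_2 \|\mathbb{L}_{11}u\|_2 = \kappa_2 \|u\|_{\mathcal{H}}.
\]
Similarly, using $\sup_\xi \frac{1}{|l_{11}(\xi)|} = 1$ and Plancherel,
\[
\|u\|_2 = \|\hat{u}\|_2 = \left\|\tfrac{1}{l_{11}}\,(l_{11}\hat{u})\right\|_2 \le \|l_{11}\hat{u}\|_2 = \|u\|_{\mathcal{H}}.
\]

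Finally I would combine these two estimates. For the bilinear bound, $\|uv\|_2 \le \|u\|_\infty \|v\|_2 \le \kappa_2\|u\|_{\mathcal{H}}\|v\|_{\mathcal{H}}$; for the trilinear bound, $\|uvw\|_2 \le \|u\|_\infty \|v\|_\infty \|w\|_2 \le \kappa_2^2 \|u\|_{\mathcal{H}}\|v\|_{\mathcal{H}}\|w\|_{\mathcal{H}}$, which is the claim. There is essentially no serious obstacle here; the only points requiring a moment's care are the identification $\sup_\xi \frac{1}{|l_{11}(\xi)|} = 1$ (so that $\|\cdot\|_2$ is dominated by $\|\cdot\|_{\mathcal{H}}$ with constant $1$, with no spurious factor) and the membership $\frac{1}{l_{11}} \in L^2(\R^2)$, both immediate from the explicit expression for $l_{11}$ in \eqref{definition_of_l}.
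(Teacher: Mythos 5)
Your proof is correct and follows exactly the approach the paper defers to \cite{unbounded_domain_cadiot}: the key ingredients $\|u\|_\infty \le \|\tfrac{1}{l_{11}}\|_2\|u\|_{\mathcal{H}}$ and $\|u\|_2 \le \sup_\xi\tfrac{1}{|l_{11}(\xi)|}\,\|u\|_{\mathcal{H}} = \|u\|_{\mathcal{H}}$, combined via H\"older, are precisely the two facts the paper highlights ($\max_\xi|\tfrac{1}{l_{11}}|=1$ and $\|\tfrac{1}{l_{11}}\|_2=\tfrac{1}{2\sqrt{\lambda_1\pi}}$). The only cosmetic difference is that you have written out the argument self-containedly rather than citing the earlier reference.
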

\begin{proof}
The proof can be found in \cite{unbounded_domain_cadiot}. We use that $\max_{\xi \in \mathbb{R}^2} \left|\frac{1}{l_{11}}\right| = 1$ and $\left\|\frac{1}{l_{11}}\right\|_2 = \frac{1}{2\sqrt{\lambda_1 \pi}}$.
\end{proof}
Let $L_{11}$ be the Fourier coefficients representation of $\mathbb{L}_{11}$ on $\om$ as defined in Section \ref{sec : periodic_spaces}. Then, we re-define the Hilbert space $\mathscr{h}$ as 
\[
\mathscr{h} \bydef \left\{U \in \ell^2(J_{\mathrm{red}}(D_4)), ~ \|U\|_{\mathscr{h}} < \infty \right\}
\]
where $\|U\|_{\mathscr{h}} \bydef \|L_{11}U\|_2$ for all $U \in \mathscr{h}$.  Then, define  $F_r$ and $G_r$ as the periodic equivalents of $\mathbb{F}_r$ and $\mathbb{G}_r$, as done in Section \ref{sec : periodic_spaces}. Specifically, we have 
\begin{align*}
    G_r(U) \bydef U*U - \lambda_1 U*U*U  ~~ \text{ and } ~~ F_r(U) \bydef L_{11}U + G_r(U)
\end{align*}
for all $U \in \mathscr{h}$.

Now that we introduced the required notations to study the scalar PDE \eqref{eq : reduced equation}, we are in a position to expose our computer-assisted analysis. Let us fix $N \in \mathbb{N}$  controlling the numerical truncation of both our operators and sequences approximations respectively. That is, we choose $N_0 = N$.  Then, following Section \ref{sec : numerical construction}, we assume that we where able to construct an approximate solution $u_0 \in \mathcal{H}_{D_4}$ such that $u_0 = \gamma^\dagger(U_0)$ (cf. \eqref{single_gamma_dagger} for a definition of $\gamma^\dagger$) for some $U_0 \in {\mathscr{h}}$ with $U_0 = \pi^{N} U_0$ (cf. \eqref{def : piN and pisubN} for a definition of $\pi^N$). In particular, an equivalent of Theorem \ref{th: radii polynomial} can be derived in the case of the scalar PDE $\mathbb{F}_r =0$. We state the result in the next theorem for convenience whose proof is given in \cite{unbounded_domain_cadiot}.
\begin{theorem}\label{th: radii polynomial reduced}
Let $\mathbb{A}_r : L^2_{D_4}(\mathbb{R}^2) \to \mathcal{H}_{D_4}$ be a bounded linear operator. Moreover, let $\mathcal{Y}_0, \mathcal{Z}_1$ be non-negative constants and let $\mathcal{Z}_2 : (0, \infty) \to [0,\infty)$ be a non-negative function such that
  \begin{align}
    \|\mathbb{A}_r\mathbb{F}_r(u_0)\|_{\mathcal{H}} & \le \mathcal{Y}_0\label{def : Y0r}\\
    \|I_d - \mathbb{A}_rD\mathbb{F}_r(u_0)\|_{\mathcal{H}} &\le \mathcal{Z}_1\label{def : Z1r}\\
    \|\mathbb{A}_r\left({D}\mathbb{F}_r(v) - D\mathbb{F}_r(u_0)\right)\|_{\mathcal{H}} &\le \mathcal{Z}_2(s)s, ~~ \text{for all } v \in \overline{B_s(u_0)} \text{ and all } s>0.\label{def : Z2r}
\end{align}  
If there exists $s>0$ such that
\begin{equation}\label{condition radii polynomial reduced}
    \frac{1}{2}\mathcal{Z}_2(s)s^2 - (1-\mathcal{Z}_1)s + \mathcal{Y}_0 <0 \text{ and } \mathcal{Z}_1 + \mathcal{Z}_2(s)s < 1,
 \end{equation}
then there exists a unique $\tilde{u} \in \overline{B_s(u_0)} \subset \mathcal{H}_{D_4}$ such that $\mathbb{F}_r(\tilde{u})=0$. 
\end{theorem}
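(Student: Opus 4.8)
The plan is to follow the same structure used in the proof of Theorem~\ref{th: radii polynomial}, since Theorem~\ref{th: radii polynomial reduced} is the scalar analogue of that result and the hypotheses have exactly the same shape. The only genuinely new ingredient is that we must invoke the abstract Newton--Kantorovich machinery of \cite{unbounded_domain_cadiot}, so in fact the statement is essentially a restatement of a theorem proved there; the cleanest proof is a reduction to that reference combined with a self-contained verification of the contraction estimates.

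First I would define the fixed-point operator $\mathbb{T}_r(u) \bydef u - \mathbb{A}_r \mathbb{F}_r(u)$ on $\mathcal{H}_{D_4}$ and observe, exactly as in the proof of Theorem~\ref{th: radii polynomial}, that a zero of $\mathbb{F}_r$ in $\overline{B_s(u_0)}$ is the same thing as a fixed point of $\mathbb{T}_r$ in that ball (because $\mathbb{A}_r$ is built so as to be injective, via $\mathbb{A}_r = \mathbb{L}_{11}^{-1}\mathbb{B}_r$ with $\mathbb{L}_{11}$ an isometric isomorphism onto $L^2_{D_4}(\mathbb{R}^2)$). Then the heart of the argument is the pair of estimates
\begin{align*}
    \|\mathbb{T}_r(u) - u_0\|_{\mathcal{H}} &\leq \frac{1}{1-\mathcal{Z}_1}\left(\tfrac{1}{2}\mathcal{Z}_2(s)s^2 + \mathcal{Y}_0\right), \\
    \|\mathbb{T}_r(u) - \mathbb{T}_r(w)\|_{\mathcal{H}} &\leq \frac{\mathcal{Z}_2(s)s}{1-\mathcal{Z}_1}\|u-w\|_{\mathcal{H}}
\end{align*}
for all $u,w \in \overline{B_s(u_0)}$ and all $s>0$. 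These follow from \eqref{def : Y0r}--\eqref{def : Z2r}: write $\mathbb{T}_r(u) - u_0 = (u - u_0) - \mathbb{A}_r\mathbb{F}_r(u)$, insert $-\mathbb{A}_r\mathbb{F}_r(u_0) + \mathbb{A}_r\mathbb{F}_r(u_0)$, use the fundamental theorem of calculus to write $\mathbb{A}_r(\mathbb{F}_r(u) - \mathbb{F}_r(u_0)) = \mathbb{A}_r D\mathbb{F}_r(u_0)(u-u_0) + \int_0^1 \mathbb{A}_r(D\mathbb{F}_r(u_0 + t(u-u_0)) - D\mathbb{F}_r(u_0))(u-u_0)\,dt$, and then bound using $\mathcal{Y}_0$, $\mathcal{Z}_1$, $\mathcal{Z}_2$ together with the smoothness of $\mathbb{G}_r$ on $\mathcal{H}$ (which is guaranteed by Lemma~\ref{lem : banach algebra reduced}, ensuring $\mathbb{F}_r$ is $C^1$ and the integral makes sense). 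The $\frac{1}{1-\mathcal{Z}_1}$ factor appears after absorbing the $\mathcal{Z}_1\|u-u_0\|$ term; strictly one works with $(I_d - \mathbb{A}_r D\mathbb{F}_r(u_0))$ and a Neumann-type rearrangement, exactly as in \cite{unbounded_domain_cadiot, van2021spontaneous}.

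Finally, given the two displayed estimates, I would check that condition \eqref{condition radii polynomial reduced} at some $s = s_0$ forces $\|\mathbb{T}_r(u) - u_0\|_{\mathcal{H}} \leq s_0$ (this is precisely the first inequality in \eqref{condition radii polynomial reduced}, rearranged) and $\frac{\mathcal{Z}_2(s_0)s_0}{1-\mathcal{Z}_1} < 1$ (this is the second inequality in \eqref{condition radii polynomial reduced} together with $\mathcal{Z}_1 < 1$), so that $\mathbb{T}_r : \overline{B_{s_0}(u_0)} \to \overline{B_{s_0}(u_0)}$ is a well-defined contraction on a complete metric space. The Banach fixed point theorem then yields a unique fixed point $\tilde{u} \in \overline{B_{s_0}(u_0)}$, hence a unique zero of $\mathbb{F}_r$ there. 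I do not expect a serious obstacle here: the only subtlety is ensuring $\mathbb{A}_r$ is injective so that "fixed point of $\mathbb{T}_r$" and "zero of $\mathbb{F}_r$" coincide, and that $D\mathbb{F}_r(u_0)$ is a bounded operator $\mathcal{H}_{D_4} \to L^2_{D_4}(\mathbb{R}^2)$ so that the bounds $\mathcal{Z}_1$, $\mathcal{Z}_2$ are meaningful — both of which follow from the scalar version of Lemma~\ref{Banach algebra} recorded in Lemma~\ref{lem : banach algebra reduced}. Since all of this is carried out in detail in \cite{unbounded_domain_cadiot}, the proof reduces to citing that reference after noting the one-to-one correspondence between fixed points and zeros.
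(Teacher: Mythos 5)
Your proposal follows exactly the same route as the paper: the paper's own proof of this scalar theorem is a one-line deferral to \cite{unbounded_domain_cadiot}, and the system analogue (Theorem~\ref{th: radii polynomial}) is likewise proved by defining $\mathbb{T}$, quoting from \cite{unbounded_domain_cadiot, van2021spontaneous} precisely the two displayed estimates you reproduce, and then invoking the Banach fixed point theorem. Your added remarks on the mean value argument and on the injectivity of $\mathbb{A}_r$ (needed to pass from a fixed point of $\mathbb{T}_r$ to a zero of $\mathbb{F}_r$) correctly fill in details that the paper leaves implicit, so the proposal is consistent with and essentially identical to the paper's treatment.
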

Having in mind the above theorem, we need to construct an approximate inverse $\mathbb{A}_r : L^2_{D_4}(\R^2) \to \mathcal{H}_{D_4}$ for $D\mathbb{F}_r(u_0)$ and compute the corresponding bounds $\mathcal{Y}_0, \mathcal{Z}_1$ and $\mathcal{Z}_2$. This is achieved in the next section.
\subsection{Computer-assisted analysis for the case \texorpdfstring{$\lambda_1\lambda_2 =1$}{lam1lam2}}
 Following the construction presented in Section \ref{sec : the operator A}, we start by building $B_r^N$, which is a numerical approximate inverse of $\pi^N DF_r(U_{0})L_{11}^{-1} \pi^N$. Then, we define
\begin{align}
    B_r \bydef \pi_N + B_r^N \text{ and } \mathbb{B}_r \bydef \mathbb{1}_{\R^2 \setminus \om} + \Gamma^\dagger(B_r).
\end{align}
Finally, the operator $\mathbb{A}_r : L^2_{D_4}(\R^2) \to \mathcal{H}_{D_4}$ is given by
\begin{align}
    \mathbb{A}_r \bydef \mathbb{L}_{11}^{-1} \mathbb{B}_r.\label{def : A_r}
\end{align}
In particular, $\mathbb{A}_r : L^2_{D_4}(\R^2) \to \mathcal{H}_{D_4}$ is a bounded linear operator since $\mathbb{L}_{11} : \mathcal{H}_{D_4} \to L^2_{D_4}(\mathbb{R}^2)$ is an isometric isomorphism. Moreover, using \cite{unbounded_domain_cadiot}, we have 
\begin{align}\label{eq : equality of norms A and B and BN}
    \|\mathbb{A}_r\|_{2,\mathcal{H}} = \|\mathbb{B}_r\|_2 = \max\{1, \|B^N_r\|_2\}.
\end{align}
We  now present the computation of the required bounds for the application of Theorem \ref{th: radii polynomial reduced}.
First, the bound $\mathcal{Y}_0$ can be computed thanks to Parseval's identity. We present the result in the following lemma, for which a proof is given in  \cite{unbounded_domain_cadiot}.
\begin{lemma}\label{lem : Y_0_reduced}
Let $\mathcal{Y}_0 > 0$ be such that
\begin{align}
    \mathcal{Y}_0 \bydef |\om|^{\frac{1}{2}}\left(\|B_{r}^N  F_r(U_{0})\|_{2}^2 + \|(\pi^{3N}-\pi^N) G_r(U_{0})\|_{2}^2 \right)^{\frac{1}{2}}.
\end{align}
Then, $\|\mathbb{A}_{r}\mathbb{F}_r(u_{0})\|_{\mathcal{H}} \leq \mathcal{Y}_0$.
\end{lemma}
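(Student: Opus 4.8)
Since the full argument is the scalar analogue of the proof referenced in \cite{unbounded_domain_cadiot}, the plan is to reduce the $\mathcal{H}$-norm to an $L^2$-norm through the isometry $\mathbb{L}_{11}$, transfer the quantity to Fourier coefficients on $\om$, and then exploit the support structure of $U_0$. First I would use $\mathbb{A}_r = \mathbb{L}_{11}^{-1}\mathbb{B}_r$ from \eqref{def : A_r} together with the defining identity $\|u\|_{\mathcal{H}} = \|\mathbb{L}_{11}u\|_2$ and the fact that $\mathbb{L}_{11}:\mathcal{H}_{D_4}\to L^2_{D_4}(\R^2)$ is an isometric isomorphism, to obtain $\|\mathbb{A}_r\mathbb{F}_r(u_0)\|_{\mathcal{H}} = \|\mathbb{L}_{11}\mathbb{L}_{11}^{-1}\mathbb{B}_r\mathbb{F}_r(u_0)\|_2 = \|\mathbb{B}_r\mathbb{F}_r(u_0)\|_2$.

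The next step is to identify $\mathbb{F}_r(u_0)$ with an $\overline{\om}$-supported function whose sequence of Fourier coefficients is $F_r(U_0)$. Since $u_0 = \gamma^\dagger(U_0)$ has support in $\overline{\om}$, both $\mathbb{L}_{11}u_0$ (a constant-coefficient differential operator) and $\mathbb{G}_r(u_0) = (1-\lambda_1 u_0)u_0^2$ have support in $\overline{\om}$; using that the symbol $l_{11}$ acts diagonally, $\gamma(\mathbb{L}_{11}u_0) = L_{11}U_0$, and that $\gamma(u_0^2) = U_0*U_0$, $\gamma(u_0^3) = U_0*U_0*U_0$ by the very definition of the discrete convolution, we get $\gamma(\mathbb{F}_r(u_0)) = L_{11}U_0 + G_r(U_0) = F_r(U_0)$, hence $\mathbb{F}_r(u_0) = \gamma^\dagger(F_r(U_0))$ by the single-component analogue of Lemma \ref{lem : gamma and Gamma properties}. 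Because this function is supported in $\overline{\om}$, the term $\mathbb{1}_{\mathbb{R}^2\setminus\om}$ in $\mathbb{B}_r$ contributes nothing and $\Gamma^\dagger(B_r) = \gamma^\dagger B_r\gamma$ gives $\mathbb{B}_r\mathbb{F}_r(u_0) = \gamma^\dagger(B_r F_r(U_0))$, so the isometry yields $\|\mathbb{B}_r\mathbb{F}_r(u_0)\|_2 = |\om|^{\frac{1}{2}}\|B_r F_r(U_0)\|_2$.

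It then remains to expand $\|B_r F_r(U_0)\|_2$ using supports. Since $U_0 = \pi^N U_0$, the sequence $L_{11}U_0$ is supported on $I^N$ and $G_r(U_0) = U_0*U_0 - \lambda_1 U_0*U_0*U_0$ is supported on $I^{3N}$, so $\pi_N F_r(U_0) = (\pi^{3N}-\pi^N)G_r(U_0)$. Writing $B_r = \pi_N + B_r^N$ with $B_r^N = \pi^N B_r^N\pi^N$, one gets $B_r F_r(U_0) = B_r^N F_r(U_0) + \pi_N F_r(U_0)$, where the two summands are supported on $I^N$ and its complement respectively; their orthogonality in $\ell^2$ gives $\|B_r F_r(U_0)\|_2^2 = \|B_r^N F_r(U_0)\|_2^2 + \|(\pi^{3N}-\pi^N)G_r(U_0)\|_2^2$, which is exactly $\mathcal{Y}_0^2/|\om|$, proving the claim. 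The argument is essentially bookkeeping; the only point that needs care is the correspondence between pointwise products of $\overline{\om}$-supported functions and discrete convolutions of their Fourier coefficients, together with correctly tracking the truncation levels $I^N$, $I^{2N}$, $I^{3N}$ — but no genuine difficulty arises, and (modulo interval-arithmetic enclosures in the actual computation) one in fact obtains equality rather than just the stated inequality.
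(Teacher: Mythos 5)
Your proposal is correct and follows the same route that the paper (via its reference to Lemma 4.11 of \cite{unbounded_domain_cadiot} and its own proof of Lemma \ref{lem : bound Y_0}) takes: pass from the $\mathcal{H}$-norm to $L^2$ via the isometry $\mathbb{L}_{11}$, use the support of $\mathbb{F}_r(u_0)$ in $\overline{\om}$ to kill the $\mathbb{1}_{\R^2\setminus\om}$ part of $\mathbb{B}_r$, convert to $\ell^2$ via $\gamma/\gamma^\dagger$ (picking up the $|\om|^{1/2}$ factor), and then split $B_r = \pi_N + B_r^N$ with orthogonality. You are also right that the argument yields equality, not merely the stated inequality, modulo the rigorous-arithmetic enclosures. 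The only step you phrase loosely is $\gamma(\mathbb{L}_{11}u_0) = L_{11}U_0$: this is not purely a Fourier-symbol bookkeeping fact but relies on the null-trace-of-order-$2$ construction of $u_0$ on $\partial\om$ (Section \ref{sec : numerical construction}), which is what makes the boundary terms in the integration by parts vanish so that the restricted Laplacian agrees with the periodic one; without that the identity would fail. Since the paper has already arranged that property, this is a gloss rather than a gap.
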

Now, similarly as what was achieved in Lemma \ref{lem : Bound Z_2}, $\mathcal{Z}_2(s)$ is obtained using the mean value inequality for Banach spaces.
\begin{lemma}\label{lem : Z_2_reduced}
Let $s > 0$ and let $\mathcal{Z}_2(s) > 0$ be such that
\begin{align}
    \mathcal{Z}_2(s) \bydef \max\{1,\|B_{r}^N\|_{2}\} \left(2 \kappa_2 + 3\lambda_1 \kappa_2^2 s\right) + 6\lambda_1 \left(\|\mathbb{U}_{0}^{*} (B_{r}^N)^{*}\|_{2}^2 + \|U_{0}\|_{1}^2\right)^{\frac{1}{2}} \kappa_2,
\end{align}
where $\mathbb{U}_0$ is the discrete convolution operator associated to $U_0$ (cf. \eqref{def : discrete conv operator} for a definition).
Then, $\|\mathbb{A}_{r}(D\mathbb{F}_r(v) - D\mathbb{F}_r(u_{0}))\|_{\mathcal{H}} \leq \mathcal{Z}_2(s) s$ for all $v \in \overline{B_s(u_0)}$.
\end{lemma}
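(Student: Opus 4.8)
The plan is to use that $\mathbb{F}_r$ is a polynomial map, so the increment $D\mathbb{F}_r(v)-D\mathbb{F}_r(u_0)$ can be written out exactly rather than estimated through a second-order Taylor remainder. Setting $h\bydef v-u_0$ with $\|h\|_{\mathcal{H}}\leq s$ and recalling $D\mathbb{G}_r(u)h'=2uh'-3\lambda_1u^2h'$ together with $v^2-u_0^2=h^2+2u_0h$, a direct computation gives, for every $h'\in\mathcal{H}_{D_4}$,
\[
\big(D\mathbb{F}_r(v)-D\mathbb{F}_r(u_0)\big)h'=\big(D\mathbb{G}_r(v)-D\mathbb{G}_r(u_0)\big)h'=2hh'-3\lambda_1h^2h'-6\lambda_1u_0hh'.
\]
Since $\mathbb{A}_r=\mathbb{L}_{11}^{-1}\mathbb{B}_r$ and $\mathbb{L}_{11}:\mathcal{H}_{D_4}\to L^2_{D_4}(\R^2)$ is an isometric isomorphism, we have $\|\mathbb{A}_rz\|_{\mathcal{H}}=\|\mathbb{B}_rz\|_2$, so the task reduces to bounding $\|\mathbb{B}_r(2hh'-3\lambda_1h^2h'-6\lambda_1u_0hh')\|_2$ uniformly over $\|h'\|_{\mathcal{H}}\leq1$, and then taking the supremum in $h'$.

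For the two purely high-order terms I would use the crude estimate $\|\mathbb{B}_rz\|_2\leq\|\mathbb{B}_r\|_2\|z\|_2=\max\{1,\|B_r^N\|_2\}\|z\|_2$ from \eqref{eq : equality of norms A and B and BN}, combined with the Banach-algebra inequalities of Lemma \ref{lem : banach algebra reduced}: $\|hh'\|_2\leq\kappa_2\|h\|_{\mathcal{H}}\|h'\|_{\mathcal{H}}\leq\kappa_2 s\|h'\|_{\mathcal{H}}$ and $\|h^2h'\|_2\leq\kappa_2^2\|h\|_{\mathcal{H}}^2\|h'\|_{\mathcal{H}}\leq\kappa_2^2 s^2\|h'\|_{\mathcal{H}}$. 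This produces exactly the contribution $\max\{1,\|B_r^N\|_2\}(2\kappa_2+3\lambda_1\kappa_2^2 s)s$ to $\mathcal{Z}_2(s)s$.

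The term $u_0hh'$ is where a sharper estimate is required, and this is the one nontrivial step. Here I would write $u_0hh'=\mathbb{u}_0(hh')$ where $\mathbb{u}_0$ is the multiplication operator by $u_0$, so that $\|\mathbb{B}_r(u_0hh')\|_2\leq\|\mathbb{B}_r\mathbb{u}_0\|_2\,\|hh'\|_2\leq\|\mathbb{B}_r\mathbb{u}_0\|_2\,\kappa_2 s\|h'\|_{\mathcal{H}}$, and estimate $\|\mathbb{B}_r\mathbb{u}_0\|_2$. Because $\mathrm{supp}(u_0)\subset\overline{\om}$, one has $\mathbb{u}_0\in\mathcal{B}_\om(L^2_{D_4})$, $\mathbb{1}_{\R^2\setminus\om}\mathbb{u}_0=0$, and the identity $\gamma\mathbb{u}_0=\mathbb{U}_0\gamma$ holds (the Fourier coefficients on $\om$ of $u_0z$ equal $U_0*\gamma(z)$, since $u_0$ agrees with the periodic extension of $u_0|_\om$). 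Hence $\mathbb{B}_r\mathbb{u}_0=\gamma^\dagger B_r\mathbb{U}_0\gamma$, and by the isometry properties of $\gamma$ and $\gamma^\dagger$ (cf. Lemma \ref{lem : gamma and Gamma properties}) this yields $\|\mathbb{B}_r\mathbb{u}_0\|_2\leq\|B_r\mathbb{U}_0\|_2$. Finally, splitting $B_r=\pi_N+B_r^N$, the ranges of $\pi_N\mathbb{U}_0$ and of $B_r^N\mathbb{U}_0=\pi^NB_r^N\pi^N\mathbb{U}_0$ are orthogonal in $\ell^2(J_{\mathrm{red}}(D_4))$, so $\|B_r\mathbb{U}_0\|_2^2\leq\|\pi_N\mathbb{U}_0\|_2^2+\|B_r^N\mathbb{U}_0\|_2^2\leq\|U_0\|_1^2+\|\mathbb{U}_0^{*}(B_r^N)^{*}\|_2^2$, using Young's inequality \eqref{young_inequality} for $\|\mathbb{U}_0\|_2\leq\|U_0\|_1$ and $(B_r^N\mathbb{U}_0)^{*}=\mathbb{U}_0^{*}(B_r^N)^{*}$. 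Multiplying by $6\lambda_1$ and collecting the three contributions gives $\mathcal{Z}_2(s)s$ exactly as stated. The only delicate point is this orthogonal splitting together with the identity $\gamma\mathbb{u}_0=\mathbb{U}_0\gamma$, which upgrades an $a+b$ estimate to the $\sqrt{a^2+b^2}$ form appearing in the statement; everything else is a routine chain of triangle, Young, and Banach-algebra inequalities.
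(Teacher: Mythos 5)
Your argument is correct and follows essentially the same route as the paper's proof: you expand the polynomial increment $D\mathbb{G}_r(v)-D\mathbb{G}_r(u_0)$ exactly, push the first two terms through $\|\mathbb{B}_r\|_2=\max\{1,\|B_r^N\|_2\}$ and the Banach-algebra estimates of Lemma \ref{lem : banach algebra reduced}, and handle the $u_0$-term via the identity $\|\mathbb{B}_r\mathbb{u}_0\|_2=\|B_r\mathbb{U}_0\|_2$ (using $\mathrm{supp}(u_0)\subset\overline{\om}$ and Lemma \ref{lem : gamma and Gamma properties}) followed by the orthogonal splitting $\|B_r\mathbb{U}_0\|_2^2\leq\|B_r^N\mathbb{U}_0\|_2^2+\|\pi_N\mathbb{U}_0\|_2^2$ and Young's inequality. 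This is exactly the paper's decomposition and chain of estimates, stated from the ``apply to $h'$'' point of view rather than in operator-norm notation.
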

\begin{proof}
Let $h = v - u_{0} \in B_s(u_{0})$. Then, we get
\begin{align}
    \|\mathbb{A}_{r}(D\mathbb{F}_r(v) - D\mathbb{F}_r(u_{0}))\|_{\mathcal{H}} &= \|\mathbb{B}_{r}(D\mathbb{G}_r(h+u_{0}) - D\mathbb{G}_r(u_{0}))\|_{\mathcal{H},2} \\ \nonumber
    &= \|\mathbb{B}_{r}(2\mathbb{h} - 3\lambda_1 \mathbb{h}^2 - 6\lambda_1 \mathbb{h} \mathbb{u}_{0})\|_{\mathcal{H},2} \\ \nonumber
    &\leq \|\mathbb{B}_{r}\|_{2} (2\|\mathbb{h}\|_{\mathcal{H},2} + 3\lambda_1 \|\mathbb{h}^2\|_{\mathcal{H},2}) + 6\lambda_1 \|\mathbb{B}_{r} \mathbb{u}_{0}\|_{2} \|\mathbb{h}\|_{\mathcal{H},2}. 
\end{align}
Now, using Lemma \ref{lem : banach algebra reduced} combined with \eqref{eq : equality of norms A and B and BN}, we get
\begin{align*}
    \|\mathbb{B}_{r}\|_{2} (2\|\mathbb{h}\|_{\mathcal{H},2} + 3\lambda_1 \|\mathbb{h}^2\|_{\mathcal{H},2}) \leq \max\{1,\|B_r^N\|_{2}\} \left(2 \kappa_2 s + 3\lambda_1 \kappa_2^2 s^2\right).
\end{align*}
Now, recalling that $u_0 = \gamma^\dagger(U_0)$ and $\mathbb{B} = \mathbb{1}_{\mathbb{R}^2 \setminus \om} + \Gamma^\dagger(B_r)$, we get
\begin{align*}
    \|\mathbb{B}_r\mathbb{u}_0\|_2 = \| \Gamma^\dagger(B_r)\mathbb{u}_0\|_2 = \|B_r \mathbb{U}_0\|_2,
\end{align*}
where we used Lemma \ref{lem : gamma and Gamma properties} for the last step.  Recalling that $B_r = \pi_N + B^N_r$, we get 
\begin{align*}
    \|B_r\mathbb{U}_0\|_2^2 &\leq \|\pi^NB_r\mathbb{U}_0\|_2^2 + \|\pi_NB_r\mathbb{U}_0\|_2^2 
    =  \|B^N_r\mathbb{U}_0\|_2^2 + \|\pi_N\mathbb{U}_0\|_2^2 
    \leq \|B^N_r\mathbb{U}_0\|_2^2 + \|\mathbb{U}_0\|_2^2.
\end{align*}
We conclude the proof using that $\|\mathbb{U}_0\|_2 \leq \|U_0\|_1$ (cf. \eqref{young_inequality}) and $\|B^N_r\mathbb{U}_0\|_2 = \|\mathbb{U}_0^*(B^N_r)^*\|_2.$
\end{proof}
Finally, we discuss the bound $\mathcal{Z}_1$. Define
\begin{align}
    v_0 \bydef 2u_{0} -3\lambda_1 u_{0}^2 ~~\text{and}~~ V_0 \bydef \gamma(v_0).\label{def : v_0}
\end{align}
Then, observe that
\begin{align}
    \mathbb{v}_0 = D\mathbb{G}_r(u_{0}) ~~ \text{and}~~ \mathbb{V}_0 = DG_r(U_{0})
\end{align}
where $\mathbb{v}_0$ and $\mathbb{V}_0$ are the multiplication operators for $v_0$ and $V_0$ respectively. 
Using these notations, we present the computation of the bound $\mathcal{Z}_1$ in the next lemma, for which a proof is given in  \cite{unbounded_domain_cadiot}.
\begin{lemma}\label{lem : Z_full_1 reduced}
Let $Z_1, \mathcal{Z}_{u,1}$ and $\mathcal{Z}_{u,2}$ be non-negative bounds  satisfying
\begin{align}
\nonumber
Z_1 \geq \|I_d - B_r(I_d + \mathbb{V}_0L_{11}^{-1})\|_{2}\text{,}~~\mathcal{Z}_{u,1} \geq \|\mathbb{1}_{\mathbb{R}^2 \setminus \om} \mathbb{v}_0 \mathbb{L}_{11}^{-1}\|_{2}\text{ and}~~
    \mathcal{Z}_{u,2} \geq \|\mathbb{1}_{\om}\mathbb{v}_0 (\Gamma^\dagger(L_{11}^{-1}) - \mathbb{L}_{11}^{-1})\|_{2}.
\end{align}
\\
Then, defining $\mathcal{Z}_u  \bydef \sqrt{\mathcal{Z}_{u,1}^2 + \mathcal{Z}_{u,2}^2} $ and $\mathcal{Z}_1>0$ as
\begin{equation}\label{eq : first definition Z1 reduced}
    \mathcal{Z}_1 \bydef Z_1 + \max\{1, \|B_{r}^N\|_{2}\} \mathcal{Z}_{u},
\end{equation}
it follows that $ \|I_d -{\mathbb{A}_r}D\mathbb{F}_r(u_{0})\|_{\mathcal{H}} \leq \mathcal{Z}_1.$
\end{lemma}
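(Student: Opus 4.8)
The plan is to adapt the decomposition of $I_d - \mathbb{A}_r D\mathbb{F}_r(u_0)$ used in the system case (Lemma \ref{lem : Z_full_1}) to the scalar setting, exploiting that here $\mathbb{A}_r = \mathbb{L}_{11}^{-1}\mathbb{B}_r$ with $\mathbb{B}_r = \mathbb{1}_{\R^2\setminus\om} + \Gamma^\dagger(B_r)$, and that $\mathbb{L}_{11}:\mathcal{H}_{D_4}\to L^2_{D_4}(\R^2)$ is an isometric isomorphism. First I would write, for $u\in\mathcal{H}_{D_4}$,
\begin{align*}
    \|(I_d - \mathbb{A}_r D\mathbb{F}_r(u_0))u\|_{\mathcal{H}} = \|\mathbb{L}_{11}(I_d - \mathbb{L}_{11}^{-1}\mathbb{B}_r D\mathbb{F}_r(u_0))u\|_2 = \|(\mathbb{L}_{11} - \mathbb{B}_r D\mathbb{F}_r(u_0))u\|_2,
\end{align*}
and then, since $D\mathbb{F}_r(u_0) = \mathbb{L}_{11} + \mathbb{v}_0$ and $D\mathbb{F}_r(u_0)\mathbb{L}_{11}^{-1} = I_d + \mathbb{v}_0\mathbb{L}_{11}^{-1}$, set $v=\mathbb{L}_{11}^{-1}w$ to reduce the estimate to bounding $\|I_d - \mathbb{B}_r(I_d + \mathbb{v}_0\mathbb{L}_{11}^{-1})\|_2$ as an operator on $L^2_{D_4}(\R^2)$.

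Next I would split $\mathbb{B}_r = \mathbb{1}_{\R^2\setminus\om} + \Gamma^\dagger(B_r)$ and insert the identity $I_d = \mathbb{1}_{\om} + \mathbb{1}_{\R^2\setminus\om}$, following the argument of \cite{unbounded_domain_cadiot}. The terms supported off $\om$ combine so that $\mathbb{1}_{\R^2\setminus\om}(I_d + \mathbb{v}_0\mathbb{L}_{11}^{-1})$ contributes only $\mathbb{1}_{\R^2\setminus\om}\mathbb{v}_0\mathbb{L}_{11}^{-1}$ (the identity part cancels), giving the $\mathcal{Z}_{u,1}$ term. On $\om$, I would replace $\mathbb{L}_{11}^{-1}$ by its periodic truncation $\Gamma^\dagger(L_{11}^{-1})$: writing $\mathbb{v}_0\mathbb{L}_{11}^{-1} = \mathbb{v}_0(\mathbb{L}_{11}^{-1} - \Gamma^\dagger(L_{11}^{-1})) + \mathbb{v}_0\Gamma^\dagger(L_{11}^{-1})$, the first piece restricted to $\om$ gives the $\mathcal{Z}_{u,2}$ term after using $\mathrm{supp}(\mathbb{v}_0 u_0$-localized quantities$)\subset\overline{\om}$ and Lemma \ref{lem : gamma and Gamma properties}, while $\Gamma^\dagger(B_r)(I_d + \mathbb{v}_0\Gamma^\dagger(L_{11}^{-1}))$ transfers to the sequence space via $\bGam^\dagger$ as $\Gamma^\dagger(B_r(I_d + \mathbb{V}_0 L_{11}^{-1}))$, whose distance to the identity-on-$\om$ is exactly $\|I_d - B_r(I_d + \mathbb{V}_0 L_{11}^{-1})\|_2 \le Z_1$ by the isometry in Lemma \ref{lem : gamma and Gamma properties}. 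Collecting the $\om$-supported contributions into a single operator and the $\R^2\setminus\om$-supported contributions into another, their supports are disjoint, so the $2$-norm of the sum is $\sqrt{\mathcal{Z}_{u,1}^2 + \mathcal{Z}_{u,2}^2} = \mathcal{Z}_u$ combined additively with $Z_1$, and one extracts the factor $\max\{1,\|B_r^N\|_2\} = \|\mathbb{B}_r\|_2$ (cf. \eqref{eq : equality of norms A and B and BN}) multiplying $\mathcal{Z}_u$ since that $\mathcal{Z}_u$ contribution passes through $\mathbb{B}_r$.

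The main obstacle is the careful bookkeeping of which operator pieces are supported on $\om$ versus $\R^2\setminus\om$ so that the triangle inequality can be applied without double-counting, together with justifying the cancellation of the identity on $\R^2\setminus\om$ and the transfer $\Gamma^\dagger(B_r)\Gamma^\dagger(\cdot) = \Gamma^\dagger(B_r \cdot)$ — which relies on $B_r$ acting only through its $\om$-localized representation and on the composition identity for $\bGam^\dagger$. Since all of this is a direct specialization of Lemma \ref{lem : Z_full_1} (which itself reduces to the scalar computation of \cite{unbounded_domain_cadiot}) to the scalar operator $\mathbb{L}_{11}$ and the scalar multiplier $\mathbb{v}_0$, I would simply note that the argument of Lemma \ref{lem : Z_full_1}, restricted to its $(1,1)$-block with $DG^N$ replaced by the full $D\mathbb{G}_r(u_0)$ and $N_0=N$, yields the claim verbatim; no new analytic input is needed beyond Lemma \ref{lem : gamma and Gamma properties} and \eqref{eq : equality of norms A and B and BN}.
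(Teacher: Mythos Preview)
Your proposal is correct and follows essentially the same approach as the paper: the paper simply states that a proof is given in \cite{unbounded_domain_cadiot}, and your sketch is precisely the scalar specialization of the argument in Lemma \ref{lem : Z_full_1} (which itself adapts Theorem 3.5 of \cite{unbounded_domain_cadiot}). Your identification of the three pieces $Z_1$, $\mathcal{Z}_{u,1}$, $\mathcal{Z}_{u,2}$ via the splitting $\mathbb{B}_r = \mathbb{1}_{\R^2\setminus\om} + \Gamma^\dagger(B_r)$ and the insertion of $\Gamma^\dagger(L_{11}^{-1})$, together with the use of Lemma \ref{lem : gamma and Gamma properties} and \eqref{eq : equality of norms A and B and BN}, matches the intended derivation exactly.
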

Let us now compute $Z_1$ and $\mathcal{Z}_u$. We begin with $Z_1$.
\begin{lemma}\label{lem : Z1_bound reduced}
Let $M_{r}^N \bydef \pi^N + \mathbb{V}_0L_{11}^{-1}$ and $M_r \bydef \pi_N + M_r^N = I_d + \mathbb{V}_0L_{11}^{-1}$. Let $\varphi$ be defined as in \eqref{definition_of_phi}. Let $Z_1 > 0$ be such that
\begin{equation}\label{ineq : Z_reduced}
    Z_1 \bydef \varphi(Z_{1,1},Z_{1,2},Z_{1,3},Z_{1,4})
\end{equation}
where 
\begin{align}
&Z_{1,1} \bydef \sqrt{\|(\pi^N - B_{r}^NM_{r}^N)(\pi^N -  (M_{r}^N)^*(B_{r}^N)^*)\|_{2}} \\
    &Z_{1,2} \bydef \max_{n \in J_{\mathrm{red}}(D_4) \setminus I^N} \frac{1}{|l_{11}(\tilde{n})|} \sqrt{\|B_{r}^N \mathbb{V}_0\pi_N \mathbb{V}_0B_{r}^N\|_{2}} \\
    &Z_{1,3} \bydef \sqrt{\|\pi^N L_{11}^{-1} \mathbb{V}_0 \pi_N \mathbb{V}_0 L_{11}^{-1} \pi^N\|_{2}} \\
    &Z_{1,4} \bydef \max_{n \in J_{\mathrm{red}}(D_4) \setminus I^N }\frac{1}{|l_{11}(\tilde{n})|} \|V_0\|_{1}.
\end{align}
Then, $\|I_d - B_{r}M_{r}\|_{2} \leq Z_1$.
\end{lemma}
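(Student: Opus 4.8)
The plan is to mirror the structure of the analogous bound $Z_1$ in the full system (Lemma \ref{lem : Z1_bound}), specialized to the scalar operator $L_{11}$ and its multiplication operator $\mathbb{V}_0$. First, I would decompose $I_d - B_r M_r$ using the block structure $B_r = \pi_N + B_r^N$, $M_r = \pi_N + M_r^N$, with $M_r^N = \pi^N + \mathbb{V}_0 L_{11}^{-1}$. Because both $\pi^N$ and $\pi_N$ are orthogonal projections commuting appropriately, the operator $I_d - B_r M_r$ splits into a finite-rank piece supported on $I^N$ and a ``tail'' piece supported on $J_{\mathrm{red}}(D_4)\setminus I^N$, together with cross terms that couple the two. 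Concretely I expect
\begin{align*}
    \pi^N(I_d - B_r M_r)\pi^N &= \pi^N - B_r^N M_r^N,\\
    \pi^N(I_d - B_r M_r)\pi_N &= -B_r^N \mathbb{V}_0 L_{11}^{-1}\pi_N,\\
    \pi_N(I_d - B_r M_r)\pi^N &= -\pi_N \mathbb{V}_0 L_{11}^{-1}\pi^N,\\
    \pi_N(I_d - B_r M_r)\pi_N &= -\pi_N \mathbb{V}_0 L_{11}^{-1}\pi_N,
\end{align*}
using $B_r^N = \pi^N B_r^N \pi^N$ and that $\pi_N$ acts as the identity on the tail. I would then apply Lemma \ref{lem : full_matrix_estimate} with these four blocks to get a bound of the form $\varphi(Z_{1,1},Z_{1,2},Z_{1,3},Z_{1,4})$.

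Next I would estimate each block norm. For $Z_{1,1}$, the finite-rank block $\pi^N - B_r^N M_r^N$ has norm equal to $\sqrt{\|(\pi^N - B_r^N M_r^N)(\pi^N - (M_r^N)^*(B_r^N)^*)\|_2}$ via $\|K\|_2 = \sqrt{\|KK^*\|_2}$; this is computable as the norm of a finite matrix. For the tail-type blocks $Z_{1,2}$ and $Z_{1,3}$, the key is that $L_{11}^{-1}$ is diagonal with entries $1/l_{11}(\tilde n)$, so $\|L_{11}^{-1}\pi_N\|_2 = \max_{n\in J_{\mathrm{red}}(D_4)\setminus I^N} 1/|l_{11}(\tilde n)|$, and this maximum is attained at the smallest admissible index since $1/|l_{11}|$ is decreasing in $|\xi|$. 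Thus $Z_{1,2}$ should come from $\|B_r^N \mathbb{V}_0 \pi_N\| \cdot \|L_{11}^{-1}\pi_N\|$, and using $\|B_r^N \mathbb{V}_0 \pi_N\|_2 = \sqrt{\|B_r^N \mathbb{V}_0 \pi_N \mathbb{V}_0 B_r^N\|_2}$ (noting $\mathbb{V}_0$ is self-adjoint as a multiplication/convolution operator and $\pi_N = \pi_N^2$) gives the stated form; similarly $Z_{1,3}$ uses $\|\pi^N L_{11}^{-1}\mathbb{V}_0\pi_N\|_2 = \sqrt{\|\pi^N L_{11}^{-1}\mathbb{V}_0\pi_N\mathbb{V}_0 L_{11}^{-1}\pi^N\|_2}$. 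For $Z_{1,4}$, the block $\pi_N\mathbb{V}_0 L_{11}^{-1}\pi_N$ is bounded by $\|\mathbb{V}_0\|_2 \cdot \|L_{11}^{-1}\pi_N\|_2$, and by Young's inequality \eqref{young_inequality} the convolution operator norm satisfies $\|\mathbb{V}_0\|_2 \le \|V_0\|_1$, yielding $Z_{1,4} = \|V_0\|_1 \max_{n\in J_{\mathrm{red}}(D_4)\setminus I^N} 1/|l_{11}(\tilde n)|$.

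The main obstacle, as I see it, is bookkeeping the cross terms carefully: one must verify that the off-diagonal blocks genuinely have the clean form above and that applying $\varphi$ to these four quantities is legitimate, i.e. that no block was under-counted. In particular one should double-check whether the block $\pi^N(I_d - B_rM_r)\pi_N$ really equals $-B_r^N\mathbb{V}_0 L_{11}^{-1}\pi_N$ with no leftover $\pi^N$-to-$\pi_N$ leakage from $M_r^N$'s identity part (it should vanish since $\pi^N\pi_N = 0$), and similarly confirm $\pi^N B_r^N = B_r^N$. Once the block decomposition is pinned down, the rest is the routine application of $\|K\|_2 = \sqrt{\|KK^*\|_2}$, self-adjointness of $\mathbb{V}_0$, monotonicity of $1/|l_{11}|$ to justify the maxima (as noted in the remark following Lemma \ref{lem : Z1_bound}), Young's inequality, and Lemma \ref{lem : full_matrix_estimate}. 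I would then conclude $\|I_d - B_r M_r\|_2 \le \varphi(Z_{1,1},Z_{1,2},Z_{1,3},Z_{1,4}) = Z_1$.
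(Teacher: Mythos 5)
Your proposal is correct and follows essentially the same route as the paper's proof: decompose $I_d - B_r M_r$ into the four $\pi^N/\pi_N$ blocks, compute each block norm via $\|K\|_2 = \sqrt{\|KK^*\|_2}$ (for the computable finite pieces), bound the tail parts using the monotone decay of $1/|l_{11}(\tilde n)|$ and Young's inequality, and finish with Lemma \ref{lem : full_matrix_estimate}. The one point you flag as needing care — whether $\pi^N(I_d - B_r M_r)\pi_N$ collapses cleanly to $-B_r^N \mathbb{V}_0 L_{11}^{-1}\pi_N$ — does check out exactly as you anticipate, since $\pi^N B_r = B_r^N$ and $\pi^N \pi_N = 0$, so there is no hidden leakage.
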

\begin{proof}
We begin as in the proof of Lemma \ref{lem : Z1_bound}. 
\begin{align}
    \nonumber I - B_{r}M_{r} &= \begin{bmatrix}
        \pi^N (I - B_{r}M_{r})\pi^N & \pi^N (I - B_{r}M_{r})\pi_N \\
        \pi_N (I - B_{r}M_{r})\pi^N & \pi_N (I - B_{r}M_{r})\pi_N
    \end{bmatrix} \\ \nonumber
    &= \begin{bmatrix}
        \pi^N (I - B_{r}M_{r})\pi^N & -\pi^N B_{r}M_{r} \pi_N \\
        \pi_N (I - M_{r})\pi^N & \pi_N (I - M_{r})\pi_N
    \end{bmatrix} \\
    &= \begin{bmatrix}
         \pi^N - B_{r}^NM_{r}^N & -B_{r}^N \mathbb{V}_0 L_{11}^{-1} \pi_N \\ 
        -\pi_N \mathbb{V}_0 L_{11}^{-1}\pi^N & -\pi_N \mathbb{V}_0 L_{11}^{-1} \pi_N
    \end{bmatrix}.\label{step_in_Z1_reduced}
\end{align}
Now, we examine $\pi^N - B_{r}^NM_{r}^N$.
\begin{align}
     \|\pi^N - B_{r}^NM_{r}^N\|_{2}^2 &= \|(\pi^N - B_{r}^NM_{r}^N) (\pi^N - (M_{r}^N)^{*} (B_{r}^N)^{*})\|_{2} \bydef Z_{1,1}^2.\label{step_in_Z11_reduced}
\end{align}
Following this, we need to investigate the term $- B_{r}^N \mathbb{V}_0 L_{11}^{-1} \pi_N$.
\begin{align}
    \nonumber \|-B_{r}^N \mathbb{V}_0 L_{11}^{-1} \pi_N\|_{2}^2 = \|B_{r}^N \mathbb{V}_0 L_{11}^{-1} \pi_N\|_{2}^2 
    = \|\pi_N L_{11}^{-1} \mathbb{V}_0 B_{11}^N\|_{2}^2  
    &\leq \|\pi_N L_{11}^{-1}\|_{2}^2 \|\pi_N\mathbb{V}_0 B_{r}^N\|_{2}^2 \\ 
    &\hspace{-3.5cm}\leq \max_{n \in J_{\mathrm{red}}(D_4) \setminus I^N} \frac{1}{|l_{11}(\tilde{n})|^2} \|B_{r}^N \mathbb{V}_0\pi_N \mathbb{V}_0B_{r}^N\|_{2} \bydef Z_{1,2}^2.\label{step_in_Z12_reduced}
    \end{align}
\par Thirdly, we consider $-\pi_N \mathbb{V}_0 L_{11}^{-1} \pi^N$.
\begin{align}
    \|-\pi_N \mathbb{V}_0 L_{11}^{-1} \pi^N\|_{2}^2 = \|\pi_N \mathbb{V}_0 L_{11}^{-1} \pi^N\|_{2}^2 = \|\pi^N L_{11}^{-1} \mathbb{V}_0 \pi_N \mathbb{V}_0 L_{11}^{-1} \pi^N\|_{2} \bydef Z_{1,3}^2\label{step_in_Z1_3_reduced}
\end{align}
Lastly, we will consider $-\pi_N \mathbb{V}_0 L_{11}^{-1} \pi_N$.
\begin{align}
    \nonumber \|-\pi_N \mathbb{V}_0 L_{11}^{-1} \pi_N\|_{2} = \|\pi_N \mathbb{V}_0 L_{11}^{-1} \pi_N\|_{2} 
    = \|\pi_N L_{11}^{-1} \mathbb{V}_0 \pi_N\|_{2}
    &\leq \|\pi_N L_{11}^{-1}\|_{2}\|\pi_N \mathbb{V}_0\pi_N\|_{2} \\ \nonumber
    &\leq \max_{n \in J_{\mathrm{red}}(D_4)\setminus I^N} \frac{1}{|l_{11}(\tilde{n})|} \|\mathbb{V}_0\|_{2} \\ 
\nonumber &\leq \max_{n \in J_{\mathrm{red}}(D_4)\setminus I^N} \frac{1}{|l_{11}(\tilde{n})|} \|V_0\|_{1} \\
&\bydef Z_{1,4}\label{step_in_Z1_4_reduced}
\end{align}
where we use Young's inequality (cf. \eqref{young_inequality}) on the final step. With \eqref{step_in_Z11_reduced}, \eqref{step_in_Z12_reduced}, \eqref{step_in_Z1_3_reduced}, and \eqref{step_in_Z1_4_reduced} computed,  we use Lemma \ref{lem : full_matrix_estimate} to compute the $Z_1$ bound.
\end{proof}
\par We now consider the bound $\mathcal{Z}_u$. We can reuse some of the computations presented in Lemmas \ref{lem : lemma Z11full} and \ref{lem : lemma Z12full}. 
\begin{lemma}\label{lem : Zu_reduced}
Let $a_1$ and $a_2$ be defined as in \eqref{def : definition of a1 and a2}. Let $\mathcal{Z}_{u,1},\mathcal{Z}_{u,2} > 0$ be defined as
\begin{align}
    &\mathcal{Z}_{u,1} \bydef \frac{\sqrt{2}C_0(f_{11})(2\pi)^{\frac{1}{4}}e^{-a_1d}\sqrt{|\om|}}{a_1^{\frac{3}{4}}}\sqrt{(V_0,E_1* V_0)_2}\\
    &\mathcal{Z}_{u,2} \bydef \frac{4C_1}{\sqrt{|\om|}}\left(\mathcal{C}_{1,1} \sqrt{(V_0,E_1 * V_0)_2} + \mathcal{C}_{2,1} C(v_0)\right)
\end{align}
where $E_1$ is defined as in Lemma \ref{lem : lemma Z11full}, $C_{1}$, $\mathcal{C}_{1,1}, \mathcal{C}_{2,1},$ and $C(v_0)$ are defined as in Lemma \ref{lem : lemma Z12full}. Defining $\mathcal{Z}_u = \sqrt{\mathcal{Z}_{u,1}^2 + \mathcal{Z}_{u,2}^2}$, we get $\|\mathbb{v}_0(\Gamma^\dagger(L_{11}^{-1}) - \mathbb{L}_{11}^{-1})\|_{2} \leq \mathcal{Z}_{u}$.
\end{lemma}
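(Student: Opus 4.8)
The plan is to derive Lemma~\ref{lem : Zu_reduced} by specializing the component-wise estimates of Lemmas~\ref{lem : lemma Z11full} and~\ref{lem : lemma Z12full} to the scalar setting. In the reduced problem, $D\mathbb{G}_r(u_0)=\mathbb{v}_0$ is a single multiplication operator with $\mathrm{supp}(v_0)\subset\overline{\om}$, the only differential operator entering is $\mathbb{L}_{11}$, and, since $N_0=N$, the function $v_0$ plays exactly the role that $v_1^N$ plays in the full system (with no truncation error). Thus the requirements of Lemma~\ref{lem : Z_full_1 reduced} reduce to controlling the ``leaking tail'' $\mathbb{1}_{\mathbb{R}^2\setminus\om}\mathbb{L}_{11}^{-1}\mathbb{v}_0$ by $\mathcal{Z}_{u,1}$ and the periodization error $\mathbb{1}_\om\mathbb{v}_0(\Gamma^\dagger(L_{11}^{-1})-\mathbb{L}_{11}^{-1})$ by $\mathcal{Z}_{u,2}$, and both estimates are obtained by rerunning the arguments already written for the first component of the system.

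For $\mathcal{Z}_{u,1}$ I would first use \eqref{f_j_action} to write $\mathbb{L}_{11}^{-1}\mathbb{v}_0 u=f_{11}*(v_0 u)$, then apply Proposition~\ref{prop : introduce_f} with $\Omega=\mathbb{R}^2\setminus\om$, $f=f_{11}$ and $v=v_0$ (legitimate since $f_{11},v_0\in L^2(\mathbb{R}^2)$), which gives $\|\mathbb{1}_{\mathbb{R}^2\setminus\om}f_{11}*(v_0 u)\|_2\le\sqrt{\|\mathbb{1}_{\mathbb{R}^2\setminus\om}(f_{11}^2*v_0^2)\|_1}\,\|u\|_2$. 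The estimate of $\|\mathbb{1}_{\mathbb{R}^2\setminus\om}(f_{11}^2*v_0^2)\|_1$ is then verbatim the one producing $\mathcal{Z}_{u,1,1}$ in Lemma~\ref{lem : lemma Z11full}: insert the decay bound $|f_{11}(x)|\le C_0(f_{11})e^{-a_1|x|}/|x|_1^{1/4}$ from Lemma~\ref{lem : constants_for_f}, use $\mathrm{supp}(v_0)\subset\overline{\om}$ to pull out the factor $e^{-a_1 d}$, and rewrite the remaining double integral as the quadratic form $(V_0,E_1*V_0)_2$ with $V_0=\gamma(v_0)$ and $E_1$ as in Lemma~\ref{lem : lemma Z11full}. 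Collecting the constants gives precisely the stated $\mathcal{Z}_{u,1}$.

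For $\mathcal{Z}_{u,2}$ the plan is to copy the $\mathcal{Z}_{u,2,1}$ argument of Lemma~\ref{lem : lemma Z12full} with $v_1^N$ replaced by $v_0$. Given $u\in L^2_{D_4}(\mathbb{R}^2)$ with $\|u\|_2=1$, set $w\bydef v_0 u$ and $g\bydef\mathbb{1}_\om(\mathbb{L}_{11}^{-1}-\Gamma^\dagger(L_{11}^{-1}))w$; by Theorem~3.9 of~\cite{unbounded_domain_cadiot} the Fourier coefficients of $g$ satisfy $g_n=-\tfrac{1}{|\om|}\int_{\mathbb{R}^2\setminus\om}\mathbb{L}_{11}^{-1}w(x)e^{-2\pi i\tilde{n}\cdot x}\,dx$. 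Multiplying by $|2\pi\tilde{n}|^2$, applying Green's identity, and using $\mathbb{L}_{11}=\tfrac{1}{a_1^2}(\Delta-a_1^2 I_d)$ together with $\mathrm{supp}(w)\subset\overline{\om}$ to annihilate the interior term yields
\[
g_n=-\frac{1}{|\om|\,(a_1^2+|2\pi\tilde{n}|^2)}\int_{\partial\om}e^{-2\pi i\tilde{n}\cdot x}\,\nabla\mathbb{L}_{11}^{-1}w(x)\cdot dS(x).
\]
Parseval's identity then gives $\|g\|_2^2=|\om|\sum_n\alpha_n|g_n|^2\le\frac{16}{|\om|}\big(\int_{-d}^d|\partial_{x_1}\mathbb{L}_{11}^{-1}w(d,x_2)|\,dx_2\big)^2\sum_n\frac{\alpha_n}{(a_1^2+|2\pi\tilde{n}|^2)^2}$, where the last sum is bounded by $\sqrt{C_1}$ through Lemma~\ref{lem : riemann sum} (this is \eqref{Sigma_j_val} with $j=1$). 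It remains to estimate the boundary integral $\int_{-d}^d|\partial_{x_1}\mathbb{L}_{11}^{-1}w(d,x_2)|\,dx_2$ via Lemma~\ref{lem : first_int_Z_u_2} and the exponential decay of $f_{11}$, which produces exactly the two contributions $\mathcal{C}_{1,1}\sqrt{(V_0,E_1*V_0)_2}$ and $\mathcal{C}_{2,1}C(v_0)$; assembling gives the stated $\mathcal{Z}_{u,2}$. Finally, $\mathcal{Z}_u=\sqrt{\mathcal{Z}_{u,1}^2+\mathcal{Z}_{u,2}^2}$ is the aggregation prescribed by Lemma~\ref{lem : Z_full_1 reduced}, and $\|\mathbb{v}_0(\Gamma^\dagger(L_{11}^{-1})-\mathbb{L}_{11}^{-1})\|_2\le\mathcal{Z}_u$ follows because $\mathrm{supp}(v_0)\subset\overline{\om}$ forces $\mathbb{v}_0(\Gamma^\dagger(L_{11}^{-1})-\mathbb{L}_{11}^{-1})=\mathbb{1}_\om\mathbb{v}_0(\Gamma^\dagger(L_{11}^{-1})-\mathbb{L}_{11}^{-1})$, which is controlled by $\mathcal{Z}_{u,2}\le\mathcal{Z}_u$.

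The main obstacle is the $\mathcal{Z}_{u,2}$ estimate, specifically the control of the boundary trace $\int_{-d}^d|\partial_{x_1}\mathbb{L}_{11}^{-1}w(d,x_2)|\,dx_2$ for $w=v_0 u$ with $u$ only in $L^2$: because $1/l_{11}\notin L^1(\mathbb{R}^2)$ the kernel $f_{11}$ carries a $|x|^{-1/4}$ singularity at the origin, so this trace cannot be handled by a naive pointwise decay bound. This is precisely what forces the appearance of the local term $C(v_0)$ (a Sobolev-type norm of $v_0$ near $\partial\om$) and makes Lemma~\ref{lem : first_int_Z_u_2} indispensable; the rest of the proof is a routine re-run of the component-wise computations already carried out for the full system in Lemmas~\ref{lem : lemma Z11full} and~\ref{lem : lemma Z12full}.
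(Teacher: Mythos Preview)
Your proposal is correct and follows exactly the approach the paper indicates: its own proof simply reads ``Following very similar steps to those in the proofs of Lemmas~\ref{lem : lemma Z11full} and~\ref{lem : lemma Z12full}, one obtains the desired result for the bound $\mathcal{Z}_u$,'' and you have carried out precisely that specialization with the correct substitutions ($v_1^N\rightsquigarrow v_0$, single component, $N_0=N$). Your identification of the kernel singularity of $f_{11}$ as the reason Lemma~\ref{lem : first_int_Z_u_2} and the local term $C(v_0)$ are needed is also on point.
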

\begin{proof}
Following very similar steps to those in the proofs of Lemmas \ref{lem : lemma Z11full} and \ref{lem : lemma Z12full}, one obtains the desired result for the bound $\mathcal{Z}_u$.
\end{proof}
With $\mathcal{Y}_0, \mathcal{Z}_2, $ and $\mathcal{Z}_1$ now computed, we can perform computer assisted proofs in \eqref{gray_scott_reduced}. Furthermore, we present the following result which is the equivalent of Theorem \ref{th : radii periodic} and which allows us to obtain a proof of a branch of periodic solutions in the reduced equation \eqref{gray_scott_reduced}.
\begin{theorem}[Family of periodic solutions in the reduced equation]\label{th : radii periodic reduced}
Let $A_r : \ell^2_{D_4}(\mathbb{R}^2) \to \mathscr{h}$ be defined  as in \eqref{def : A_r}. Then, let $\mathcal{Y}_0,\mathcal{Z}_2, \mathcal{Z}_1$, and $\mathcal{Z}_{u}$ be the bounds defined in Lemmas \ref{lem : Y_0_reduced}, \ref{lem : Z_2_reduced}, \ref{lem : Z_full_1 reduced}, and \ref{lem : Zu_reduced} respectively. Moreover, let 
\begin{align}
    &\widehat{\kappa}_2 \bydef \sqrt{\frac{1}{4\pi \lambda_1} + \frac{1}{4d^2} + \frac{1}{2d}\frac{\pi}{\sqrt{\lambda_1}}}\label{eq : kappas in the Theorem periodic solutions reduced}
\end{align}
and let $\widehat{\mathcal{Z}}_1$ and $\widehat{\mathcal{Z}}_2(r)$ be  bounds satisfying
\begin{align*}
    \widehat{\mathcal{Z}}_1 &\bydef \mathcal{Z}_1 +\max\{1, \|B_{r}^N\|_{2}\}\mathcal{Z}_{u}\\
     \widehat{\mathcal{Z}}_2(s) &\bydef  \max\{1,\|B_r^N\|_{2}\} (2\widehat{\kappa}_2 + 3\lambda_1 \widehat{\kappa}_2^2 s) + 6\lambda_1 (\|\mathbb{U}_0^* (B_r^N)^*\|_{2}^2 + \|U_0\|_{1}^2)^{\frac{1}{2}} \widehat{\kappa}_2.
\end{align*}
for all $s>0$. Finally, define $\widehat{\mathcal{Y}}_0 \bydef \mathcal{Y}_0$.
If there exists $s>0$ such that 
\begin{align}\label{eq : radii condition periodic reduced}
    \frac{1}{2}\widehat{\mathcal{Z}}_2(s)s^2 - (1-\widehat{\mathcal{Z}}_1)s + \widehat{\mathcal{Y}}_0 <0, \ \mathrm{and} \ \hat{\mathcal{Z}}_2(s) s + \hat{\mathcal{Z}}_1 < 1,
\end{align}
then there exists a smooth curve 
\[
\left\{\tilde{u}(q) : q \in [d,\infty]\right\} \subset C^\infty(\R^2)
\]
such that $\tilde{u}(q)$ is a $D_4$-symmetric periodic solution to \eqref{gray_scott_reduced} with period $2q$ in both variables. In particular, $\tilde{u}(\infty)$ is a localized pattern on $\R^2.$
\end{theorem}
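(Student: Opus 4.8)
The plan is to mirror the argument used for Theorem \ref{th : radii periodic} above (which itself follows Theorem 3.7 of \cite{sh_cadiot}), now in the scalar reduced setting. For each $q\in[d,\infty)$ introduce the periodic zero-finding problem $F_r^{(q)}(U)=0$ posed on the torus $\R^2/(2q\mathbb{Z})^2$ restricted to $D_4$-symmetric sequences, where $L_{11}^{(q)}$, $G_r^{(q)}$ and $F_r^{(q)}$ are the analogues of the operators of Section \ref{sec : bounds_reduced} with $\om$ replaced by $\Omega_q$. Since $\mathrm{supp}(u_0)\subset\overline{\om}$ and $d\le q$, the zero extension of $U_0$ is a legitimate approximate solution on $\Omega_q$, and one takes as approximate inverse $A_r^{(q)}\bydef (L_{11}^{(q)})^{-1}\big(\mathbb{1}_{\R^2\setminus\Omega_q}+\Gamma_q^\dagger(B_r)\big)$ built from the \emph{same} finite matrix $B_r^N$. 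The endpoint $q=\infty$ is precisely the situation of Theorem \ref{th: radii polynomial reduced}. Running Theorem \ref{th: radii polynomial reduced} for each $q$ with a single radius will produce $\tilde u(q)\in\overline{B_r(u_0)}$, and a standard implicit-function/uniqueness argument upgrades $q\mapsto\tilde u(q)$ to a smooth curve.

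The heart of the proof is thus to show that the three bounds $\mathcal{Y}_0^{(q)}$, $\mathcal{Z}_1^{(q)}$, $\mathcal{Z}_2^{(q)}(s)$ attached to $F_r^{(q)}$ are dominated, uniformly in $q\in[d,\infty)$, by $\widehat{\mathcal{Y}}_0$, $\widehat{\mathcal{Z}}_1$ and $\widehat{\mathcal{Z}}_2(s)$. For $\widehat{\mathcal{Y}}_0=\mathcal{Y}_0$ one uses that $\mathbb{F}_r(u_0)$ is supported in $\overline{\om}\subset\overline{\Omega_q}$, so its Fourier representation on $\Omega_q$ remains finitely supported and the Parseval identities of the periodic form of Lemma \ref{lem : gamma and Gamma properties} make the formula of Lemma \ref{lem : Y_0_reduced} $q$-independent. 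For $\widehat{\mathcal{Z}}_1$, the finite-rank contribution $Z_1$ of Lemma \ref{lem : Z1_bound reduced} is unchanged since it involves only the fixed objects $B_r^N$, $\mathbb{V}_0$, $\pi^N$, $\pi_N$, $L_{11}$; the tail contribution is where the periodic geometry enters. On the torus the exterior piece $\mathbb{1}_{\R^2\setminus\Omega_q}\mathbb{v}_0(L_{11}^{(q)})^{-1}$ disappears, but the comparison piece $\mathbb{1}_{\Omega_q}\mathbb{v}_0\big(\Gamma_q^\dagger((L_{11}^{(q)})^{-1})-(L_{11}^{(q)})^{-1}\big)$ survives and, as in \cite{sh_cadiot}, is estimated through the exponential decay of $f_{11}$ (Lemma \ref{lem : constants_for_f}) by a quantity bounded \emph{uniformly in $q\ge d$} by $\mathcal{Z}_u$; this accounts for the single extra $\max\{1,\|B_r^N\|_2\}\mathcal{Z}_u$ term in $\widehat{\mathcal{Z}}_1$ relative to $\mathcal{Z}_1$. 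Finally, the only $q$-dependence in $\mathcal{Z}_2^{(q)}(s)$ (Lemma \ref{lem : Z_2_reduced}) enters via the Banach-algebra constant $\kappa_2=\|1/l_{11}\|_2$ of Lemma \ref{lem : banach algebra reduced}, whose periodic replacement is $\mathcal{K}_1\bydef \sup_{q\ge d}\frac{1}{\sqrt{|\om|}}\big(\sum_{n\in\mathbb{Z}^2}\frac{1}{l_{11}(n/2q)^2}\big)^{1/2}$; since $1/l_{11}^2$ is $D_4$-symmetric and decreasing in $|\xi|$, Lemma \ref{lem : riemann sum} together with the integral computation \eqref{Sigma_j_val} (with $a_1=\sqrt{1/\lambda_1}$ as in \eqref{def : definition of a1 and a2}) yields $\mathcal{K}_1\le\widehat{\kappa}_2$, exactly estimate \eqref{int_comp_periodic1}. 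Substituting $\widehat{\kappa}_2$ for $\kappa_2$ in the formula of Lemma \ref{lem : Z_2_reduced} (the factor $\|\mathbb{U}_0^*(B_r^N)^*\|_2$ being $q$-independent) gives $\widehat{\mathcal{Z}}_2(s)\ge\mathcal{Z}_2^{(q)}(s)$ for every $q$.

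With these uniform bounds in hand, hypothesis \eqref{eq : radii condition periodic reduced} shows that \eqref{condition radii polynomial reduced} holds with $(\mathcal{Y}_0,\mathcal{Z}_1,\mathcal{Z}_2)=(\widehat{\mathcal{Y}}_0,\widehat{\mathcal{Z}}_1,\widehat{\mathcal{Z}}_2)$ for the \emph{same} $s=r$ and every $q\in[d,\infty]$, so the periodic counterpart of Theorem \ref{th: radii polynomial reduced} supplies a unique zero $\tilde u(q)$ of $F_r^{(q)}$ in $\overline{B_r(u_0)}$. Smoothness of $q\mapsto\tilde u(q)$ then follows as in \cite{sh_cadiot}: $\widehat{\mathcal{Z}}_1+\widehat{\mathcal{Z}}_2(r)r<1$ forces $D\mathbb{F}_r^{(q)}(\tilde u(q))$ to be boundedly invertible, and applying the implicit function theorem to the map $(U,1/q)\mapsto F_r^{(q)}(U)$ (smooth in $1/q\in[0,1/d]$, with $1/q=0$ recovering $\mathbb{F}_r=0$ on $\R^2$) produces a $C^\infty$ branch; the regularity $\tilde u(q)\in C^\infty(\R^2)$ is Proposition \ref{prop : regularity of the solution general} in its periodic form. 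The endpoint $\tilde u(\infty)$ is the localized solution of \eqref{gray_scott_reduced} on $\R^2$ furnished by Theorem \ref{th: radii polynomial reduced}.

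The main obstacle I anticipate is exactly the $q$-uniform control of the tail bound $\mathcal{Z}_u$ and of the Banach-algebra constant: one must ensure that the comparison operator $\Gamma_q^\dagger((L_{11}^{(q)})^{-1})-(L_{11}^{(q)})^{-1}$ and the associated trace and embedding constants on $\Omega_q$ do not degrade as $q\to\infty$, rather than merely being finite for each fixed $q$. The Riemann-sum estimate of Lemma \ref{lem : riemann sum} disposes of $\mathcal{K}_1$ cleanly (this is where the monotonicity and $D_4$-symmetry of $1/l_{11}^2$ are essential), while the explicit exponential-decay bounds on $f_{11}$ in Lemma \ref{lem : constants_for_f} handle the tail; once these two uniform estimates are secured, the remaining steps — the fixed-point application and the smoothness of the branch — are routine and parallel \cite{sh_cadiot} almost verbatim.
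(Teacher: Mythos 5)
Your proposal is correct and follows essentially the same route as the paper's (implicit) proof: the paper simply states that the result "is the equivalent of Theorem \ref{th : radii periodic}," whose proof replaces the integral norm $\|1/l_{11}\|_2$ by the periodic supremum $\mathcal{K}_1$, bounds $\mathcal{K}_1\le\widehat{\kappa}_2$ via Lemma \ref{lem : riemann sum}, absorbs the uniform-in-$q$ tail comparison into the extra $\max\{1,\|B_r^N\|_2\}\mathcal{Z}_u$ term, and then invokes the fixed-point machinery of \cite{sh_cadiot}. Your exposition is in fact somewhat more explicit than the paper's about the role of the period-$2q$ Fourier discretization and the source of the extra $\mathcal{Z}_u$ contribution, but the key technical ingredients — the $D_4$-symmetric Riemann-sum bound, the exponential-decay estimate on $f_{11}$ from Lemma \ref{lem : constants_for_f}, and the uniform radii-polynomial verification over $q\in[d,\infty]$ — are identical.
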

\section{Constructive proofs of existence of localized patterns}\label{sec : CAPs}
In this section, we present computer-assisted proofs of four branches of periodic solutions limiting  a stationary localized pattern in the 2D Gray-Scott model. In particular, we compute the values of $\mathcal{Y}_0, \mathcal{Z}_2$, and $\mathcal{Z}_1$ for each of the four patterns, and prove that they satisfy \eqref{condition radii polynomial} in Theorem \ref{th: radii polynomial} (or \eqref{condition radii polynomial reduced} in Theorem \ref{th: radii polynomial reduced} for the case of the reduced equation). Then, we compute $\widehat{\mathcal{Z}}_2$ and $\widehat{\mathcal{Z}}_1$ and prove that  Theorem \ref{th : radii periodic} is applicable (or Theorem \ref{th : radii periodic reduced} for the case of the reduced equation). Furthermore, since our analysis is derived in the Hilbert space $\mathcal{H}_{D_4}$, we obtain a proof of the $D_4$-symmetry by construction.

In order to make use of the $D_4$-symmetry at the numerical level, we  created our own sequence structure available at \cite{dominic_D_4_julia}. This Julia repository is fully compatible with RadiiPolynomial.jl \cite{julia_olivier}. More specifically, \cite{dominic_D_4_julia} includes the construction of multiplication operators (e.g $\mathbb{V}_1^N$ defined as in \eqref{def : V_1 and V_2}), the convolution, the Laplacian, evaluation, and norms specifically for sequences indexed on the reduced set $J_{\mathrm{red}}(D_4)$. This means one can directly input a sequence with $D_4$-symmetry with the reduced set of coefficients, and the remaining operations will be done on the reduced set without any need to enforce it throughout the process.

 We now provide computer-assisted proofs of four different patterns as well as four unbounded branches of periodic solutions converging to these patterns as the period tends to infinity. For each pattern, we provide explicit bounds satisfying condition \eqref{condition radii polynomial} in Theorem \ref{th: radii polynomial}. Note that the names of the pattern are only descriptive. In particular, the ``spike" and ``ring" patterns are not proven to be radially symmetric. We however obtain proofs of the $D_4$-symmetry by using \cite{dominic_D_4_julia}.
\subsection{Spike Pattern in the scalar case when \texorpdfstring{$\lambda_2\lambda_1=1$}{lamgam1}} 
\begin{theorem}\label{th : proof_of_spike_R1}
Let $\lambda_2 = 9$, $\lambda_1 = \frac{1}{9}$.
Moreover, let $s_0 \bydef 0.0005$. Then there exists a unique solution $\tilde{u}$ to \eqref{gray_scott_reduced} in $\overline{B_{s_0}(u_{spike})} \subset \mathcal{H}_{D_4}$ and we have that $\|\tilde{u}-u_{spike}\|_{\mathcal{H}} \leq s_0$. That is, $\tilde{u}$ is at most $s_0$ away from the approximation shown in Figure \ref{fig : 1D case}.
\par In addition, there exists a smooth curve 
\[
\left\{\tilde{u}(q) : q \in [d,\infty]\right\} \subset C^\infty(\R^2)
\]
such that $\tilde{u}(q)$ is a periodic solution to \eqref{gray_scott_reduced} with period $2q$ in both directions. In particular, $\tilde{u}(\infty) = \tilde{u}$ is a localized pattern on $\R^2.$ 
\end{theorem}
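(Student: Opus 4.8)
The plan is to obtain the first assertion from Theorem \ref{th: radii polynomial reduced} and the second from Theorem \ref{th : radii periodic reduced}, so that the whole argument reduces to a validated numerical verification of the radii-polynomial inequalities at the explicit parameters $\lambda_1 = \tfrac19$, $\lambda_2 = 9$ (note $\lambda_1\lambda_2 = 1$, so we are genuinely in the reduced scalar setting of Section \ref{sec : bounds_reduced} and \eqref{gray_scott_reduced} applies, with $a_1 = \sqrt{1/\lambda_1} = 3$ and $a_2 = \sqrt{\lambda_2} = 3$).

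First I would fix a truncation size $N$ and a half-width $d \geq 1$ and construct $u_{spike} = u_0 \in \mathcal{H}_{D_4}$ following Section \ref{sec : numerical construction}: starting from the exact $1$D profile \eqref{initial_guess_for_newton} with $x$ replaced by $\sqrt{x_1^2 + x_2^2}$, compute its $D_4$-Fourier series $\overline{U}_{0,1}$, refine with Newton's method on $F_r$ to tolerance $10^{-14}$, and project onto the kernel of the trace operator $\mathcal{T}$ of \eqref{coeffs_T_M} so that $u_0$ has null trace of order two on $\partial\om$ and hence $u_0 \in H^2(\R^2)$. In parallel I would build $B_r^N$, a numerical approximate inverse of $\pi^N DF_r(U_0)L_{11}^{-1}\pi^N$, and thence $\mathbb{A}_r = \mathbb{L}_{11}^{-1}\mathbb{B}_r$ as in \eqref{def : A_r}.

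Next I would evaluate, in interval arithmetic, the three bounds of Theorem \ref{th: radii polynomial reduced}: $\mathcal{Y}_0$ via Lemma \ref{lem : Y_0_reduced} (a finite vector-norm computation after expanding $F_r(U_0)$ and $G_r(U_0)$ on $J_{\mathrm{red}}(D_4)$ up to index $3N$); $\mathcal{Z}_2(s)$ via Lemma \ref{lem : Z_2_reduced}, using $\kappa_2 = \tfrac{1}{2\sqrt{\lambda_1\pi}}$ from Corollary \ref{cor : banach algebra1} together with the finite quantities $\|B_r^N\|_2$ and $\|\mathbb{U}_0^*(B_r^N)^*\|_2$; and $\mathcal{Z}_1$ via Lemma \ref{lem : Z_full_1 reduced}, which needs $Z_1$ from Lemma \ref{lem : Z1_bound reduced} (four matrix-norm terms combined through $\varphi$) and $\mathcal{Z}_u = \sqrt{\mathcal{Z}_{u,1}^2 + \mathcal{Z}_{u,2}^2}$ from Lemma \ref{lem : Zu_reduced}, the latter built from the closed-form exponential-decay constants $C_0(f_{11})$, $C_1$, $\mathcal{C}_{1,1}$, $\mathcal{C}_{2,1}$, $C(v_0)$ of Lemmas \ref{lem : constants_for_f} and \ref{lem : lemma Z12full} evaluated at $a_1 = a_2 = 3$ and at the chosen $d$. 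With these numbers I would check that $\tfrac12\mathcal{Z}_2(s_0)s_0^2 - (1-\mathcal{Z}_1)s_0 + \mathcal{Y}_0 < 0$ and $\mathcal{Z}_1 + \mathcal{Z}_2(s_0)s_0 < 1$ at $s_0 = 5\times 10^{-4}$; Theorem \ref{th: radii polynomial reduced} then delivers the unique zero $\tilde u \in \overline{B_{s_0}(u_{spike})}$ of $\mathbb{F}_r$, which is smooth by Proposition \ref{prop : regularity of the solution general} and hence solves \eqref{gray_scott_reduced}, the $D_4$-symmetry being automatic from $\tilde u \in \mathcal{H}_{D_4}$.

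For the branch of periodic solutions I would re-run the same scheme through Theorem \ref{th : radii periodic reduced}: the only modifications are to replace $\kappa_2$ by the uniform-in-$q$ Riemann-sum constant $\widehat{\kappa}_2 = \sqrt{\tfrac{1}{4\pi\lambda_1} + \tfrac{1}{4d^2} + \tfrac{1}{2d}\tfrac{\pi}{\sqrt{\lambda_1}}}$ (from Lemma \ref{lem : riemann sum}) inside $\widehat{\mathcal{Z}}_2$, and to set $\widehat{\mathcal{Z}}_1 = \mathcal{Z}_1 + \max\{1,\|B_r^N\|_2\}\mathcal{Z}_u$ while $\widehat{\mathcal{Y}}_0 = \mathcal{Y}_0$; verifying \eqref{eq : radii condition periodic reduced} for some $s>0$ — which should still hold at or slightly above $s_0$ since $\widehat{\kappa}_2 = \kappa_2 + \mathcal{O}(1/d)$ and $\mathcal{Z}_u$ is exponentially small in $d$ — then yields the smooth family $\{\tilde u(q): q \in [d,\infty]\}$ of $D_4$-symmetric $2q$-periodic solutions with $\tilde u(\infty) = \tilde u$. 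The real obstacle here is entirely computational rather than conceptual: one must take $N$ and $d$ large enough that the tail contributions ($\mathcal{Z}_u$, the $\pi_N$-projected terms of $Z_1$, the high-frequency part of $G_r(U_0)$ in $\mathcal{Y}_0$) fit inside the narrow $s_0$-window of \eqref{condition radii polynomial reduced}, yet small enough that the finite matrices $B_r^N$, $\mathbb{V}_0$ remain tractable in validated arithmetic, and one must confirm that projecting onto the kernel of $\mathcal{T}$ has not spoiled the residual $\mathcal{Y}_0$ — exactly the trade-off flagged after \eqref{def : construction of u0}.
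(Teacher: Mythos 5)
Your proposal matches the paper's proof essentially line for line: the paper chooses $N=20$, $d=4$, constructs $u_0$ and $B_r^N$ exactly as you describe, evaluates $\kappa_2$, $C_0(f_{11})$, $C_{11}(f_{11})$, $C_{12}(f_{11})$, $\mathcal{Y}_0$, $\mathcal{Z}_2(s_0)$, $Z_1$, $\mathcal{Z}_{u,1}$, $\mathcal{Z}_{u,2}$ in interval arithmetic, verifies \eqref{condition radii polynomial reduced} at $s_0 = 5\times 10^{-4}$, and then recomputes $\widehat{\kappa}_2$, $\widehat{\mathcal{Z}}_1$, $\widehat{\mathcal{Z}}_2$ to invoke Theorem \ref{th : radii periodic reduced} for the periodic branch. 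You correctly identify that the entire burden is computational and correctly locate the trade-off between $N$, $d$, and the size of the contraction window.
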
 
\begin{proof}
Choose $N = 20, d = 4$ where $N$ is the number of Fourier coefficients and $d$ allows to define $\om \bydef (-d,d)^2$. Then, following the construction in Section \ref{sec : numerical construction}, we build $u_0 = \gamma^\dagger(U_0)$ and define $u_{spike} \bydef u_0$. Next, we construct $B_r^N$ using the approach described in Section \ref{sec : bounds_reduced}. In particular, we prove that 
\begin{align}
    \|B_r^N\|_2 \leq 4.24406,
\end{align}
which implies that $\|\mathbb{A}_r\|_{2,\mathcal{H}} \leq 4.24406$ using \eqref{eq : equality of norms A and B and BN}. Using Lemma \ref{lem : banach algebra reduced}, we prove that
\begin{align}
\kappa_2 \bydef 0.846285.
\end{align}
This allows us to compute the $\mathcal{Z}_2(s)$-bound introduced in Lemma \ref{lem : Z_2_reduced}. Next, using the constants defined in Lemmas \ref{lem : constants_for_f} and \ref{lem : lemma Z12full}, we obtain
\begin{align}
    C_{0}(f_{11}) \bydef 56.7699 \text{,~~}
    C_{11}(f_{11}) \bydef 22.5597 \text{, and}~~C_{12}(f_{11}) \bydef 59.1361.
\end{align}
Finally, using \cite{julia_cadiot_blanco_GS}, we we choose $s_0 = 0.0005$ and define 
\begin{align}\label{eq : values for the bounds}
\mathcal{Y}_0 \bydef 3\times10^{-4}\text{, }~\mathcal{Z}_2(s_0) \bydef  13.6 \text{,}~Z_1 \bydef 0.124 \text{,}~
\mathcal{Z}_{u,1} \bydef 0.011 \text{, and}~\mathcal{Z}_{u,2} \bydef 5.6 \times 10^{-4}.
    \end{align}
In particular, the above values satisfy the inequalities required to apply Theorem \ref{th : radii periodic reduced}. Moreover, this allows us to define $\widehat{\kappa}_2, \widehat{\mathcal{Z}}_1,$ and $\widehat{\mathcal{Z}}_2(s)$ as 
\begin{align}
    \widehat{\kappa}_2 \bydef 1.382\text{,}~~
\widehat{\mathcal{Z}}_1 \bydef 0.22\text{, and}~~\widehat{\mathcal{Z}}_2(s_0) \bydef 22.2,
\end{align}
and prove that \eqref{condition radii polynomial reduced} in Theorem \ref{th : radii periodic reduced} is satisfied.
\end{proof}
 \begin{figure}[H]
\centering
 \begin{minipage}[H]{0.5\linewidth}
  \centering\epsfig{figure=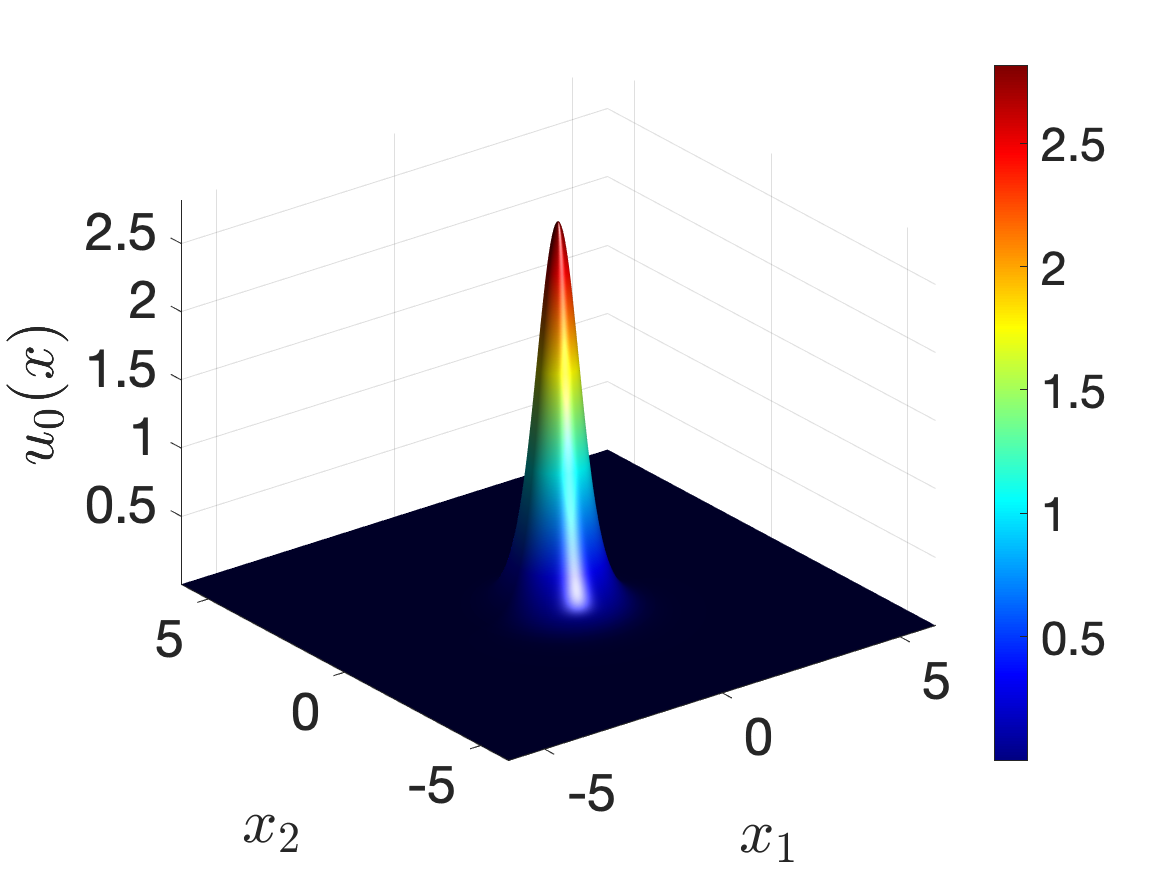,width=\linewidth}
 \end{minipage} 
 \caption{Approximation of a ``spike pattern" in the Gray-Scott reduced equation, $\lambda_2 = 9, \lambda_1 = \frac{1}{9}$.}
 \label{fig : 1D case}
 \end{figure}

 \subsection{Spike Pattern in the full system away from \texorpdfstring{$\lambda_2\lambda_1 =1$}{lamgamis1}}
 The spike pattern away from $\lambda_2 \lambda_1 = 1$ was found by taking the solution proven in Theorem \ref{th : proof_of_spike_R1} as the initial guess for our Newton method, changing the value of $\lambda_2$, and allowing Newton to converge to a new solution. 
 \begin{theorem}\label{th : proof_of_spike_away}
Let $\lambda_2 =10, \lambda_1 =\frac{1}{9}$. Moreover, let $r_0 \bydef 6 \times 10^{-6}$. Then there exists a unique solution $\tilde{\mathbf{u}}$ to \eqref{eq : gray_scott cov} in $\overline{B_{r_0}(\mathbf{u}_{spike})} \subset \mathcal{H}_{D_4}$ and we have that $\|\tilde{\mathbf{u}}-\mathbf{u}_{spike}\|_{\mathcal{H}} \leq r_0$. That is, $\tilde{\mathbf{u}}$ is at most $r_0$ away from the approximation shown in Figure \ref{fig : spike pattern}.
\par In addition, there exists a smooth curve 
\[
\left\{\tilde{\mathbf{u}}(q) : q \in [d,\infty]\right\} \subset C^\infty(\R^2) \times C^\infty(\R^2)
\]
such that $\tilde{\mathbf{u}}(q)$ is a periodic solution to \eqref{gray_scott_reduced} with period $2q$ in both directions. In particular, $\tilde{\mathbf{u}}(\infty) = \tilde{\mathbf{u}}$ is a localized pattern on $\R^2.$ 
\end{theorem}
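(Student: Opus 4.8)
The plan is to apply Theorem \ref{th: radii polynomial} followed by Theorem \ref{th : radii periodic} to an approximate solution $\mathbf{u}_{spike}$ of the \emph{full} system \eqref{eq : gray_scott cov} at $\lambda_2 = 10$, $\lambda_1 = \tfrac{1}{9}$ (so $\lambda_1\lambda_2 = \tfrac{10}{9}\neq 1$ and the second component is genuinely active). First I would fix a truncation size $N$ for operators, a size $N_0$ for Fourier-coefficient vectors, and a half-width $d$ so that $\om = (-d,d)^2$; then, starting from the spike solution of Theorem \ref{th : proof_of_spike_R1}, perform numerical continuation in $\lambda_2$ with Newton's method to obtain $\overline{\mathbf{U}}_0$, project it onto the kernel of the periodic trace operator $\mathcal{T}$ as in Section \ref{sec : numerical construction} to get $\mathbf{U}_0 = \bpi^{N_0}\mathbf{U}_0$, and set $\mathbf{u}_{spike} \bydef \bgam^\dagger(\mathbf{U}_0) \in \mathcal{H}_{D_4}$ with support in $\overline{\om}$.

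Next I would build the approximate inverse $\mathbb{A} = \mathbb{L}^{-1}\mathbb{B}$ as in Section \ref{sec : the operator A}: since $DF(\mathbf{U}_0)L^{-1}$ is block upper triangular with an identity in the lower-right corner (cf. \eqref{expanded_DFLinverse}), the finite part $B^N$ is taken of the form \eqref{B_in_full_equation} with $B_{11}^N, B_{12}^N$ computed numerically, and $\mathbb{B}$ is assembled via \eqref{def : the operator A}. With $\mathbf{u}_{spike}$ and $\mathbb{A}$ fixed, the three bounds of Theorem \ref{th: radii polynomial} are computed with rigorous interval arithmetic using \cite{julia_cadiot_blanco_GS}: $\mathcal{Y}_0$ from Lemma \ref{lem : bound Y_0}; $\mathcal{Z}_2$ from Lemma \ref{lem : Bound Z_2}, which needs $\kappa_2,\kappa_3$ of Corollary \ref{cor : banach algebra1} and $\kappa_0$ of Lemma \ref{lem : banach algebra} (all explicit in $\lambda_1,\lambda_2$); and $\mathcal{Z}_1$ from Lemma \ref{lem : Z_full_1}, which splits into $Z_1$ (Lemma \ref{lem : Z1_bound}), $\mathcal{Z}_{u,1}$ (Lemma \ref{lem : lemma Z11full}) and $\mathcal{Z}_{u,2}$ (Lemma \ref{lem : lemma Z12full}), the last two requiring the exponential-decay constants $C_0(f_{11}), C_0(f_{22})$ of Lemma \ref{lem : constants_for_f} and the boundary constants $C_{11}(f_{jj}), C_{12}(f_{jj})$ of Lemma \ref{lem : lemma Z12full}. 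One then checks that the radii-polynomial inequalities \eqref{condition radii polynomial} hold at $r_0 = 6\times 10^{-6}$, yielding a unique $\tilde{\mathbf{u}} \in \overline{B_{r_0}(\mathbf{u}_{spike})} \subset \mathcal{H}_{D_4}$ with $\mathbb{F}(\tilde{\mathbf{u}}) = 0$; smoothness follows from Proposition \ref{prop : regularity of the solution general}, and the $D_4$-symmetry is automatic since everything is done in $\mathcal{H}_{D_4}$.

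To obtain the branch of periodic solutions I would then replace the period-independent $\kappa_2,\kappa_3,\kappa_0$ by the period-robust analogues $\widehat{\kappa}_2,\widehat{\kappa}_3,\widehat{\kappa}_0$ of Theorem \ref{th : radii periodic} (obtained by bounding the lattice sums $\mathcal{K}_1,\mathcal{K}_2$ with the Riemann-sum estimate of Lemma \ref{lem : riemann sum}), assemble $\widehat{\mathcal{Z}}_1 = \mathcal{Z}_1 + \max\{1,\|B_{11}^N\|_2\}\mathcal{Z}_u$, $\widehat{\mathcal{Z}}_2$, and $\widehat{\mathcal{Y}}_0 = \mathcal{Y}_0$, and verify \eqref{eq : radii condition periodic} at the same (or a slightly enlarged) radius. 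Because $\mathcal{Z}_u$ is already small and $d$ is reasonably large, $\widehat{\mathcal{Z}}_1 \approx \mathcal{Z}_1$ and $\widehat{\mathcal{Z}}_2 \approx \mathcal{Z}_2$, so this step costs essentially nothing once the localized proof succeeds; it produces the smooth curve $\{\tilde{\mathbf{u}}(q): q\in[d,\infty]\}$ with $\tilde{\mathbf{u}}(\infty) = \tilde{\mathbf{u}}$ the localized pattern.

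The main obstacle is purely computational: obtaining rigorous yet sufficiently sharp enclosures of $\mathcal{Z}_1$, and in particular of $\mathcal{Z}_{u,1}$ and $\mathcal{Z}_{u,2}$, which involve the kernels $f_{11},f_{22},f_3$ lying only in $L^2$ and not $L^1$ (forcing the $|x|^{-1/4}$ singularity management of Lemma \ref{lem : constants_for_f}) and the coupling through $l_{21}$; one must also take $d$ and $N$ large enough that the tail and trace errors controlled by $\|V_i^N - V_i\|_1$ and the $\pi_N$-projections in Lemmas \ref{lem : Z1_bound}--\ref{lem : lemma Z12full} do not destroy the contraction. If the first attempt fails, the natural remedies are to enlarge $d$ (which decreases $\mathcal{Z}_u$ exponentially) or $N$ and recompute.
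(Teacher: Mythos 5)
Your proposal follows essentially the same route as the paper's proof: continue from the reduced spike, project via the trace operator to land in $\mathcal{H}_{D_4}$, build the block upper-triangular approximate inverse as in \eqref{B_in_full_equation}, assemble $\mathcal{Y}_0$, $\mathcal{Z}_1 = Z_1 + \max\{1,\|B_{11}^N\|_2\}(\mathcal{Z}_u + \cdots)$ and $\mathcal{Z}_2$ from the corresponding lemmas with rigorous interval arithmetic, verify \eqref{condition radii polynomial}, then swap $\kappa_j \to \widehat{\kappa}_j$ via Lemma \ref{lem : riemann sum} and check \eqref{eq : radii condition periodic} to get the periodic branch. The only content the paper adds is the concrete choice $N_0 = 50$, $N = 20$, $d = 8$ and the reported bound values (e.g.\ $\|B_{11}^N\|_2 \leq 4.1398$, $\mathcal{Y}_0 = 2.7\times 10^{-6}$, $Z_1 = 0.494$), which are the output of exactly the computation you describe.
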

\begin{proof}
Choose $N_0 = 50, N = 20, d = 8$. Then, we perform the full construction described in Section \ref{sec : numerical construction} to build $\mathbf{u}_0 = \bgam^\dagger(\mathbf{U}_0)$ and define $\mathbf{u}_{spike} \bydef \mathbf{u}_0$. Next, we construct $B^N$ using the approach described in Section \ref{sec : the operator A}. Since the computations in Section \ref{sec : bounds} only require us to compute $\|B_{11}^N\|_{2}$, we find
\begin{align}
    \|B_{11}^N\|_{2} \leq 4.1398 .
\end{align}
Using Corollary \ref{cor : banach algebra1} and Lemma \ref{lem : banach algebra}, we prove that
\begin{align}
    \nonumber \kappa_2 \bydef 0.846285 \text{,}~~\kappa_3 \bydef 0.101286 \text{, and}~~\kappa_0 \bydef 0.0851493.
\end{align}
This allows us to compute the $\mathcal{Z}_2(r)$ bound defined in Section \ref{sec : Z2}. Next, using the constants defined in Lemmas \ref{lem : constants_for_f} and \ref{lem : lemma Z12full}, we obtain
\begin{align}
    C_0(f_{11}) \bydef 56.7699
 \text{,}~~C_{0}(f_{22}) \bydef 6.22524 \text{,}~~C_{11}(f_{11}) \bydef 22.5597 \text{,}~~C_{12}(f_{11}) \bydef 59.1361 \text{,}
 \end{align}
 \begin{align}
 C_{11}(f_{22}) \bydef 2.55697 \text{, and}~~C_{12}(f_{22}) \bydef 6.85831.
\end{align}
Finally, using \cite{julia_cadiot_blanco_GS}, we choose $r_0 \bydef 6 \times 10^{-6}$ and define
\begin{align}
    \mathcal{Y}_0 \bydef 2.7 \times 10^{-6} \text{,}~\mathcal{Z}_2(r_0) \bydef  16.1625 \text{,}~Z_1 \bydef 0.494 \text{,}~
    \mathcal{Z}_{u,1} \bydef 4.35 \times 10^{-5} \text{, and}~\mathcal{Z}_{u,2} \bydef 4.65 \times 10^{-7}   \end{align}
and prove that these values satisfy Theorem \ref{th: radii polynomial}. 
\par Following this, we compute the $\widehat{\kappa}_j$'s for $j \in \{0,2,3\}$ and define $\widehat{\mathcal{Z}}_1$ and $\widehat{\mathcal{Z}}_2(r)$ as
\begin{align}
    \nonumber \widehat{\kappa}_2 \bydef 1.14419 \text{,}~~ \widehat{\kappa}_3 \bydef 0.185143 \text{,}~~ \widehat{\kappa}_0 \bydef 0.115123\text{,}~~\widehat{\mathcal{Z}}_1 \bydef 0.4952\text{, and}~~\widehat{\mathcal{Z}}_2(r_0) \bydef 21.8517,
\end{align}
we obtain the Theorem \ref{th : radii periodic} is satisfied.
\end{proof}
 \begin{figure}[H]
\centering
 \begin{minipage}{.5\linewidth}
  \centering\epsfig{figure=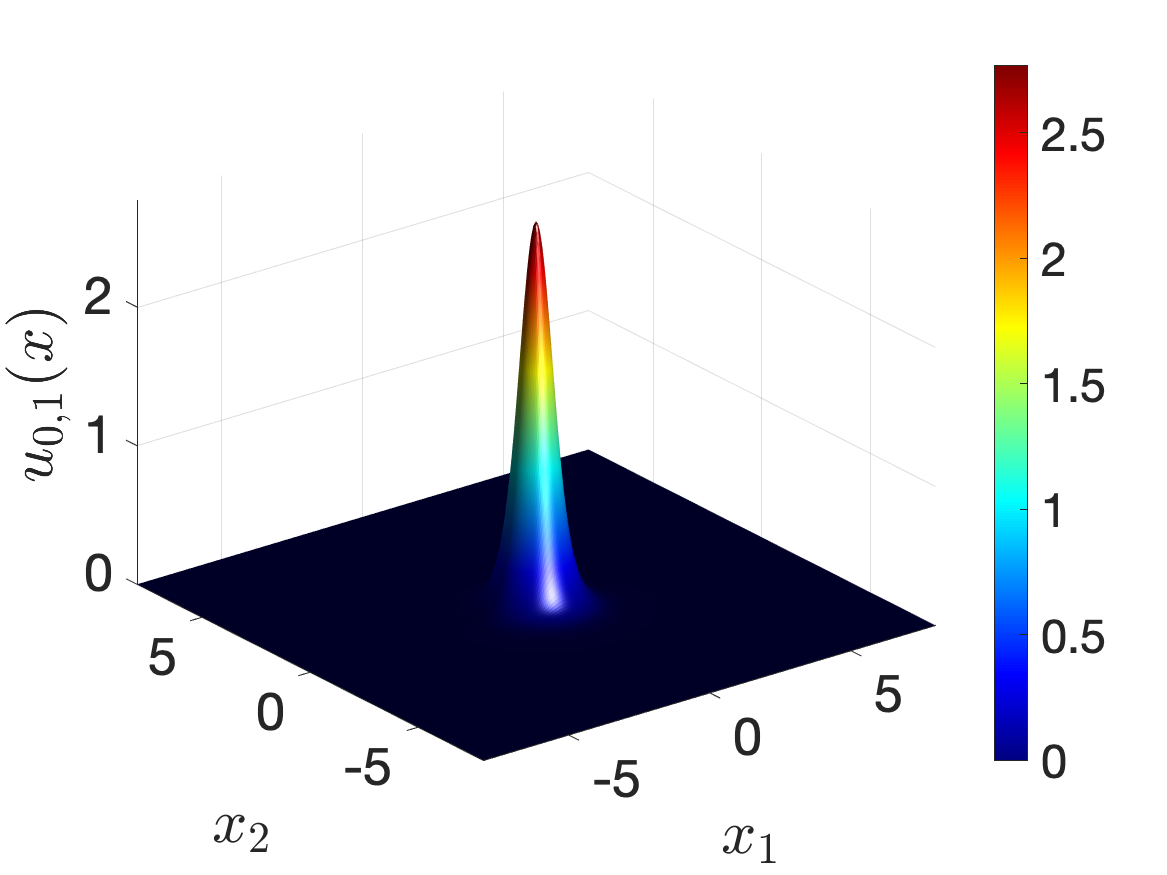,width=\linewidth}
 \end{minipage}%
 \begin{minipage}{.5\linewidth}
  \centering\epsfig{figure=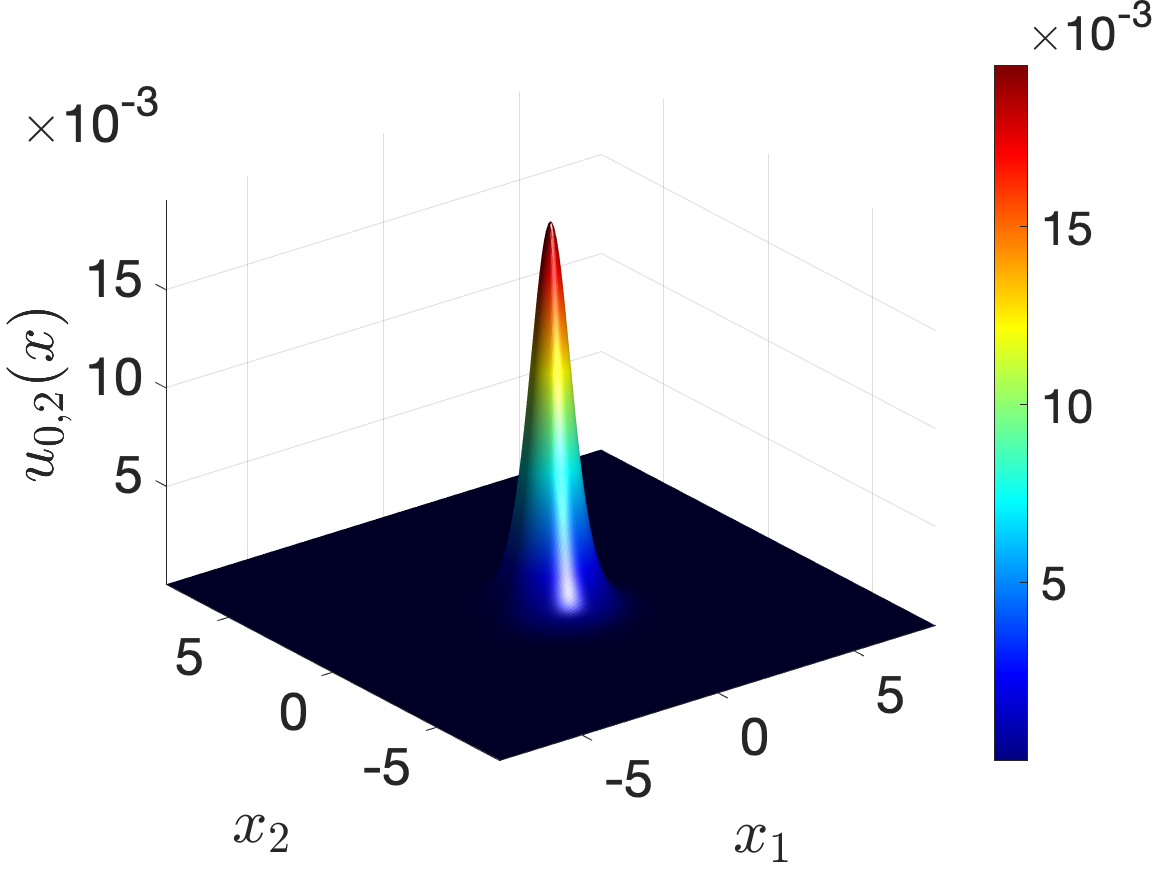,width=\linewidth}
 \end{minipage} 
 \caption{Approximation of a ``spike pattern" in the Gray-Scott system of equations with $\lambda_2 = 10, \lambda_1 = \frac{1}{9}$. (L) is $u_{0,1}$ and (R) is $u_{0,2}$. }
 \label{fig : spike pattern}
 \end{figure}
\subsection{Ring Pattern}
The ring pattern was found by performing numerical continuation from the spike pattern proven in Theorem \ref{th : proof_of_spike_R1}. In particular, the ring appeared after the first fold encountered on the branch.  
\begin{theorem}\label{th : proof_of_ring}
Let $\lambda_2 =3.73, \lambda_1 =0.0567$. Moreover, let $r_0 \bydef 6 \times 10^{-6}$. Then there exists a unique solution $\tilde{\mathbf{u}}$ to \eqref{eq : gray_scott cov} in $\overline{B_{r_0}(\mathbf{u}_{ring})} \subset \mathcal{H}_{D_4}$ and we have that $\|\tilde{\mathbf{u}}-\mathbf{u}_{ring}\|_{\mathcal{H}} \leq r_0$. That is, $\tilde{\mathbf{u}}$ is at most $r_0$ away from the approximation shown in Figure \ref{fig : ring pattern}.
\par In addition, there exists a smooth curve 
\[
\left\{\tilde{\mathbf{u}}(q) : q \in [d,\infty]\right\} \subset C^\infty(\R^2) \times C^\infty(\R^2)
\]
such that $\tilde{\mathbf{u}}(q)$ is a periodic solution to \eqref{eq : gray_scott cov} with period $2q$ in both directions. In particular, $\tilde{\mathbf{u}}(\infty) = \tilde{\mathbf{u}}$ is a localized pattern on $\R^2.$ 
\end{theorem}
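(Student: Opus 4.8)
The plan is to mimic exactly the structure of the proof of Theorem \ref{th : proof_of_spike_away}, since the ring pattern lives in the full system \eqref{eq : gray_scott cov} (away from $\lambda_1\lambda_2=1$, as $\lambda_1\lambda_2 = 0.0567\times 3.73 \neq 1$), and therefore the same machinery---Theorem \ref{th: radii polynomial} for existence and local uniqueness, and Theorem \ref{th : radii periodic} for the branch of periodic solutions---applies verbatim. First I would fix the discretization parameters $N_0$, $N$, and the domain half-width $d$ (with $\om = (-d,d)^2$), and, following Section \ref{sec : numerical construction}, construct the approximate solution $\mathbf{u}_0 = \bgam^\dagger(\mathbf{U}_0) \in \mathcal{H}_{D_4}$ obtained by numerical continuation from the spike solution of Theorem \ref{th : proof_of_spike_R1} past the first fold; set $\mathbf{u}_{ring} \bydef \mathbf{u}_0$. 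The trace operator $\mathcal{T}$ of \eqref{coeffs_T_M} must be applied so that $\mathbf{u}_0 \in \mathcal{H}_{D_4}$ with support in $\overline{\om}$; here $d$ must be chosen large enough that enforcing the null trace does not spoil the quality of the approximate zero, which is more delicate for the ring than for the spike since the ring is more spatially extended.

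Next I would construct the approximate inverse via $\mathbb{A} = \mathbb{L}^{-1}\mathbb{B}$ as in Section \ref{sec : the operator A}, which in this block-triangular setting reduces to building $B_{11}^N$ and $B_{12}^N$ numerically; the bounds in Section \ref{sec : bounds} only require the norm $\|B_{11}^N\|_2$, which I would enclose rigorously. Then I would compute the constants $\kappa_2, \kappa_3, \kappa_0$ from Corollary \ref{cor : banach algebra1} and Lemma \ref{lem : banach algebra} for the given $(\lambda_1,\lambda_2)$, and the decay constants $C_0(f_{11}), C_0(f_{22}), C_{11}(f_{11}), C_{12}(f_{11}), C_{11}(f_{22}), C_{12}(f_{22})$ from Lemmas \ref{lem : constants_for_f} and \ref{lem : lemma Z12full}. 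With these in hand, the bounds $\mathcal{Y}_0$ (Lemma \ref{lem : bound Y_0}), $\mathcal{Z}_2$ (Lemma \ref{lem : Bound Z_2}), $Z_1$ (Lemma \ref{lem : Z1_bound}), $\mathcal{Z}_{u,1}$ (Lemma \ref{lem : lemma Z11full}) and $\mathcal{Z}_{u,2}$ (Lemma \ref{lem : lemma Z12full}) are all computed by the rigorous-numerics code of \cite{julia_cadiot_blanco_GS}, and assembled into $\mathcal{Z}_1$ via \eqref{eq : first definition Z1}. I would then exhibit an explicit radius $r_0 \bydef 6\times 10^{-6}$ and verify the two inequalities in \eqref{condition radii polynomial}, which yields via Theorem \ref{th: radii polynomial} the existence of a unique $\tilde{\mathbf{u}} \in \overline{B_{r_0}(\mathbf{u}_{ring})}$ with $\mathbb{F}(\tilde{\mathbf{u}})=0$; Proposition \ref{prop : regularity of the solution general} upgrades it to a smooth solution of \eqref{eq : gray_scott cov}, and the bound $\|\tilde{\mathbf{u}}-\mathbf{u}_{ring}\|_{\mathcal{H}}\leq r_0$ follows from $\tilde{\mathbf{u}}\in \overline{B_{r_0}(\mathbf{u}_{ring})}$.

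For the second assertion, I would compute the modified constants $\widehat{\kappa}_2, \widehat{\kappa}_3, \widehat{\kappa}_0$ of \eqref{def : kappa01_02}--\eqref{eq : kappas in the Theorem periodic solutions}, form $\widehat{\mathcal{Y}}_0 \bydef \mathcal{Y}_0$, $\widehat{\mathcal{Z}}_1 \bydef \mathcal{Z}_1 + \max\{1,\|B_{11}^N\|_2\}\mathcal{Z}_u$ and $\widehat{\mathcal{Z}}_2(r)$ as in Theorem \ref{th : radii periodic}, and check \eqref{eq : radii condition periodic} at the same $r_0$ (or a slightly enlarged one). Since $\kappa_j = \widehat{\kappa}_j + \mathcal{O}(1/d)$ and $\mathcal{Z}_u$ is small for the chosen $d$, this second verification should pass essentially whenever the first one does, and Theorem \ref{th : radii periodic} then produces the smooth curve $\{\tilde{\mathbf{u}}(q): q\in[d,\infty]\} \subset C^\infty(\R^2)\times C^\infty(\R^2)$ of $D_4$-symmetric $2q$-periodic solutions with $\tilde{\mathbf{u}}(\infty) = \tilde{\mathbf{u}}$.

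The main obstacle I anticipate is purely quantitative: the ring is a larger-amplitude and more spatially spread-out structure than the spike, so the linearization $D\mathbb{F}(\mathbf{u}_0)$ is closer to singular (reflected in a larger $\|B_{11}^N\|_2$ and a larger $Z_1$), and one needs enough Fourier modes $N$, $N_0$ and a large enough domain $d$ for $\mathcal{Y}_0$, the tail terms $\|V_i^N - V_i\|_1$, and $\mathcal{Z}_u$ (which carries the factors $e^{-a_1 d}$, $e^{-a_2 d}$) to be small enough that the quadratic radii-polynomial inequality \eqref{condition radii polynomial} has a solution with $\mathcal{Z}_1 + \mathcal{Z}_2(r_0)r_0 < 1$. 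Getting a sufficiently sharp $\mathcal{Z}_{u,2}$ via the Green's-identity approach of Lemma \ref{lem : lemma Z12full} (cf. Remark \ref{rem : Zu2}) is what makes this feasible; everything else is a routine rerun of the interval-arithmetic computation with the new parameter values.
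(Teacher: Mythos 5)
Your proposal is correct and follows exactly the same approach as the paper: the paper's proof of this theorem is essentially a one-line reduction to the proof of Theorem~\ref{th : proof_of_spike_away}, fixing $N_0 = 80$, $N = 60$, $d = 10$, computing the same list of constants $\kappa_j, \widehat{\kappa}_j$ and bounds $\mathcal{Y}_0, \mathcal{Z}_2, Z_1, \mathcal{Z}_{u,1}, \mathcal{Z}_{u,2}, \widehat{\mathcal{Z}}_1, \widehat{\mathcal{Z}}_2$ via interval arithmetic, and verifying conditions \eqref{condition radii polynomial} and \eqref{eq : radii condition periodic} at $r_0 = 6\times 10^{-6}$. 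Your anticipated quantitative difficulties are real but are resolved in the paper precisely by the larger $N_0, N, d$ you flag.
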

\begin{proof}
Choose $N_0 = 80, N = 60, d = 10$. The proof is obtained similarly as that of Theorem \ref{th : proof_of_spike_away}. In particular, we define
\begin{align}
    \kappa_2 \bydef 1.18469 \text{,}~~\kappa_3 \bydef 0.244715 \text{, and}~~\kappa_0 \bydef 0.258464 \end{align}
    \begin{align}
    \mathcal{Y}_0 \bydef  2.3\times 10^{-6} \text{,}~~\mathcal{Z}_2(r_0) \bydef 11235 \text{,}~~Z_1 \bydef 0.317 \text{,}~~
    \mathcal{Z}_{u,1} \bydef  4.71 \times 10^{-5}\text{, and}~~\mathcal{Z}_{u,2} \bydef 1.04 \times 10^{-6}
\end{align}
and prove that these values satisfy Theorem \ref{th: radii polynomial}. Additionally, we define
\begin{align}
    \widehat{\kappa}_2 \bydef 1.43724 \text{,}~~\widehat{\kappa}_3 \bydef 0.651838\text{,}~~\widehat{\kappa}_0 \bydef 0.490695\text{,}~~\widehat{\mathcal{Z}}_1 \bydef 0.428923 \text{, and}~~\widehat{\mathcal{Z}}_2(r_0) \bydef 15127.2
\end{align}
and prove that these values satisfy Theorem \ref{th : radii periodic}.
\end{proof}
 \begin{figure}[H]
\centering
 \begin{minipage}{.5\linewidth}
  \centering\epsfig{figure=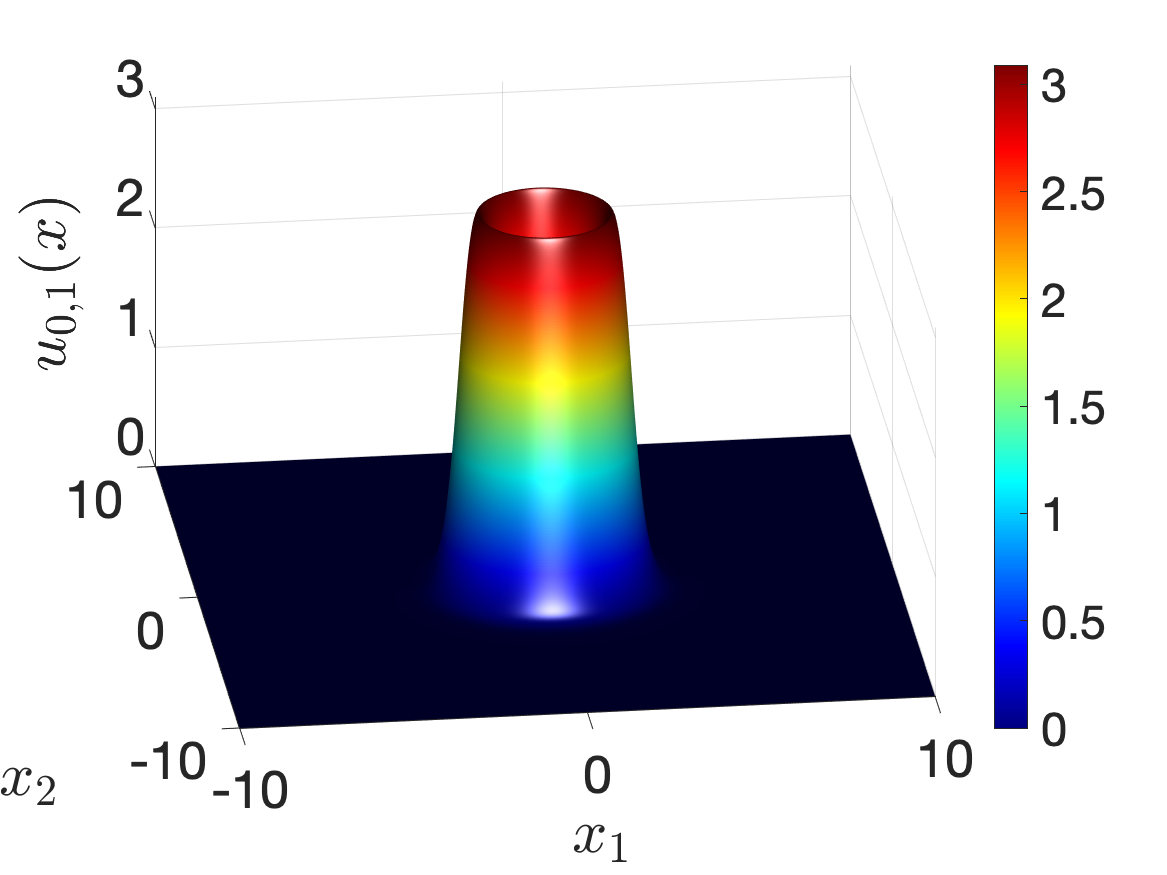,width=\linewidth}
 \end{minipage}%
 \begin{minipage}{.5\linewidth}
  \centering\epsfig{figure=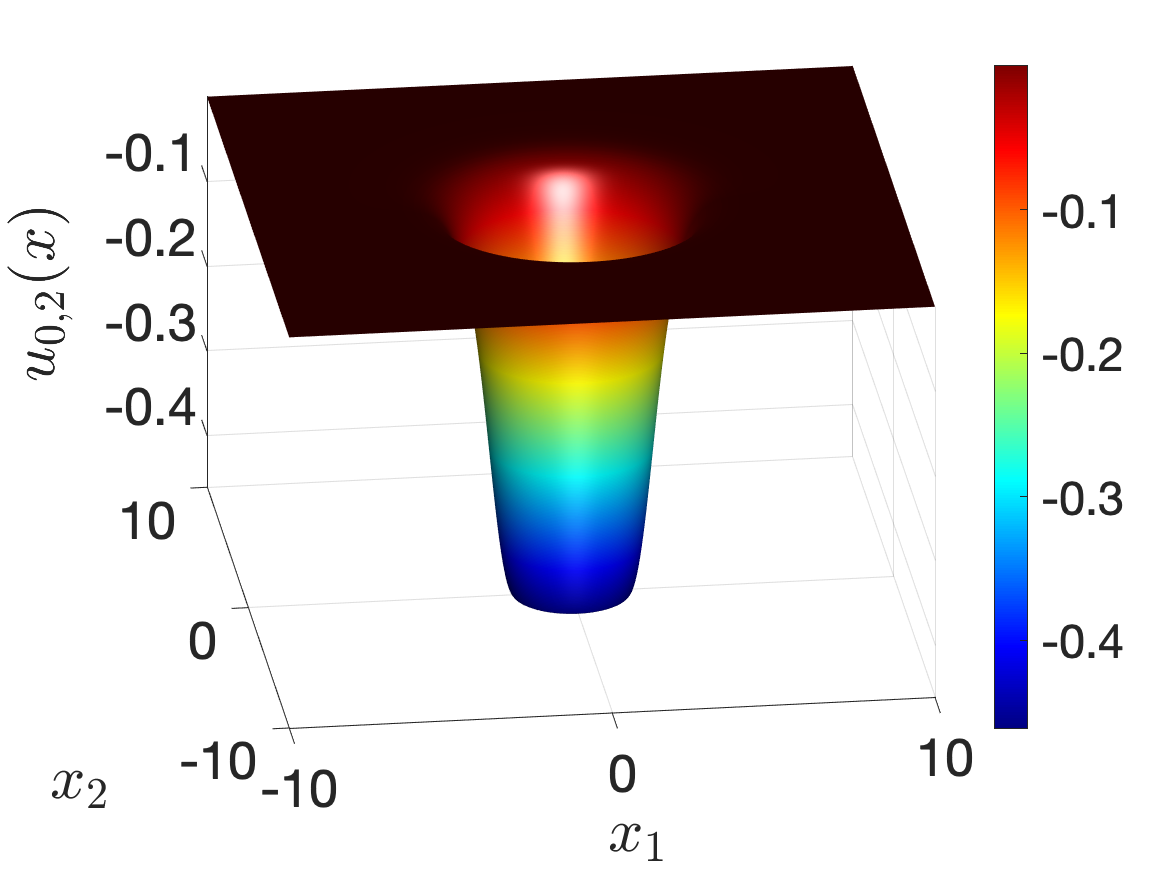,width=\linewidth}
 \end{minipage} \caption{Approximation of a ``ring pattern" in the Gray-Scott system of equations with $\lambda_2 = 3.73, \lambda_1 = 0.0567$. (L) is $u_{0,1}$ and (R) is $u_{0,2}$.}
 \label{fig : ring pattern}
 \end{figure}
\subsection{Leaf Pattern}
By performing more numerical continuation from the ring pattern proven in Theorem \ref{th : proof_of_ring}, we found the ``leaf" pattern. This pattern was encountered after passing through multiple folds and bifurcations.  
\begin{theorem}\label{th : proof_of_leaf}
Let $\lambda_2 =3.74, \lambda_1 =0.0566$. Moreover, let $r_0 \bydef 6 \times 10^{-6}$. Then there exists a unique solution $\tilde{\mathbf{u}}$ to \eqref{eq : gray_scott cov} in $\overline{B_{r_0}(\mathbf{u}_{leaf})} \subset \mathcal{H}_{D_4}$ and we have that $\|\tilde{\mathbf{u}}-\mathbf{u}_{leaf}\|_{\mathcal{H}} \leq r_0$. That is, $\tilde{\mathbf{u}}$ is at most $r_0$ away from the approximation shown in Figure \ref{fig : leaf_pattern}.
\par In addition, there exists a smooth curve 
\[
\left\{\tilde{\mathbf{u}}(q) : q \in [d,\infty]\right\} \subset C^\infty(\R^2) \times C^\infty(\R^2)
\]
such that $\tilde{\mathbf{u}}(q)$ is a periodic solution to \eqref{gray_scott_reduced} with period $2q$ in both directions. In particular, $\tilde{\mathbf{u}}(\infty) = \tilde{\mathbf{u}}$ is a localized pattern on $\R^2.$ 
\end{theorem}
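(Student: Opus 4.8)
The plan is to follow verbatim the template used in the proofs of Theorems \ref{th : proof_of_spike_away} and \ref{th : proof_of_ring}: verify the hypotheses of Theorem \ref{th: radii polynomial} to obtain the localized leaf pattern, then verify those of Theorem \ref{th : radii periodic} to obtain the accompanying unbounded branch of $D_4$-symmetric periodic solutions, in both cases by exhibiting rigorous numerical enclosures of every constant appearing in the radii-polynomial inequalities.

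First I would fix discretization parameters $N_0, N$ and the half-width $d$ of $\om = (-d,d)^2$. Since the leaf pattern is reached by numerical continuation from the ring pattern of Theorem \ref{th : proof_of_ring} and the parameter values $(\lambda_1,\lambda_2)=(0.0566,3.74)$ are essentially those of the ring, I expect to take comparable values (taking, say, $N_0 = 80$, $N = 60$, $d = 10$), with $N_0$ large enough to resolve the more structured profile. I would then run Newton's method on $F(\mathbf{U})=0$ from the continued data, apply the null-trace projection of Section \ref{sec : numerical construction} so that the result lies in $\mathcal{H}_{D_4}$, and set $\mathbf{u}_{leaf} \bydef \mathbf{u}_0 = \bgam^\dagger(\mathbf{U}_0)$. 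Next I would build the block upper-triangular approximate inverse $B^N$ of $\bpi^N DF(\mathbf{U}_0)L^{-1}\bpi^N$ as in Section \ref{sec : the operator A}, and compute with interval arithmetic the one quantity Section \ref{sec : bounds} needs from it, $\|B_{11}^N\|_{2}$. With $\mathbf{u}_0$ and $B^N$ in hand, I would evaluate the algebra constants $\kappa_2,\kappa_3,\kappa_0$ from Corollary \ref{cor : banach algebra1} and Lemma \ref{lem : banach algebra}, the kernel/decay constants $C_0(f_{11}), C_0(f_{22}), C_{11}(f_{11}), C_{12}(f_{11}), C_{11}(f_{22}), C_{12}(f_{22})$ from Lemmas \ref{lem : constants_for_f} and \ref{lem : lemma Z12full}, and feed these into the formulas of Lemmas \ref{lem : bound Y_0}, \ref{lem : Bound Z_2}, \ref{lem : Z_full_1} (through Lemmas \ref{lem : Z1_bound}, \ref{lem : lemma Z11full}, \ref{lem : lemma Z12full}) to obtain enclosures of $\mathcal{Y}_0$, $\mathcal{Z}_2(\cdot)$, $Z_1$, $\mathcal{Z}_{u,1}$, $\mathcal{Z}_{u,2}$, hence of $\mathcal{Z}_1$, and check the two inequalities \eqref{condition radii polynomial} with $r_0 \bydef 6\times 10^{-6}$. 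Finally I would form $\widehat{\kappa}_2$ (and $\widehat{\kappa}_3,\widehat{\kappa}_0$) from Theorem \ref{th : radii periodic}, set $\widehat{\mathcal{Z}}_1 = \mathcal{Z}_1 + \max\{1,\|B_{11}^N\|_{2}\}\mathcal{Z}_u$ and $\widehat{\mathcal{Z}}_2(r_0)$, and check \eqref{eq : radii condition periodic}, which by the remark after Theorem \ref{th : radii periodic} is expected to follow automatically from \eqref{condition radii polynomial} once $d$ is large enough.

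The main obstacle I anticipate is purely quantitative: closing \eqref{condition radii polynomial} for this pattern. The leaf lies past several folds and bifurcations, so it is far less isolated numerically than a spike; near such points $D\mathbb{F}(\mathbf{u}_0)$ is close to singular, which degrades the conditioning of $B^N$ (inflating $Z_1$) and, together with the already very large $\mathcal{Z}_2$ seen for the ring ($\mathcal{Z}_2 \sim 10^4$), forces the residual $\mathcal{Y}_0$ down to the $10^{-6}$ range. Concretely this means Newton must be iterated essentially to machine precision on a sizeable truncation, $d$ must be taken large enough that the null-trace correction of Section \ref{sec : numerical construction} does not spoil the residual, and the interval evaluations of the many discrete-convolution operator norms entering $\mathcal{Z}_2$ and $Z_1$ must be kept tight; everything else — the Banach-algebra inequalities of Lemma \ref{Banach algebra}, the exponential-decay estimates for $f_{11},f_{22},f_3$, and the Riemann-sum bounds of Lemma \ref{lem : riemann sum} — is then routine, being already packaged in the lemmas of Section \ref{sec : bounds}.
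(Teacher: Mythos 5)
Your proposal follows exactly the template the paper uses (verify Theorem \ref{th: radii polynomial} then Theorem \ref{th : radii periodic} via interval evaluation of all the bounds of Section \ref{sec : bounds}), and is correct in method. The one place your guess diverges materially from what the paper actually did is the discretization: the paper needs $N_0 = 240$, $N = 180$, $d = 22$ rather than ring-sized values, and it attributes this not to near-singularity of $D\mathbb{F}(\mathbf{u}_0)$ inflating $Z_1$ (which stays at a modest $0.515$) but to the slow decay of the leaf's Fourier coefficients, which drives up both the truncation size and the domain radius $d$ and makes memory the binding constraint — the point of the $D_4$-reduced index set $J_{\mathrm{red}}(D_4)$ and the custom sequence structure of \cite{dominic_D_4_julia} is precisely to make that large truncation tractable.
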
  
\begin{proof}
Choose $N_0 = 240, N = 180, d = 22$. The proof is obtained similarly to that of Theorem \ref{th : proof_of_spike_away}.  
In particular, we define
\begin{align}
    \kappa_2 \bydef 1.18574\text{,}~~\kappa_3 \bydef 0.244603 \text{, and}~~\kappa_0 \bydef 0.258144\end{align}
    \begin{align}
    \mathcal{Y}_0 \bydef  1.1\times 10^{-6} \text{,}~~\mathcal{Z}_2(r_0) \bydef 51812 \text{,}~~Z_1 \bydef 0.515\text{,}
    ~~
    \mathcal{Z}_{u,1} \bydef 6\times 10^{-6} \text{, and}~~\mathcal{Z}_{u,2} \bydef 1.7 \times 10^{-6}
\end{align}
and prove that these values satisfy Theorem \ref{th: radii polynomial}. Additionally, we define
\begin{align}
    \widehat{\kappa}_2 \bydef 1.30637 \text{,}~~\widehat{\kappa}_3 \bydef 0.445831\text{,}~~\widehat{\kappa}_0 \bydef 0.397017\text{,}~~\widehat{\mathcal{Z}}_1 \bydef 0.5798\text{, and}~~\widehat{\mathcal{Z}}_2(r_0) \bydef 61246.4 
\end{align}
and prove that these values satisfy Theorem \ref{th : radii periodic}.
\end{proof}
 \begin{figure}[H]
\centering
 \begin{minipage}{.5\linewidth}
  \centering\epsfig{figure=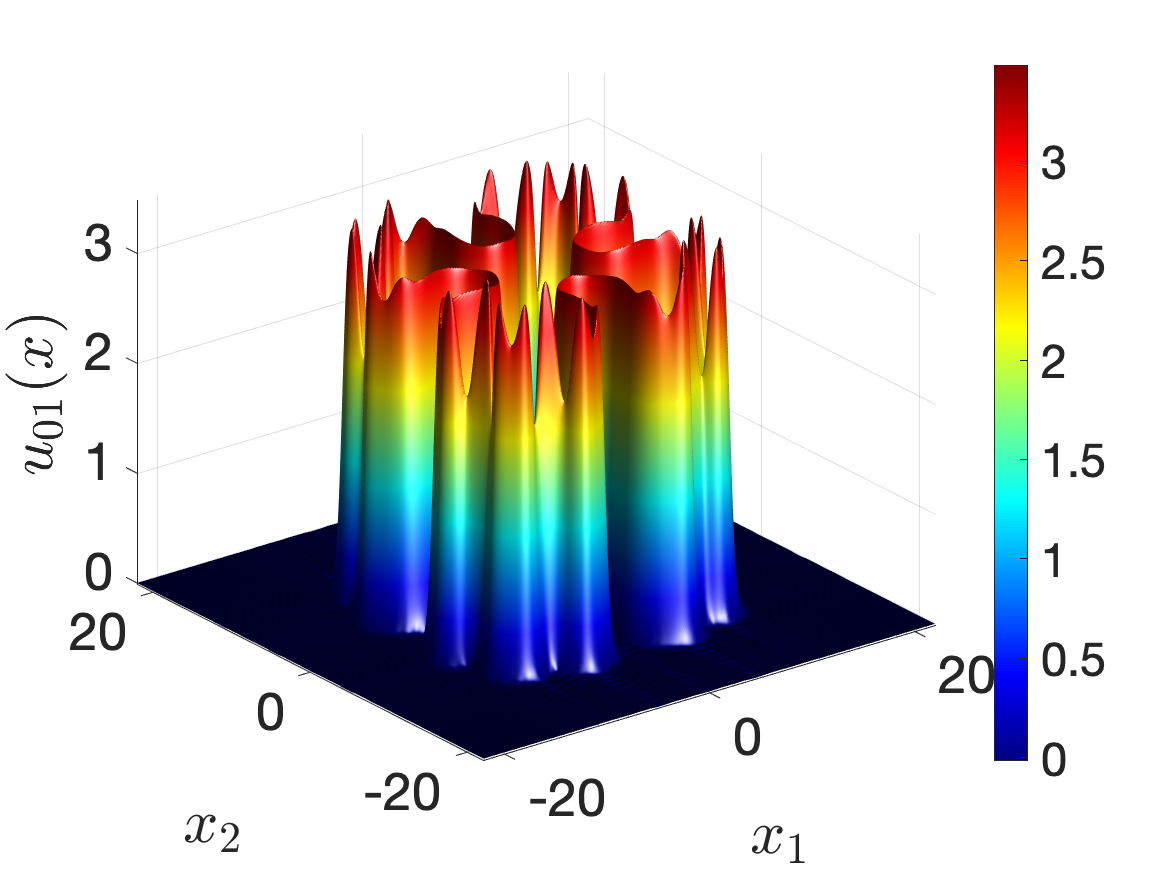,width=\linewidth}
 \end{minipage}%
 \begin{minipage}{.5\linewidth}
  \centering\epsfig{figure=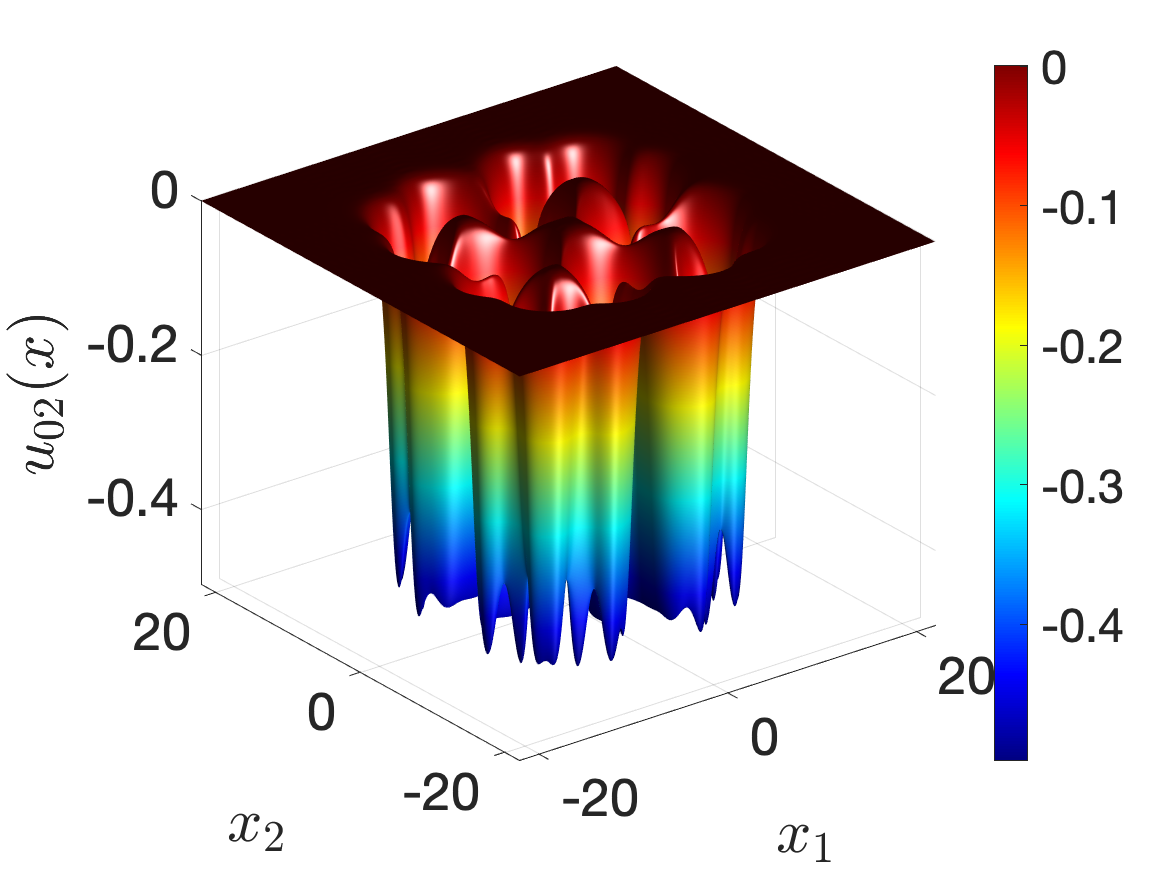,width=\linewidth}
 \end{minipage} \caption{ Approximation of the  ``leaf pattern" in the Gray-Scott system of equations with $\lambda_2 = 3.74, \lambda_1 = 0.0566$. (L) is $u_{0,1}$ and (R) is $u_{0,2}$.}
 \label{fig : leaf_pattern}
 \end{figure}
 To the best of our knowledge, this is the first non-radial localized pattern proven in the Gray-Scott model. Note that the value of $N_0$ is much larger than the previous patterns at $240$. This was necessary due to the slow decay of the Fourier coefficients. The use of $D_4$-symmetric Fourier series made the proof of the leaf possible though despite limited memory. 
\section{Conclusion and Future Directions}\label{sec : conclusion}
In this paper, we extended the theory of \cite{unbounded_domain_cadiot} to derive a general computer-assisted methodology for proving the existence of stationary localized patterns in autonomous semi-linear systems of PDEs. We demonstrated the efficiency of the approach on the 2D Gray-Scott system of equations by constructively proving four localized patterns with high accuracy. Our results} include, to the best of our knowledge, the first non-radial localized pattern proven in Gray-Scott. This approach could be directly applied to any other reaction diffusion system provided Assumptions \ref{ass:A(1)} and \ref{ass : LinvG in L1} are satisfied. In the event that $\mathbb{G}$ is not of the form \eqref{def: G and j}, further work would be required. More specifically, if $\mathbb{G}$ is non-polynomial, then $\mathbb{G}(\mathbf{u}_0)$ has infinitely many non-vanishing Fourier coefficients. A technique was provided by the authors of \cite{nonpoly_suspension} to handle such nonlinearities rigorously. It would be interesting to investigate the applicability of this method to treat problems on unbounded domains. Combining these techniques would allow one to treat a larger class of reaction diffusion models, such as the Gierer-Meinhardt model.
 \par It would also be of interest to prove the existence a branch of solutions starting from $\lambda_2 \lambda_1 = 1$. This would allow to have a 2D-equivalent of the results proven in \cite{jay_jp_gs} on unbounded domains. In the aforementioned paper, the existence of spike solutions in 1D Gray-Scott away from $\lambda_2 \lambda_1 = 1$ was verified by performing a rigorous continuation method. It would be of interest to obtain a similar result in the two-dimensional case. Furthermore, as mentioned in Section \ref{sec : numerical construction}, we began by obtaining an approximate solution of a radially symmetric "spike" pattern in \eqref{gray_scott_reduced}. This solution is depicted in Figure \ref{fig : 1D case}. From here, we performed numerical continuation to obtain the other approximate solutions used in Figures \ref{fig : spike pattern}, \ref{fig : ring pattern}, and \ref{fig : leaf_pattern}. It is natural to ask if this numerical approach can be made rigorous. One way to do so is to use the approach described in \cite{polynomial_chaos}. In this paper, the author describes a rigorous method for proving branches of solutions using Chebyshev series.  The author of \cite{cadiot_witham} adapted the method to unbounded domains on the nonlocal Witham equation. This is demonstrated in Section 4.6 of the aforementioned paper. This is currently under investigation. Additionally, in \cite{wei_breakup}, numerical solutions to Gray-Scott were shown on a bounded 2D domain. It would be of interest to verify if these patterns are indeed localized patterns and then attempt to prove their existence with the present methodology.
 \par Another point of interest is stability of the obtained localized patterns.  This question has been investigated in Section 5.3 of \cite{unbounded_domain_cadiot}, as it provides the necessary tools for proving isolated eigenvalues. In particular, the aforementioned approach provides a direct way to prove spectral instability. To prove the stability of a pattern, one would have to control regions of non-existence of eigenvalues, as presented in \cite{cadiot_witham} or \cite{unbounded_domain_cadiot}.   By using a similar approach to the two aforementioned papers, we could attempt to prove the (in)stability of our patterns in Gray-Scott. This is also under investigation.
\par Finally, for the purpose of Gray-Scott, our numerical investigation was completely performed using our $D_4$-Fourier code available at \cite{dominic_D_4_julia}. Now that we have proven one non-radial localized pattern in Gray-Scott, it is possible more could exist, possibly with symmetries other than $D_4$. In particular, the method presented in this manuscript could easily be modified to match diverse symmetries. We believe the proof of the leaf pattern justifies further numerical investigation into the model for possible existence of more non-radial localized patterns. With the method developed in this paper, one could prove these non-radial solutions, should they exist. Computing these solutions is a highly non-trivial process, but their discovery would provide more insight as to what kind of phenomena is possible in the Gray-Scott system of equations. 
\appendix
\renewcommand{\theequation}{A.\arabic{equation}}
\setcounter{equation}{0}
\section{Computation of \texorpdfstring{$\mathcal{Z}_2$}{Z2}}\label{apen : Z2}
We begin by proving Lemma \ref{lem : banach algebra}.
\begin{lemma}
Let $\kappa_2$ be defined in \eqref{def : definition kappa2 kappa3} and define $\kappa_0 >0$ as 
\begin{align}
    \kappa_0 \bydef \min\left\{ \max\left\{\left[\left(\lambda_1 \kappa_2 + \frac{1}{2\sqrt{\pi \lambda_2}}\right)^2 + \frac{1}{4\pi \lambda_2}\right]^{\frac{1}{2}}, \frac{\sqrt{2}}{2\sqrt{\pi \lambda_2}}\right\},  \frac{\kappa_2}{\lambda_2} \left((1-\lambda_2 \lambda_1)^2 + 1\right)^{\frac{1}{2}}\right\}.
\end{align}
Then, 
\begin{align}
\|u_1v_2\|_2 \leq \kappa_0 \|\mathbf{u}\|_{\mathcal{H}} \|\mathbf{v}\|_{\mathcal{H}}
\end{align}
for all $\mathbf{u}= (u_1,u_2), \mathbf{v}=(v_1,v_2) \in \mathcal{H}$.
\end{lemma}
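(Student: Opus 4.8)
The plan is to mirror the bilinear estimate of Lemma~\ref{Banach algebra}: bound $\|u_1v_2\|_2$ by Hölder's inequality, putting one factor in $L^\infty(\R^2)$ and the other in $L^2(\R^2)$, carry this out in the two natural ways, and take the smaller of the two resulting constants; this produces the outer $\min$ in the definition of $\kappa_0$. The algebraic ingredient is the block-triangular structure of $\mathbb{L}$ together with identity \eqref{eq : equality of the lik}. Setting $\mathbf{w}\bydef\mathbb{L}\mathbf{u}=(w_1,w_2)$ and $\mathbf{z}\bydef\mathbb{L}\mathbf{v}=(z_1,z_2)$, one has $\|\mathbf{u}\|_{\mathcal{H}}^2=\|w_1\|_2^2+\|w_2\|_2^2$, $\|\mathbf{v}\|_{\mathcal{H}}^2=\|z_1\|_2^2+\|z_2\|_2^2$, and inverting $\mathbb{L}$ gives $u_1=\mathbb{L}_{11}^{-1}w_1$ and $v_2=\mathbb{L}_{22}^{-1}z_2-\mathbb{L}_{22}^{-1}\mathbb{L}_{21}\mathbb{L}_{11}^{-1}z_1=\mathbb{L}_{22}^{-1}(z_2-z_1)+\lambda_1\mathbb{L}_{11}^{-1}z_1$, where the last equality is \eqref{eq : equality of the lik} written as $\mathbb{L}_{22}^{-1}\mathbb{L}_{21}\mathbb{L}_{11}^{-1}=\mathbb{L}_{22}^{-1}-\lambda_1\mathbb{L}_{11}^{-1}$. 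I will repeatedly use the Fourier-multiplier bounds $\|\mathbb{L}_{11}^{-1}f\|_\infty\le\|1/l_{11}\|_2\|f\|_2=\kappa_2\|f\|_2$ and $\|\mathbb{L}_{22}^{-1}f\|_\infty\le\|1/l_{22}\|_2\|f\|_2=\frac{1}{2\sqrt{\pi\lambda_2}}\|f\|_2$ (from $\|g\|_\infty\le\|\hat g\|_1$, Cauchy--Schwarz, Plancherel and \eqref{eq : norm of l11 and l22}), together with the pointwise symbol bounds $|1/l_{11}|\le1$, $|1/l_{22}|\le1/\lambda_2$ and $|(l^{-1})_{21}|=|1-\lambda_2\lambda_1|/|l_{11}l_{22}|\le|1-\lambda_2\lambda_1|/\lambda_2$.

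For the estimate producing $B=\frac{\kappa_2}{\lambda_2}\big((1-\lambda_2\lambda_1)^2+1\big)^{1/2}$, I would write $\|u_1v_2\|_2\le\|u_1\|_\infty\|v_2\|_2$: the first factor is $\le\kappa_2\|w_1\|_2\le\kappa_2\|\mathbf{u}\|_{\mathcal{H}}$, and from $\hat v_2=(l^{-1})_{21}\hat z_1+(l^{-1})_{22}\hat z_2$ the pointwise symbol bounds and Cauchy--Schwarz in $\mathbb{C}^2$ give $|\hat v_2(\xi)|\le\frac{1}{\lambda_2}\big((1-\lambda_2\lambda_1)^2+1\big)^{1/2}\big(|\hat z_1(\xi)|^2+|\hat z_2(\xi)|^2\big)^{1/2}$, so that $\|v_2\|_2\le\frac{1}{\lambda_2}\big((1-\lambda_2\lambda_1)^2+1\big)^{1/2}\|\mathbf{v}\|_{\mathcal{H}}$ after integrating and applying Plancherel. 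For the estimate producing $A=\max\big\{[(\lambda_1\kappa_2+\frac{1}{2\sqrt{\pi\lambda_2}})^2+\frac{1}{4\pi\lambda_2}]^{1/2},\ \frac{\sqrt2}{2\sqrt{\pi\lambda_2}}\big\}$, I would instead write $\|u_1v_2\|_2\le\|u_1\|_2\|v_2\|_\infty$: here $\|u_1\|_2\le\|1/l_{11}\|_\infty\|w_1\|_2=\|w_1\|_2\le\|\mathbf{u}\|_{\mathcal{H}}$, and using $v_2=\mathbb{L}_{22}^{-1}(z_2-z_1)+\lambda_1\mathbb{L}_{11}^{-1}z_1$ one gets $\|v_2\|_\infty\le\|1/l_{22}\|_2\|z_2-z_1\|_2+\lambda_1\kappa_2\|z_1\|_2\le(\lambda_1\kappa_2+\|1/l_{22}\|_2)\|z_1\|_2+\|1/l_{22}\|_2\|z_2\|_2$; Cauchy--Schwarz in $\mathbb{R}^2$ against $\|\mathbf{v}\|_{\mathcal{H}}=(\|z_1\|_2^2+\|z_2\|_2^2)^{1/2}$ then yields the first entry of the $\max$, while a coarser variant of the same computation yields the second entry $\sqrt2\,\|1/l_{22}\|_2$ (which never exceeds the first and is retained only for uniformity with the periodic version of this estimate used in Theorem~\ref{th : radii periodic}). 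Taking the minimum of $A$ and $B$ concludes.

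I do not expect a genuine obstacle: the argument is bookkeeping with Hölder, Plancherel and Cauchy--Schwarz. The only points requiring care are (i) choosing, in each Hölder split, the representation of $v_2$ --- either $\hat v_2=(l^{-1})_{21}\hat z_1+(l^{-1})_{22}\hat z_2$ directly, or the re-expressed form $\mathbb{L}_{22}^{-1}(z_2-z_1)+\lambda_1\mathbb{L}_{11}^{-1}z_1$ --- that produces exactly the stated constants, and (ii) combining the two resulting summands by Cauchy--Schwarz so that they are measured against $\|\mathbf{v}\|_{\mathcal{H}}=(\|z_1\|_2^2+\|z_2\|_2^2)^{1/2}$ rather than the weaker sum $\|z_1\|_2+\|z_2\|_2$, which would degrade the constant.
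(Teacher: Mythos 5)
Your proposal is correct and takes essentially the same route as the paper: Hölder's inequality applied in both of the two natural ways (one factor in $L^\infty$, the other in $L^2$), the block-triangular inversion of $\mathbb{L}$ together with identity \eqref{eq : equality of the lik}, and Cauchy--Schwarz in $\mathbb{C}^2$ (for the $\mathcal{M}_1$-type bound) or $\R^2$ (for the $\mathcal{M}_2$-type bound) to measure against $\|\mathbf{v}\|_{\mathcal{H}}=(\|z_1\|_2^2+\|z_2\|_2^2)^{1/2}$. The paper packages this by applying Lemma~\ref{Banach algebra} and computing the $\mathcal{M}_1$ and $\mathcal{M}_2$ norms of $g_2l^{-1}=\begin{bmatrix}0&1\\0&0\end{bmatrix}l^{-1}$ directly (then upper-bounding the $\mathcal{M}_2$ norm via the triangle inequality), while you unpack the same estimate by hand; the end constants coincide, and your observation that the inner $\max$ is trivially attained at its first argument (so that $\sqrt{2}\|1/l_{22}\|_2$ plays no active role here) is also accurate.
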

\begin{proof}
First, notice that 
\begin{align*}
    \|u_1v_1\|_2 = \left\|\begin{bmatrix}
        1 & 0 \\
        0 & 0 
    \end{bmatrix} \mathbf{u} \begin{bmatrix}
        1 & 0 \\
        0 & 0 
    \end{bmatrix} \mathbf{v}\right\|_2 \leq \kappa_2 \|\mathbf{u}\|_{\mathcal{H}} \|\mathbf{v}\|_{\mathcal{H}}
\end{align*}
using Corollary \ref{cor : banach algebra1}.
Then, observe that we have
\begin{align*}
    \|u_1v_2\|_2 = \left\|\begin{bmatrix}
        1 & 0 \\
        0 & 0 
    \end{bmatrix} \mathbf{u} \begin{bmatrix}
        0 & 1 \\
        0 & 0 
    \end{bmatrix} \mathbf{v}\right\|_2.
\end{align*}
Now,  
\begin{align}
    \begin{bmatrix}
        0 & 1 \\
        0 & 0 
    \end{bmatrix} l^{-1}(\xi) = \begin{bmatrix}
        -\frac{l_{21}(\xi)}{l_{11}(\xi)l_{22}(\xi)} & \frac{1}{l_{22}(\xi)} \\
        0 & 0
    \end{bmatrix}
    = \begin{bmatrix}
    \frac{\lambda_1}{l_{11}(\xi)} - \frac{1}{l_{22}(\xi)} & \frac{1}{l_{22}(\xi)} \\
    0 & 0
    \end{bmatrix}
\end{align}
for all $\xi \in \R^2$.
Consequently, using \eqref{eq : equality of the lik}, we have 
\begin{align*}
    \left\| \begin{bmatrix}
        0 & 1 \\
        0 & 0 
    \end{bmatrix} l^{-1}\right\|_{\mathcal{M}_1} = \sup_{\xi \in \R^2} \sup\limits_{\substack{x \in \R^2\\ |x|_2=1}} \left|\frac{\lambda_1 x_1}{l_{11}(\xi)} - \frac{x_1}{l_{22}(\xi)} + \frac{x_2}{l_{22}(\xi)}\right|
    &= \sup_{\xi \in \R^2} \left(\left(\frac{\lambda_1 }{l_{11}(\xi)} - \frac{1}{l_{22}(\xi)}\right)^2 +  \frac{1}{l_{22}(\xi)^2} \right)^{\frac{1}{2}}\\
    &= \sup_{\xi \in \R^2} \left(\left(\frac{l_{21}(\xi)}{l_{22}(\xi)l_{11}(\xi)}\right)^2 +  \frac{1}{l_{22}(\xi)^2} \right)^{\frac{1}{2}}\\
    &= \left(\left(\frac{1-\lambda_2\lambda_1}{\lambda_2}\right)^2 +  \frac{1}{\lambda_2^2} \right)^{\frac{1}{2}}
\end{align*}
On the other hand,
\begin{align}
    \left\| \begin{bmatrix}
        0 & 1 \\
        0 & 0 
    \end{bmatrix} l^{-1}\right\|_{\mathcal{M}_2} = \left( \left\|\frac{\lambda_1}{l_{11}}- \frac{1}{l_{22}}\right\|_2^2 + \left\|\frac{1}{l_{22}}\right\|_2^2\right)^{\frac{1}{2}}
    &\leq \left[ \left(\lambda_1 \left\|\frac{1}{l_{11}}\right\|_{2} + \left\|\frac{1}{l_{22}}\right\|_{2}\right)^2 + \left\|\frac{1}{l_{22}}\right\|_{2}^2\right]^{\frac{1}{2}}.\label{kappa0_with_norms}
\end{align}
We conclude the proof combining Lemma \ref{Banach algebra} and the computations performed in Corollary \ref{cor : banach algebra1}.
\end{proof}
We now present the proof of Lemma \ref{lem : Bound Z_2}. Let us first recall the lemma, and then begin the proof.
\begin{lemma}
Let $\kappa_0$ and  $(\kappa_2, \kappa_3)$ be the bounds satisfying \eqref{def : kappa0} and \eqref{def : definition kappa2 kappa3} respectively. Moreover, define $q \in L^\infty(\R^2)\cap L^2_{D_4}(\R^2)$ and its associated Fourier coefficients $Q$ as  \begin{equation}
        q \bydef u_{0,2} + \mathbb{1}_{\om} -3\lambda_1 u_{0,1}~~ \text{ and }~~
        Q \bydef \gamma(q).
\end{equation} 

Then, let
$\mathcal{Z}_2 : (0, \infty) \to [0, \infty)$ be defined  as 
\begin{align}
    \mathcal{Z}_2(r) &\bydef  2\left(\kappa_2^2 + 4 \kappa_0^2\right)^{\frac{1}{2}}\sqrt{\varphi(\mathcal{Z}_{2,1},\mathcal{Z}_{2,2},\mathcal{Z}_{2,2},\mathcal{Z}_{2,3})}   + 3\kappa_3 \max\{1,\| B_{11}^N\|_{2}\}r, 
\end{align}
for all $r \geq 0$,
where
\begin{align}
    &\mathcal{Z}_{2,1} \bydef  \| B_{11}^N(\mathbb{Q}^2 + \mathbb{U}_{0,1}^2)(B_{11}^N)^{*}\|_{2} \\
    &\mathcal{Z}_{2,2} \bydef \sqrt{\| B_{11}^N (\mathbb{Q}^2 + \mathbb{U}_{0,1}^2)\pi_N(\mathbb{Q}^2 + \mathbb{U}_{0,1}^2) (B_{11}^N)^{*} \|_{2}} \\
    &\mathcal{Z}_{2,3} \bydef \|Q^2 + U_{0,1}^2\|_{1},
\end{align}
and where $\varphi$ is defined in Lemma \ref{lem : full_matrix_estimate}. Then, $\|\mathbb{A}\left( D\mathbb{F}(\mathbf{u}_0 + \mathbf{h}) -D\mathbb{F}(\mathbf{u}_0)\right)\|_{\mathcal{H}} \leq \mathcal{Z}_2(r)r$ for all $r>0$ and all $\mathbf{h} \in \overline{B_r(0)} \subset \mathcal{H}$.
\end{lemma}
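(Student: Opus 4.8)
The plan is to expand $D\mathbb{F}(\mathbf{u}_0+\mathbf{h})-D\mathbb{F}(\mathbf{u}_0)$ explicitly for the Gray-Scott nonlinearity, bound the $\mathcal{H}$-norm of $\mathbb{A}$ times this difference, and then translate the operator bounds into the Fourier-coefficient quantities $\mathcal{Z}_{2,1}$, $\mathcal{Z}_{2,2}$, $\mathcal{Z}_{2,3}$ via Lemma~\ref{lem : gamma and Gamma properties}. First I would compute, for $\mathbf{h}=(h_1,h_2)$, the difference $D\mathbb{G}(\mathbf{u}_0+\mathbf{h})-D\mathbb{G}(\mathbf{u}_0)$. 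Since $\mathbb{G}_2$ is quadratic and $\mathbb{G}_3$ is cubic, the linear-in-$\mathbf{h}$ terms combine into multiplication operators built from $q = u_{0,2}+\mathbb{1}_{\om}-3\lambda_1 u_{0,1}$ and $u_{0,1}$ acting on the first component (plus a term $\mathbb{1}_{\om}$ coming from differentiating $u_1^2$ at the constant part, which is why $q$ carries the $\mathbb{1}_{\om}$), while the quadratic-in-$\mathbf{h}$ remainder is controlled by $3\kappa_3\|\mathbf{h}\|_{\mathcal{H}}$ using Corollary~\ref{cor : banach algebra1}. Concretely the difference has the block structure $\begin{bmatrix}\ast & \ast\\ 0 & 0\end{bmatrix}$ where the two nonzero blocks are multiplication by $(2q\, h_1 + 2 u_{0,1} h_2 + \text{(quadratic)})$-type expressions; using Lemma~\ref{lem : banach algebra} (the $\kappa_0$ bound for the cross term $u_1 v_2$) and $\kappa_2$ for the $u_1 v_1$ term, the linear part contributes a factor $(\kappa_2^2 + 4\kappa_0^2)^{1/2}$ after collecting the two contributions $h_1$ and $2h_2$ into a single Euclidean norm.

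Next I would pass through $\mathbb{A} = \mathbb{L}^{-1}\mathbb{B}$, using $\|\mathbb{A}\cdot\|_{\mathcal{H}} = \|\mathbb{B}\cdot\|_2$ by the isometry of $\mathbb{L}:\mathcal{H}_{D_4}\to L^2_{D_4}$. The operator $\mathbb{B}$ is block upper-triangular with $\mathbb{B}_{11}=\Gamma^\dagger(B_{11}^N+\pi_N)$ in the top-left corner and identity in the bottom-right; since the difference of derivatives only has a nonzero first row, the relevant composition is $\mathbb{B}_{11}$ times a multiplication operator whose symbol involves $q$ and $u_{0,1}$. The key estimate is then of the form $\|\mathbb{B}_{11}\,\mathbb{m}\|_2$ where $\mathbb{m}$ is multiplication by a function supported in $\overline{\om}$; writing $\mathbb{B}_{11} = \mathbb{1}_{\R^2\setminus\om} + \Gamma^\dagger(B_{11}^N+\pi_N)$ and using that the multiplier is supported on $\om$, the $\mathbb{1}_{\R^2\setminus\om}$ piece drops, and we reduce to $\Gamma^\dagger(B_{11}^N+\pi_N)$ composed with a convolution operator. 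Passing to Fourier coefficients via $\bGam$ turns this into $(B_{11}^N+\pi_N)(\mathbb{Q}+\mathbb{U}_{0,1}$-type$)$. The $\pi_N$ part composed with a discrete convolution operator is estimated by Young's inequality~\eqref{young_inequality}, giving the $\ell^1$-norm term $\|Q^2+U_{0,1}^2\|_1 = \mathcal{Z}_{2,3}$ and the $\max\{1,\|B_{11}^N\|_2\}$ factor in the cubic part; the $B_{11}^N$ part on the finite block yields the operator norms $\mathcal{Z}_{2,1}$ and $\mathcal{Z}_{2,2}$, where $\mathcal{Z}_{2,1}$ is the straightforward $\|B_{11}^N(\mathbb{Q}^2+\mathbb{U}_{0,1}^2)(B_{11}^N)^*\|_2$ and $\mathcal{Z}_{2,2}$ is the mixed finite/tail term $\|B_{11}^N(\cdots)\pi_N(\cdots)(B_{11}^N)^*\|_2^{1/2}$. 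Finally Lemma~\ref{lem : full_matrix_estimate} (the function $\varphi$) assembles the $2\times 2$ block norm from these pieces, accounting for the fact that both $\mathbb{B}_{11}$ blocks (acting on $h_1$ and on $h_2$) contribute.

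The main obstacle I expect is bookkeeping the correct combination of $\kappa_2$ versus $\kappa_0$ and the factor of $2$ (hence $\kappa_2^2 + 4\kappa_0^2$): one must carefully track that differentiating $\mathbb{G}_2(\mathbf{u}) = (u_1^2,0)$ at $\mathbf{u}_0$ in direction $\mathbf{h}$ gives $2u_{0,1}h_1$, differentiating $\mathbb{G}_3$ gives cross terms $u_{0,1}u_{0,2}h_1$, $u_{0,1}^2 h_2$, and $-\lambda_1 u_{0,1}^2 h_1$ type contributions, and that in the \emph{difference} $D\mathbb{F}(\mathbf{u}_0+\mathbf{h})-D\mathbb{F}(\mathbf{u}_0)$ the quadratic pieces split cleanly into a part linear in $\mathbf{h}$ (bounded using $\|q\|$ and $\|u_{0,1}\|$ via $\kappa_0,\kappa_2$) and a genuinely quadratic part (bounded by $3\kappa_3\|\mathbf{h}\|_{\mathcal{H}}^2$). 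The Euclidean combination of the two directional contributions $h_1$ and $2u_{0,1}h_2$ into a single factor, using $\|u_1 v_1\|_2\le\kappa_2\|\mathbf{u}\|_{\mathcal{H}}\|\mathbf{v}\|_{\mathcal{H}}$ and $\|u_1 v_2\|_2\le\kappa_0\|\mathbf{u}\|_{\mathcal{H}}\|\mathbf{v}\|_{\mathcal{H}}$, is what produces $(\kappa_2^2+4\kappa_0^2)^{1/2}$, and the factor $2$ in front of the whole expression comes from the mean value inequality applied on the segment $[\mathbf{u}_0,\mathbf{u}_0+\mathbf{h}]$ together with the symmetrization of the bilinear form. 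All other steps — dropping $\mathbb{1}_{\R^2\setminus\om}$ against a compactly supported multiplier, applying $\bGam$, invoking Young and $\varphi$ — are routine given the lemmas already established, so the proof is essentially an exercise in careful algebra of the block operators followed by Lemma~\ref{lem : gamma and Gamma properties} and Lemma~\ref{lem : full_matrix_estimate}.
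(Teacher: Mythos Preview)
Your plan is essentially the paper's proof: expand $D\mathbb{G}(\mathbf{u}_0+\mathbf{h})-D\mathbb{G}(\mathbf{u}_0)$ exactly, isolate the second-derivative term $D^2\mathbb{G}(\mathbf{u}_0)(\mathbf{h},\mathbf{z})$ plus a cubic remainder bounded by $3\kappa_3\max\{1,\|B_{11}^N\|_2\}r^2$, then pass to Fourier via $\Gamma^\dagger$ and split $\mathbb{B}_{11}=\Gamma^\dagger(B_{11}^N+\pi_N)$ into finite and tail blocks assembled by $\varphi$. Two small corrections to your bookkeeping: first, no mean value inequality is needed --- $\mathbb{G}$ is polynomial, so the Taylor expansion is \emph{exact}, and the factor $2$ in front is simply the explicit coefficient in $D^2\mathbb{G}(\mathbf{u}_0)(\mathbf{z},\mathbf{h})=2qh_1z_1+2u_{0,1}h_1z_2+2u_{0,1}h_2z_1$; second, the combination $(\kappa_2^2+4\kappa_0^2)^{1/2}$ arises from the paper's matrix factorisation
\[
D^2\mathbb{G}(\mathbf{u}_0)(\mathbf{z},\mathbf{h})=2\begin{bmatrix}\mathbb{q}&\mathbb{u}_{0,1}\end{bmatrix}\begin{bmatrix}\mathbb{h}_1&0\\\mathbb{h}_2&\mathbb{h}_1\end{bmatrix}\begin{bmatrix}z_1\\z_2\end{bmatrix},
\]
where the $4\kappa_0^2$ comes from bounding $\|h_2z_1+h_1z_2\|_2\le 2\kappa_0\|\mathbf{h}\|_{\mathcal{H}}\|\mathbf{z}\|_{\mathcal{H}}$ (two cross terms, each $\le\kappa_0$), not from a ``$2h_2$'' contribution. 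With those adjustments your outline matches the paper's argument step for step.
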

\begin{proof}
Let $\mathbf{h} \in B_r(0), \mathbf{z} \in B_1(0) \subset \mathcal{H}$. Then, let $g_2 : \left(\mathcal{H}\right)^2 \to \mathcal{H}$ and $g_3 : \left(\mathcal{H}\right)^3 \to \mathcal{H}$ be defined as
\begin{align*}
        g_2(\mathbf{u}, \mathbf{v}) &\bydef  \mathbb{G}_{2,1}\mathbf{u}  \mathbb{G}_{2,2}\mathbf{v}\\
        g_3(\mathbf{u}, \mathbf{v}, \mathbf{w}) &\bydef \mathbb{G}_{3,1}\mathbf{u}  \mathbb{G}_{3,2}\mathbf{v} \mathbb{G}_{3,3}\mathbf{w}
    \end{align*}
    for all $\mathbf{u}, \mathbf{v}, \mathbf{w} \in \mathcal{H}.$
In particular, notice that 
\begin{align*}
    g_2(\mathbf{u}, \mathbf{v})=  g_2(\mathbf{v}, \mathbf{u})\\
    g_3(\mathbf{u}, \mathbf{v}, \mathbf{w}) =  g_3(\mathbf{u}, \mathbf{w}, \mathbf{v})
\end{align*}
for all $\mathbf{u}, \mathbf{v}, \mathbf{w} \in \mathcal{H}$.
Therefore, given $\mathbf{v} \in \mathcal{H}_{D_4}$, we have
\begin{align}\label{eq : step 1 in proof Z2}
    D\mathbb{G}(\mathbf{v}) &= g_2(\mathbf{v}, \cdot) + g_2(\mathbf{v}, \cdot) + g_3(\mathbf{v}, \mathbf{v}, \cdot) + g_3(\mathbf{v}, \cdot, \mathbf{v}) + g_3(\cdot , \mathbf{v}, \mathbf{v})\\
        & = 2g_2(\mathbf{v}, \cdot) + 2g_3(\mathbf{v}, \mathbf{v}, \cdot) +   g_3(\cdot , \mathbf{v}, \mathbf{v}).\label{DG_computation_Z2}
\end{align}
Using \eqref{eq : step 1 in proof Z2}, observe that
\begin{align}
\nonumber &\left(D\mathbb{G}(\mathbf{u}_0 + \mathbf{h}) - D\mathbb{G}(\mathbf{u}_0)\right) \mathbf{z} \\ 
&=  2g_2(\mathbf{h},\mathbf{z}) +  2g_3(\mathbf{h},\mathbf{u}_0, \mathbf{z}) + 2g_3(\mathbf{h},\mathbf{h}, \mathbf{z})  + 2 g_3(\mathbf{z},\mathbf{u}_0, \mathbf{h}) + 2 g_3(\mathbf{z},\mathbf{h},\mathbf{u}_0) +  g_3(\mathbf{z},\mathbf{h},\mathbf{h}).
    \end{align}
Simplifying, we obtain
\begin{align}\label{eq : step 2 proof Z2}
    \left(D\mathbb{G}(\mathbf{u}_0 + \mathbf{h}) - D\mathbb{G}(\mathbf{u}_0)\right) \mathbf{z} =  D^2\mathbb{G}(\mathbf{u}_0)(\mathbf{h},\mathbf{z}) +  g_3(\mathbf{z},\mathbf{h},\mathbf{h}) + 2g_3(\mathbf{h},\mathbf{h}, \mathbf{z}).
\end{align}
Now, by definition of $\mathbb{B}$ in \eqref{def : definition of B by blocks} and $\mathbb{G}$ in \eqref{definition_of_G}, we have 
\begin{align}
    \mathbb{B}D\mathbb{G}(\mathbf{v}) = \begin{bmatrix}
        \mathbb{B}_{11} & 0 \\
        0 &
        0
    \end{bmatrix}D\mathbb{G}(\mathbf{v})\label{B_times_DG}
\end{align}
for all $\mathbf{v} \in \mathcal{H}_{D_4}$. Consequently, using Lemma \ref{cor : banach algebra1}, we get
\begin{align}
    \|\mathbb{B}\left(g_3(\mathbf{z},\mathbf{h},\mathbf{h}) + 2g_3(\mathbf{h},\mathbf{h}, \mathbf{z})\right)\|_2 &\leq \|\mathbb{B}_{11}\|_{2}\left( \|g_3(\mathbf{z},\mathbf{h},\mathbf{h})\|_2 + 2\|g_3(\mathbf{h},\mathbf{h}, \mathbf{z})\|_2\right)\\
    &\leq 3\kappa_3 \|\mathbb{B}_{11}\|_{2} \|\mathbf{z}\|_{\mathcal{H}} \|\mathbf{h}\|_{\mathcal{H}}^2 \leq 3\kappa_3 \|\mathbb{B}_{11}\|_{2} r^2.\label{eq : step 3 proof Z2}
\end{align}
Therefore, combining \eqref{eq : step 2 proof Z2} and \eqref{eq : step 3 proof Z2}, we obtain
\begin{align}
    \|\mathbb{B}(D\mathbb{G}(\mathbf{u}_0 + \mathbf{h}) - D\mathbb{G}(\mathbf{u}_0))\mathbf{z}\|_{2} &\leq \|\mathbb{B}D^2\mathbb{G}(\mathbf{u}_0)(\mathbf{h},\mathbf{z})\|_{2} +  3\kappa_3 \|\mathbb{B}_{11}\|_{2} r^2.\label{Z_2_higher_order}
\end{align}
Now, following the steps of the proof of Lemma 3.4 from \cite{unbounded_domain_cadiot}, we  similarly obtain
\begin{align}
    \|\mathbb{B}_{11}\|_{2} = \max\{1,\|B_{11}^N\|_{2}\}.\label{eq : Parseval_on_operator_B11}
\end{align}
In particular, using \eqref{eq : Parseval_on_operator_B11}, the equation \eqref{Z_2_higher_order} becomes
\begin{align}
     \|\mathbb{B}(D\mathbb{G}(\mathbf{u}_0 + \mathbf{h}) - D\mathbb{G}(\mathbf{u}_0))\mathbf{z}\|_{2} \leq  \|\mathbb{B}D^2\mathbb{G}(\mathbf{u}_0)(\mathbf{h},\mathbf{z})\|_{2} +  3\kappa_3 \max\{1,\|B_{11}^N\|_2\} r^2.
\end{align}
Now, let us examine the term $\|\mathbb{B}D^2\mathbb{G}(\mathbf{u}_0)(\mathbf{h},\mathbf{z})\|_{2}$. To begin, observe that
\begin{align*}
    D^2\mathbb{G}(\mathbf{u}_0)(\mathbf{z},\mathbf{h}) = 2\mathbb{q}\mathbb{h}_1z_1 + 2 \mathbb{u}_{0,1}\mathbb{h}_1z_2 +  2 \mathbb{u}_{0,1}\mathbb{h}_2z_1 &= z_1 \left(2\mathbb{q}\mathbb{h}_1 + 2 \mathbb{u}_{0,1}\mathbb{h}_2\right) + 2 \mathbb{h}_1 \mathbb{u}_{0,1}z_2\\
        &= \begin{bmatrix}
            2\mathbb{q}\mathbb{h}_1 + 2 \mathbb{u}_{0,1}\mathbb{h}_2 & 2  \mathbb{u}_{0,1}\mathbb{h}_1\\
            0 & 0
        \end{bmatrix} \begin{bmatrix}
            z_1\\
            z_2
        \end{bmatrix}\\
        & =  \begin{bmatrix}
            2\mathbb{q}  & 2  \mathbb{u}_{0,1}
        \end{bmatrix}\begin{bmatrix}
            \mathbb{h}_1 & 0 \\
            \mathbb{h}_2 & \mathbb{h}_1
        \end{bmatrix} \begin{bmatrix}
            z_1\\
            z_2
    \end{bmatrix}
\end{align*}
where $\mathbb{q}$ is the multiplication operator (cf. \eqref{def : multiplication operator}) associated to $q$ defined in \eqref{def : q}.
Using \eqref{B_in_full_equation}, we get
\begin{align}
    \nonumber \left\|\mathbb{B}D^2\mathbb{G}(\mathbf{u}_0)(\mathbf{z},\mathbf{h})\right\|_{2} &= 2\left\|\begin{bmatrix}
\mathbb{B}_{11}\mathbb{q} &   \mathbb{B}_{11}\mathbb{u}_{0,1}
\end{bmatrix}\begin{bmatrix}
            \mathbb{h}_1 & 0 \\
            \mathbb{h}_2 & \mathbb{h}_1
        \end{bmatrix} \begin{bmatrix}
            z_1\\
            z_2
        \end{bmatrix}\right\|_{2}\\ 
    &\leq 2\left\|\begin{bmatrix}
    \mathbb{B}_{11}\mathbb{q} &   \mathbb{B}_{11}\mathbb{u}_{0,1}
    \end{bmatrix}\right\|_{2} \left\|\begin{bmatrix}
            \mathbb{h}_1 & 0 \\
            \mathbb{h}_2 & \mathbb{h}_1
        \end{bmatrix} \begin{bmatrix}
            z_1\\
            z_2
        \end{bmatrix}\right\|_{2}.\label{step_in_Z2}
\end{align}
Let us now treat the term $\left\|\begin{bmatrix} \mathbb{h}_1 & 0 \\
    \mathbb{h}_2 & \mathbb{h}_1
    \end{bmatrix} \begin{bmatrix}
        z_1\\
        z_2
        \end{bmatrix}\right\|_{2}$. We have
\begin{align}
\nonumber \left\|\begin{bmatrix}
        \mathbb{h}_1 & 0 \\
        \mathbb{h}_2 & \mathbb{h}_1
    \end{bmatrix}\begin{bmatrix}
        z_1 \\
        z_2
    \end{bmatrix}\right\|_{2} &= \left\| \begin{bmatrix}
        \mathbb{h}_1 z_1 \\
        \mathbb{h}_2 z_1 + \mathbb{h}_1 z_2
    \end{bmatrix}\right\|_{2} \\ \nonumber
    &= \left(\|\mathbb{h}_1 z_1\|_{2}^2 + \|\mathbb{h}_2z_1 + \mathbb{h}_1z_2\|_{2}^2\right)^{\frac{1}{2}} \\ \nonumber
    &\leq \left(\|\mathbb{h}_1 z_1\|_{2}^2 + \left(\|\mathbb{h}_2z_1\|_{2} + \|\mathbb{h}_1z_2\|_{2}\right)^2\right)^{\frac{1}{2}} \\ \nonumber
    &\leq \left(\kappa^2_2 \|\mathbf{h}\|_{\mathcal{H}} \|\mathbf{z}\|_{\mathcal{H}} + (\kappa_0 \|\mathbf{h}\|_{\mathcal{H}}\|\mathbf{z}\|_{\mathcal{H}} + \kappa_0 \|\mathbf{h}\|_{\mathcal{H}}\|\mathbf{z}\|_{\mathcal{H}})^2\right)^{\frac{1}{2}} \\ \nonumber
    &= \left(\kappa_2^2 + 4 \kappa_0^2\right)^{\frac{1}{2}}r
\end{align}
where we used Corollary \ref{cor : banach algebra1} and Lemma \ref{lem : banach algebra}.  Then, we focus on
$\|\mathbb{B}_{11}\begin{bmatrix}
    \mathbb{q} & \mathbb{u}_{0,1}
\end{bmatrix}\|_{2}$. Using the properties of the adjoint, we obtain that
\begin{align}
\left\|\mathbb{B}_{11}\begin{bmatrix}
    \mathbb{q} & \mathbb{u}_{0,1}
\end{bmatrix}\right\|_{2}^2 &= \left\|\mathbb{B}_{11}(\mathbb{q}^2 + \mathbb{u}_{0,1}^2) \mathbb{B}_{11}^{*}\right\|_{2}\label{adjoint_to_get_single_operator_Z2}.
\end{align}
Recalling that $\mathbb{B}_{11} = \Gamma^\dagger(\pi_N + B_{11}^N)$  by definition, we get
\begin{align}
    \nonumber \left\|\mathbb{B}_{11}(\mathbb{q}^2 + \mathbb{u}_{0,1}^2)\mathbb{B}_{11}^{*}\right\|_{2} &= \left\|\Gamma^\dagger(\pi_N + B_{11}^N)(\mathbb{q}^2 + \mathbb{u}_{0,1}^2)\Gamma^\dagger(\pi_N + B_{11}^N)^{*}\right\|_{2}. \end{align}
Since $\Gamma^\dagger(K_1)\Gamma^\dagger(K_2) = \Gamma^\dagger(K_1 K_2)$ for all $K_1, K_2 \in \mathcal{B}(\ell^2(J_{\mathrm{red}}(D_4)))$, we have
    \begin{align}
    \left\|\Gamma^\dagger(\pi_N + B_{11}^N)(\mathbb{q}^2 + \mathbb{u}_{0,1}^2)\Gamma^\dagger(\pi_N + B_{11}^N)^{*}\right\|_{2} &= \left\|\Gamma^\dagger\left((\pi_N + B_{11}^N) (\mathbb{Q}^2 + \mathbb{U}_{0,1}^2) (\pi_N + B_{11}^N)^{*}\right)\right\|_{2}\\
    & = \left\|(\pi_N + B_{11}^N) (\mathbb{Q}^2 + \mathbb{U}_{0,1}^2) (\pi_N + B_{11}^N)^{*}\right\|_{2}\label{step_before_estimate_Z2}, 
    \end{align}
    where we used Lemma \ref{lem : gamma and Gamma properties} on the last step and where $\mathbb{Q}$ and $ \mathbb{U}_{0,1}$ are the discrete convolution operators (cf. \eqref{def : discrete conv operator}) associated to $Q$ and $U_{0,1}$ respectively.
Now, we decompose  \eqref{step_before_estimate_Z2} as follows
{\begin{align}
    \left\|(\pi_N + B_{11}^N) (\mathbb{Q}^2 + \mathbb{U}_{0,1}^2) (\pi_N + B_{11}^N)^{*}\right\|_{2}&= \left\| \begin{bmatrix}
        B_{11}^N(\mathbb{Q}^2 + \mathbb{U}_{0,1}^2) (B_{11}^N)^{*} & B_{11}^N (\mathbb{Q}^2 + \mathbb{U}_{0,1}^2) \pi_N \\
        \pi_N (\mathbb{Q}^2 + \mathbb{U}_{0,1}^2) (B_{11}^N)^{*} & \pi_N (\mathbb{Q}^2 + \mathbb{U}_{0,1}^2)\pi_N
    \end{bmatrix}\right\|_{2}.\label{step_before_varphi_Z2} 
    \end{align}}
It remains to examine each of the blocks of the right-hand side of  \eqref{step_before_varphi_Z2} and to apply Lemma \ref{lem : full_matrix_estimate} to bound the full operator norm. First, we consider the upper right and lower left blocks of \eqref{step_before_varphi_Z2}. Notice these blocks are adjoints of each other, hence their spectral norm is identical. Equivalently,  it suffices to study $\|B_{11}^N (\mathbb{Q}^2 + \mathbb{U}_{0,1}^2) \pi_N\|_{2}$.
In particular, we obtain 
\begin{equation}\label{second_and_third_block_Z2_estimated}
    \|B_{11}^N(\mathbb{Q}^2 + \mathbb{U}_{0,1}^2)\pi_N\|_{2} = \sqrt{\| B_{11}^N(\mathbb{Q}^2 + \mathbb{U}_{0,1}^2) \pi_N (\mathbb{Q}^2 + \mathbb{U}_{0,1}^2) (B_{11}^N)^{*} \|_{2}} \bydef \mathcal{Z}_{2,2}
\end{equation} 
Then, by Young's inequality \eqref{young_inequality}, we have 
\begin{align}
    \|\pi_N (\mathbb{Q}^2 + \mathbb{U}_{0,1}^2) \pi_N \|_{2} \leq \|\mathbb{Q}^2 + \mathbb{U}_{0,1}^2\|_{2}  &\leq \|Q^2 + V_0^2\|_{1} \bydef \mathcal{Z}_{2,3}\label{fourth_block_Z2_estimated}
\end{align}
All together, combining \eqref{second_and_third_block_Z2_estimated}, \eqref{fourth_block_Z2_estimated} and applying Lemma \ref{lem : full_matrix_estimate}, we obtain
\begin{align}
 &\|\mathbb{B}_{11} \begin{bmatrix}\mathbb{q} & \mathbb{u}_{0,1} \end{bmatrix}\|_{2} \leq \varphi(\mathcal{Z}_{2,1},\mathcal{Z}_{2,2},\mathcal{Z}_{2,2},\mathcal{Z}_{2,3}) \label{B_term_block_one}\end{align}
meaning that
\begin{align}
    \|\mathbb{B}D^2\mathbb{G}(\mathbf{u}_0)(\mathbf{z},\mathbf{h})\|_{2} \leq 2\left(\kappa_2^2 + 4 \kappa_0^2\right)^{\frac{1}{2}} r\sqrt{\varphi(\mathcal{Z}_{2,1},\mathcal{Z}_{2,2},\mathcal{Z}_{2,2},\mathcal{Z}_{2,3})} .\label{bbM_1_result}
\end{align} 
Combining \eqref{Z_2_higher_order}   with \eqref{bbM_1_result}, we obtain the desired $\mathcal{Z}_2$ bound.
\end{proof}
\renewcommand{\theequation}{B.\arabic{equation}}
\setcounter{equation}{0}
\section{Computation of \texorpdfstring{$\mathcal{Z}_1$}{Z1}}\label{apen : Z1}
We begin by proving Lemma \ref{lem : Z_full_1}. We first recall the lemma. 
\begin{lemma}
Let $\left(\mathcal{Z}_{u,k,j}\right)_{k \in \{1,2\}, j \in \{1,2,3\}}$ be bounds satisfying
\begin{align}
&\mathcal{Z}_{u,1,1} \geq \|\mathbb{1}_{\mathbb{R}^2\setminus\om} \mathbb{L}_{11}^{-1} \mathbb{v}_1^N\|_{2} \\
    &\mathcal{Z}_{u,1,2} \geq \|\mathbb{1}_{\mathbb{R}^2\setminus\om}\mathbb{L}_{22}^{-1}\mathbb{v}_2^N\|_{2}
    \\
    &\mathcal{Z}_{u,1,3} \geq \|\mathbb{1}_{\mathbb{R}^2\setminus\om}\mathbb{L}_{22}^{-1}\mathbb{L}_{21}\mathbb{L}_{11}^{-1}\mathbb{v}_2^N\|_{2}\\
&\mathcal{Z}_{u,2,1} \geq \|\mathbb{1}_{\om} (\mathbb{L}_{11}^{-1} - \Gamma^\dagger(L_{11}^{-1}))\mathbb{v}_1^N\|_{2} \\
    &\mathcal{Z}_{u,2,2} \geq \|\mathbb{1}_{\om} (\mathbb{L}_{22}^{-1} - \Gamma^\dagger(L_{22}^{-1}))\mathbb{v}_2^N\|_{2}
    \\
    &\mathcal{Z}_{u,2,3} \geq \|\mathbb{1}_{\om}(\mathbb{L}_{22}^{-1}\mathbb{L}_{21}\mathbb{L}_{11}^{-1} - \Gamma^\dagger(L_{22}^{-1} L_{21} L_{11}^{-1}))\mathbb{v}_2^N\|_{2}.
\end{align} 
Moreover, given $k \in \{1,2\}$, define  $\mathcal{Z}_{u,k} \bydef \sqrt{(\mathcal{Z}_{u,k,1} + \mathcal{Z}_{u,k,3})^2 + \mathcal{Z}_{u,k,2}^2} $.
Then, it follows that $\mathcal{Z}_{u,1}$ and $\mathcal{Z}_{u,2}$ satisfy
\begin{align} 
\mathcal{Z}_{u,1} &\geq \|\mathbb{1}_{\mathbb{R}^2 \setminus \om} D\mathbb{G}^N(\mathbf{u}_0) \mathbb{L}^{-1}\|_{2}\\
    \mathcal{Z}_{u,2} &\geq \|\mathbb{1}_{\om}D\mathbb{G}^N(\mathbf{u}_0)(\bGam^\dagger(L^{-1}) - \mathbb{L}^{-1})\|_{2}.
\end{align}
Furthermore, let $Z_1$ be a non-negative constant satisfying
\begin{align}
Z_1 &\geq \|I_d - B(I_d + DG^N(\mathbf{U}_0)L^{-1})\|_{2}.
\end{align}
Also define  $\mathcal{Z}_u  \bydef \sqrt{\mathcal{Z}_{u,1}^2 + \mathcal{Z}_{u,2}^2} $.
Let $\varphi$ be defined as in \eqref{definition_of_phi}. Finally, defining $\mathcal{Z}_1>0$ as
\begin{equation}
    \mathcal{Z}_1 \bydef Z_1 + \max\left\{1, \|B_{11}^N\|_{2}\right\} \left(\mathcal{Z}_{u} +  \varphi\left(1,0,\frac{|\lambda_1\lambda_2-1|}{\lambda_2},\frac{1}{\lambda_2}\right) \sqrt{\|V_1^N - V_1\|_{1}^2 + \|V_2^N - V_2\|_{1}^2}\right),
\end{equation}
it follows that $ \|I_d -{\mathbb{A}}D\mathbb{F}(\mathbf{u}_0)\|_{\mathcal{H}} \leq \mathcal{Z}_1.$
\end{lemma}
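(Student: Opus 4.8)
The plan is to decompose $\|I_d - \mathbb{A}D\mathbb{F}(\mathbf{u}_0)\|_{\mathcal{H}}$ into a "finite/periodic" error term and an "unboundedness" error term, matching the structure of the constructed operators. Recalling that $\mathbb{A} = \mathbb{L}^{-1}\mathbb{B}$ and that $\mathbb{L} : \mathcal{H}_{D_4} \to L^2_{D_4}$ is an isometric isomorphism, we first reduce to an estimate on $L^2_{D_4}$:
\begin{align*}
\|I_d - \mathbb{A}D\mathbb{F}(\mathbf{u}_0)\|_{\mathcal{H}} = \|\mathbb{L}(I_d - \mathbb{L}^{-1}\mathbb{B}D\mathbb{F}(\mathbf{u}_0))\mathbb{L}^{-1}\|_{2} = \|I_d - \mathbb{B}D\mathbb{F}(\mathbf{u}_0)\mathbb{L}^{-1}\|_{2}.
\end{align*}
Since $D\mathbb{F}(\mathbf{u}_0)\mathbb{L}^{-1} = I_d + D\mathbb{G}(\mathbf{u}_0)\mathbb{L}^{-1}$, I would insert and subtract the truncated version $D\mathbb{G}^N(\mathbf{u}_0)$, writing $D\mathbb{G}(\mathbf{u}_0)\mathbb{L}^{-1} = D\mathbb{G}^N(\mathbf{u}_0)\mathbb{L}^{-1} + (D\mathbb{G}(\mathbf{u}_0) - D\mathbb{G}^N(\mathbf{u}_0))\mathbb{L}^{-1}$. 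This leads to the triangle-inequality split
\begin{align*}
\|I_d - \mathbb{B}D\mathbb{F}(\mathbf{u}_0)\mathbb{L}^{-1}\|_{2} \leq \|I_d - \mathbb{B}(I_d + D\mathbb{G}^N(\mathbf{u}_0)\mathbb{L}^{-1})\|_{2} + \|\mathbb{B}(D\mathbb{G}(\mathbf{u}_0) - D\mathbb{G}^N(\mathbf{u}_0))\mathbb{L}^{-1}\|_{2}.
\end{align*}
The second term is controlled by $\|\mathbb{B}_{11}\|_2 = \max\{1,\|B_{11}^N\|_2\}$ (the norm identity already invoked in the $\mathcal{Z}_2$ proof) times the operator norm of the difference of multiplication operators, which by passing to Fourier coefficients and applying the $\varphi$-estimate of Lemma \ref{lem : full_matrix_estimate} together with Young's inequality \eqref{young_inequality} becomes $\varphi(1,0,\frac{|\lambda_1\lambda_2-1|}{\lambda_2},\frac{1}{\lambda_2})\sqrt{\|V_1^N-V_1\|_1^2 + \|V_2^N-V_2\|_1^2}$; here the $\varphi$ arguments are exactly the operator norms of the blocks of $\big[\begin{smallmatrix} \mathbb{v}_1 & \mathbb{v}_2 \\ 0 & 0\end{smallmatrix}\big]\mathbb{L}^{-1}$ as computed via \eqref{l_inverse}, \eqref{eq : equality of the lik}, using $\sup|1/l_{11}| = 1$, $\sup|1/l_{22}| = 1/\lambda_2$, $\sup|l_{21}/(l_{11}l_{22})| = |\lambda_1\lambda_2-1|/\lambda_2$.

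For the first term, I would further split using $\mathbb{B} = \mathbb{1}_{\mathbb{R}^2\setminus\om} \oplus \mathbb{1}_{\mathbb{R}^2\setminus\om} + \bGam^\dagger(B)$, and insert $\bGam^\dagger(L^{-1})$ as an approximation of $\mathbb{L}^{-1}$. Writing $\mathbb{L}^{-1} = \bGam^\dagger(L^{-1}) + (\mathbb{L}^{-1} - \bGam^\dagger(L^{-1}))$ and noting that the localized part $\bGam^\dagger(B)\bGam^\dagger(L^{-1}) = \bGam^\dagger(BL^{-1})$ reproduces the periodic problem exactly, the identity $\cha + \mathbb{1}_{\mathbb{R}^2\setminus\om} = I_d$ lets one collapse the "outer" piece. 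After the dust settles one gets three contributions: (i) the purely periodic error $\|I_d - B(I_d + DG^N(\mathbf{U}_0)L^{-1})\|_2 \leq Z_1$ via Lemma \ref{lem : gamma and Gamma properties}; (ii) the part of $D\mathbb{G}^N(\mathbf{u}_0)\mathbb{L}^{-1}$ supported outside $\om$, namely $\|\mathbb{1}_{\mathbb{R}^2\setminus\om}D\mathbb{G}^N(\mathbf{u}_0)\mathbb{L}^{-1}\|_2 \leq \mathcal{Z}_{u,1}$, multiplied by $\|\mathbb{B}_{11}\|_2$; (iii) the discrepancy between $\mathbb{L}^{-1}$ and its periodic surrogate on $\om$, namely $\|\mathbb{1}_{\om}D\mathbb{G}^N(\mathbf{u}_0)(\bGam^\dagger(L^{-1}) - \mathbb{L}^{-1})\|_2 \leq \mathcal{Z}_{u,2}$, again weighted by $\|\mathbb{B}_{11}\|_2$. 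The block structure \eqref{def : bbDG_N} of $D\mathbb{G}^N(\mathbf{u}_0)$ (only the top row is nonzero, with entries $\mathbb{v}_1^N$ and $\mathbb{v}_2^N$) together with the explicit form of $\mathbb{L}^{-1}$ in \eqref{l_inverse} lets me expand $\mathcal{Z}_{u,1}$ and $\mathcal{Z}_{u,2}$ into the nine scalar operator-norm pieces $\mathcal{Z}_{u,k,j}$ and recombine them via $\varphi$ applied to the $2\times 2$ block matrices — this is precisely the assembly $\mathcal{Z}_{u,k} = \sqrt{(\mathcal{Z}_{u,k,1}+\mathcal{Z}_{u,k,3})^2 + \mathcal{Z}_{u,k,2}^2}$ and $\mathcal{Z}_u = \sqrt{\mathcal{Z}_{u,1}^2 + \mathcal{Z}_{u,2}^2}$.

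Collecting terms, the three contributions (i)--(iii) plus the $V^N - V$ correction give
\begin{align*}
\|I_d - \mathbb{A}D\mathbb{F}(\mathbf{u}_0)\|_{\mathcal{H}} \leq Z_1 + \max\{1,\|B_{11}^N\|_2\}\Big(\mathcal{Z}_u + \varphi\big(1,0,\tfrac{|\lambda_1\lambda_2-1|}{\lambda_2},\tfrac{1}{\lambda_2}\big)\sqrt{\|V_1^N-V_1\|_1^2 + \|V_2^N-V_2\|_1^2}\Big) = \mathcal{Z}_1,
\end{align*}
as claimed. The main obstacle I anticipate is the careful bookkeeping in the first term's split: one must verify that the characteristic-function projections $\cha$ and $\mathbb{1}_{\mathbb{R}^2\setminus\om}$ interact correctly with $\bGam^\dagger(B)$ (which lives in $\mathcal{B}_\om(L^2_{D_4})$, i.e. $\mathbb{K} = \cha\mathbb{K}\cha$) and with the multiplication operators $\mathbb{v}_i^N$ — in particular that $\mathrm{supp}(v_i^N)$ may leak outside $\overline{\om}$ after the $\gamma^\dagger$-truncation, so the "on $\om$ / off $\om$" decomposition is not cosmetic. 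Once that is handled exactly as in Theorem 3.9 of \cite{unbounded_domain_cadiot}, the rest is a matter of applying Lemma \ref{lem : gamma and Gamma properties}, the norm identity $\|\mathbb{B}_{11}\|_2 = \max\{1,\|B_{11}^N\|_2\}$, and Lemma \ref{lem : full_matrix_estimate} to bound the $2\times2$ block operators by $\varphi$ of their block norms.
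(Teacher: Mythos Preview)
Your proposal is correct and follows essentially the same route as the paper: reduce to $L^2$ via the isometry $\mathbb{L}$, split off the truncation error $D\mathbb{G}-D\mathbb{G}^N$, then insert $\bGam^\dagger(L^{-1})$ to separate the periodic piece $Z_1$ from the unbounded-domain correction $\mathcal{Z}_u$, and finally unpack $\mathcal{Z}_{u,k}$ via the block structure of $\mathbb{L}^{-*}D\mathbb{G}^N(\mathbf{u}_0)^*$ (the paper works with the adjoint here, which is why the individual $\mathcal{Z}_{u,k,j}$ have $\mathbb{L}_{jj}^{-1}$ on the left of $\mathbb{v}_i^N$). One small correction: the factor $\varphi\big(1,0,\tfrac{|\lambda_1\lambda_2-1|}{\lambda_2},\tfrac{1}{\lambda_2}\big)$ is the paper's bound on $\|\mathbb{L}^{-1}\|_2$ alone (via Lemma~\ref{lem : full_matrix_estimate} applied to the blocks of $l^{-1}$), not on the product $D\mathbb{G}(\mathbf{u}_0)\mathbb{L}^{-1}$; the truncation term is estimated as $\|\mathbb{B}_{11}\|_2\,\|\mathbb{L}^{-1}\|_2\,\|D\mathbb{G}^N(\mathbf{u}_0)-D\mathbb{G}(\mathbf{u}_0)\|_2$ with the last factor handled by Young's inequality.
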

\begin{proof}
Motivated by the proof of Theorem 3.5 in \cite{unbounded_domain_cadiot}, we proceed similarly. First, using the definition of the norm $\|\cdot\|_{\mathcal{H}}$, we get
\begin{align}
     \|I_d - \mathbb{A}D\mathbb{F}(\mathbf{u}_0)\|_{\mathcal{H}} 
     &= \|I_d - \mathbb{B} ( I_d + D\mathbb{G}(\mathbf{u}_0)\mathbb{L}^{-1})\|_2, \label{ineq : zero step in Zu} \end{align}
and using triangle inequality, we obtain
{\small\begin{align}\label{ineq : first step in Zu}
 \|I_d - \mathbb{B} ( I_d + D\mathbb{G}(\mathbf{u}_0)\mathbb{L}^{-1})\|_2  
   \leq \|I_d - \mathbb{B}(I_d +D\mathbb{G}^N(\mathbf{u}_0)\mathbb{L}^{-1})\|_{2} + \|\mathbb{B}(D\mathbb{G}^N(\mathbf{u}_0) - D\mathbb{G}(\mathbf{u}_0))\mathbb{L}^{-1}\|_{2}.
\end{align}}
For the first term in \eqref{ineq : first step in Zu}, we decompose as follows
{\small\begin{align}
     \|I_d - \mathbb{B}(I_d +D\mathbb{G}^N(\mathbf{u}_0)\mathbb{L}^{-1})\|_{2} &\le \left\|I_d - \mathbb{B}\left(I_d + D\mathbb{G}^N(\mathbf{u}_0)\bGam^\dagger\left(L^{-1}\right) \right)\right\|_2 
     + \left\| \mathbb{B}D\mathbb{G}^N(\mathbf{u}_0)\big(\bGam^\dagger\left(L^{-1}\right) - \mathbb{L}^{-1}\big)\right\|_2
     \label{ineq : second step in Zu}.
\end{align}}
For the first term on the right-hand side of \eqref{ineq : second step in Zu}, the proof of Theorem 3.5 in \cite{unbounded_domain_cadiot} provides 
\begin{align*}
    \left\|I_d - \mathbb{B}(I_d + D\mathbb{G}^N(\mathbf{u}_0)\bGam^\dagger\left(L^{-1}\right) )\right\|_2 = \|I - B(I_d + DG^N(\mathbf{U}_0)L^{-1})\|_{2}.
\end{align*}

For the term $\| \mathbb{B}D\mathbb{G}^N(\mathbf{u}_0)\big(\bGam^\dagger\left(L^{-1}\right) - \mathbb{L}^{-1}\big)\|_2$, we combine \eqref{B_times_DG} and the proof of Theorem 3.5 in \cite{unbounded_domain_cadiot} to get
\begin{align}
   \left\| \mathbb{B}D\mathbb{G}^N(\mathbf{u}_0)\left(\bGam^\dagger\left(L^{-1}\right) - \mathbb{L}^{-1}\right)\right\|_2 \leq \max\left\{1,\|B_{11}^N\|_{2}\right\} \mathcal{Z}_u = \max\left\{1,\|B_{11}^N\|_{2}\right\}\sqrt{\mathcal{Z}_{u,1}^2 + \mathcal{Z}_{u,2}^2}.
\end{align}
Now, we examine $\mathcal{Z}_{u,1}\geq \|\mathbb{1}_{\mathbb{R}^2\setminus \om}D\mathbb{G}^N(\mathbf{u}_0) \mathbb{L}^{-1}\|_{2}.$ To do so,
let $\mathbf{u} = (u_1,u_2) \in L^2$ such that $\|\mathbf{u}\|_{2} = 1$. Then, observe that
\begin{align}
    \nonumber \|\mathbb{1}_{\mathbb{R}^2\setminus \om}\mathbb{L}^{-*}D\mathbb{G}^N(\mathbf{u}_0)^{*}\mathbf{u}\|_{2}
    &=\left\|\mathbb{1}_{\mathbb{R}^2\setminus \om} \begin{bmatrix}
        \mathbb{L}_{11}^{-1}& -\mathbb{L}_{22}^{-1} \mathbb{L}_{21} \mathbb{L}_{11}^{-1} \\
        0 & \mathbb{L}_{22}^{-1}
    \end{bmatrix}\begin{bmatrix}
      \mathbb{v}_1^N & 0 \\ \mathbb{v}_2^N & 0
    \end{bmatrix}\begin{bmatrix}
        u_1 \\ u_2
    \end{bmatrix}\right\|_{2} \\ \nonumber
    &= \left\|\mathbb{1}_{\mathbb{R}^2\setminus \om}\begin{bmatrix}
        \mathbb{L}_{11}^{-1} & -\mathbb{L}_{22}^{-1} \mathbb{L}_{21} \mathbb{L}_{11}^{-1}\\
        0 & \mathbb{L}_{22}^{-1}
    \end{bmatrix}\begin{bmatrix}
        \mathbb{v}_1^Nu_1 \\ \mathbb{v}_2^Nu_1
    \end{bmatrix}\right\|_{2} \\ \nonumber
    &= \left\|\mathbb{1}_{\mathbb{R}^2\setminus \om} \begin{bmatrix}
        \mathbb{L}_{11}^{-1}\mathbb{v}_1^N u_1 -\mathbb{L}_{22}^{-1} \mathbb{L}_{21} \mathbb{L}_{11}^{-1} \mathbb{v}_2^N u_1 \\
        \mathbb{L}_{22}^{-1} \mathbb{v}_2^N u_1
    \end{bmatrix}\right\|_{2} \\ \nonumber
    &\hspace{-1.0cm}\leq \sqrt{\left( \|\mathbb{1}_{\mathbb{R}^2\setminus \om}\mathbb{L}_{11}^{-1} \mathbb{v}_1^N u_1 \|_{2} + \|\mathbb{1}_{\mathbb{R}^2\setminus \om}\mathbb{L}_{22}^{-1} \mathbb{L}_{21} \mathbb{L}_{11}^{-1}\mathbb{v}_2^N u_1\|_{2}\right)^2 + \|\mathbb{1}_{\mathbb{R}^2\setminus \om}\mathbb{L}_{22}^{-1} \mathbb{v}_2^N u_1\|_{2}^2} \\
    &\hspace{-1.0cm}\leq \sqrt{(\mathcal{Z}_{u,1,1}^2 + \mathcal{Z}_{u,1,3}^2)^2+\mathcal{Z}_{u,1,2}^2}.
\end{align}
By performing the same steps for $\mathcal{Z}_{u,2}$, we similarly obtain
\begin{align}
    \|\mathbb{1}_{\om}D\mathbb{G}^N(\mathbf{u}_0)(\bGam^\dagger(L^{-1}) - \mathbb{L}^{-1})\|_{2} \leq \sqrt{(\mathcal{Z}_{u,2,1} + \mathcal{Z}_{u,2,3})^2 + \mathcal{Z}_{u,2,2}^2}.
\end{align}
\par We now return to \eqref{ineq : first step in Zu} and consider the term $\|\mathbb{B}(D\mathbb{G}^N(\mathbf{u}_0) - D\mathbb{G}(\mathbf{u}_0))\mathbb{L}^{-1}\|_{2}$. To begin, observe that
\begin{align}
    \|\mathbb{B}(D\mathbb{G}^N(\mathbf{u}_0) - D\mathbb{G}(\mathbf{u}_0))\mathbb{L}^{-1}\|_{2} &\leq \|\mathbb{B}_{11}\|_{2}\|\mathbb{L}^{-1}\|_{2} \|D\mathbb{G}^N(\mathbf{u}_0) - D\mathbb{G}(\mathbf{u}_0)\|_{2}.
\end{align}
Using Lemma \ref{lem : full_matrix_estimate}, we obtain that
\begin{align}
    \nonumber \|\mathbb{L}^{-1}\|_{2} \leq \varphi(\|\mathbb{L}_{11}^{-1}\|_{2},0, \|\mathbb{L}_{22}^{-1} \mathbb{L}_{21}\mathbb{L}_{11}^{-1}\|_{2},\|\mathbb{L}_{22}^{-1}\|_{2}) &\leq \varphi\left(1, 0, |\lambda_1 \lambda_2 - 1|\|\mathbb{L}_{22}^{-1}\|_{2} \|\mathbb{L}_{11}^{-1}\|_{2}, \frac{1}{\lambda_2}\right) \\ \nonumber
    &\leq \varphi\left(1,0,\frac{|\lambda_1\lambda_2-1|}{\lambda_2},\frac{1}{\lambda_2}\right)
\end{align}
where we used that $|l_{11}| \geq 1, |l_{22}| \geq \frac{1}{\lambda_2}$. Hence, 
{\small\begin{align}
    \nonumber \|\mathbb{B}_{11}\|_{2}\|\mathbb{L}^{-1}\|_{2} \|D\mathbb{G}^N(\mathbf{u}_0) - D\mathbb{G}(\mathbf{u}_0)\|_{2} 
    &\leq \max\{1,\|B_{11}^N\|_{2}\} \varphi\left(1,0,\frac{|\lambda_1\lambda_2-1|}{\lambda_2},\frac{1}{\lambda_2}\right) \left\| \begin{bmatrix}
        \mathbb{v}_1^N - \mathbb{v}_1 & \mathbb{v}_2^N - \mathbb{v}_2 \\
        0 & 0
    \end{bmatrix}\right\|_{2} \\ \nonumber
    &\hspace{-0.7cm}\leq \max\{1,\|B_{11}^N\|_{2}\} \varphi\left(1,0,\frac{|\lambda_1\lambda_2-1|}{\lambda_2},\frac{1}{\lambda_2}\right) \sqrt{\|\mathbb{v}_1^N - \mathbb{v}_1\|_{2}^2 + \|\mathbb{v}_2^N - \mathbb{v}_2\|_{2}^2} \\ \nonumber
    &\hspace{-0.7cm}\leq \max\{1,\|B_{11}^N\|_{2}\} \varphi\left(1,0,\frac{|\lambda_1\lambda_2-1|}{\lambda_2},\frac{1}{\lambda_2}\right) \sqrt{\|V_1^N - V_1\|_{1}^2 + \|V_2^N - V_2\|_{1}^2}
\end{align}}
where the last step followed from Young's inequality \eqref{young_inequality}. This concludes the proof.
\end{proof}
We now proceed with the computation of $Z_1$ in Lemma \ref{lem : Z1_bound}.
\begin{lemma}
Let $M^N \bydef \bpi^N + DG^N(\mathbf{U}_0)L^{-1}$ and $M \bydef \bpi_N + M^N = I_d + DG^N(\mathbf{U}_0)L^{-1}$. Let $\varphi$ be defined as in \eqref{definition_of_phi}. Then, let $Z_1 > 0$ be such that
\begin{equation}
    Z_1 \bydef \varphi(Z_{1,1},Z_{1,2},Z_{1,3},Z_{1,4})
\end{equation}
where 
\begin{align}
&Z_{1,1} \bydef \sqrt{\|(\bpi^N - B^NM^N)(\bpi^N -  (M^N)^*(B^N)^*)\|_{2}} \\
    &Z_{1,2} \bydef \max_{n \in J_{\mathrm{red}}(D_4) \setminus I^N} \sqrt{\left(\frac{\sqrt{\|M_1\|_{2}}}{|l_{11}(\tilde{n})|}  + \frac{|l_{21}(\tilde{n})|\sqrt{\|M_2\|_{2}}}{|l_{22}(\tilde{n}) l_{11}(\tilde{n})|}\right)^2 + \frac{\|M_2\|_{2}}{|l_{22}(\tilde{n})|^2}} \\
    &Z_{1,3} \bydef \sqrt{\|\bpi^NL^{-*}DG^N(\mathbf{U}_0)^{*}\bpi_NDG^N(\mathbf{U}_0)L^{-1}\bpi^N\|_{2}} \\
    &Z_{1,4} \bydef \max_{n \in J_{\mathrm{red}}(D_4)\setminus I^N}\sqrt{\left( \frac{\|V_1^N\|_{1}}{|l_{11}(\tilde{n})|}  + \frac{|l_{21}(\tilde{n})|\|V_2^N\|_{1}}{|l_{22}(\tilde{n}) l_{11}(\tilde{n})|} \right)^2 + \frac{\|V_2^N\|_{1}^2}{|l_{22}(\tilde{n})|^2} } 
\end{align}\par and $M_1,M_2$ are given by
\begin{align*}
    M_1 \bydef   B_{11}^N \mathbb{V}_1^N \pi_N \mathbb{V}_1^N (B_{11}^N)^{*} ~~\text{and}~~
    M_2 \bydef  B_{11}^N \mathbb{V}_2^N \pi_N \mathbb{V}_2^N (B_{11}^N)^{*}.
\end{align*}
Then, $\|I_d - B(I_d + DG^N(\mathbf{U}_0)L^{-1})\|_{2}  = \|I_d - BM\|_{2} \leq Z_1$.
\end{lemma}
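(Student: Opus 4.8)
The plan is to mirror the scalar argument of Lemma \ref{lem : Z1_bound reduced}, now carrying along both the splitting $\ell^2_{D_4} = \bpi^N\ell^2_{D_4}\oplus\bpi_N\ell^2_{D_4}$ and the (lower-triangular by block) two-component structure of the system. First I would write $I_d - BM$ as a $2\times 2$ block operator with respect to $(\bpi^N,\bpi_N)$,
\begin{equation*}
I_d - BM = \begin{bmatrix} \bpi^N(I_d-BM)\bpi^N & \bpi^N(I_d-BM)\bpi_N \\ \bpi_N(I_d-BM)\bpi^N & \bpi_N(I_d-BM)\bpi_N \end{bmatrix},
\end{equation*}
so that, by Lemma \ref{lem : full_matrix_estimate}, it is enough to bound the spectral norm of each block by $Z_{1,1},\dots,Z_{1,4}$ respectively and then apply $\varphi$. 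Using $B = \bpi_N + B^N$ with $B^N = \bpi^N B^N\bpi^N$, $M = \bpi_N + M^N$ with $M^N = \bpi^N + DG^N(\mathbf{U}_0)L^{-1}$, together with $\bpi^N\bpi_N=0$ and $B^N\bpi_N=0$, each block collapses, exactly as in \eqref{step_in_Z1_reduced}: the $(1,1)$ block becomes $\bpi^N - B^N M^N$, while the $(1,2)$, $(2,1)$ and $(2,2)$ blocks become $-B^N DG^N(\mathbf{U}_0)L^{-1}\bpi_N$, $-\bpi_N DG^N(\mathbf{U}_0)L^{-1}\bpi^N$ and $-\bpi_N DG^N(\mathbf{U}_0)L^{-1}\bpi_N$ respectively.

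Next I would estimate the four blocks. For the $(1,1)$ block, writing $\|A\|_2^2 = \|AA^*\|_2$ with $A = \bpi^N - B^N M^N$ and $(M^N)^* = \bpi^N + (L^{-1})^*DG^N(\mathbf{U}_0)^*$ gives $Z_{1,1}$, a finite-dimensional quantity computable with interval arithmetic. For the remaining three blocks I would insert the explicit forms
\begin{equation*}
DG^N(\mathbf{U}_0)L^{-1} = \begin{bmatrix} \mathbb{V}_1^N L_{11}^{-1} - \mathbb{V}_2^N L_{22}^{-1}L_{21}L_{11}^{-1} & \mathbb{V}_2^N L_{22}^{-1} \\ 0 & 0 \end{bmatrix}, \qquad B^N = \begin{bmatrix} B_{11}^N & B_{12}^N \\ 0 & \pi^N \end{bmatrix},
\end{equation*}
use that $L_{11}^{-1}$ and $L_{22}^{-1}$ are diagonal (hence commute with $\pi_N$, and $L_{21}=(\lambda_1\lambda_2-1)I_d$ is also diagonal), and bound the resulting $2\times 2$ operators through the Cauchy--Schwarz branch of Lemma \ref{lem : full_matrix_estimate}. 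For the $(1,2)$ block this produces $\sqrt{\|a\|_2^2 + \|b\|_2^2}$ with $a = B_{11}^N\mathbb{V}_1^N\pi_N L_{11}^{-1} - B_{11}^N\mathbb{V}_2^N\pi_N L_{22}^{-1}L_{21}L_{11}^{-1}$ and $b = B_{11}^N\mathbb{V}_2^N\pi_N L_{22}^{-1}$; estimating $\|\pi_N L_{11}^{-1}\|_2 = \max_{n\notin I^N}|l_{11}(\tilde{n})|^{-1}$ and the analogous symbol factors, and writing $\|B_{11}^N\mathbb{V}_i^N\pi_N\|_2 = \sqrt{\|M_i\|_2}$ (valid since $V_i^N$ is real and even, so $(\mathbb{V}_i^N)^*=\mathbb{V}_i^N$), recovers $Z_{1,2}$. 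The $(2,1)$ block is again finite-dimensional, and $\|\bpi_N DG^N(\mathbf{U}_0)L^{-1}\bpi^N\|_2^2 = \|\bpi^N L^{-*}DG^N(\mathbf{U}_0)^*\bpi_N DG^N(\mathbf{U}_0)L^{-1}\bpi^N\|_2 = Z_{1,3}^2$. For the $(2,2)$ block I would bound $\|\pi_N\mathbb{V}_i^N\pi_N\|_2 \le \|\mathbb{V}_i^N\|_2 \le \|V_i^N\|_1$ by Young's inequality \eqref{young_inequality}, combine with the same symbol tails, and obtain $Z_{1,4}$.

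The main bookkeeping obstacle, and the only step requiring genuine care, is the collapse of $\bpi^N(I_d - BM)\bpi^N$ to the clean finite expression $\bpi^N - B^N M^N$: since $DG^N(\mathbf{U}_0)$ is a band-limited but not finite-rank convolution operator, one must keep precise track of how it interacts with $\bpi^N$ and $\bpi_N$, which is done exactly as in the proof of Theorem 3.5 of \cite{unbounded_domain_cadiot}. Once the four block bounds are assembled, Lemma \ref{lem : full_matrix_estimate} yields $\|I_d - BM\|_2 \le \varphi(Z_{1,1},Z_{1,2},Z_{1,3},Z_{1,4}) = Z_1$, and since $BM = B(I_d + DG^N(\mathbf{U}_0)L^{-1})$ by definition of $M$, this is the claim.
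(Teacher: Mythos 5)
Your proposal is correct and follows essentially the same route as the paper's Appendix~B proof: decompose $I_d - BM$ into a $2\times 2$ block with respect to $(\bpi^N, \bpi_N)$, collapse each block using $\bpi^N\bpi_N = 0$ and $B^N\bpi_N = 0$, bound the four blocks to obtain $Z_{1,1},\dots,Z_{1,4}$ (using the adjoint trick for the diagonal blocks, the tail symbol factors $\max_{n\notin I^N}|l_{ij}(\tilde n)|^{-1}$, the $M_i$ quantities for the $(1,2)$ block, and Young's inequality for the $(2,2)$ block), then combine via Lemma~\ref{lem : full_matrix_estimate}. The only cosmetic difference is that for the $(1,2)$ block you work with $B^N DG^N(\mathbf{U}_0)L^{-1}\bpi_N$ directly as a $[a, b; 0, 0]$ block, whereas the paper passes to its adjoint $[c,0;d,0]$ before estimating — the two are of course equivalent.
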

\begin{proof}
We begin by examining $I - BM$.
\begin{align}
    \nonumber I - BM &= \begin{bmatrix}
        \bpi^N (I - BM)\bpi^N & \bpi^N (I - BM)\bpi_N \\
        \bpi_N (I - BM)\bpi^N & \bpi_N (I - BM)\bpi_N
    \end{bmatrix} \\ \nonumber
    &= \begin{bmatrix}
        \bpi^N (I - BM)\bpi^N & -\bpi^N BM \bpi_N \\
        \bpi_N (I - M)\bpi^N & \bpi_N (I - M)\bpi_N
    \end{bmatrix} \\
    &= \begin{bmatrix}
         \bpi^N - B^NM^N & -B^N DG^N(\mathbf{U}_0) L^{-1} \bpi_N \\ 
        -\bpi_N DG^N(\mathbf{U}_0) L^{-1}\bpi^N & -\bpi_N DG^N(\mathbf{U}_0) L^{-1} \bpi_N
    \end{bmatrix}.\label{step_in_Z1}
\end{align}
Now, we will use Lemma \ref{lem : full_matrix_estimate} on \eqref{step_in_Z1}. This means we must compute the norm of each of the blocks of \eqref{step_in_Z1}. We begin with $\bpi^N - B^NM^N$.
\begin{align}
     \|\bpi^N - B^NM^N\|_{2}^2 &= \|(\bpi^N - B^NM^N) (\bpi^N - (M^N)^{*} (B^N)^{*})\|_{2} \bydef Z_{1,1}^2.\label{definition_of_Z11}
\end{align}
Next, we examine $- B^N DG(\mathbf{U}_0) L^{-1} \bpi_N$. Using the properties of the adjoint, we get
\begin{align}
    \nonumber \|-B^N DG^N(\mathbf{U}_0) L^{-1} \bpi_N\|_{2}^2    &= \|\bpi_N L^{-*} DG^N(\mathbf{U}_0)^{*} (B^N)^{*}\|_{2}^2 \\ \nonumber 
    &= \left\| \bpi_N \begin{bmatrix}
        L_{11}^{-1} & -L_{22}^{-1} L_{21}L_{11}^{-1} \\
        0 & L_{22}^{-1}
    \end{bmatrix}\begin{bmatrix}
        \mathbb{V}_1^N & 0 \\
        \mathbb{V}_2^N & 0
    \end{bmatrix}\begin{bmatrix}
        (B_{11}^N)^{*} & 0 \\
        (B_{12}^N)^{*} & \pi^N
    \end{bmatrix} \right\|_{2}^2  \\   
    &= \left\| \begin{bmatrix}
        \pi_N L_{11}^{-1} \mathbb{V}_1^N(B_{11}^N)^{*} - \pi_N L_{22}^{-1} L_{21} L_{11}^{-1} \mathbb{V}_2^N (B_{11}^N)^{*} & 0 \\
        \pi_N L_{22}^{-1} \mathbb{V}_2^N (B_{11}^N)^{*} & 0
\end{bmatrix}\right\|_{2}^2.\label{step_before_breaking_blocks_Z1} \end{align}
Let us now examine the term $\pi_N L_{22}^{-1} \mathbb{V}_2^N (B_{11}^N)^{*}$. We have
\begin{align}
    \|\pi_N L_{22}^{-1} \mathbb{V}_2^N (B_{11}^N)^{*}\|_{2}^2 \leq \|\pi_N L_{22}^{-1}\|_{2}^2 \|\pi_N \mathbb{V}_2^N (B_{11}^N)^{*} \|_{2}^2
    &\leq \max_{n \in J_{\mathrm{red}}(D_4) \setminus I^N} \frac{1}{|l_{22}(\tilde{n})|^2} \|B_{11}^N \mathbb{V}_2^N \pi_N \mathbb{V}_2^N (B_{11}^N)^{*} \|_{2} \\
    &\bydef \max_{n \in J_{\mathrm{red}}(D_4) \setminus I^N} \frac{1}{|l_{22}(\tilde{n})|^2} \|M_2\|_{2}.\label{definition_of_M2}
\end{align}
In a similar way, we can estimate the operator norm $\|\pi_N L_{11}^{-1} \mathbb{V}_1^N(B_{11}^N)^{*}  - \pi_N L_{22}^{-1} L_{21} L_{11}^{-1} \mathbb{V}_2^N (B_{11}^N)^{*}\|_{2}$ as follows
{\small\begin{align}
    \nonumber \|\pi_N L_{11}^{-1} \mathbb{V}_1^N(B_{11}^N)^{*}  - \pi_N L_{22}^{-1} L_{21} L_{11}^{-1} \mathbb{V}_2^N (B_{11}^N)^{*}\|_{2}^2
    &\leq \left( \|\pi_N L_{11}^{-1} \mathbb{V}_1^N (B_{11}^N)^{*} \|_{2} + \|\pi_N L_{22}^{-1} L_{21} L_{11}^{-1} \mathbb{V}_2^N (B_{11}^N)^{*} \|_{2}\right)^2 \\ \nonumber
    &\hspace{-1.8cm}\leq ( \|\pi_N L_{11}^{-1}\|_{2} \|\pi_N \mathbb{V}_1^N(B_{11}^N)^{*} \|_{2} + \|\pi_N L_{22}^{-1} L_{21} L_{11}^{-1}\|_{2}\|\pi_N \mathbb{V}_2^N (B_{11}^N)^{*} \|_{2})^2 \\ &\hspace{-1.8cm}\bydef \max_{n \in J_{\mathrm{red}}(D_4) \setminus I^N} \left( \frac{1}{|l_{11}(\tilde{n})|} \sqrt{\|M_1\|_{2}} + \frac{|l_{21}(\tilde{n})|}{|l_{22}(\tilde{n}) l_{11}(\tilde{n})|} \sqrt{\|M_2\|_{2}}\right)^2\label{M1_definition}
\end{align}}
where the last step followed from taking the adjoint. Hence, we get that
{\small\begin{align}
    \| B^NDG^N(\mathbf{U}_0) L^{-1} \bpi_N\|_{2}^2 
    \leq \max_{n \in J_{\mathrm{red}}(D_4) \setminus I^N} \left(\frac{\sqrt{\|M_1\|_{2}}}{|l_{11}(\tilde{n})|}  + \frac{|l_{21}(\tilde{n})|\sqrt{\|M_2\|_{2}}}{|l_{22}(\tilde{n}) l_{11}(\tilde{n})|} \right)^2 + \frac{\|M_2\|_{2}}{|l_{22}(\tilde{n})|^2} 
    &\bydef Z_{1,2}^2.\label{step_in_Z1_2}
\end{align}}
\par Next, we consider the term $-\bpi_N DG^N(\mathbf{U}_0) L^{-1} \bpi^N$.
\begin{align}
    \|-\bpi_N DG^N(\mathbf{U}_0) L^{-1} \bpi^N\|_{2}^2 
    &= \|\bpi^N L^{-*} DG^N(\mathbf{U}_0)^{*} \bpi_N DG^N(\mathbf{U}_0) L^{-1} \bpi^N\|_{2} \bydef Z_{1,3}^2\label{step_in_Z1_3}
\end{align}
Lastly, we treat the block $-\bpi_N DG^N(\mathbf{U}_0) L^{-1} \bpi_N$.
\begin{align}
    \nonumber \|-\bpi_N DG^N(\mathbf{U}_0) L^{-1} \bpi_N\|_{2}^2     &= \|\bpi_N (L^{-1})^{*} DG^N(\mathbf{U}_0)^{*} \bpi_N\|_{2}^2 \\ \nonumber
    &= \left\| \bpi_N \begin{bmatrix}
        L_{11}^{-1} & -L_{22}^{-1} L_{21} L_{11}^{-1} \\
        0 & L_{22}^{-1}
    \end{bmatrix}\begin{bmatrix}
        \mathbb{V}_1^N & 0 \\
        \mathbb{V}_2^N & 0
    \end{bmatrix}\bpi_N\right\|_{2}^2 \\ 
    \nonumber &= \left\| \begin{bmatrix}
        \pi_NL_{11}^{-1}\mathbb{V}_1^N\pi_N - \pi_N L_{22}^{-1} L_{21} L_{11}^{-1} \mathbb{V}_2^N \pi_N & 0 \\
        \pi_N L_{22}^{-1} \mathbb{V}_2^N \pi_N & 0 
    \end{bmatrix}\right\|_{2}^2 \\
    &\hspace{-2.0cm}\leq \max_{n \in J_{\mathrm{red}}(D_4) \setminus I^N} \left( \frac{1}{|l_{11}(\tilde{n})|} \|\mathbb{V}_1^N\|_{2} + \frac{|l_{21}(\tilde{n})|}{|l_{22}(\tilde{n}) l_{11}(\tilde{n})|} \|\mathbb{V}_2^N\|_{2}\right)^2 + \frac{1}{(l_{22})_N^2} \|\mathbb{V}_2^N\|_{2}^2\label{last_term_Z1_before_breaking_blocks}
\end{align}
where we used Lemma \ref{lem : full_matrix_estimate}. Now, we use Young's inequality on \eqref{last_term_Z1_before_breaking_blocks} to obtain
{\footnotesize\begin{align}
    \max_{n \in J_{\mathrm{red}}(D_4) \setminus I^N} \left( \frac{\|\mathbb{V}_1^N\|_{2}}{|l_{11}(\tilde{n})|}  + \frac{|l_{21}(\tilde{n})|\|\mathbb{V}_2^N\|_{2}}{|l_{22}(\tilde{n}) l_{11}(\tilde{n})|} \right)^2 + \frac{\|\mathbb{V}_2^N\|_{2}^2}{(l_{22})_N^2} 
    &\leq \max_{n \in J_{\mathrm{red}}(D_4) \setminus I^N} \left( \frac{\|V_1^N\|_{1}}{|l_{11}(\tilde{n})|}  + \frac{|l_{21}(\tilde{n})|\|V_2^N\|_{1}}{|l_{22}(\tilde{n}) l_{11}(\tilde{n})|} \right)^2 + \frac{\|V_2^N\|_{1}^2}{(l_{22})_N^2} \label{final_simplification_before_step_in_Z1_4}
\end{align}}
where $V_1, V_2$ are defined in \eqref{def : w_0_in_full_equation}. Hence, in total, we obtain that
\begin{align}
    \|\bpi_N DG^N(\mathbf{U}_0) L^{-1} \bpi_N\|_{2}^2 \leq \max_{n \in J_{\mathrm{red}}(D_4)\setminus I^N}\left( \frac{\|V_1^N\|_{1}}{|l_{11}(\tilde{n})|} + \frac{|l_{21}(\tilde{n})|\|V_2^N\|_{1}}{|l_{22}(\tilde{n}) l_{11}(\tilde{n})|} \right)^2 + \frac{\|V_2^N\|_{1}^2}{|l_{22}(\tilde{n})|^2} 
    &\bydef Z_{1,4}^2.\label{step_in_Z1_4}
\end{align}
With \eqref{step_in_Z1}, \eqref{step_in_Z1_2}, \eqref{step_in_Z1_3}, and \eqref{step_in_Z1_4} computed, we are able to compute the $Z_1$ bound.
\end{proof}
Next, we prove Lemma \ref{lem : constants_for_f}. This lemma is necessary to compute the $\mathcal{Z}_u$ bound.
\begin{lemma}
Let $f_{11}, f_{22},$ and $ f_3$ be defined in \eqref{def : f11}, \eqref{def : f22}, and \eqref{def : f3} respectively and let $a_1,a_2$ be defined in \eqref{def : definition of a1 and a2}. Moreover, let $C_0(f_{11})$ and $C_0(f_{22})$ be non-negative constants defined as 
\begin{align}
    C_0(f_{11}) \bydef \max \left\{a_1^2 2e^{\frac{5}{4}}\left(\frac{2}{a_1}\right)^{\frac{1}{4}},~ a_1^2 \sqrt{\frac{\pi}{2\sqrt{a_1}}} \right\} ~~\text{and}~~C_0(f_{22}) \bydef \max \left\{ 2e^{\frac{5}{4}}\left(\frac{2}{a_2}\right)^{\frac{1}{4}},~ \sqrt{\frac{\pi}{2\sqrt{a_2}}} \right\}.
\end{align}
Then, \eqref{def : constant C0f11}, \eqref{def : constant C0f22}, and \eqref{def : C3} are satisfied.
\end{lemma}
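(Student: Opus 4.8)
The plan is to reduce everything to a single scalar estimate: the pointwise decay of the two-dimensional Bessel potential $\mathcal{F}^{-1}\big(\tfrac{1}{|2\pi\xi|^2+a^2}\big)$. First I would rewrite the symbols in normalized form: since $\lambda_1=a_1^{-2}$ and $\lambda_2=a_2^2$, one has $\tfrac{1}{l_{11}(\xi)}=-\tfrac{a_1^2}{|2\pi\xi|^2+a_1^2}$ and $\tfrac{1}{l_{22}(\xi)}=-\tfrac{1}{|2\pi\xi|^2+a_2^2}$. Using the subordination identity $\tfrac{1}{|2\pi\xi|^2+a^2}=\int_0^\infty e^{-ta^2}e^{-t|2\pi\xi|^2}\,dt$ together with $\mathcal{F}^{-1}(e^{-t|2\pi\xi|^2})(x)=\tfrac{1}{4\pi t}e^{-|x|^2/(4t)}$, I obtain the integral representations $f_{11}(x)=-\tfrac{a_1^2}{4\pi}\int_0^\infty \tfrac1t e^{-ta_1^2-|x|^2/(4t)}\,dt$ and $f_{22}(x)=-\tfrac{1}{4\pi}\int_0^\infty \tfrac1t e^{-ta_2^2-|x|^2/(4t)}\,dt$ (equivalently, $|f_{jj}(x)|$ is a fixed constant times $K_0(a_j|x|)$, the modified Bessel function). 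In particular $f_{11},f_{22}<0$ on $\mathbb{R}^2\setminus\{0\}$ and $|f_{jj}|$ is radially decreasing, facts I will need later.

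Next I would extract the exponential decay. Completing the square, $ta^2+\tfrac{|x|^2}{4t}=a|x|+\big(\sqrt t\,a-\tfrac{|x|}{2\sqrt t}\big)^2$, so $e^{-ta^2-|x|^2/(4t)}=e^{-a|x|}\,e^{-(\sqrt t a-|x|/(2\sqrt t))^2}$, and the leftover $t$-integral converges for $x\neq 0$ with only a logarithmic divergence as $x\to 0$. To convert this into the claimed $|x|^{-1/4}$ envelope I would split the $t$-range at the minimizer $t=|x|/(2a)$ and estimate each piece; equivalently, and more transparently, after the change of variables $r=a_j|x|$ it suffices to bound $K_0(r)$ by $C\,e^{-r}r^{-1/4}$, which I would do by splitting at a threshold $r_0$: for $r\le r_0$ use the elementary logarithmic upper bound coming from the series expansion of $K_0$ (here $r^{1/4}|\ln r|$ is bounded on $(0,r_0]$, with maximum producing the $e^{5/4}$-type constant), and for $r\ge r_0$ use the classical estimate $K_0(r)\le\sqrt{\pi/(2r)}\,e^{-r}$ together with $r^{-1/2}\le r_0^{-1/4}r^{-1/4}$. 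Propagating the $a_j$-scaling introduced by $r=a_j|x|$ then yields precisely the two terms defining $C_0(f_{11})$ and $C_0(f_{22})$ — the $2e^{5/4}(2/a_j)^{1/4}$ term from the near field and the $\sqrt{\pi/(2\sqrt{a_j})}$ term from the far field — while the overall $a_1^2$ prefactor in $C_0(f_{11})$ comes from the constant $a_1^2=1/\lambda_1$ multiplying the potential. (The harmless discrepancy between $|x|_1^{1/4}$ in \eqref{def : constant C0f11}–\eqref{def : constant C0f22} and $|x|^{1/4}$ in \eqref{def : C3} can be absorbed at the end via $|x|\le|x|_1\le\sqrt2\,|x|$.)

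For $f_3$ I would avoid a direct Fourier computation and instead invoke the algebraic identity \eqref{eq : equality of the lik}, which gives $\tfrac{l_{21}}{l_{11}l_{22}}=\tfrac{1}{l_{22}}-\tfrac{\lambda_1}{l_{11}}$ and hence $f_3=f_{22}-\lambda_1 f_{11}$. Since $\lambda_1 a_1^2=1$, this reads $f_3(x)=\tfrac{1}{2\pi}\big(K_0(a_1|x|)-K_0(a_2|x|)\big)$, which has a definite sign: if $\tfrac{1}{\lambda_1}\ge\lambda_2$ then $a_1\ge a_2$ and monotonicity of $K_0$ gives $|f_3(x)|=\tfrac{1}{2\pi}\big(K_0(a_2|x|)-K_0(a_1|x|)\big)\le\tfrac{1}{2\pi}K_0(a_2|x|)=|f_{22}(x)|$; symmetrically, if $\tfrac{1}{\lambda_1}\le\lambda_2$ then $|f_3(x)|\le\tfrac{1}{2\pi}K_0(a_1|x|)=\tfrac{1}{a_1^2}|f_{11}(x)|$. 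Combining with the bounds on $f_{11},f_{22}$ established above yields exactly \eqref{def : C3}.

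The main obstacle is not the structure of the argument but the constant bookkeeping in the second step: the logarithmic singularity of $K_0$ at the origin is borderline against the $|x|^{-1/4}$ envelope, so the threshold $r_0$, the choice of power $1/4$, and every factor of $2$, $e^{5/4}$, and $a_j$ must be tracked carefully — in particular the change of variables $r=a_j|x|$ entangles the singularity exponent with the decay rate, which is why $C_0(f_{jj})$ depends on $a_j$ in a slightly unexpected way, and producing a bound that is simultaneously rigorous and tight enough for the downstream computer-assisted estimates is where the care is needed. A secondary, routine point is to justify the subordination formula and the identification $\mathcal{F}^{-1}\big(\tfrac{1}{|2\pi\xi|^2+a^2}\big)=\tfrac{1}{2\pi}K_0(a|x|)$ at the level of tempered distributions before using it pointwise.
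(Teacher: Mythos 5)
Your proposal follows essentially the same route as the paper's proof: identify $f_{11}$ and $f_{22}$ as constant multiples of the modified Bessel function $K_0(a_j|\cdot|)$, establish an envelope of the form $K_0(r)\le C\,e^{-r}r^{-1/4}$ by splitting into near field (series expansion) and far field (classical asymptotic bound), and handle $f_3$ through the algebraic identity $\tfrac{l_{21}}{l_{11}l_{22}}=\tfrac{1}{l_{22}}-\tfrac{\lambda_1}{l_{11}}$ together with monotonicity of $K_0$. The main difference is cosmetic: you derive the $K_0$ identification via the subordination integral and the heat kernel, whereas the paper cites Hankel transform tables; your route has the mild advantage of making the sign and radial monotonicity of $|f_{jj}|$ transparent, which you later use for $f_3$.

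Two points of imprecision worth flagging. First, your parenthetical that ``the maximum of $r^{1/4}|\ln r|$ produces the $e^{5/4}$-type constant'' does not match the actual origin of the constant $2e^{5/4}(2/a_j)^{1/4}$: in the paper it arises by bounding the series as $|K_0(a_jz)|\le e^{1/4}\big(1+|\ln(a_jz/2)|\big)$ (the $e^{1/4}$ is $\sum_k 4^{-k}/k!$), then using $1+|\ln w|\le 2\,w^{-1/4}$ on $(0,1)$ to supply the prefactor $2$ and the power $|x|^{-1/4}$, and finally multiplying and dividing by $e^{a_jz}$ with $e^{a_jz}\le e$ on the near-field range $z<1/a_j$ — not by maximizing $r^{1/4}|\ln r|$. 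If you intend to reproduce the precise constants in the lemma rather than just the qualitative envelope, you will need this chain rather than a maximum. Second, you are right that $\eqref{def : constant C0f11}$–$\eqref{def : constant C0f22}$ mix $|x|$ in the exponent with $|x|_1$ in the denominator while the proof works entirely with the Euclidean norm; your absorption via $|x|_1\le\sqrt2\,|x|$ costs an explicit factor $2^{1/8}$, and you should be aware that this factor is not visibly accounted for in the stated constants, so if you track constants carefully you may end up with slightly different (and, since you also retain the correct normalization $\mathcal F^{-1}\big(\tfrac{1}{|2\pi\xi|^2+a^2}\big)=\tfrac{1}{2\pi}K_0(a|\cdot|)$, slightly smaller) values of $C_0(f_{jj})$ than those in the lemma. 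Neither issue affects the validity of your overall strategy.
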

\begin{proof}
 Using the Hankel transform tables of \cite{poularikas2018transforms} (Chapter 9), we readily obtain
{\small\begin{align}\label{def : computation of f11 and f22 formulas}
    f_{11}(x) = \mathcal{F}^{-1}\left(\frac{1}{l_{11}(\xi)}\right)(x) = -a_1^2 K_0\left(a_1 |x|\right) ~~\text{and}~~ f_{22}(x) = \mathcal{F}^{-1}\left(\frac{1}{l_{22}(\xi)}\right)(x) = -K_0\left(a_2|x|\right)
\end{align}}
for all $x \in \R^2 \setminus\{0\}$, where $K_0$ is the modified Bessel function of the second kind. 
First, consider $0 < z < \frac{1}{a_j}$ for $j = 1,2$. Using the series expansion of $K_0$ given in \cite{watson_bessel}, we obtain
\begin{align*}
    K_0\left(a_jz\right) =\sum_{k=0}^\infty \frac{\psi(k+1)}{4^k(k!)^2}\left(a_j z\right)^{2k}-\ln\left(\frac{a_j z}{2}\right)\sum_{k=0}^\infty \frac{1}{4^k(k!)^2}\left(a_j z\right)^{2k},\label{K_0_series}
\end{align*}
where $\psi$ is the digamma function. In particular, using the definition of $\psi$, we get $\psi(k+1) = \sum_{n=1}^k\frac{1}{n} - C_{Euler} \leq \frac{1}{2} + \ln(k)$ for all $k \geq 1$, where $C_{Euler} \approx 0.58\dots$ is the Euler–Mascheroni constant.  Consequently, $\frac{|\psi(k+1)|}{k!} \leq \frac{1}{2k} + \frac{\ln(k)}{k!} \leq 1$ for all $k \geq 0$. Therefore, if $0<z<\frac{1}{a_j}$, then
\begin{align*}
    \left|K_0\left(a_jz\right)\right| &\leq  \sum_{k=0}^\infty \frac{|\psi(k+1)|}{4^k(k!)^2} + \left|\ln\left(\frac{a_jz}{2}\right)\right|\sum_{k=0}^\infty \frac{1}{4^k(k!)^2}\\
    &\leq \sum_{k=0}^\infty \frac{1}{4^k k!} + \left|\ln\left(\frac{a_jz}{2}\right)\right|\sum_{k=0}^\infty \frac{1}{4^kk!}\\
    &= e^{\frac{1}{4}} +e^{\frac{1}{4}}\left|\ln\left(\frac{a_j z}{2}\right)\right|\\
    &=e^{\frac{1}{4}}\left(1 + \left|\ln\left(\frac{a_jz}{2}\right)\right|\right).
\end{align*}
Observe that for $0<z<a_j$, we have
\begin{align}
    1 + \left|\ln\left(\frac{a_jz}{2}\right)\right| \leq 2\frac{\left(\frac{2}{a_j}\right)^{\frac{1}{4}}}{z^{\frac{1}{4}}} = 2\left(\frac{2}{a_j}\right)^{\frac{1}{4}} \frac{1}{z^{\frac{1}{4}}}.
\end{align} 
Then, using again that $0 < z < \frac{1}{a_j}$, we have
\begin{align}
    \nonumber 2\left(\frac{2}{a_j}\right)^{\frac{1}{4}}\frac{1}{z^{\frac{1}{4}}} &= 
    2\left(\frac{2}{a_j}\right)^{\frac{1}{4}}e^{-a_j z}e^{a_j z}\frac{1}{z^{\frac{1}{4}}}
     \leq 2e\left(\frac{2}{a_j}\right)^{\frac{1}{4}}\frac{e^{-a_j z}}{z^{\frac{1}{4}}}.
\end{align}
\\ Hence, we obtain 
\begin{align}
   \left|K_0\left(a_j z\right)\right| \leq 2e^{\frac{5}{4}} \left(\frac{2}{a_j}\right)^{\frac{1}{4}}\frac{e^{-a_j z}}{z^{\frac{1}{4}}}.\label{K_0_a_j_bound}
\end{align}
Using \eqref{K_0_a_j_bound}, we obtain 
\begin{align}
    |f_{11}(x)| \leq C_{0,1}(f_{11}) \frac{e^{-a_1 |x|}}{|x|^{\frac{1}{4}}} ~~\text{and}~~|f_{22}(x)| \leq C_{0,1}(f_{22}) \frac{e^{-a_2 |x|}}{|x|^{\frac{1}{4}}}\label{C01f11_def}
\end{align}
where we define
\begin{align}
    \nonumber C_{0,1}(f_{11}) \bydef a_1^2 2e^{\frac{5}{4}}\left(\frac{2}{a_1}\right)^{\frac{1}{4}} ~~\text{and}~~C_{0,1}(f_{22}) \bydef 2e^{\frac{5}{4}}\left(\frac{2}{a_2}\right)^{\frac{1}{4}}.
\end{align}

Next, we fix $z \geq \frac{1}{a_j}$ and using Theorem 3.1 in \cite{yangchu2017inequalities}
\begin{align}
\left|K_0\left(a_j z\right)\right| \leq  \sqrt{\frac{\pi}{2a_j z}}  e^{-a_j z} \leq \sqrt{\frac{\pi}{2\sqrt{a_j}}}\frac{e^{-a_j z}}{z^{\frac{1}{4}}}. 
\end{align}
Consequently, defining $C_{0,2}(f_{11}) \bydef a_1^2 \sqrt{\frac{\pi}{2\sqrt{a_1}}}, C_{0,2}(f_{22}) \bydef \sqrt{\frac{\pi}{2\sqrt{a_2}}}$, and using  \eqref{C01f11_def}, we obtain that
\begin{align}
    \nonumber C_0(f_{11}) \bydef \max(C_{0,1}(f_{11}), C_{0,2}(f_{11}))~~\text{and}~~C_0(f_{22}) \bydef \max(C_{0,1}(f_{22}),C_{0,2}(f_{22}))
\end{align}
provides the desired result. We now consider $\left|f_3(x)\right|$. Recalling \eqref{eq : equality of the lik}, we get
\begin{align}
    \nonumber \frac{l_{21}(\xi)}{l_{11}(\xi) l_{22}(\xi)}
    &=  \frac{1}{|2\pi \xi|^2 + a_1^2} - \frac{1}{|2\pi \xi|^2 + a_2^2}
\end{align}
for all $\xi \in \R^2$, which implies that
\begin{align}
    \nonumber \mathcal{F}^{-1}\left(\frac{l_{21}(\xi)}{l_{11}(\xi) l_{22}(\xi)}\right)(x)
    &= K_{0}\left(a_2 |x|\right) - K_{0}\left(a_1 |x|\right). 
\end{align}
We now separate two cases. If $\frac{1}{\lambda_1} \geq \lambda_2$, then using that $K_0$ is a decreasing function, we have that
\begin{align} \left|K_0(a_2 |x|) - K_0\left(a_1|x|\right)\right| \leq   |K_0(a_2|x|)| = |f_{22}(x)| \leq C_0(f_{22}) \frac{e^{-a_2|x|}}{|x|^{\frac{1}{4}}},\label{f_3_bound1}
\end{align}
for all $x \in \R^2\setminus \{0\}.$
If instead $\frac{1}{\lambda_1} \leq \lambda_2$, then 
\begin{align}
    \left|K_0(a_2 |x|) - K_0\left(a_1|x|\right)\right| \leq   \left|K_0\left(a_1|x|\right)\right| = \frac{1}{a_1^2}|f_{11}(x)| \leq \frac{1}{a_1^2}C_0(f_{11}) \frac{e^{-a_1|x|}}{|x|^{\frac{1}{4}}},
\end{align}
which concludes the proof.
\end{proof} 
Finally, we compute the $\mathcal{Z}_{u,1}$ bound defined in Lemma \ref{lem : lemma Z11full}.
\begin{lemma}
Let $a_1, a_2$ be given in \eqref{def : definition of a1 and a2} and let
 $$E_j \bydef \gamma\left(\mathbb{1}_{\om}(x) \left(\frac{1}{2}(\cosh(2a_jx_1) + \cosh(2a_jx_2)\right)\right)$$ for $j = 1,2$. Moreover, let $\left(\mathcal{Z}_{u,1,j}\right)_{j \in \{1,2,3\}}$ be bounds defined as 
\begin{align}
&\mathcal{Z}_{u,1,1} \bydef \frac{\sqrt{2}C_0(f_{11})(2\pi)^{\frac{1}{4}}e^{-a_1d}\sqrt{|\om|}}{a_1^{\frac{3}{4}}}\sqrt{(V_1^N,V_1^N* E_1)_2} \\
    &\mathcal{Z}_{u,1,2} \bydef \frac{\sqrt{2}C_0(f_{22})(2\pi)^{\frac{1}{4}}e^{-a_2d}\sqrt{|\om|}}{a_2^{\frac{3}{4}}} \sqrt{(V_2^N,V_2^N * E_2)_2} \\
    &\mathcal{Z}_{u,1,3} \bydef \begin{cases}
        \mathcal{Z}_{u,1,2} & \mathrm{\ if \ } \frac{1}{\lambda_1} \geq \lambda_2 \\
        \frac{\sqrt{2}\lambda_2C_0(f_{11})(2\pi)^{\frac{1}{4}}e^{-a_1d}\sqrt{|\om|}}{a_1^{\frac{3}{4}}}\sqrt{(V_2^N,V_2^N* E_1)_2} & \mathrm{\ if \ } \frac{1}{\lambda_1} \leq \lambda_2
    \end{cases}
\end{align}
where $V_1^N,V_2^N$ are given in \eqref{def : w_0_in_full_equation}. Then $ \mathcal{Z}_{u,1} =  
    \sqrt{\left( \mathcal{Z}_{u,1,1} + \mathcal{Z}_{u,1,3}\right)^2 + \mathcal{Z}_{u,1,2}^2}$ satisfies \eqref{def : Zu1}. That is, $\|\mathbb{1}_{\mathbb{R}^2 \setminus \om} D\mathbb{G}^N(\mathbf{u}_0)\mathbb{L}^{-1}\|_{2} \leq \mathcal{Z}_{u,1}.$ 
\end{lemma}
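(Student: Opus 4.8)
## Proof proposal for Lemma \ref{lem : lemma Z11full}

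The plan is to make explicit the bounds in \eqref{eq : new definition of Zu1 in equation}, which were already reduced via Proposition \ref{prop : introduce_f} to estimating $\|\mathbb{1}_{\mathbb{R}^2\setminus\om}(f^2 * v^2)\|_1$ for the relevant kernels $f \in \{f_{11}, f_{22}, f_3\}$ and source terms $v \in \{v_1^N, v_2^N\}$. First I would write out the definition of this $L^1$-norm as a double integral: $\|\mathbb{1}_{\mathbb{R}^2\setminus\om}(f^2 * v^2)\|_1 = \int_{\mathbb{R}^2\setminus\om}\int_{\mathbb{R}^2} f(x-y)^2 v(y)^2\, dy\, dx$. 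Since $\mathrm{supp}(v_i^N) \subset \overline{\om}$, the inner integral runs only over $y \in \om$, and for $x \notin \om$ and $y \in \om$ one has $|x - y| \geq \mathrm{dist}(x, \om)$ roughly, so the exponential decay of $f$ from Lemma \ref{lem : constants_for_f} kicks in. The key manoeuvre is to use $|f_{jj}(x-y)|^2 \leq C_0(f_{jj})^2 \frac{e^{-2a_j|x-y|}}{|x-y|_1^{1/2}}$ and then bound $e^{-2a_j|x-y|} \leq e^{-2a_j d} \cdot (\text{something involving } y)$ by exploiting that $|x - y| \geq |x| - |y|$-type inequalities together with $|x_1|$ or $|x_2| \geq d$ on $\mathbb{R}^2 \setminus \om$ (by $D_4$-symmetry it suffices to treat one face). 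This is where the $\cosh(2a_j x_1) + \cosh(2a_j x_2)$ term originates: it is the natural $D_4$-symmetrization of $e^{2a_j|y|_\infty}$-type factors that remain after pulling out $e^{-2a_j d}$, and integrating the singularity $|x-y|_1^{-1/2}$ over the complementary half-planes produces the constant $\frac{(2\pi)^{1/4}}{a_j^{3/4}}$ (a Gamma-function / one-dimensional Laplace-type integral $\int_0^\infty t^{-1/2} e^{-2a_j t}\,dt = \sqrt{\pi/(2a_j)}$, up to the $2^{1/4}$ bookkeeping from $|\cdot|_1$ versus $|\cdot|_2$).

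Concretely, the steps I would carry out are: (i) fix the face $\{x_1 > d\}$ and write $\int_{x_1 > d}\int_{\om} f_{jj}(x-y)^2 v_i^N(y)^2\, dy\, dx$; (ii) substitute $z = x - y$ so the $z$-integral is over a shifted half-plane and bound it uniformly in $y$ by extending to $\{z_1 > d - y_1\}$, then use $d - y_1 \geq 0$ is false in general, so instead keep the shift and bound $e^{-2a_j|z|} \leq e^{-2a_j|z_1|}$ and integrate, getting a factor $e^{-2a_j(d - y_1)}$ after carefully tracking; (iii) recognize $e^{2a_j y_1} + (\text{symmetric terms}) \leq \cosh(2a_j y_1) + \cosh(2a_j y_2)$ up to the factor $\frac12$ absorbed in $E_j$; (iv) rewrite the resulting $\int_\om v_i^N(y)^2 (\cosh(2a_j y_1) + \cosh(2a_j y_2))\, dy$ using Parseval as the inner product $(V_i^N, V_i^N * E_j)_2$, since multiplication by the $D_4$-function $\mathbb{1}_\om(\cosh(2a_j x_1) + \cosh(2a_j x_2))$ corresponds on the Fourier side to convolution with $E_j$ (the factor $|\om|$ appears from the normalization in Lemma \ref{lem : gamma and Gamma properties}, hence the $\sqrt{|\om|}$ in front); (v) multiply by $4$ to account for the four faces of $\partial\om$ (which, combined with $\sqrt{\phantom{x}}$ and the $\frac12$ in $E_j$, gives the $\sqrt{2}$ prefactor); (vi) take square roots and assemble $\mathcal{Z}_{u,1}$ from $\mathcal{Z}_{u,1,1}, \mathcal{Z}_{u,1,2}, \mathcal{Z}_{u,1,3}$ exactly as in the combination $\sqrt{(\mathcal{Z}_{u,1,1}+\mathcal{Z}_{u,1,3})^2 + \mathcal{Z}_{u,1,2}^2}$ already justified in Lemma \ref{lem : Z_full_1}. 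For $f_3$ one simply invokes the case split \eqref{def : C3}: when $\frac1{\lambda_1} \geq \lambda_2$ the bound on $f_3$ coincides with that on $f_{22}$, giving $\mathcal{Z}_{u,1,3} = \mathcal{Z}_{u,1,2}$; otherwise one uses $|f_3| \leq \frac1{a_1^2}C_0(f_{11})\frac{e^{-a_1|x|}}{|x|^{1/4}} = \lambda_2 C_0(f_{11})\frac{e^{-a_1|x|}}{|x|^{1/4}}$ (since $\frac1{a_1^2} = \lambda_1 = \lambda_2^{-1}\cdot(\lambda_1\lambda_2)$... more precisely $\frac1{a_1^2} = \lambda_1$, and the $\lambda_2$ in the stated bound comes from the fact that $v_2^N$ enters through $\mathbb{L}_{22}^{-1}\mathbb{L}_{21}\mathbb{L}_{11}^{-1}$ and the $l_{21}$ contribution carries the $(\lambda_1\lambda_2-1)$ which here is bookkept as $\lambda_2$), and repeat the integral computation with the $a_1$-decay and $E_1$.

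The main obstacle I anticipate is step (ii)–(iii): carefully tracking the exponential factor through the change of variables so that the final bound depends on $y$ only through $e^{2a_j|y_1|}$-type terms that can be cleanly repackaged as the $D_4$-symmetric weight defining $E_j$, while simultaneously controlling the $|z|_1^{-1/2}$ singularity near $z = 0$. One must be careful that the singularity at $z=0$ only occurs when $x = y$, i.e., when $x \in \overline{\om}$, which is a measure-zero overlap with $\mathbb{R}^2\setminus\om$; for $x$ strictly outside $\om$ and $y \in \om$ the integrand is genuinely bounded, but the integral $\int t^{-1/2}e^{-2a_j t}\,dt$ near $t=0$ is still finite, so this is not a true obstruction, merely a place where the constant $(2\pi)^{1/4} a_j^{-3/4}$ must be extracted correctly. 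Everything else — the Parseval identification of the weighted $L^2$ integral with $(V_i^N, V_i^N * E_j)_2$, the factor-of-four from the faces, and the final assembly — follows the template of Theorem 3.9 in \cite{unbounded_domain_cadiot} and Lemma \ref{lem : Z_full_1}, so I would defer to those for the routine bookkeeping.
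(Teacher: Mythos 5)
Your plan follows essentially the same route as the paper's proof: reduce via Proposition~\ref{prop : introduce_f} to an $L^1$-norm of a convolution, exploit $D_4$-symmetry to restrict to one half-plane (picking up a factor of four), invoke the exponential decay of $f_{jj}$ from Lemma~\ref{lem : constants_for_f}, separate the double integral into a pure exponential in one variable and a singular Laplace-type integral $\int_0^\infty t^{-1/2}e^{-2a_j t}\,dt = \sqrt{\pi/(2a_j)}$ in the other, symmetrize the remaining $e^{2a_j y_1}$ weight into the $\cosh$ combination defining $E_j$, and convert the weighted $\int_\om (v_i^N)^2 e_j$ into $(V_i^N, E_j*V_i^N)_2$ by Parseval. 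The treatment of $f_3$ via the case split in \eqref{def : C3} is also the paper's.

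There is one concrete misstep in your step~(ii). You propose bounding $e^{-2a_j|z|} \leq e^{-2a_j|z_1|}$ and then integrating, but this throws away \emph{all} decay in $z_2$: the subsequent $z_2$-integral would be $\int_{\mathbb{R}} |z|_1^{-1/2}\,dz_2$, which diverges at infinity. The correct manoeuvre (the one used in \eqref{eq : computation exponential decay 2}) is the opposite: keep the full product $e^{-2a_j|z_1|}e^{-2a_j|z_2|}$ in the exponent, and discard the $z_1$-dependence only in the singular denominator, i.e.\ $\frac{1}{\sqrt{|z_1|+|z_2|}} \leq \frac{1}{\sqrt{|z_2|}}$. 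This splits cleanly as $\left(\int_d^\infty e^{-2a_j(y_1-x_1)}\,dy_1\right)\left(\int_{\mathbb{R}} \frac{e^{-2a_j|y_2-x_2|}}{\sqrt{|y_2-x_2|}}\,dy_2\right)$, both factors finite, producing exactly the prefactor $\sqrt{\pi/(2a_j^3)}\,e^{-2a_j d}e^{2a_j x_1}$ that after squaring, symmetrizing, and taking square roots gives the stated $\sqrt{2}(2\pi)^{1/4}a_j^{-3/4}e^{-a_j d}$. Also a small bookkeeping point: your aside on $d-y_1$ is off — for $x\in\om$ one has $|x_1|<d$, so the shift $d-x_1$ is always positive; the labelling of inside/outside variables got tangled, but this does not affect the overall strategy. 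Finally, your explanation of the $\lambda_2$ factor in $\mathcal{Z}_{u,1,3}$ is garbled (you note $1/a_1^2 = \lambda_1$ and then invoke $\lambda_2\lambda_1-1$ heuristically); the factor comes directly from inserting the case-specific bound of \eqref{def : C3} and has nothing to do with $l_{21}$ reappearing here, so you should not expect to need that.
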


\begin{proof} 
Observe that it is sufficient to provide an upper bound for the terms on the right-hand side of \eqref{eq : new definition of Zu1 in equation}.
Starting with ${\|\mathbb{1}_{\mathbb{R}^2 \setminus \om} (f_{11}^2 * (v_1^N)^2)\|_{1}}$ and using that $v_1^N \in L^2_{D_4}(\mathbb{R}^2)$, we get
\begin{align}
    \|\mathbb{1}_{\mathbb{R}^2 \setminus \om}  (f_{11}^2 * (v_1^N)^2)\|_{1}
    &= \int_{\mathbb{R}^2 \setminus \om} \int_{\om} f_{11}(y-x)^2 v_1^N(x)^2 dx dy \\ 
    &\leq 2\left[ \int_{\mathbb{R}} \int_{d}^{\infty} \int_{\om} f_{11}(y-x)^2 v_1^N(x)^2 dx dy + \int_{d}^{\infty} \int_{\mathbb{R}} f_{11}(y-x)^2 v_1^N(x)^2 dx dy\right] \\
    &\leq 4 \int_{\mathbb{R}} \int_{d}^{\infty} \int_{\om} f_{11}(y-x)^2 v_1^N(x)^2 dx dy.
\end{align}
 We now use Lemma \ref{lem : constants_for_f} and Fubini's theorem to obtain
\begin{align}
    4 \int_{\mathbb{R}} \int_{d}^{\infty} \int_{\om} f_{11}(y-x)^2 v_1^N(x)^2 dx dy &\leq 4C_0(f_{11})^2  \int_{\mathbb{R}} \int_{d}^{\infty} \int_{\om}\frac{e^{-2a_1|y-x|_1}}{\sqrt{|y-x|_1}} v_1^N(x)^2dx dy \\ 
    &= 4C_0(f_{11})^2 \int_{\om} v_1^N(x)^2\int_{\mathbb{R}} \int_{d}^{\infty} \frac{e^{-2a_1|y-x|_1}}{\sqrt{|y-x|_1}} dy dx.\label{eq : cauchy S for E2}
\end{align}
But now notice that 
\begin{align}\label{eq : computation exponential decay 2}
 \nonumber
     \int_{\mathbb{R}} \int_{d}^\infty  \frac{e^{-2a_1|y-x|_1}}{\sqrt{|y-x|_1}}dy_1dy_2
     = & \int_{\mathbb{R}} \int_{d}^\infty  \frac{e^{-2a_1(|y_1-x_1|+|y_2-x_2|)}}{\sqrt{|y_1-x_1|+|y_2-x_2|}}dy_1dy_2 \\ \nonumber 
     \leq & \int_{d}^\infty e^{-2a_1(y_1-x_1)}dy_1 \int_{\mathbb{R}}\frac{e^{-2a_1|y_2-x_2|}}{\sqrt{|y_2 - x_2|}}dy_2\\ 
     = &\sqrt{\frac{\pi}{2a_1^3}} e^{2a_1x_1}
\end{align}
for all $x \in \om.$
Therefore, combining \eqref{eq : cauchy S for E2} and \eqref{eq : computation exponential decay 2}, we get
\begin{align}
4C_0(f_{11})^2\int_{\om} v_1^N(x)^2\int_{\mathbb{R}} \int_{d}^{\infty} \frac{e^{-2a_1|y-x|_1}}{\sqrt{|y-x|_1}} dy dx \leq \frac{2\sqrt{2\pi}e^{-2a_1d}}{\sqrt{a_1^3}}C_0(f_{11})^2 \int_\om v_1^N(x)^2e^{2a_1x_1}dx.\label{middle_estimate_Zu1}
\end{align}
Since we are assuming $D_4$-symmetry, observe that for $k = 1,2$
\begin{align}
    &\nonumber \int_{\om} v_1^N(x)^2 e^{ 2a_1x_k} dx = \int_{\om} v_1^N(x)^2 e^{-2a_1x_k} dx = \int_{\om} v_1^N(x)^2 \cosh(2a_1x_k) dx \\
    &=
    \frac{1}{2}\int_{\om} v_1^N(x)^2(\cosh(2a_1 x_1)+\cosh(2a_1 x_2)) dx.\label{step_in_Zu}
\end{align}
In particular, $e_1 : x \mapsto \frac{1}{2}\left(\cosh(2a_1 x_1)+\cosh(2a_1 x_2)\right)$ is $D_4$-symmetric, hence its Fourier coefficients in $\ell^2(J_{\mathrm{red}}(D_4))$, denoted $E_1 \bydef \gamma(\cha e_1)$, are well-defined.   
Now, using Parseval's identity, we can write \eqref{step_in_Zu} as
\begin{align}
 \frac{1}{2}\int_{\om} v_1^N(x)^2(\cosh(2a_1 x_1)+\cosh(2a_1 x_2)) dx
    &= |\om|(V_1^N,V_1^N* E_1)_2.\label{inner_prod_def} 
\end{align}
 Combining \eqref{middle_estimate_Zu1} and \eqref{inner_prod_def}, we prove the desired result for $\mathcal{Z}_{u,1,1}$. One can then prove similar results for $\mathcal{Z}_{u,1,2}$ and $\mathcal{Z}_{u,1,3}$, which concludes the proof of the lemma.
\end{proof}
\renewcommand{\theequation}{C.\arabic{equation}}
\setcounter{equation}{0}
\section{Integral computation in the proof of Lemma \ref{lem : lemma Z12full}}\label{apen : AppendixZu2}
In this section, we provide the technical analysis required to prove Lemma \ref{lem : lemma Z12full}. 
\begin{lemma}\label{lem : first_int_Z_u_2}
Given $i, j \in \{1,2\}$, define $w_i(x) = v_i^N(x)u(x)$ where $u \in L^2_{D_4}(\mathbb{R}^2)$ such that $\|u\|_{2} = 1$. Let $\mathcal{C}_{1,j},\mathcal{C}_{2,j},$ and $C(v_j^N)$ be non-negative constants defined as in Lemma \ref{lem : lemma Z12full}.
Then, 
    \begin{align}
        &\int_{-d}^d |\partial_{x_1} \mathbb{L}_{jj}^{-1} w_i(d,x_2)| dx_2 \leq \mathcal{C}_{1,j} \sqrt{(V_i^N,E_{j} * V_i^N)_2} + \mathcal{C}_{2,j}C(v_j^N)\label{L11_Z_u2_comp}
        \end{align}
where $E_j$ is defined in Lemma \ref{lem : lemma Z11full}.
\end{lemma}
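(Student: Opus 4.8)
The goal is to bound the one-dimensional boundary integral $\int_{-d}^{d} |\partial_{x_1} \mathbb{L}_{jj}^{-1} w_i(d,x_2)|\, dx_2$ where $w_i = v_i^N u$, $\|u\|_2 = 1$, and $\mathbb{L}_{jj}^{-1}$ acts by convolution with $f_{jj}$. First I would write $\partial_{x_1}\mathbb{L}_{jj}^{-1} w_i(d,x_2) = \int_{\Omega_0} (\partial_{x_1} f_{jj})(d-y_1, x_2-y_2)\, w_i(y)\, dy$ using that $\mathrm{supp}(w_i)\subset\overline{\Omega_0}$ and \eqref{f_j_action}. Since $d - y_1 \geq 0$ on $\Omega_0$, I expect the relevant argument of $f_{jj}$ to stay (essentially) bounded away from the singularity at the origin in the $x_1$-direction, which lets the exponential decay $|f_{jj}(x)| \leq C_0(f_{jj}) e^{-a_j|x|}|x|^{-1/4}$ (Lemma \ref{lem : constants_for_f}) and a companion estimate on $|\partial_{x_1} f_{jj}|$ (differentiating the Bessel-function representation $f_{jj} = -a_j^{2\delta} K_0(a_j|\cdot|)$, using $K_0' = -K_1$ and the asymptotics/series bounds for $K_1$) take over. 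The constants $C_{11}(f_{jj})$ and $C_{12}(f_{jj})$ defined in Lemma \ref{lem : lemma Z12full} are precisely tuned to capture these two regimes (the $e^{-a_j}$-weighted near-boundary part and the bulk part).

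Next I would apply Cauchy–Schwarz in $x_2$ over $(-d,d)$ and then in $y$ over $\Omega_0$ to separate the kernel from $w_i = v_i^N u$, using $\|u\|_2 = 1$ to discard the $u$-factor, exactly as in the proof of Lemma \ref{lem : lemma Z11full}: the kernel integral will produce an exponential factor $e^{-a_j d}$ together with an integral of the form $\int_{\Omega_0} v_j^N(y)^2 e^{2a_j y_k}\, dy$, which by $D_4$-symmetry and Parseval equals $|\Omega_0|\,(V_j^N, E_j * V_j^N)_2$ (this is the term multiplied by $\mathcal{C}_{1,j}$). The portion of the kernel near $y_1 \approx d$ — where the $|x|^{-1/4}$ singularity genuinely bites because $f_{jj}$ and its derivative are being evaluated at small argument — must be handled separately over the strip $(d-1,d)^2$; here I would Cauchy–Schwarz against $\partial_{x_1} w_i$ and $w_i$ restricted to that strip and against the boundary trace $w_i(d,\cdot)$, which is exactly what the quantity $C(v_j^N)$ packages (note $C(w)$ involves $\|\mathbb{1}_{(d-1,d)^2}\partial_{x_1} w\|_2$, $\|\mathbb{1}_{(d-1,d)^2} w\|_2$, and $\|\mathbb{1}_{(d-1,d)} w(d,\cdot)\|_2$, with a Green's-identity / integration-by-parts argument converting the trace contribution into these), producing the term multiplied by $\mathcal{C}_{2,j}$. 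Adding the two contributions yields \eqref{L11_Z_u2_comp}.

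The main obstacle I anticipate is the careful bookkeeping around the singularity of $f_{jj}$ and $\partial_{x_1} f_{jj}$ at the origin: since $\tfrac{1}{l_{jj}} \notin L^1(\mathbb{R}^2)$, $f_{jj}$ is only locally $L^1$ with a logarithmic blow-up and $\partial_{x_1} f_{jj}$ behaves like $|x|^{-1}$ near $0$, so the naive Cauchy–Schwarz splitting used for $\mathcal{Z}_{u,1}$ does not converge uniformly. Splitting the $y_1$-integral at $y_1 = d-1$ and treating the near-region via the $H^2$-norm data $C(v_j^N)$ (rather than via the pointwise kernel bound) is the device that circumvents this; getting the constants $C_{11}(f_{jj})$, $C_{12}(f_{jj})$, $\mathcal{C}_{1,j}$, $\mathcal{C}_{2,j}$ explicit and verifying the claimed numerical values requires estimating $\int_0^\infty$-type integrals of $e^{-2a_j t} t^{-1/2}$ and $e^{-2a_j t}(\ln t)^2 t^{-1}$, for which I would invoke standard integral tables as the paper does elsewhere (e.g. \cite{gradshteyn2014table}). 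The remaining steps — the $D_4$-symmetrization and the Parseval identity giving $(V_i^N, E_j * V_i^N)_2$ — are routine repetitions of the computations already carried out in Lemma \ref{lem : lemma Z11full}.
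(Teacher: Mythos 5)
Your proposal follows the paper's argument in all essentials: the Bessel-function estimates on $\partial_{x_1} f_{jj}$ (the two regimes captured by $C_{11}(f_{jj})$ for $|x|>1$ and $C_{12}(f_{jj})$ for $|x|<\sqrt{2}$), the split of the integration region at $y_1=d-1$ that isolates the $|x|^{-1}$ singularity of the differentiated kernel, the Cauchy--Schwarz/Parseval step against $u$ that yields $\sqrt{(V_i^N, E_j*V_i^N)_2}$, and the boundary-trace bookkeeping packaged into $C(v_j^N)$. Two operational details differ slightly from your sketch, and both matter for recovering the stated constants: for the bulk contribution $I_2$ the paper does \emph{not} Cauchy--Schwarz in $x_2$ (which would introduce a stray $\sqrt{2d}$) but instead swaps the order of integration (Fubini) and integrates the exponentially decaying kernel in $x_2$ exactly, giving $\int_{\mathbb{R}} e^{-a_j|y_2-x_2|}\,dx_2 = 2/a_j$ and hence the clean $e^{-a_j d}/a_j$ factor in $\mathcal{C}_{1,j}$; and the $\sup_{y_1\in(d-1,d)}\int_{\mathcal{D}_0}(v_j^N)^2\,dy_2$ that arises in the near-singularity piece is controlled by the fundamental theorem of calculus applied to $(v_j^N)^2$ in the $y_1$ variable rather than by Green's identity (a close cousin, so either would do). Also note the relevant singularity to isolate here is the $1/|x|$ blow-up of $\partial_{x_1} f_{jj}$, not the $|x|^{-1/4}$ rate of $f_{jj}$ itself, as you correct later in your write-up; it is precisely this $1/|x|$ behaviour that makes a naive Cauchy--Schwarz of $|\partial_{x_1} f_{jj}|^2$ diverge and forces the split at $y_1=d-1$.
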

\begin{proof}
In Lemma \ref{lem : constants_for_f}, we computed explicit constants for the exponential decay of $f_{11}$ and $f_{22}$. We begin by doing the same for $\partial_{x_1} f_{11}$ and $\partial_{x_1} f_{22}$. Let $a_1$ and $a_2$ be defined as in \eqref{def : definition of a1 and a2}. Let $C_{11}(f_{11}), C_{12}(f_{11}), C_{11}(f_{22})$, and $ C_{12}(f_{22})$ be non-negative constants defined as in Lemma \ref{lem : lemma Z12full}. Then, we have
\begin{align}\label{eq : inequality for the derivatives of f11 and f22}
 \hspace{-0.5cm}|\partial_{x_1}f_{11}(x)| \leq \begin{cases}
     C_{11}(f_{11}) e^{-a_1|x|} & \mathrm{if \ } |x| > 1 \\
     C_{12}(f_{11}) \frac{e^{-a_1|x|}}{|x|} & \mathrm{if \ } |x| < \sqrt{2} 
    \end{cases},
|\partial_{x_1}f_{22}(x)| \leq \begin{cases}
     C_{11}(f_{22}) e^{-a_2|x|} & \mathrm{if \ } |x| > 1 \\
     C_{12}(f_{22}) \frac{e^{-a_2|x|}}{|x|} & \mathrm{if \ } |x| < \sqrt{2} 
    \end{cases}
\end{align}
\\ for all $x \in \mathbb{R}^2 \setminus \{0\}.$ Let $j \in \{1,2\}$ and recall that
\[
\partial_{x_1} K_0(a_j|x|) = - \frac{a_jx_1}{|x|} K_1(a_j|x|) 
\]
for all $x \in \R^2  \setminus \{0\}$. Therefore, using \eqref{def : computation of f11 and f22 formulas}, we get
\begin{align*}
    \left|\partial_{x_1}f_{11}(x)\right| &= a_1^3\left|\frac{x_1}{|x|} K_1(a_1|x|)\right| \leq  a_1^3|K_1(a_1|x|)| \\
    \left|\partial_{x_1}f_{22}(x)\right| &= a_2\left|\frac{x_1}{|x|} K_1(a_2|x|)\right| \leq  a_2|K_1(a_2|x|)|
    \end{align*}
for all $x \in \R^2 \setminus \{0\}$,  where we used that $K_1$ is a decreasing function on the last step. Let us now consider the case $|x| > 1$.
Using Corollary 3.4 from \cite{yangzheng2017inequalities}, for $z > 1$, we get
\begin{align}
     |K_1(a_j z)| \leq \sqrt{\frac{\pi}{2}} \frac{1}{\sqrt{a_j z+1}}\left(1 + \frac{1}{a_j z}\right) e^{-a_j z} &\leq \sqrt{\frac{\pi}{2}} \frac{1}{\sqrt{a_j + 1}}\left(1 + \frac{1}{a_j}\right) e^{-a_j z}.
\end{align}
Hence, we obtain
\begin{align}
    &a_1^3 \left| K_1\left(a_1 |x|\right)\right| \leq a_1^3 \sqrt{\frac{\pi}{2}}\frac{1}{\sqrt{a_1  + 1}} \left(1 + \frac{1}{a_1}\right) e^{-a_1|x|} \bydef C_{11}(f_{11}) e^{-a_1|x|} \\
    &a_2 \left| K_1\left(a_2|x|\right)\right| \leq a_2 \sqrt{\frac{\pi}{2}}\frac{1}{\sqrt{a_2  + 1}} \left(1 + \frac{1}{a_2 }\right) e^{-a_2|x|} \bydef C_{11}(f_{22}) e^{-a_2 |x|}\label{C11_away_def}
\end{align}
for all $|x| > 1$.
Now, for $0 < z < \sqrt{2}$, notice that
\begin{align}
    \sqrt{\frac{\pi}{2}}\frac{1}{\sqrt{a_j z + 1}}\left(1 + \frac{1}{a_j z}\right) e^{-a_j z}  \leq \sqrt{\frac{\pi}{2}} \left(1 + \frac{1}{a_j z}\right) e^{-a_j z} 
    &\leq \sqrt{\frac{\pi}{2}} \frac{\sqrt{2}a_j + 1}{a_j } \frac{e^{-a_j z}}{z},
\end{align}
which implies that
\begin{align}
    a_1^3 \left| K_1\left(a_1|x|\right)\right| &\leq a_1^2 \sqrt{\frac{\pi}{2}} \left(\sqrt{2}a_1 + 1\right) \frac{e^{-a_1|x|}}{|x|} \bydef C_{12}(f_{11}) \frac{e^{-a_1|x|}}{|x|} \\
    a_2\left| K_1\left(a_2|x|\right)\right| &\leq \sqrt{\frac{\pi}{2}} \left(\sqrt{2}a_2 + 1 \right) \frac{e^{-a_2|x|}}{|x|} \bydef C_{12}(f_{22}) \frac{e^{-a_2|x|}}{|x|}\label{C12_near_def}
\end{align}
for all $0<|x| < \sqrt{2}.$ This proves equation \eqref{eq : inequality for the derivatives of f11 and f22}. 
\par Now, we go back to the study of $\int_{-d}^d |\partial_{x_1} \mathbb{L}_{jj}^{-1} w_i(d,x_2)| dx_2$. We only consider the case $j = i = 1$ as the other cases follow similarly. 
To begin, recall that $\mathbb{L}^{-1}_{11}v = f_{11}*v$ for all $v \in L^2(\R^2)$ and observe that
\begin{align}
     &\int_{-d}^d    |\partial_{x_1} \mathbb{L}_{11}^{-1}w_1(d,x_2)|dx_2 = \int_{-d}^d    \left| \int_{\Omega_0} \partial_{x_1}f_{11}(y_1 - d,y_2 - x_2) w_1(y_1,y_2)dy_1 dy_2 \right| dx_2 \\ 
    & \leq \int_{\om} \int_{-d}^d |w_1(y_1,y_2)| |\partial_{x_1} f_{11}(y_1 - d, y_2 - x_2)| dx_2 dy_1 dy_2.\label{first_int_Zu2}\end{align}
Now, define $\mathcal{D}_0 \bydef (d-1,d)$ and $\Omega_1 \bydef (\om \times (-d,d)) \setminus \mathcal{D}_0^3$. Then, we decompose \eqref{first_int_Zu2} as 
\begin{align}
    \int_{\om} \int_{-d}^d |w_1(y_1,y_2)| |\partial_{x_1} f_{11}(y_1 - d, y_2 - x_2)| dx_2 dy_1 dy_2 \bydef I_1 + I_2
\end{align}
where
\begin{align}
    &I_1 \bydef \int_{\mathcal{D}_0^3}  |w_1(y_1,y_2)| |\partial_{x_1} f_{11}(y_1 - d, y_2 - x_2)| dx_2 dy_1 dy_2 \\
    &I_2 \bydef \int_{\Omega_1}  |w_1(y_1,y_2)| |\partial_{x_1} f_{11}(y_1 - d, y_2 - x_2)| dx_2 dy_1 dy_2. 
    \end{align}
 Beginning with $I_2$, observe that for  $(y_1,y_2,x_2) \in \Omega_1$, it follows that
\begin{align}
    1 \leq \sqrt{|y_1 - d|^2 + |y_2 - x_2|^2}.
\end{align}
Now, using \eqref{eq : inequality for the derivatives of f11 and f22}, we obtain
\begin{align}
    I_2 &\leq C_{12}(f_{11}) \int_{\Omega_1}  |w_1(y_1,y_2)|e^{-a_1(|y_1 - d| + |y_2 - x_2|)} dx_2 dy_1 dy_2,\label{step_C12f11}
\end{align}
and  since $\Omega_1 \subset \om \times (-d,d)$, it follows that
\begin{align}
    I_2 &\leq C_{12}(f_{11}) \int_{\om} |w_1(y_1,y_2)| \int_{-d}^{d} e^{-a_1(|y_1 - d| + |y_2 - x_2|)} dx_2 dy_1 dy_2 \\
    \nonumber
    &\leq e^{-a_1 d} C_{12}(f_{11}) \int_{\om}  |w_1(y_1,y_2)|e^{ay_1} \int_{-\infty}^{\infty} e^{-a_1|y_2 - x_2|} dx_2 dy_1 dy_2.
\end{align}
Now, observe that
\begin{align}
     \int_{-\infty}^{\infty} e^{-a_1 |y_2 - x_2|} dx_2 &= 2 \int_{0}^{\infty} e^{-a_1 z} dz = \frac{2}{a_1}.\label{exp_int_value}
\end{align}
Recall that $w_1 = v_1^N u$ for $u \in L^2_{D_4}(\mathbb{R}^2)$ such that $\|u\|_{2} = 1$. Hence, we can apply Cauchy-Schwarz inequality to obtain
\begin{align}
    \int_{\om} |w_1(y_1,y_2)| e^{ay_1} dy_1 = \int_{\om} |u(y_1,y_2)||v_1^N(y_1,y_2)|e^{ay_1} dy_1 
    &\leq \left(\int_{\om} v_1^N(y)^2 e^{2ay_1} dy_1 \right)^{\frac{1}{2}}\label{cs_on_w_1}
\end{align}
Then, we use Parseval's identity to get
\begin{align}
    \frac{2}{a_1} e^{-a_1 d} C_{12}(f_{11}) \left(\int_{\om} v_1^N(y)^2 e^{2ay_1} dy\right)^{\frac{1}{2}} 
    &= \frac{2}{a_1} e^{-a_1 d} C_{12}(f_{11}) \sqrt{|\om|}\sqrt{(V_1^N, E_1 * V_1^N)_2}.\label{int_away_from_0}
\end{align}
Combining \eqref{exp_int_value},\eqref{cs_on_w_1} and \eqref{int_away_from_0}, we obtain
\begin{align}
    \nonumber I_2
    &\leq \frac{2\sqrt{|\om|}}{a_1} e^{-a_1 d} C_{12}(f_{11}) \sqrt{(V_1^N, E_1 * V_1^N)_2}.
\end{align}
Next, we consider $I_1$. Observe that for $(y_1,y_2,x_2) \in \mathcal{D}_0^3$, we have
\begin{align}
    0 \leq \sqrt{|y_1 - d|^2 + |y_2 - x_2|^2} \leq \sqrt{2}.
\end{align}
Hence, we can use \eqref{eq : inequality for the derivatives of f11 and f22} to obtain
    \begin{align}
    I_1 &\leq C_{11}(f_{11}) \int_{\mathcal{D}_0^3}  |w_1(y_1,y_2)| \frac{e^{-a_1(|y_1 - d| + |y_2 - x_2|)}}{|y_1-d| + |y_2 - x_2|} dx_2 dy_1 dy_2.\label{step_C11f11}
    \end{align}
Then, it yields
\begin{align}
    \nonumber I_1 &= C_{11}(f_{11})     \int_{\mathcal{D}_0^2}  |w_1(y_1,y_2)| \int_{\mathcal{D}_0} \frac{e^{-a_1(|y_1-d| + |y_2 - x_2|)}}{|y_1-d| + |y_2 - x_2|} dx_2 dy_1 dy_2 \\ \nonumber
    &\leq C_{11}(f_{11}) \int_{\mathcal{D}_0^2}  |w_1(y_1,y_2)|e^{-a_1|y_1 -d|}   \int_{-\infty}^{\infty} \frac{e^{-a_1|y_2 - x_2|}}{|y_1-d| + |y_2 - x_2|} dx_2 dy_1 dy_2 \\
    &= e^{-a_1d} C_{11}(f_{11}) \int_{\mathcal{D}_0^2}  |w_1(y_1,y_2)|e^{a_1y_1}   I_3(y_1,y_2) dy_1 dy_2\label{Z_u_2_with_I_1}
\end{align}
where we used that $\mathcal{D}_0 \subset \mathbb{R}$, $y_1 \in \mathcal{D}_0$ implies that $|y_1 - d| = d - y_1$, and 
\begin{equation}\label{I_1_definition}
    I_3(y_1,y_2) \bydef \int_{-\infty}^{\infty} \frac{e^{-a_1|y_2 - x_2|}}{(d-y_1) + |y_2 - x_2|} dx_2.
\end{equation} Now, given $y_1,y_2 \in \mathcal{D}_0$, we compute $I_3(y_1,y_2)$ as follows
\begin{align}
    \nonumber I_3(y_1,y_2)  = \int_{-\infty}^{\infty} \frac{e^{-a_1|y_2-x_2|}}{(d-y_1)+|y_2-x_2|} dx_2 
    &=  \int_{-\infty}^{\infty} \frac{e^{-a_1|t|}}{(d-y_1) + |t|} dt \\ \nonumber
    &= 2 \left( \int_{0}^{1} \frac{e^{-a_1y}}{(d-y_1) + t} dt + \int_{1}^{\infty} \frac{e^{-a_1t}}{(d-y_1) + t} dx\right)\\ \nonumber
    &\leq 2  \left( \int_{0}^{1} \frac{1}{(d-y_1) + t} dt + \int_{1}^{\infty} e^{-a_1t} dt\right) \\ 
    &= 2 \left( \ln(d+1-y_1) - \ln(d-y_1) +  \frac{e^{-a_1}}{a_1} \right).\label{I_1_val_end}
    \end{align}
Returning to \eqref{Z_u_2_with_I_1}, we use \eqref{I_1_val_end} to obtain
\begin{equation}\label{I_2_definition}
        I_1  \leq C_{11}(f_{11})e^{-a_1d} \left(I_4 + I_5\right)
\end{equation}
where
\begin{equation}\label{I_4_definition}
    I_4 \bydef 2\int_{\mathcal{D}_0^2} |w_1(y_1,y_2)|e^{a_1y_1} (\ln(d+1-y_1) - \ln(d-y_1)) dy_1 dy_2
\end{equation}
and
\begin{equation}\label{I_5_definition}
    I_5 \bydef \frac{2e^{-a_1}}{a_1} \int_{\mathcal{D}_0^2} |w_1(y_1,y_2)|e^{a_1y_1} dy_1 dy_2.
\end{equation}
To compute $I_5$, we use that $\mathcal{D}_0^2 \subset \om$ and again Cauchy-Schwarz on $w_1$ (cf. \eqref{cs_on_w_1}) to obtain
\begin{align}
    \nonumber I_5 \leq \frac{2e^{-a_1}}{a_1}\int_{\om} |w_1(y_1,y_2)| e^{a_1y_1} dy_1 dy_2
    &\leq \frac{2e^{-a_1}}{a_1}  \left(\int_{\om} v_1^N(y)^2 e^{2a_1y_1} dy_1 dy_2 \right)^{\frac{1}{2}} \\
    &= \frac{2e^{-a_1}}{a_1}  \sqrt{|\om|}\sqrt{(V_1^N,E_{1} * V_1^N)_2}.\label{first_int_near_singularity}
\end{align}
Next, we compute $I_4$. We begin again with Cauchy-Schwarz to obtain
\begin{align}
    \nonumber I_4 & \leq 2 \left(\int_{\mathcal{D}_0^2}^{d} v_1^N(y)^2 e^{2a_1y_1} (\ln(d+1-y_1) - \ln(d-y_1))^2 dy_1 dy_2 \right)^{\frac{1}{2}} \\ \nonumber
    &\leq 2 \left(\int_{\mathcal{D}_0} \int_{\mathcal{D}_0} \left(\sup_{y_1 \in \mathcal{D}_0} v_1^N(y)^2\right) e^{2a_1y_1} (\ln(d+1-y_1) - \ln(d-y_1))^2 dy_1 dy_2 \right)^{\frac{1}{2}} \\ \nonumber
    &= 2\left(\sup_{y_1 \in \mathcal{D}_0}\int_{\mathcal{D}_0} v_1^N(y)^2dy_2\right)^{\frac{1}{2}} \left( \int_{\mathcal{D}_0}e^{2a_1y_1} (\ln(d+1-y_1) - \ln(d-y_1))^2 dy_1 \right)^{\frac{1}{2}}. 
\end{align}
Let us now concentrate on the term $\left( \int_{\mathcal{D}_0}e^{2a_1y_1} (\ln(d+1-y_1) - \ln(d-y_1))^2 dy_1 \right)^{\frac{1}{2}}$.
\begin{align} 
\hspace{-1.0cm}\int_{\mathcal{D}_0}e^{2a_1y_1} (\ln(d+1-y_1) - \ln(d-y_1))^2 dy_1  \leq e^{2a_1 d} \int_{\mathcal{D}_0}(\ln(d+1-y_1) - \ln(d-y_1))^2 dy_1.\label{before_log_steps}
\end{align}
Now, observe that $\ln(d + 1 - y_1) - \ln(d - y_1) \geq 0$. Then, since $\ln(d + 1 - y_1) \leq \ln(2)$, we obtain 
\begin{align}
    0\leq \ln(d + 1 - y_1) - \ln(d - y_1) \leq \ln(2) - \ln(d - y_1) .\label{log_steps}
\end{align}
Since \eqref{log_steps} is positive, we can square it and obtain 
\begin{align}
    0\leq \left(\ln(d + 1 - y_1) - \ln(d - y_1)\right)^2 \leq \left(\ln(2) - \ln(d - y_1)\right)^2 .\label{log_steps_2}
\end{align}
Using \eqref{log_steps_2}, we return to \eqref{before_log_steps} and see that
\begin{align} 
 \nonumber e^{2a_1 d}\int_{\mathcal{D}_0} (\ln(d+1-y_1) - \ln(d-y_1))^2 dy_1  
    &\leq  e^{2a_1d}\int_{\mathcal{D}_0}(\ln(2) - \ln(d-y_1))^2 dy_1  \\ \nonumber
    &\hspace{-1.6cm}=   e^{2a_1d}\int_{d-1}^{d}  \ln(2)^2 -2 \ln(2)\ln(d-y_1) + \ln(d-y_1)^2 dy_1
      \\ \nonumber
    &\hspace{-1.6cm}= e^{2a_1d}  \left( \ln(2)^2 +2 \ln(2) + 2\right).
\end{align}
Let us now examine the term $\left(\sup_{y_1 \in \mathcal{D}_0}\int_{\mathcal{D}_0} v_1^N(y)^2dy_2\right)^{\frac{1}{2}}$. By the second fundamental theorem of Calculus, we obtain
{\small\begin{align}
    |v_1^N(y)^2| = \left| \frac{1}{2}\int_{y_1}^d  v_1^N(t,y_2)\partial_{x_1} v_1^N(t,y_2) dt - v^N_1(d,y_2)^2\right| \leq \frac{1}{2}\int_{\mathcal{D}_0} |v_1^N(t,y_2)\partial_{x_1} v_1^N(t,y_2)| dt + v^N_1(d,y_2)^2
\end{align}}
where we used that $y_1 \in \mathcal{D}_0$. Then, using Cauchy Schwarz, we obtain
\begin{align}\label{eq : estimation on the boundary}
  \int_{\mathcal{D}_0} |v_1^N(t,y_2)\partial_{x_1} v_1^N(t,y_2)| dt\leq \left( \int_{\mathcal{D}_0}| v_1^N(t,y_2)|^2dt\right)^{\frac{1}{2}} \left( \int_{\mathcal{D}_0}|\partial_{x_1} v_1^N(t,y_2)|^2dt\right)^{\frac{1}{2}}.
\end{align} Consequently, using Cauchy-Schwaz again, we get
\begin{align}
    \int_{\mathcal{D}_0} &\left( \int_{\mathcal{D}_0}| v_1^N(t,y_2)|^2dt\right)^{\frac{1}{2}} \left( \int_{\mathcal{D}_0}|\partial_{x_1} v_1^N(t,y_2)|^2dt\right)^{\frac{1}{2}} dy_2\\
    &\leq \left(  \int_{\mathcal{D}_0^2}| v_1^N(t,y_2)|^2dtdy_2\right)^{\frac{1}{2}} \left(  \int_{\mathcal{D}_0^2}|\partial_{x_1} v_1^N(t,y_2)|^2dtdy_2\right)^{\frac{1}{2}}\\
    &= \|\mathbb{1}_{\mathcal{D}_0^2} v_1^N\|_2\|\mathbb{1}_{\mathcal{D}_0^2}\partial_{x_1} v_1^N\|_2,
\end{align}
where we used that $\mathcal{D}_0^2 = (d-1,d)^2$ on the last step.
Finally, using \eqref{eq : estimation on the boundary}, this implies that
\begin{align}
    \sup_{y_1 \in \mathcal{D}_0} \int_{\mathcal{D}_0}\left|v_1^N(y)\right|^2 dy_2 \leq   \frac{1}{2}\|\mathbb{1}_{\mathcal{D}_0^2}\partial_{x_1} v_1^N\|_2\|\mathbb{1}_{\mathcal{D}_0^2} v_1^N\|_2 + \|\mathbb{1}_{\mathcal{D}_0} v_1^N(d,\cdot)\|_2^2.\label{I_4_val_end}
\end{align}
Hence, we obtain
\begin{align}
    I_4 &\leq 2e^{a_1d} ( \ln(2)^2 + 2\ln(2) +2)^{\frac{1}{2}}\left(\sup_{y_1 \in \mathcal{D}_0} \int_{\mathcal{D}_0}\left|v_1^N(y)\right|^2 dy_2\right)^{\frac{1}{2}} \\
    &\leq e^{a_1d} \frac{\mathcal{C}_{2,1}}{\sqrt{|\om|}C_{11}(f_{11})} \left( \frac{1}{2}\|\mathbb{1}_{\mathcal{D}_0^2}\partial_{x_1} v_1^N\|_2\|\mathbb{1}_{\mathcal{D}_0^2} v_1^N\|_2 + \|\mathbb{1}_{\mathcal{D}_0} v_1^N(d,\cdot)\|_2^2\right)^{\frac{1}{2}} \\
    &= e^{a_1d} \frac{\mathcal{C}_{2,1}}{C_{11}(f_{11})} C(v_1^N).\label{second_int_near_singularity}
\end{align}
Using \eqref{first_int_near_singularity} and \eqref{second_int_near_singularity}, we have an expression for \eqref{Z_u_2_with_I_1}.
\begin{align}
I_1 &\leq 2\sqrt{|\om|} e^{-a_1 d}C_{11}(f_{11})\frac{e^{-a_1}}{a_1} \sqrt{(V_1^N,E_{1} * V_1^N)_2} +\mathcal{C}_{2,1}C(v_1^N)\label{integral_x2_z12}
\end{align}
Combining \eqref{int_away_from_0} and \eqref{integral_x2_z12}, we have
\begin{align}
    \nonumber \int_{-d}^d |\partial_{x_1} \mathbb{L}_{11}^{-1} w_1(d,x_2)| dx_2 &= I_1 + I_2  \\
    &\hspace{-1.5cm}\leq 2\sqrt{|\om|} e^{-a_1 d}\frac{C_{11}(f_{11})e^{-a_1} + C_{12}(f_{11})}{a_1} \sqrt{(V_1^N, E_1 * V_1^N)_2} + \mathcal{C}_{2,1} C(v_1^N) \\ \nonumber
    &\hspace{-1.5cm}= \mathcal{C}_{1,1} \sqrt{(V_1^N, E_1 * V_1^N)_2} + \mathcal{C}_{2,1} C(v_1^N).
\end{align}
This concludes the proof of Lemma \ref{lem : first_int_Z_u_2}. 
\end{proof}
\newpage
\nocite{ferranti2021interval}
\nocite{julia_cadiot_Kawahara}
\nocite{julia_cadiot_sh}
\nocite{julia_cadiot_blanco_GS}
\nocite{julia_interval}
\bibliographystyle{abbrv}
\bibliography{biblio}
\end{document}